\documentclass[a4paper, 12pt]{article}

\usepackage{amsmath} 
\usepackage{theorem}
\usepackage{amssymb}
\usepackage{amsfonts}
\usepackage{latexsym}
\usepackage{amscd}
\usepackage{diagramb}
\usepackage{graphicx}
\usepackage{enumerate}
\usepackage{yfonts}
\usepackage{upgreek}
\usepackage{cancel}
\usepackage{hyperref}
\usepackage{pxfonts}  
\usepackage{caption}
\usepackage{multirow}
\usepackage{float}

\usepackage{pdflscape}
\usepackage{afterpage}
\usepackage{capt-of}
\usepackage{lipsum}

\usepackage{stmaryrd}
\usepackage{tikz} 
\usepackage{circledsteps}
\usepackage{quiver} 

    \newsavebox\tikzcdbox
    \newenvironment{tikzcdscale}{%
      \begin{lrbox}{\tikzcdbox}%
      \begin{tikzcd}%
    }{%
      \end{tikzcd}%
      \end{lrbox}%
      \resizebox{\textwidth}{!}{\usebox\tikzcdbox}%
    }

\usepackage{xspace,colortbl}
\usepackage{color}

\usepackage{arydshln} 

\definecolor{verde}{rgb}{0.,0.7,0.}
\definecolor{indigo}{rgb}{.18, .34, .78}
\definecolor{indigo1}{rgb}{.18, .24, .78}
\definecolor{indigo2}{rgb}{.18, .14, .78}
\definecolor{indigo3}{rgb}{.18, 0., .78}
\definecolor{rojo}{rgb}{1,0,0}
\definecolor{negro}{rgb}{0,0,0}
\definecolor{lila}{rgb}{.46, .16, .78}
\definecolor{lila1}{rgb}{.46, .16, .86}
\definecolor{lila2}{rgb}{.56, .16, .86}
	\definecolor{lila3}{rgb}{.63, .16, .78}
\definecolor{lila4}{rgb}{.7, .16, .78}
\definecolor{lila5}{rgb}{.78, .26, .78}
\definecolor{lila6}{rgb}{.6, 0., .78}
\definecolor{grey}{rgb}{0.5,0.5,0.5}

\DeclareMathAlphabet{\mathpzc}{OT1}{pzc}{m}{it}

    \newcommand{\axiom}[1]{\textbf{\hypertarget{#1}{(#1)}}}
    \newcommand{\axiomref}[1]{\hyperlink{#1}{(#1)}}

\newtheorem{thm}{Theorem}

\newtheorem{defn}{Definition}

\theorembodyfont{\rmfamily}

\newtheorem{rem}[thm]{Remark}
\newtheorem{prop}[thm]{Proposition}
\newtheorem{ex}[thm]{Example}
\newtheorem{cor}[thm]{Corollary}

\newcommand{\qed}{\hfill\quad\fbox{\rule[0mm]{0,0cm}{0,0mm}}  \par\bigskip}

\newcommand\SWarrow{\mathrel{\rotatebox[origin=c]{-135}{$\Rightarrow$}}}  
\newcommand\NWarrow{\mathrel{\rotatebox[origin=c]{135}{$\Rightarrow$}}}  
\newcommand\SEarrow{\mathrel{\rotatebox[origin=c]{-45}{$\Rightarrow$}}}  

\newcommand{\x}{\mbox{-}}

\newcommand{\comp}{\circ}
\newcommand{\iso}{\cong}
\newcommand{\ot}{\otimes}

\newcommand{\crta}{\overline}

\newcommand{\Aa}{{\mathbb A}}
\newcommand{\Bb}{{\mathbb B}}
\newcommand{\Cc}{{\mathbb C}}

\def\Nn{{\mathbb N}}

\newcommand{\Ps}{\operatorname Ps}
\newcommand{\C}{{\mathcal C}}
\newcommand{\D}{{\mathcal D}}
\newcommand{\A}{{\mathcal A}}
\newcommand{\B}{{\mathcal B}}
\newcommand{\F}{{\mathcal F}}
\newcommand{\G}{{\mathcal G}}

\newcommand{\M}{{\mathcal M}}
\newcommand{\N}{{\mathcal N}}

\newcommand{\HH}{{\mathcal H}}

\newcommand{\Oo}{{\mathcal O}}
\newcommand{\U}{{\mathcal U}}
\newcommand{\R}{{\mathcal R}}

\newcommand{\Id}{\operatorname {Id}}
\newcommand{\Cat}{\operatorname {Cat}}

\newcommand{\BiMlt}{\operatorname {BiMlt}}
\newcommand{\RepBM}{\operatorname {BiRepBiMlt}} 
\newcommand{\MBic}{\operatorname {MonBicat}}

\newcommand{\id}{\operatorname {id}}
\newcommand{\Lax}{\operatorname{\mathbb L ax}}

\newcommand{\Bicat}{\operatorname {Bicat}}
\newcommand{\Fun}{\operatorname{\mathbb F un}}

\newcommand{\Fi}{\varphi} 
\newcommand{\Kl}{\operatorname {Kl}} 

\newcommand{\Epsilon}{\varepsilon}

\def\ul{\underline}
\def\dul#1{\underline{\underline{#1}}}

\newcommand{\exlabel}[1]{\label{ex:#1}}

\newcommand{\eqlabel}[1]{\label{eq:#1}}
\newcommand{\equref}[1]{(\ref{eq:#1})}
\newcommand{\thlabel}[1]{\label{th:#1}}
\newcommand{\thref}[1]{Theorem~\ref{th:#1}}
\newcommand{\delabel}[1]{\label{de:#1}}
\newcommand{\deref}[1]{Definition~\ref{de:#1}}
\newcommand{\prlabel}[1]{\label{pr:#1}}
\newcommand{\prref}[1]{Proposition~\ref{pr:#1}}
\newcommand{\colabel}[1]{\label{co:#1}}
\newcommand{\coref}[1]{Corollary~\ref{co:#1}}
\newcommand{\rmlabel}[1]{\label{rm:#1}}
\newcommand{\rmref}[1]{Remark~\ref{rm:#1}}
\newcommand{\selabel}[1]{\label{se:#1}}
\newcommand{\seref}[1]{Section~\ref{se:#1}}
\newcommand{\sslabel}[1]{\label{ss:#1}}
\newcommand{\ssref}[1]{Subsection~\ref{ss:#1}}
\newcommand{\ssslabel}[1]{\label{sss:#1}}
\newcommand{\sssref}[1]{Subsection~\ref{sss:#1}}

\newcommand{\plabel}[1]{\label{p:#1}}
\newcommand{\pref}[1]{P.~\ref{p:#1}}

\newcommand*{\threefrac}[3]{%
  \begin{array}{@{\,}c@{\,}}%
    #1\\
    \hline
    #2\\
    \hline
    #3%
  \end{array}%
}

\begin{document}

\title{Bi-representable bi-multicategories } 

\author{Bojana Femi\'c \vspace{6pt} \\
{\small Mathematical Institite of  \vspace{-2pt}}\\
{\small Serbian Academy of Sciences and Arts } \vspace{-2pt}\\
{\small Kneza Mihaila 36,} \vspace{-2pt}\\
{\small 11 000 Belgrade, Serbia} \vspace{-2pt}\\
{\small femicenelsur@gmail.com}}

\date{}
\maketitle

\begin{abstract}
We prove a two-dimensional version of Hermida's equivalence result on representable multicategories and classify 
its stricter versions. The obtained three-dimensional equivalence of the respective totality categories is of 
categories enriched over the multicategory $\mathsf{Bicat}$ of Verity. There is a basic enriched equivalence with 
respect to inherently lax monoidal pseudofunctors, and a one with respect to pseudo-monoidal pseudofunctors, related 
to the latter via the forgetful enriched functors. We extend Slattery's result on a bi-multicategory structure of the 
Kleisli bicategory of a 2-multicategory encompassing bi-representability. 
\end{abstract}


\maketitle

\tableofcontents

\section{Introduction}

It is very well known that there is an equivalence between monoidal categories and representable multicategories. The two 
classes of structures even make biequivalent 2-categories, as shown in \cite{Herm}. In the constructions of 
representable multicategories known in the literature (for 2-categories in \cite[Section I,4]{Gray}, for double categories 
in \cite{Fem:Gray} and  \cite{Fem:Prem}, in  \cite{BL} for Gray-categories), one describes the functors in $\Fun(A, \Fun(B,C))$, for the 
categories in question $A,B,C$, then one characterizes them as elements of $q\x\Fun(A\times B,C)$, which Gray called {\em quasi-functors}, 
and ultimately proves that the latter is in bijection with $\Fun_{st}(A\ot B, C)$ for a naturally occurring product $A\ot B$:
\begin{equation}\eqlabel{isoms-multic}
 \Fun(A, \Fun(B,C))\iso q\x\Fun(A\times B,C) \iso \Fun_{st}(A\ot B, C).
\end{equation}
Here $\Fun_{st}(A\ot B, C)$ is the set of the corresponding strict functors. Such obtained product $A\ot B$ is called {\em Gray product}, in credit to the first above mentioned construction from \cite[Section I,4]{Gray}, if the (horizontal) 1-cells in $\Fun(B,C)$ are {\em natural} transformations, and 
{\em funny product}, if they are {\em unnatural} transformations (\cite{FLK}). From the above listed constructions of representable 
multicategories the one from \cite{Fem:Prem} is of the latter kind, while the rest are of the former. In both cases the multicategory is represented by the occurring product. 

As a matter of fact, these bijections turn out to be isomorphisms in the corresponding ambient category $\Oo$ of the categories $A,B,C$. 
This is not enough to say that one has {\em $\Oo$-enriched multicategories}, as one should verify if the substitutions of the constructed multicategories are functors, {\em i.e.} 1-cells in $\Oo$. Though, the fact that the above isomorphisms are of higher dimension sets the ground for making a natural question, if the obtained Gray product is even a monoidal product in the 2-category whose underlying category 
is $\Oo$. More comprehensively, the appropriate question is then if Hermida's bi-equivalence from \cite[Theorem 9.8]{Herm} has a 
two-dimensional analogue. 

\smallskip

Two-dimensional multicategories are {\em bi-multicategories}, which by definition are $\Cat_2$-multicategories, being $\Cat_2$ 
the 2-category of categories. (The strict version of the notion are 2-multicategories, which first appeared in \cite{Shul-sl} 
and \cite[Definition 5.1]{LSR}. They are $\Cat_1$-multicategories, where $\Cat_1$ is the category of categories. The weaker notion, 
as well as that of bi-representable bi-multicategories were studied in \cite{Phil}.) 
The first motivating conjecture for this research was thus to prove that monoidal bicategories are in an equivalence correspondence with 
bi-representable bi-multicategories. Moreover, these two classes of structures should organize into 
three-dimensional categories. Rather than considering a fully weak totality tricategory, taking inspiration from \cite{Camp}, we 
conjectured the equivalence of the two classes of structures as of 
$\mathsf{Bicat}$-enriched categories, where $\mathsf{Bicat}$ is the multicategory of bicategories from \cite[Section 1.3]{Ver}. 
The conjectured result we proved to be true in 
\thref{main}. In the subsequent two corollaries, in \ssref{Stricter} we formulate equivalences in the two semi-strict cases 
(for Gray monoidal and for bicategorically monoidal 2-categories) and 
in the fully strict case (of strictly monoidal 2-categories). Table \ref{table3} resumes our findings in a concise way. 

The above isomorphisms \equref{isoms-multic} do not include the Cartesian product. There are at least two occurrences of the 
Cartesian product in relation to representability of multicategories in the literature, though with the following twist. Instead 
of representability, one has bicategorical bi-representability via the Cartesian product of the 
multicategories of pseudodouble categories (in \cite[Appendix A]{Camp}, with pseudodouble functors) and of double categories 
(in \cite{Fem:Bif} and \cite{Fem:Gray}, with a subclass of lax double functors.) 
We interpret these two examples through the lens of the above semi-strict equivalence in \coref{mixed}. 


\smallskip

Higher transforms for bi-multicategories were introduced in \cite{Phil}. For bi-representable bi-multicategories we prove in 
\ssref{fun-bicat} that the higher transforms for their underlying bi-multicategories respect the bi-representability in a 
{\em lax-representable way} (see \prref{psf birep} - \prref{modif}). For the pseudofunctors $\F:\M\to\N$ this means that there are 
 pseudonatural transformations $\xi_n: m(\F(-),..,\F(-))\to \F(m(-,..,-))$ and a unary 1-cell $\xi_0: m(-)\to\F(m(-))$ in $\N$ 
constrained by certain invertible modifications, where $m(a_1,...,a_n)$ denotes the universal object in the bi-representability. 
When the $\xi_n$'s are pseudonatural equivalences, the pseudofunctors respect bi-representability in a {\em pseudo-representable way} 
(\coref{fully mon}). Moreover, equivalences of $\mathsf{Bicat}$-categories of monoidal bicategories and bi-representable bi-multicategories 
with respect to lax-representable and pseudo-representable pseudofunctors commute with the forgetful 2-functors (see the commutative 
diagram \equref{commut diag}). 

\smallskip

A bi-multicategory structure was shown to exist on the Kleisli bicategory $\Kl_T(\ul{\M})$ for a 2-multicategory $\M$ and a 
pseudocommutative pseudomonad $T$ on it in \cite[Theorem 6.8]{Slat}, here $\ul{\M}$ denotes the underlying 2-category of $\M$. 
The manuscript has not contemplated bi-representable bi-multicategories. By including bi-representability and applying our 
equivalence result of the $\mathsf{Bicat}$-enriched categories, in \thref{Slat} we extend the latter result, both to 
 monoidal bicategories and bi-representable bi-multicategories to obtain a structure of a bi-representable bi-multicategory 
on the corresponding Kleisli bicategory. 

\medskip

The article is structured as follows. In the next section we recall multicategories (representability, closedness, enrichment 
over them, examples), the notions of 2- and bi-multicategories and of 2- and bi-representability, and we recollect the data defining 
a monoidal bicategory. In Section 3 we show how a bi-representable bi-multicategory yields a monoidal bicategory. Section 4 is the 
technical core of the paper, in it we construct a bi-representable bi-multicategory out of a monoidal bicategory. (We do not use 
strictification result for bicategories, which would have by large simplified the computations, as we will draw the conclusion about 
the strict case in the final section.) In section 5 we prove equivalence of the enriched categories: we prove that the assignments in the previous two sections give an equivalence of the two notions, we describe the functor bicategory $\BiMlt(\M,\N)$ of bi-multicategories 
applied to bi-representable $\M,\N$ showing that the pseudofunctors are automatically {\em lax-representable}, we show that monoidal bicategories make a $\mathsf{Bicat}$-enriched category $\MBic$, we prove that the respective hom-bicategories are bi-equivalent, and 
conclude that $\RepBM$ is $\mathsf{Bicat}$-enriched, too, and equivalent to $\MBic$ as a $\mathsf{Bicat}$-category. The final section we dedicate to applications: we study the strict and two semi-strict versions of the equivalence of enriched categories (\ssref{Stricter}), 
we relate the studied equivalence (with respect to lax-representable and lax monoidal pseudofunctors) to the one contemplating pseudo-representable, respectively, pseudo monoidal pseudofunctors in \ssref{fully mon}, and conclude on bi-representable Kleisli bi-multicategories 
upgrading Slattery's result. 

\smallskip

Some parts of the paper are technically and computationally involved (especially Sections 4 and partially 5.2) and this hinders both the 
writing and the reading from being fluent. As we do want to leave a record of how the computations are done, we presented the necessary technicalities, and we invite the reader to skip the technical parts when he/she feels like to follow the flow of the construction more 
fluently. 

\smallskip

{\bf Notational conventions.} In a bicategory $\B$ the horizontal composition we denote by $[-\vert-]$ read in the lexicographical order, 
and also by the usual $\circ$ notation and concatenation, read in the opposite order. The vertical composition we denote by fractions 
$\frac{\bullet}{\bullet}$. When the annotation of the pastings of 2-cells via the horizontal and vertical compositions gets involved, 
to simplify we will use the notation $\sigma\diamond\tau$ (read from right to left)
to denote the concatenations of the 2-cells $\sigma$ and $\tau$  in the sense that will be obvious from the context.

\section{Multicategories, 
monoidal bicategories and bi-multicategories}

In this section we recall the notions from the title, as well as the notions of enrichment over a multicategory 
and of a 2-representable 2-multicategory. 

\subsection{Multicategories and some of their examples} \sslabel{defex}

We start by recalling multicategories and what it means when they are closed. 

\begin{defn}
A multicategory consists of:
\begin{itemize} 
\item a collection of objects, \vspace{-0,2cm}
\item for each list of objects $a_1,...,a_n$ for $n\geq 0$ and an object $b$, a set $\M_n(a_1,...,a_n;b)$, \vspace{-0,2cm}
\item for each object $a$ an element $1_a\in\M_1(a;a)$, \vspace{-0,2cm}
\item for all lists $\crta{a_i}, i=1,...,n, b_1,...,b_n$ and objects $c$ a function, called {\em substitution}:
$$\M_n(b_1,...,b_n,c)\times\Pi_{j=1}^{n}\M_{k_i}(\crta{a_i};b_i)\to\M_{\sum_{i=1}^n k_i}(\crta{a_1},...,\crta{a_n};c)$$
$$(g, f_1,...,f_n)\mapsto g\circ(f_1,...,f_n)$$
satisfying a natural associativity and two identity axioms. Here $\crta{a_i}$ is a short annotation for a list of objects, 
and the elements of $\M_n(a_1,...,a_n;b)$ are called {\em multimaps}. 
\end{itemize}
\end{defn}

We will recall a diagrammatic representation of associativity in a multicategory in \equref{assoc bi-mc}. 

\begin{defn}
A multicategory $\M$ is said to be {\em left closed} if for all objects $b,c$ there is an object $[b,c]$ and a binary map 
$e_{b,c}:[b,c], b\to c$ so that the induced functions 
\begin{equation} \eqlabel{l-closed}	
\M_n(\crta a;[b,c])\to\M_{n+1}(\crta a,b;c)
\end{equation} 
are bijections for all $n\geq 0$. 

If there exists an object $\{b,c\}$ and a binary map $\crta{e_{b,c}}:b,\{b,c\}\to c$ for which the induced functions 
$\M_n(\crta a;\{b,c\})\to\M_{n+1}(b,\crta a;c)$ are bijections for all $n\geq 0$, then $\M$ is said to be {\em right closed}. 

$\M$ is called {\em biclosed} if it is both left and right closed. 
\end{defn}

\begin{defn}
A multicategory $\M$ is said to be {\em representable} if for any list of objects $\crta a=a_1,..,a_n$ there exists an object %
$m(a_1,..,a_n)$ and a multimap $j_{\crta a}:a_1,..,a_n\to m(a_1,..,a_n)$ that for every object $c$ and lists $\crta x, \crta b$ (of lengths 
$k$ and $l$, respectively) induces bijections 
$$\M_{k+1+l}(\crta x,m(a_1,..,a_n),\crta b;c)\to\M_{k+n+l}(\crta x,a_1,..,a_n,\crta b;c).$$
(Left representability is defined with $k=0$, and right representability with $l=0$.)
\end{defn}

We now recall several multicategories studied in the literature. 

\begin{ex} 
A symmetric closed multicategory of 2-categories was constructed in \cite[Section I,4]{Gray}. 
\end{ex}

\begin{ex} 
A symmetric closed multicategory of bicategories was constructed in \cite[Section 1.3]{Ver}. We will denote it by $\mathsf{Bicat}$. 
\end{ex}

\begin{ex} 
The symmetric closed multicategory $\mathsf{PsDbl}$ of pseudodouble categories (and pseudodouble quasi-functors of $n$-variables) 
was introduced in \cite[Appendix]{Camp}. 
The multicategory $\mathsf{Bicat}$ from the previous example is a sub-multicategory of $\mathsf{PsDbl}$. 
\end{ex}

\begin{ex} \exlabel{Dbl Gray}
In \cite{Fem:Gray} two representable symmetric closed multicategories of double categories were discussed. They are of Gray type and give rise to the Gray monoidal categories 
of double categories $(Dbl, \ot^{st}_{o-l, l-o})$ and $(Dbl, \ot^{st}_{ps})$. (In both cases the double categories of multimaps have for objects double functors, 
$(Dbl, \ot^{st}_{o-l, l-o})$ has for 1h-cells horizontal oplax transformations, for 1v-cells vertical lax transformations and for 2-cells the corresponding 
modifications, whereas $(Dbl, \ot^{st}_{ps})$ has for 1h-cells horizontal pseudonatural transformations, for 1v-cells vertical pseudonatural transformations 
and for 2-cells the corresponding modifications.) Correspondingly, we may denote one multicategory as $\mathsf{Dbl^{st}_{o-l}}$ and the other one as 
$\mathsf{Dbl^{st}_{ps}}$. 
\end{ex}

\begin{ex} 
The two representable symmetric closed multicategories of double categories from \cite{Fem:Prem} $\mathsf{Dbl_f}$ and $\mathsf{Dbl_m}$ 
are of funny type and give rise to the symmetric funny monoidal categories of double categories $(Dbl, \Box_f)$ and $(Dbl, \Box)$, respectively. In the former the objects of the double categories of multimaps 
are {\em purely funny functors}, whereas for the latter they are {\em mixed funny functors}. 
\end{ex}



\begin{ex} 
In \cite{BL} the authors constructed {\em $4$-ary multicategories} of Gray-categories (in \cite[Theorem 4.8]{Lob} it was proved that, 
under the assumption on left representability, there in as equivalence between multicategories and short multicategories). 
\end{ex}

\subsection{Enrichment over a multicategory} 

In this section we recall enrichment over a multicategory. 
Let $\M$ be a multicategory and let $\ul{\M}$ denote its underlying category.

\begin{defn} \delabel{mult-mult}  \cite[Definition 6.1.1]{JY} 
An {\em $\M$-category} or a {\em category enriched over $\M$} consists of:
\begin{itemize} 
\item a collection of objects, \vspace{-0,2cm}
\item for objects $a, b$ an object $\C(a,b)$ in $\M$, \vspace{-0,2cm}
\item for each object $a$ an object $1_a\in\C(a,a)$, \vspace{-0,2cm}
\item for objects $a, b, c$ a binary morphism in $\M$:
$$\C(b,c),\C(a,b)\to\C(a, c)$$
satisfying a natural associativity and two identity axioms in $\ul{\M}$. 
\end{itemize}
\end{defn}

An interesting class of $\M$-categories is given by the following known result, which is referred to as {\em self-enrichment}. 

\begin{prop} \prlabel{Yau} \cite[Theorem 9.1.7]{JY} \\
Let $\M$ be a closed multicategory and let $\ul{\M}$ denote its underlying category. Then $\ul{\M}$ is an $\M$-category with 
the hom-objects $\ul{\M}(a,b):=\M_1(a;b)$ and 
the composition $comp:\ul{\M}(b,c),\ul{\M}(a,b)\to\ul{\M}(a, c)$ the {\em partner} of the ternary multimap 
$$\ul{\M}(b,c), \ul{\M}(a,b), a \stackrel{\langle 1, ev\rangle}{\to} \ul{\M}(b,c), b \stackrel{ev}{\to} c$$     
that is, the following diagram commutes 
\begin{equation} \eqlabel{closed-enriched}	
	\scalebox{0.86}{
\bfig
 \putmorphism(-150,470)(1,0)[\M( b;c),\M( a; b), a`\M( a; c), a`\langle comp,1\rangle]{1250}1a
\putmorphism(-160,470)(0,-1)[\phantom{Y_2}` ` \langle 1, ev\rangle]{380}1l
\putmorphism(-160,430)(0,-1)[\phantom{Y_2}`\M( b;c),b`]{380}0l
\putmorphism(1150,470)(0,-1)[\phantom{Y_2}` `  ev]{380}1r
\putmorphism(1150,430)(0,-1)[\phantom{Y_2}`c. `]{380}0r
 \putmorphism(0,70)(1,0)[\phantom{Y_2}`\phantom{Y_2} `ev]{1150}1b
\efig}
\end{equation}
\end{prop}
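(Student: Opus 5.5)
The plan is the standard closed-structure argument. Using only the bijections \equref{l-closed} (left closedness suffices), I will build the enrichment data, identify each morphism by its transpose, and check every axiom by showing that both sides have the same transpose. Write $\ul{\M}(a,b):=\M_1(a;b)$, which by closedness is represented by an object $[a,b]$. Let $ev=e_{a,b}:[a,b],a\to b$ be the evaluation binary map, which corresponds to $1_{[a,b]}$ under \equref{l-closed}.

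\textbf{Step 1: the data.} The identity $1_a$, an element of $\ul{\M}(a,a)$ in the enriched sense, is the nullary multimap $()\to[a,a]$ corresponding under \equref{l-closed} with $n=0$ to $1_a\in\M_1(a;a)$. The composition $comp:[b,c],[a,b]\to[a,c]$ is the unique binary multimap whose image under \equref{l-closed} with $n=2$ is the ternary map $ev\circ(1,ev)$. This image is exactly the statement that \equref{closed-enriched} commutes.

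\textbf{Step 2: the uniqueness principle.} By naturality of \equref{l-closed} in $\crta a$, the bijection sends $f\in\M_n(\crta a;[b,c])$ to $ev\circ(f,1_b)$. This naturality follows from the associativity of substitution and the fact that \equref{l-closed} is given by precomposition with $ev$. Hence two multimaps $f,g:\crta a\to[b,c]$ are equal as soon as $ev\circ(f,1)=ev\circ(g,1)$.

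\textbf{Step 3: associativity.} Both composites $comp\circ(comp,1)$ and $comp\circ(1,comp)$ are maps $[c,d],[b,c],[a,b]\to[a,d]$. I will compute their transposes. Using \equref{closed-enriched} twice together with associativity of substitution, the first becomes
$$ev\circ(comp,1)\circ(comp,1,1)=ev\circ(1,ev)\circ(comp,1,1)=ev\circ(1,ev\circ(1,ev)).$$
The second becomes
$$ev\circ(1,ev)\circ(1,comp,1)=ev\circ(1,ev\circ(comp,1))=ev\circ(1,ev\circ(1,ev)).$$
These agree, so Step 2 gives associativity.

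\textbf{Step 4: unit laws and the main obstacle.} For the unit laws, the transpose of $comp\circ(1_{[b,c]},1_b)$ (with $1_b:()\to[b,b]$ nullary) is $ev\circ(1,ev\circ(1_b,1))=ev\circ(1,1_b)=ev$. Since $ev$ is the transpose of $1_{[b,c]}$, the left unit law holds, and the right unit law is similar. The only real obstacle is the bookkeeping of the multicategory associativity axiom with nullary inputs and shifting positions in these substitutions. I would handle it by carefully stating the general associativity in the form $g\circ(f_1,\dots,f_i\circ(h),\dots)$ before doing the computations.
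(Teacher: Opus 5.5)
Your proof is correct. Your route is the standard one behind \cite[Theorem 9.1.7]{JY}, which the paper cites in place of giving its own proof: define the unit and $comp$ as partners under the left-closedness bijection, then check associativity and the unit laws by showing both sides have the same partner. In Step 2 no naturality argument is needed, since that bijection is by definition $f\mapsto ev\circ(f,1)$. If you also want the enrichment to literally recover $\ul{\M}$ as its underlying category, the same kind of computation shows that for nullary $f:()\to[a,b]$ and $g:()\to[b,c]$ the partner of $comp\circ(g,f)$ is $\hat g\circ\hat f$.
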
 


\smallskip

All the examples of multicategories form \ssref{defex} are closed and hence \prref{Yau} applies to them. 

\bigskip

\begin{rem}
The condition on the binary composition map in \prref{Yau} is crucial. Namely, as observed in \cite{Camp}, bicategories form a category enriched over the multicategory $\mathsf{Bicat}$ of bicategories (Verity proved in \cite[Section 1.3]{Ver} that $\mathsf{Bicat}$ is closed, then apply \prref{Yau}). On the other hand, 
there is a natural structure of a {\em ``category'' enriched over the 3-multicategory of bicategories} on the category of bicategories. 
(The semicolon in ``category'' alludes to the fact that one has indeed a higher dimensional category.) Namely, one defines a quasi-pseudofunctor on 
hom-bicategories $c:\Ps(\B,\C), \Ps(\A,\B)\to\Ps(\A,\C)$ via: 1) the two obvious pseudofunctors 
$$c(\G,-):\Ps(\A,\B)\to\Ps(\A,\C) \quad\text{and}\quad c(-,\F):\Ps(\B,\C)\to\Ps(\A,\C)$$
for objects, {\em i.e.} pseudofunctors $\F\in\Ps(\A,\B)$ and $\G\in\Ps(\B,\C)$, and 2) a 2-cell in $\Ps(\A,\C)$, {\em i.e.} modification 
\[
\begin{tikzcd}
	\A && \B && \C \\
	\A && \B && \C
	\arrow["\F", curve={height=-24pt}, from=1-1, to=1-3]
	\arrow[""{name=0, anchor=center, inner sep=0}, "\G", curve={height=-24pt}, from=1-3, to=1-5]
	\arrow[""{name=1, anchor=center, inner sep=0}, curve={height=24pt}, from=1-3, to=1-5]
	\arrow[""{name=2, anchor=center, inner sep=0}, curve={height=-24pt}, from=2-1, to=2-3]
	\arrow[""{name=3, anchor=center, inner sep=0}, "{\F'}"', curve={height=24pt}, from=2-1, to=2-3]
	\arrow["{\G'}"', curve={height=24pt}, from=2-3, to=2-5]
	\arrow["\omega", between={0.2}{0.8}, Rightarrow, nfold, from=0, to=1]
	\arrow["\theta", between={0.2}{0.8}, Rightarrow, nfold, from=2, to=3]
\end{tikzcd}
\Rrightarrow
\begin{tikzcd}
	\A && \B && {} \\
	\A && \B && \C
	\arrow[""{name=0, anchor=center, inner sep=0}, "\F", curve={height=-24pt}, from=1-1, to=1-3]
	\arrow[""{name=1, anchor=center, inner sep=0}, curve={height=24pt}, from=1-1, to=1-3]
	\arrow["\G", curve={height=-24pt}, from=1-3, to=1-5]
	\arrow["{\F'}"', curve={height=24pt}, from=2-1, to=2-3]
	\arrow[""{name=2, anchor=center, inner sep=0}, "{\G'}"', curve={height=24pt}, from=2-3, to=2-5]
	\arrow[""{name=3, anchor=center, inner sep=0}, curve={height=-24pt}, from=2-3, to=2-5]
	\arrow["\theta", between={0.2}{0.8}, Rightarrow, nfold, from=0, to=1]
	\arrow["\omega"', between={0.2}{0.8}, Rightarrow, nfold, from=3, to=2]
\end{tikzcd}
\]
for 1-cells/transformations $\theta\in\Ps(\A,\B)$ and $\omega\in\Ps(\B,\C)$. Then the associativity for the composition $c$ consists of three binary, coming down 
to three unary associativity transformations $\alpha_{\HH,\G,-}, \alpha_{\HH,-,\F},\alpha_{-,\G,\F}$ that are identity on objects (icons), but their 2-cell components 
 $\alpha_{\HH,\G,\omega}, \alpha_{\HH,\theta,\F},\alpha_{\zeta,\G,\F}$ are non-trivial. The transformation component  $\alpha_{\HH,\G,\omega}$ is a 2-cell/modification, which when evaluated at an object $A\in\A$ gives a 2-cell in $\D$ (associativity concerns a ternary map $\Ps(\C,\D), \Ps(\B,\C), \Ps(\A,\B)\to
\Ps(\A,\D)$) whose domain and codomain differ up to the associativity of the composition in the bicategory $\D$, so it can not be the identity. The similar happens with the pentagonator modification for the associativity we are examining. So, one needs a fully 3-dimensional ambient multicategory for this enrichment. What differs in the two enrichment constructions is that {\em in the latter case the composition $c$ does not obey the condition \equref{closed-enriched}}. Though, if one deals with 2-categories instead, one obtains strict associativity and an enrichment over the multicategory $\mathsf{Bicat}$ of bicategories. 
\end{rem}



\subsection{Monoidal bicategories}

As a one-object tricategory a monoidal bicategory $\B$ consists of: 
\begin{enumerate}
\item  \axiom{MB1} identity 1-cells $id_A$ on objects $A$ and identity 2-cells $\Id_f$ on 1-cells $f$;
\item \axiom{MB2} horizontal composition of 1- and 2-cells $[-\vert-]$ (in lexicographical order), associative and unital up to isomorphisms $\alpha, \lambda, \rho$, satisfying a pentagon and a triangle, the interchange law on 2-cells strictly, and such that $[\Id_f\vert \Id_g]=\Id_{[f\vert g]}$;
\item \axiom{MB3} strictly associative and unital vertical composition of 1- and 2-cells; 
\item \axiom{MB4} a pseudofunctor $-\ot-:\B\times\B\to\B$, in particular, there are natural isomorphisms 
$$\xi:[f\ot g\vert f'\ot g']\Rightarrow [f\vert f']\ot[g\vert g']$$
$$\xi_0: id_{A\ot B}\Rightarrow id_A\ot id_B$$
satisfying a hexagon and two squares (we will call $xi$ the ``monoidality interchange''), and on 2-cells we have 
$$[\alpha\vert\alpha']\ot[\beta\vert \beta']= [\alpha\ot \beta\vert \alpha'\ot \beta']$$
$$ \Id_{f\ot g}= \Id_f\ot \Id_g;$$
\item \axiom{MB5} there is a unit object $I$, a 1-cell $\iota:I\to I$ and invertible 2-cells $\nabla: [\iota\vert\iota]\Rightarrow\iota$ 
and $u:id_I\Rightarrow\iota$ satisfying the axioms of a pseudofunctor $1\to\B$;  
\item \axiom{MB6} 
a pseudonatural equivalence $a: \ot(\ot\times 1)\Rightarrow \ot(1\times\ot)$, meaning that there are quasi-invertible 1-cells $a_{A,B,C}:
(A\ot B)\ot C\to  A\ot(B\ot C)$ 
quasi-natural in $A,B,C$, meaning that there are invertible 2-cells $a_{f,g,h}:[(f\ot g)\ot h \vert a_{A',B',C'}]\Rightarrow [a_{A,B,C}\vert f\ot (g\ot h)]$ 
(for $a_{A,B,C}$, and similarly for its quasi-inverse) natural in $f,g,h$;
\item \axiom{MB7} pseudo natural equivalences $l: \ot\comp(I\times\Id_{\B})\to\Id_{\B}$ and $r: \ot\comp(\Id_{\B}\times I) \to \Id_{\B}$ 
(there are 1-cells $l_A: I\ot A\to A$ and $r_A: A\ot I\to A$ and their quasi-inverses, quasi-natural in $A$, {\em i.e.} there are  
 invertible 2-cells $l_f:[id_I\ot f\vert l_B]\Rightarrow [l_A\vert f]$ and $r_f:[f\ot id_I\vert r_B]\Rightarrow [r_A\vert f]$ (and similarly for 
their quasi-inverses), natural in $f$;
\item \axiom{MB8} invertible modifications $\pi,\mu, \tilde\lambda$ and $\tilde\rho$, whereby the pentagon for $a$ commutes up to $\pi$,  $\mu$ relates 
$a$ with $l$ and $r$, $\lambda$ relates $a$ with $l$, and $\rho$ relates $a$ with $r$, so that for $\pi$ hold the axioms (TA1) for a 
non abelian 4-cocycle condition, and (TA2) and (TA3) for left and right normalization for the 4-cocycle $\pi$. 
\end{enumerate}

\subsection{2-multicategories, bi-multicategories and bi-representability} \sslabel{2-mult}

The notion of 2-multicategories goes back to \cite{Shul-sl}, \cite[Definition 5.1]{LSR}. 
A 2-multicategory $\M$ besides objects consists of {\em categories} $\M_n(a_1,...,a_n;b)$, 1-cells $1_a\in\M_1(a;a)$ for all objects $a$ 
and an (associative and unital) substitution {\em functor} 
\begin{equation} \eqlabel{cat-multi}
\M_n(b_1,...,b_n,c)\times\Pi_{i=1}^{n}\M_{k_i}(\crta{a_i};b_i)\to\M_{\sum_{i=1}^n k_i}(\crta{a_1},...,\crta{a_n};c).
\end{equation}

This is a 2-dimensional structure: it has objects, and objects and morphisms of the categories $\M_n(a_1,...,a_n;b)$ are the 1- and 2-cells of $\M$. 
Composition of morphisms in  $\M_n(a_1,...,a_n;b)$ induces the vertical composition of 2-cells in $\M$, and the substitution induces the horizontal 
composition of 1-cells (actually their only composition) and 2-cells. The horizontal composition is strictly associative and unital. The difference between $\M$ 
and a 2-category is that its 1-cells are multimaps and its 2-cells are morphisms of those. In other words, a 2-multicategory has an underlying 2-category 
whose hom-categories are the unary hom-categories of $\M$. A 2-multicategory is a $\Cat_1$-multicategory.

\begin{defn}\delabel{2-rep}
A 2-multicategory $\M$ is said to be {\em 2-representable} if for any list of objects $\crta a=a_1,..,a_n$ there exists an object %
$m(a_1,..,a_n)$ and a multimap $j_{\crta a}:a_1,..,a_n\to m(a_1,..,a_n)$ that for every object $c$ and lists $\crta x, \crta b$ (of lengths 
$k$ and $l$, respectively) induces isomorphisms of categories 
$$\M_{k+1+l}(\crta x,m(a_1,..,a_n),\crta b;c)\to\M_{k+n+l}(\crta x,a_1,..,a_n,\crta b;c).$$
\end{defn}

The notion of a {\em bi-multicategory} was defined in \cite[Definition 4.2.29]{Phil}. In contrast to a 2-multicategory, its substitution has 
a natural family of invertible 2-cells governing associativity and unitality, subject to a pentagon and a triangle law. In other words, 
a bi-multicategory is a multicategory enriched in the 2-category $\Cat_2$ of categories. It possesses an underlying bicategory. 
A {\em bi-representable} bi-multicategory was defined in \cite[Definition 4.2.29]{Phil}. We recall this definition here.

\begin{defn} \delabel{Phil}
A bi-multicategory $\M$ is said to be {\em bi-representable} if for any list of objects $\crta a=a_1,..,a_n$ there exists an object 
$m(a_1,..,a_n)$ that we call the {\em universal object} and a multimap $j_{\crta a}:a_1,..,a_n\to m(a_1,..,a_n)$  that we call the {\em universal arrow}, 
so that
\begin{enumerate}
\item universal arrows are closed under composition, 
\item for every object $b$ the universal arrow $j_{\crta a}$ induces adjoint equivalences 
\begin{equation} \eqlabel{Phil rep}
\M_n(a_1,..,a_n;b)\to\M_1(m(a_1,..,a_n);b)
\end{equation}
specified by a choice of a universal invertible 2-cell $\Epsilon_{\crta a}$. 
\end{enumerate}
\end{defn}

We will use an equivalent definition that is more in accordance to 
the construction of the representability of the multicategories $\mathsf{Dbl^{st}_{o-l}}$ and $\mathsf{Dbl^{st}_{ps}}$ in \cite{Fem:Gray}. 

\begin{defn} 
A bi-multicategory $\M$ is said to be {\em bi-representable} if for any list of objects $\crta a=a_1,..,a_n$ there exists an object 
$m(a_1,..,a_n)$ and a multimap $j_{\crta a}:a_1,..,a_n\to m(a_1,..,a_n)$  
that for every object $c$ and lists $\crta x, \crta b$ (of lengths $k$ and $l$, respectively) induces adjoint equivalences 
\begin{equation} \eqlabel{strong rep}
\M_{k+n+l}(\crta x,a_1,..,a_n,\crta b;c)\to\M_{k+1+l}(\crta x,m(a_1,..,a_n),\crta b;c)
\end{equation}
specified by a choice of a universal invertible 2-cell $\Epsilon_{\crta x, \crta a, \crta b}$. 
The object $m(a_1,..,a_n)$ and the  multimap $j_{\crta a}$ we call the {\em universal object} and {\em strong universal arrow} 
(for the corresponding list). 
\end{defn}

This means that for every multimap $f: \crta x,a_1,..,a_n,\crta b\to c$ there is a multimap $\hat f: 
\crta x,m(a_1,..,a_n),\crta b\to c$ and an invertible 2-cell 
$\Epsilon_{\hat f}: \hat f\comp \langle 1_{\crta x},j_{\crta a},1_{\crta b}\rangle \Rightarrow f$. 
The universality means that for any 1-cell $\xi: \crta x,m(a_1,..,a_n),\crta b\to c$ and any invertible 2-cell 
$\sigma: \xi \comp \langle 1_{\crta x},j_{\crta a},1_{\crta b}\rangle \Rightarrow f$ 
there exists a unique invertible 2-cell $\sigma^\dagger: \xi \Rightarrow \hat f$ so that
\begin{equation} \eqlabel{sigma-dagger}
\frac{[\Id_{\langle 1_{\crta x},j_{\crta a},1_{\crta b}\rangle}\vert\sigma^\dagger]}{\Epsilon_{\hat f}}= \sigma.
\end{equation}

\begin{rem} \rmlabel{unique inv 2-cell}
The invertibility of the universal 2-cell in \cite[Definition 4.2.29]{Phil} was not explicitly stated, though the invertibility of the 2-cells in 
the three above definitions will be crucial for our constructions ({\em e.g.} in \prref{ksi} and the rest of \ssref{fun-bicat}). 
Namely, having an $n$-ary multimap $f:a_1,..,a_n\to c$ and two 1-cells $\xi_1, \xi_2: m(a_1,..,a_n)\to c$ with their respective invertible 2-cells 
$\sigma_i:\xi_i\circ j\Rightarrow f, i=1,2$, we will construct a 2-cell $\xi_1\Rightarrow\xi_2$ as the (necessarily unique invertible) 2-cell 
$(\sigma_2^\dagger)^{-1}\circ\sigma_1^\dagger$ obtained from the unique invertible 2-cells $\sigma_i^\dagger:\xi_i\Rightarrow\Omega(f)$ satisfying 
the universal property. 
\end{rem}




\begin{prop}
The two definitions above of a  bi-representable bi-multicategory are equivalent. 
\end{prop}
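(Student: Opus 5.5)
The second definition specializes to the first by taking $k=l=0$: the equivalences \equref{strong rep} with empty $\crta x,\crta b$ are precisely the equivalences \equref{Phil rep}. It remains to see that universal arrows are closed under composition. Given strong universal arrows $j_{\crta{a_i}}$ for lists $\crta{a_1},\dots,\crta{a_n}$ and $j_{m(\crta{a_1}),..,m(\crta{a_n})}$, the composite $j\circ\langle j_{\crta{a_1}},..,j_{\crta{a_n}}\rangle$ induces a composite of equivalences of hom-categories, obtained by applying \equref{strong rep} once for the outer arrow and then $n$ times with suitable padding lists $\crta x,\crta b$. A composite of adjoint equivalences is an adjoint equivalence. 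The substitution coherence isomorphisms of the bi-multicategory identify this composite functor with precomposition by the composed arrow. Hence the composite is again universal, and its universal 2-cell is built from the $\Epsilon$'s and the associativity constraints.

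For the converse, assume the first definition. Given $\crta x$ and $\crta b$, I would show that precomposition with $\langle 1_{\crta x},j_{\crta a},1_{\crta b}\rangle$ is an adjoint equivalence
\[\M_{k+1+l}(\crta x,m(\crta a),\crta b;c)\to\M_{k+n+l}(\crta x,\crta a,\crta b;c).\]
Consider the universal arrow $j_{\crta x,\crta a,\crta b}$ and also $j_{\crta x, m(\crta a),\crta b}$. By closure under composition, $J:=j_{\crta x,m(\crta a),\crta b}\circ\langle 1,j_{\crta a},1\rangle$ is universal for the list $\crta x,\crta a,\crta b$. Here the unary identities $1_{x}$ are treated as universal arrows, after choosing $m(x)=x$, $j_x=1_x$, which is allowed up to equivalence. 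This gives a square of functors. Precomposition with $J$ is an equivalence $\M_1(m(\crta x,\crta a,\crta b);c)\to\M_{k+n+l}(\crta x,\crta a,\crta b;c)$, and precomposition with $j_{\crta x,m(\crta a),\crta b}$ is an equivalence $\M_1(m(\crta x,m(\crta a),\crta b);c)\to\M_{k+1+l}(\crta x,m(\crta a),\crta b;c)$. The associator of substitution makes the square commute up to invertible natural transformation. By two-out-of-three for equivalences of categories, precomposition with $\langle 1,j_{\crta a},1\rangle$ is an equivalence. Any equivalence can be promoted to an adjoint equivalence, and its counit provides the choice of invertible universal 2-cell $\Epsilon_{\crta x,\crta a,\crta b}$. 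The uniqueness in \equref{sigma-dagger} is then the fully faithfulness statement rephrased via the counit.

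The main obstacle is the bookkeeping in the closure-under-composition step for the first direction. One must verify that the composite of the equivalences is naturally isomorphic to precomposition with the composite arrow, using the pentagon and triangle coherences of the bi-multicategory. Also, in the converse, the unary identities must legitimately count as universal arrows; I would check this first by showing $1_x$ induces the identity functor up to the unitor isomorphism.
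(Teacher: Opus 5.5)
Your proof is correct and follows essentially the paper's route. Closure under composition comes from the chain of equivalences obtained by applying the strong universal arrows one at a time. For the converse, a universal composite is factored through $\langle 1_{\crta x}, j_{\crta a}, 1_{\crta b}\rangle$. The paper uses the composite $j_{\ot x,\ot a,\ot y}\langle j_{\crta x},j_{\crta a},j_{\crta y}\rangle$ and checks the universal property of the explicit lift $\hat f\, j_{\ot x,\ot a,\ot y}\langle j_{\crta x},1,j_{\crta y}\rangle$ by hand; your two-out-of-three argument packages this more cleanly.

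There is one slip: precomposition with $J$ is an equivalence out of $\M_1(m(\crta x,m(\crta a),\crta b);c)$, not $\M_1(m(\crta x,\crta a,\crta b);c)$. Relatedly, the arrow $j_{\crta x,\crta a,\crta b}$ plays no role in your argument.
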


\begin{proof}
The proof is fully analogous to the one of \cite[Proposition 8.5]{Herm} for multicategories. Let $\M$ be a bi-multicategory as in the latter definition, 
we should prove that a composite of strong universal arrows satisfies the equivalence \equref{Phil rep}. 
Let us write $\ot x=m(x_1,..,x_n), \ot y =m(y_1,..,y_l)$ and $\crta x=x_1,..,x_n, \crta y=y_1,..,y_l$. We want to prove that the composite 
$j_{\ot x,\ot y}\circ\langle j_{\crta x}, j_{\crta y}\rangle$ is universal for $\crta x,\crta y$. Applying each of the factors gradually we 
obtain a chain of equivalences 
$$\M(\crta a, \crta x,\crta y, \crta b;-)\stackrel{j_{\crta x}}{\simeq}\M(\crta a, \ot x,\crta y, \crta b;-)\stackrel{j_{\crta y}}{\simeq}
\M(\crta a, \ot x,\ot y, \crta b;-)\stackrel{j_{\ot x,\ot y}}{\simeq}\M(\crta a, (\ot x)\ot(\ot y), \crta b;-)$$
proving that the composite fulfills the desired property. 

Knowing that the composites of universal arrows are universal arrows, we need to prove the equivalence \equref{strong rep}. 
Given that $j_{\ot x, \ot a, \ot y}\langle j_{\crta x}, j_{\crta a}, j_{\crta y} \rangle$ is a universal arrow for 
$\crta x, \crta a, \crta y$, for a multimap $f:\crta x,a_1,..,a_n,\crta y\to c$ there is a 1-cell $\hat f: m(\ot x, \ot a, \ot y)\to c$ 
in the right-hand side of \equref{Phil rep} 
and a 2-cell $\Epsilon:\hat f j_{\ot x, \ot a, \ot y}\langle j_{\crta x}, j_{\crta a}, j_{\crta y} \rangle\Rightarrow f$ satisfying 
the universal property. Given that $\langle j_{\crta x}, j_{\crta a}, j_{\crta y} \rangle=\langle j_{\crta x}, id_{\crta a}, j_{\crta y} \rangle 
\langle 1_{\crta x}, j_{\crta a}, 1_{\crta y} \rangle$, let us show that the pair consisting of the 1-cell $\hat f 
 j_{\ot x, \ot a, \ot y}\langle j_{\crta x}, id_{\ot a}, j_{\crta y} \rangle$ in the right-hand side of \equref{strong rep} and the same 2-cell 
$\Epsilon$ fulfills the desired universal property. For that purpose, let $\xi: \crta x, \ot a, \crta y\to c$ be another 1-cell and 
$\sigma: \xi\langle 1_{\crta x}, j_{\crta a}, 1_{\crta y} \rangle\Rightarrow f$ a 2-cell, we want to find a unique invertible 2-cell 
$\hat f j_{\ot x, \ot a, \ot y} \langle j_{\crta x}, id_{\crta a}, j_{\crta y} \rangle\Rightarrow\xi$ which concatenated with $\sigma$ gives 
$\Epsilon$. From the equivalence \equref{Phil rep}, for $\crta x, \ot a, \crta y$ and the given $\xi: \crta x, \ot a, \crta y\to c$ we know 
that there exists a 2-cell $\omega:\hat f j_{\ot x, \ot a, \ot y} \langle j_{\crta x}, id_{\crta a}, j_{\crta y} \rangle\Rightarrow\xi$. 
By the same equivalence but for $\crta x, \crta a, \crta y$ and $f:\crta x, \crta a, \crta y\to c$ we have both pairs $(\hat f, \Epsilon)$ and 
$(\hat f, \sigma\diamond\omega)$ obeying the universal property, so it must be $\sigma\diamond\omega=\Epsilon$, that is, that $\sigma$ concatenated 
with $\omega$ equals $\Epsilon$, as desired. 
\qed\end{proof}

We will fix now some notational conventions. Consider the equivalences 
\begin{equation} \eqlabel{adj equiv}
\M_{n}(a_1,..,a_n;c)\to\M_1(m(a_1,..,a_n);c)
\end{equation}
$$\zeta \mapsto (\crta\zeta, \Epsilon_{\crta\zeta})$$
$$\hspace{-2,3cm}\alpha:\zeta\Rightarrow\zeta' \mapsto \crta\alpha.$$
Let us denote by $\Theta(\zeta \backslash \frac{\Epsilon_{\crta\zeta}}{\sigma} / \sigma^\dagger)$ the triangular prism (below left) expressing 
the corresponding equation  \equref{sigma-dagger} of 2-cells. 
Its top is the 2-cell $\Epsilon_{\crta\zeta}$, its bottom the 2-cell $\sigma$, its two sides are identity 2-cells (one along $\zeta$ and the other along 
$j_{\crta a}$), and the third side is the unique mediating 2-cell $\sigma^\dagger$, which is a morphism on the right-hand side in \equref{adj equiv}.  
(This prism can also be considered in the form of a triangular semi-open seashell (below right): its bottom and top are triangular surfaces made of 2-cells $\sigma$ 
and $\Epsilon_{\crta\zeta}$ (sharing two edges $\zeta$ and $j_{\crta a}$), which are connected by the unique mediating 2-cell $\sigma^\dagger: 
\xi\Rightarrow\crta\zeta$ (between the differing third edges of the surfaces $\Epsilon_{\crta\zeta}$ and $\sigma$) at the place of the semi-opening of the shell.) 
$$
\begin{tikzcd}
	{\crta a} && {m(\crta a)} \\
	{\crta a} & c & {m(\crta a)} \\
	& c
	\arrow["{j_{\crta a}}", from=1-1, to=1-3]
	\arrow[equals, nfold, from=1-1, to=2-1]
	\arrow[""{name=0, anchor=center, inner sep=0}, "\zeta"{description, pos=0.4}, from=1-1, to=2-2]
	\arrow[""{name=1, anchor=center, inner sep=0}, "{\crta\zeta}"{description, pos=0.3}, from=1-3, to=2-2]
	\arrow[equals, nfold, from=1-3, to=2-3]
	\arrow[""{name=2, anchor=center, inner sep=0}, "\zeta", from=2-1, to=3-2]
	\arrow[equals, nfold, from=2-2, to=3-2]
	\arrow[""{name=3, anchor=center, inner sep=0}, "\xi", from=2-3, to=3-2]
	\arrow["\Id"{description}, between={0.2}{0.8}, Rightarrow, nfold, from=0, to=2]
	\arrow["{\Epsilon_{\crta\zeta}}"'{pos=0.7}, between={0.2}{0.8}, Rightarrow, nfold, from=1-3, to=0]
	\arrow["{\sigma^\dagger}"{description}, between={0.2}{0.8}, Rightarrow, nfold, from=3, to=1]
\end{tikzcd}
\quad =\quad
\Theta(\zeta \backslash \frac{\Epsilon_{\crta\zeta}}{\sigma} / \sigma^\dagger)\quad=
\quad
\begin{tikzcd}
	{\crta a} && {m(\crta a)} \\
	\\
	& c
	\arrow[""{name=0, anchor=center, inner sep=0}, "{j_{\crta a}}", from=1-1, to=1-3]
	\arrow[""{name=1, anchor=center, inner sep=0}, "\zeta"'{pos=0.4}, from=1-1, to=3-2]
	\arrow[""{name=2, anchor=center, inner sep=0}, "{\crta\zeta}"{description, pos=0.3}, curve={height=12pt}, from=1-3, to=3-2]
	\arrow[""{name=3, anchor=center, inner sep=0}, "\xi", curve={height=-18pt}, from=1-3, to=3-2]
	\arrow["{\Epsilon_{\crta\zeta}}", between={0.2}{0.8}, Rightarrow, nfold, from=0, to=1]
	\arrow["{\sigma^\dagger}"', between={0.1}{0.9}, Rightarrow, nfold, from=3, to=2]
\end{tikzcd}
$$
The fact that a morphism $\alpha$ in $\M_{n}(a_1,..,a_n;c)$ induces a morphism $\crta\alpha$ in $\M_1(m(a_1,..,a_n);c)$ we will 
express by a triangular prism $P(\alpha\backslash\frac{\Epsilon_{\crta\zeta}}{\Epsilon_{\crta{\zeta'}}}/\crta\alpha)$ below, with lower and upper bases 
$\Epsilon_{\crta{\zeta'}}$ and $\Epsilon_{\crta\zeta}$, respectively, one side (morphism in $\M_{n}(a_1,..,a_n;c)$) the 2-cell $\alpha$, the ``induced'' side 
(the induced morphism in $\M_1(m(a_1,..,a_n);c)$) the 2-cell $\crta\alpha$, and the third side the identity on the connecting 1-cell 
$j_{\crta a}$. 
$$P(\alpha\backslash\frac{\Epsilon_{\crta\zeta}}{\Epsilon_{\crta{\zeta'}}}/\crta\alpha)\quad=\quad 
\begin{tikzcd}
	{\crta a} && {m(\crta a)} \\
	{\crta a} & c & {m(\crta a)} \\
	& c
	\arrow["{j_{\crta a}}", from=1-1, to=1-3]
	\arrow[equals, nfold, from=1-1, to=2-1]
	\arrow[""{name=0, anchor=center, inner sep=0}, "\zeta"{description, pos=0.4}, from=1-1, to=2-2]
	\arrow[""{name=1, anchor=center, inner sep=0}, "{\crta\zeta}"{description, pos=0.3}, from=1-3, to=2-2]
	\arrow[equals, nfold, from=1-3, to=2-3]
	\arrow[""{name=2, anchor=center, inner sep=0}, "\zeta", from=2-1, to=3-2]
	\arrow[equals, nfold, from=2-2, to=3-2]
	\arrow[""{name=3, anchor=center, inner sep=0}, "\xi", from=2-3, to=3-2]
	\arrow["\alpha"{description}, between={0.2}{0.8}, Rightarrow, nfold, from=0, to=2]
	\arrow["{\Epsilon_{\crta\zeta}}"'{pos=0.7}, between={0.2}{0.8}, Rightarrow, nfold, from=1-3, to=0]
	\arrow["{\crta\alpha}"{description}, between={0.2}{0.8}, Rightarrow, nfold, from=3, to=1]
\end{tikzcd}
$$
Note that the 2-cells on the right-hand sides of the prisms map from bottom to top, whereas the distribution of the cells in the annotation of prisms $Theta$ 
and $P$ follows the distribution of those cells in the diagrams. 

\medskip

We could now derive several properties of universal arrows and higher cells on them, but we defer them to \rmref{j and j'}.

\subsection{More data in bi-representability } \sslabel{more data bi-rep}

Before we end this section let us make the following observation that was not spelled out in \cite[Definition 4.2.29]{Phil}, the 
definition of a bi-representable bi-multicategory. The naturality of the adjoint equivalences \equref{Phil rep} and \equref{strong rep} 
entails the following. 

Firstly, the universal object $m_n(a_1,...,a_n):=m(a_1,...,a_n)$ for a list $a_1,...,a_n$ determines a pseudofunctor 
$$m_n:\M_n\to\M_1$$ 
where $\M_n$ is the bicategory whose objects are lists of length $n$ of objects in the multicategory $\M$, 1-cells are $n$-ary lists of 
unary morphisms in $\M$, and 2-cells are $n$-ary lists of unary 2-cells in $\M$. The pseudofunctor $m_1$ is the identity. 

Secondly, the equivalence functor $\M_{k+1+l}(\crta x,m(a_1,..,a_n),\crta b;c)\to\M_{k+n+l}(\crta x,a_1,..,a_n,\crta b;c)$ 
is a 1-cell component in the 2-category $\Cat_2$ of categories of a pseudonatural transformation 
$\Sigma: \M_{k+1+l}(-,m(-),-;-)\Rightarrow\M_{k+n+l}(-,-,-;-): (\M_k\times\M_n\times\M_l)\times\M_1\to\Cat_2$. 
Observe that for 1-cells $\crta p, \crta f, \crta q\in\M_k\times\M_n\times\M_l$ we have contravariant functors 
$\M_{k+1+l}(\crta p, m(\crta f), \crta q;1)$ and $\M_{k+n+l}(\crta p, \crta f, \crta q;1)$ and that similarly we have covariant functors in the last single variable. Thus in particular, the 2-cell component 
of $\Sigma$ at a 1-cell  $\crta p, \crta f, \crta q, r\in(\M_k\times\M_n\times\M_l)\times\M_1$ is a natural transformation of the form 
\begin{equation} \eqlabel{Sigma 2-cell comp}
	\scalebox{0.86}{
\bfig
 \putmorphism(-150,470)(1,0)[\M_{k+1+l}(\crta x',m(a_1',..,a_n'),\crta b';c)`\M_{k+n+l}(\crta x',a_1',..,a_n',\crta b';c)`
   \Sigma_{\crta x,m(a_1',..,a_n'),\crta b;c}]{1900}1a
\putmorphism(-160,470)(0,-1)[\phantom{Y_2}` ` \M_{k+1+l}(\crta p, m(\crta f), \crta q;r)]{380}1l
\putmorphism(1850,470)(0,-1)[\phantom{Y_2}` `  \M_{k+n+l}(\crta p, \crta f, \crta q;r)]{380}1r
 \putmorphism(-150,70)(1,0)[\M_{k+1+l}(\crta x,m(a_1,..,a_n),\crta b;c')`\M_{k+n+l}(\crta x,a_1,..,a_n,\crta b;c'). `
   \Sigma_{\crta x,m(a_1,..,a_n),\crta b;c}]{1900}1b
\put(520,270){\fbox{$\SWarrow \Sigma_{\crta h, \crta f, \crta k;r}$}}
\efig}
\end{equation}
\medskip


\section{From a bi-representable bi-multicategory to a monoidal bicategory} \selabel{bi-bi->mon.bic}

Suppose $\M$ is a bi-representable bi-multicategory, and let $\ul{\M}$ denote its underlying bicategory. Define a monoidal product of two objects in 
$\ul{\M}$ by the corresponding universal object $a_1\ot a_2:=m(a_1,a_2)$ ({\em i.e.} the codomain of the universal arrow for the list 
$a_1, a_2$), and the unit object by the codomain of the universal arrow of the empty list 
$j_{\langle\rangle}:\langle\rangle\to I$. For the monoidal structure equivalences $a, l, r$, consider the unary 1-cells between the codomains of the following pairs of universal arrows 
$$
\langle a_1,a_2,a_3\rangle \stackrel{\langle j_{a_1,a_2},id_{a_3}\rangle}{\to} \langle m(a_1,a_2),a_3\rangle\stackrel{j_{a_1\ot a_2,a_3}}{\to}
(a_1\ot a_2)\ot a_3
$$
$$
\langle a_1,a_2,a_3\rangle \stackrel{\langle id_{a_1},j_{a_2,a_3}\rangle}{\to} \langle a_1, m(a_2,a_3)\rangle\stackrel{j_{a_1,a_2\ot a_3}}{\to}
a_1\ot (a_2\ot a_3)
$$

we denote this one by $a_{a_1,a_2,a_3}: (a_1\ot a_2)\ot a_3\to a_1\ot (a_2\ot a_3)$, 
\begin{equation} \eqlabel{def l}
\langle a\rangle=\langle\rangle\cdot\langle a\rangle \stackrel{\langle j_{\langle\rangle},id_a\rangle}{\to} \langle I,a\rangle \stackrel{j_{I,a}}{\to}I\ot a \qquad \text{and} \qquad \langle a\rangle\stackrel{id_a}{\to} a
\end{equation} 
we denote this one by $l_a:I\ot a\to a$, and 
\begin{equation} \eqlabel{def r}
\langle a\rangle=\langle a\rangle\cdot\langle\rangle \stackrel{\langle id_a,j_{\langle\rangle}\rangle}{\to} \langle a,I\rangle \stackrel{j_{a,I}}{\to}a\ot I \qquad \text{and} \qquad \langle a\rangle\stackrel{id_a}{\to} a
\end{equation}  
we denote this one by $r_a:a\ot I\to a$. These mediating 1-cells are obtained as objects in the right-hand side category in the equivalence 
\equref{adj equiv}, by setting for $m(a_1,..,a_n)$ and $c$ the domain and the codomain of the mediating 1-cell in question, 
respectively. We obtain their quasi-inverses $a^*, l^*, r^*$ similarly, by inverting the domain and the codomain. 

For the empty list and $c=I$ let $(\iota, \Epsilon_\iota)$ be the image in the right-hand side in \equref{adj equiv} of $j:=j_{\langle\rangle}:\langle\rangle\to I$. 
Then $\Epsilon_\iota: \iota j\Rightarrow j$ is an invertible 2-cell so that for $id_I$ and any 2-cell, in particular, $\lambda_j:id_I j\Rightarrow j$ by universality there is a unique 
invertible 2-cell $\sigma^\dagger=u:id_I\Rightarrow\iota$ s.t. $\frac{[\Id_j\vert u]}{\Epsilon_\iota}=\lambda_j$. 
Similarly, there is a unique invertible 2-cell $\nabla: \iota\iota\Rightarrow\iota$ such that $\frac{[\Id_j\vert\nabla]}{\Epsilon_\iota}=
{\Epsilon_{\iota^2}}$. 
The 2-cells $\nabla, \iota$ are shown to fulfill the axioms in \axiomref{MB5} of a monoidal bicategory (we skip this proof as a simpler version of 
a proof we will carry out shortly). 

\smallskip

Quasi-naturality of $a, l, r$ and $a^*, l^*, r^*$ is obtained as follows. Take unary morphisms $f\in\M_1(x;x'), \,\, g\in\M_1(y;y'), \,\,
h\in\M_1(z;z')$. Then the composite 
$$x,y,z\stackrel{\langle f,g,h\rangle}{\to}x',y',z'\stackrel{\langle j_{x',y'},id_{z'}\rangle}{\to}x'\ot y', z'
\stackrel{j_{x'\ot y',z'}}{\to}(x'\ot y') z'$$ 
as an object on the left-hand side in \equref{adj equiv} induces an object $(f\ot g)\ot h$ on the right hand-side, and we can make 
the (horizontal) composite $a_{x',y',z'}((f\ot g)\ot h)$ as the image by the substitution (of two unary mutimaps) living in 
$\M_1((x\ot y)\ot z; x'\ot(y'\ot z'))$. Similarly, we build the composite $(f\ot (g\ot h))a_{x,y,z}$, also an object in $\M_1((x\ot y)\ot z; x'\ot(y'\ot z'))$. By the universal property, we have invertible 2-cells epsilons in the diagrams below
\begin{equation} \eqlabel{Epsilon-1} 
\bfig
 \putmorphism(0,700)(1,0)[x,y,z` `\langle j, 1\rangle ]{500}1a
 \putmorphism(530,700)(1,0)[`(x\ot y)\ot z `j]{500}1a
 \putmorphism(-20,700)(1,-2)[\phantom{A\ot B}` `\langle f,g,h\rangle ]{190}1l
 \putmorphism(100,400)(1,-2)[\phantom{A\ot B}` ` \langle 1,j\rangle]{190}1l
 \putmorphism(240,100)(1,-2)[\phantom{A\ot B}`x'\ot(y'\ot z') ` j]{170}1l
 \putmorphism(-60,730)(2,-1)[\phantom{A\ot B}` `\langle 1,j\rangle]{550}1r
 \putmorphism(350,500)(2,-1)[\phantom{A\ot B}` `]{550}1l
 \putmorphism(350,480)(2,-1)[\phantom{A\ot B}` `j]{550}0l
 \putmorphism(1030,700)(-1,-2)[` `a_{x,y,z}]{240}1r
 \putmorphism(830,300)(-1,-2)[` `f\ot(g\ot h)]{280}1r
\put(590,530){\fbox{$\Epsilon^1_{a_{x,y,z}}$}}
\put(320,180){\fbox{$\Epsilon^2_{f\ot(g\ot h)}$}}
\efig
\qquad\qquad
\bfig
 \putmorphism(0,700)(1,0)[x,y,z` `\langle j, 1\rangle ]{500}1a
 \putmorphism(500,700)(1,0)[`(x\ot y)\ot z `j]{530}1a
 \putmorphism(-40,700)(1,-2)[\phantom{A\ot B}` `\langle f,g,h\rangle ]{190}1l
 \putmorphism(100,400)(1,-2)[\phantom{A\ot B}` ` \langle 1,j\rangle]{190}1l
 \putmorphism(240,100)(1,-2)[\phantom{A\ot B}`x'\ot(y'\ot z') ` j]{170}1l
 \putmorphism(-50,440)(3,-1)[\phantom{A\ot B}` `]{700}1r
 \putmorphism(-180,510)(3,-1)[\phantom{A\ot B}` `\langle j,1\rangle]{700}0r
 \putmorphism(300,320)(3,-1)[\phantom{A\ot B}` `]{650}1r
 \putmorphism(240,380)(3,-1)[\phantom{A\ot B}` `j]{650}0r
 \putmorphism(1030,700)(-1,-2)[` `(f\ot g)\ot h]{280}1r
 \putmorphism(770,200)(-1,-2)[` `a_{x',y',z'}]{200}1r
\put(480,480){\fbox{$\Epsilon^2_{(f\ot g)\ot h}$}}
\put(320,60){\fbox{$\Epsilon^1_{a_{x',y',z'}}$}}
\efig
\end{equation} 
so that the composite 2-cells are $\sigma_1$ (on the left) and $\sigma_2$ (on the right). 
Setting $\zeta$ for the arrow on the left in both diagrams, by universality there is a unique invertible 2-cell $\sigma^\dagger
=a_{f,g,h}: a_{x',y',z'}((f\ot g)\ot h)\Rightarrow (f\ot (g\ot h))a_{x,y,z}$ so that  
$$\frac{[\Id_{j\langle j, 1\rangle}\vert a_{f,g,h}]}{\sigma_1}= \sigma_2$$
{\em i.e.} there is a prism $\Theta(\sigma_2, \sigma_1; \Id_\zeta, a_{f,g,h})$ (a semi-open seashell with bases $\sigma_2$ and $\sigma_1$ and a unique mediating 
2-cell $a_{f,g,h}$). 

To show naturality of the 2-cells $a_{f,g,h}$, take unary 2-cells, {\em i.e.} morphisms in the categories $\alpha\in\M_1(x;x'), \,\, \beta\in\M_1(y;y'), \,\,
\gamma\in\M_1(z;z')$ mapping $\langle\alpha,\beta,\gamma\rangle:\langle f,g,h\rangle\to \langle f',g',h'\rangle$. 
On the basis $\sigma_2$ we can set the prism $\Theta(\zeta\backslash\frac{\sigma_1}{\sigma_2} /a_{f,g,h})$, and then on top of it 
the identity prism with basis $\Epsilon^1_{a_{x,y,z}}$ concatenated by  
$P(\langle\alpha,\langle\beta, \gamma\rangle\rangle\backslash\frac{\Epsilon^2_{f'\ot (g'\ot h')}}{\Epsilon^2_{f\ot (g\ot h)}} /\alpha\ot(\beta\ot\gamma))$. 
(Here $\langle\alpha,\langle\beta, \gamma\rangle\rangle$ denotes $[\langle\alpha,\beta, \gamma\rangle\vert \Id_{j\langle 1,j\rangle}]$.) 
See the diagram below left. 
The composite prism we can denote by  
$$\frac{\Theta(\zeta\backslash\frac{\sigma_1}{\sigma_2} /a_{f,g,h})}{\Id_{\Epsilon^1_{a_{x,y,z}}}\diamond 
P(\langle\alpha,\langle\beta, \gamma\rangle\rangle\backslash\frac{\Epsilon^2_{f'\ot (g'\ot h')}}{\Epsilon^2_{f\ot (g\ot h)}} /\alpha\ot(\beta\ot\gamma))}$$
whereby $\diamond$ denotes the horizontal concatenation of prisms in the obvious sense. Observe that we first set (on the bottom) the prism in the numerator, 
and then (on top of it) the one from the denominator: the fraction follows the distribution of the prisms, while the induced composite 2-cell goes from bottom to top. This composite prism states that the 2-multicell $\langle\alpha,\langle\beta, \gamma\rangle\rangle$ 
from $\M_{3}(x,y,z;x'\ot(y'\ot z') )$ induces the 2-cell $\frac{a_{f,g,h}}{[\Id_{a_{x,y,z}}\vert \alpha\ot (\beta\ot \gamma) ]}$ 
in $\M_1((x\ot y)\ot z;x'\ot(y'\ot z') )$. 
$$
\begin{tikzcd}
	&&&& \bullet & \\
	\\
	\bullet &&& \bullet & \bullet \\
	\\
	\bullet &&& \bullet && \bullet \\
	&&&& \bullet \\
	&& \bullet \\
	\bullet &&& \bullet && \bullet
	\arrow[""{name=0, anchor=center, inner sep=0}, from=1-5, to=3-1]
	\arrow[""{name=1, anchor=center, inner sep=0}, from=1-5, to=3-4]
	\arrow[equals, nfold, from=1-5, to=3-5]
	\arrow[equals, nfold, from=3-1, to=5-1]
	\arrow[""{name=2, anchor=center, inner sep=0}, "{f'\ot(g'\ot h')}"{pos=0.6}, from=3-4, to=3-1]
	\arrow[equals, nfold, from=3-4, to=5-4]
	\arrow[""{name=3, anchor=center, inner sep=0}, from=3-5, to=5-1]
	\arrow[""{name=4, anchor=center, inner sep=0}, from=3-5, to=5-4]
	\arrow[from=3-5, to=5-6]
	\arrow[""{name=5, anchor=center, inner sep=0}, "{f\ot(g\ot h)}"', tail reversed, no head, from=5-1, to=5-4]
	\arrow[equals, nfold, from=5-1, to=8-1]
	\arrow["{a_{f,g,h}}"'{pos=0.3}, color={rgb,255:red,92;green,92;blue,214}, between={0.1}{0.9}, equals, nfold, 2tail reversed, from=5-4, to=8-4]
	\arrow[""{name=6, anchor=center, inner sep=0}, "{a_{x,y,z}}"{pos=0.4}, from=5-6, to=5-4]
	\arrow[equals, nfold, from=5-6, to=8-6]
	\arrow[""{name=7, anchor=center, inner sep=0}, from=6-5, to=7-3]
	\arrow[from=6-5, to=8-6]
	\arrow[""{name=8, anchor=center, inner sep=0}, from=7-3, to=8-1]
	\arrow[from=7-3, to=8-4]
	\arrow[""{name=9, anchor=center, inner sep=0}, "{a_{x',y',z'}}"', tail reversed, no head, from=8-1, to=8-4]
	\arrow[""{name=10, anchor=center, inner sep=0}, "{(f\ot g)\ot h}", from=8-6, to=8-4]
	\arrow["{\Epsilon^2_{f'\ot (g'\ot 'h)}}", between={0.2}{0.8}, Rightarrow, nfold, from=1, to=0]
	\arrow["{\langle\alpha,\langle\beta,\gamma\rangle\rangle}"'{pos=0.3}, between={0}{0.8}, Rightarrow, nfold, from=0, to=3]
	\arrow["{\Epsilon^2_{f\ot (g\ot h)}}"{pos=0.7}, between={0.2}{0.8}, Rightarrow, nfold, from=4, to=3]
	\arrow["{\alpha\ot(\beta\ot\gamma)}", color={rgb,255:red,92;green,92;blue,214}, between={0.1}{0.8}, Rightarrow, nfold, from=5, to=2]
	\arrow["{\Epsilon^1_{a_{x,y,z}}}"', between={0.2}{0.8}, Rightarrow, nfold, from=6, to=4]
	\arrow["{\Epsilon^1_{a_{x',y',z'}}}"'{pos=0.3}, between={0.2}{0.8}, Rightarrow, nfold, from=9, to=8]
	\arrow["{\Epsilon^2_{(f\ot g)\ot h}}"', between={0.2}{0.8}, Rightarrow, nfold, from=10, to=7]
\end{tikzcd}
\quad
\begin{tikzcd}
	&&& \bullet && \\
	& \bullet &&&& \bullet \\
	\bullet && \bullet \\
	&&& \bullet \\
	& \bullet &&&& \bullet \\
	\bullet && \bullet & \bullet \\
	& \bullet &&&& \bullet \\
	&& \bullet
	\arrow[from=1-4, to=2-2]
	\arrow[from=1-4, to=2-6]
	\arrow[""{name=0, anchor=center, inner sep=0}, from=1-4, to=3-3]
	\arrow[equals, nfold, dashed, from=1-4, to=4-4]
	\arrow[from=2-2, to=3-1]
	\arrow[""{name=1, anchor=center, inner sep=0}, "{a_{x,y,z}}", from=2-6, to=3-3]
	\arrow[equals, nfold, from=2-6, to=5-6]
	\arrow[equals, nfold, from=3-1, to=6-1]
	\arrow["{f'\ot (g'\ot h')}", from=3-3, to=3-1]
	\arrow[""{name=2, anchor=center, inner sep=0}, from=4-4, to=5-2]
	\arrow[from=4-4, to=5-6]
	\arrow[equals, nfold, dashed, from=4-4, to=6-4]
	\arrow[""{name=3, anchor=center, inner sep=0}, from=5-2, to=6-1]
	\arrow[""{name=4, anchor=center, inner sep=0}, from=5-2, to=6-3]
	\arrow[""{name=5, anchor=center, inner sep=0}, "{(f'\ot g')\ot h'}"{pos=0.3}, from=5-6, to=6-3]
	\arrow[equals, nfold, from=5-6, to=7-6]
	\arrow["{a_{x',y',z'}}"', tail reversed, no head, from=6-1, to=6-3]
	\arrow["{a_{f',g',h'}}"{pos=0.7}, color={rgb,255:red,92;green,92;blue,214}, Rightarrow, nfold, from=6-3, to=3-3]
	\arrow[equals, nfold, from=6-3, to=8-3]
	\arrow[""{name=6, anchor=center, inner sep=0}, from=6-4, to=7-2]
	\arrow[from=6-4, to=7-6]
	\arrow[from=7-2, to=8-3]
	\arrow[""{name=7, anchor=center, inner sep=0}, "{(f\ot g)\ot h}", from=7-6, to=8-3]
	\arrow["{\Epsilon^2_{f'\ot (g'\ot h')}}"{pos=0.6}, between={0.2}{1}, Rightarrow, nfold, from=0, to=2-2]
	\arrow["{\Epsilon^1_{a_{x,y,z}}}"'{pos=0.3}, between={0.2}{0.8}, Rightarrow, nfold, from=1, to=0]
	\arrow["{\Epsilon^1_{a_{x',y',z'}}}"{pos=0.2}, between={0.2}{0.8}, Rightarrow, nfold, from=4, to=3]
	\arrow["{\Epsilon^2_{(f'\ot g')\ot h'}}"'{pos=0.1}, between={0.2}{0.8}, Rightarrow, nfold, from=5, to=2]
	\arrow["{(\alpha\ot\beta)\ot\gamma}"{pos=0.3}, color={rgb,255:red,92;green,92;blue,214}, between={0.1}{0.9}, equals, nfold, 2tail reversed, from=5, to=7]
	\arrow["{\Epsilon^2_{(f\ot g)\ot h}}"', between={0.2}{0.8}, Rightarrow, nfold, from=7, to=6]
\end{tikzcd}$$
On the other hand, on the basis $\sigma_2$ set the concatenation of the prism identity on $\Epsilon^1_{a_{x',y',z'}}$ and 
$P(\langle\langle\alpha,\beta\rangle, \gamma\rangle\backslash \frac{\Epsilon^2_{(f'\ot g')\ot h'}}{\Epsilon^2_{(f\ot g)\ot h}}/(\alpha\ot\beta)\ot\gamma)$, 
and on top of them the prism $\Theta(\zeta'\backslash\frac{\sigma_1'}{\sigma_2'}/ a_{f',g',h'})$. See the diagram above right. 
The composite prism we can denote as follows
$$\frac{ P(\langle\langle\alpha,\beta\rangle, \gamma\rangle\backslash \frac{\Epsilon^2_{(f'\ot g')\ot h'}}{\Epsilon^2_{(f\ot g)\ot h}}/(\alpha\ot\beta)\ot\gamma)
\diamond \Id_{\Epsilon^1_{a_{x',y',z'}}}} {\Theta(\zeta'\backslash\frac{\sigma_1'}{\sigma_2'}/ a_{f',g',h'})}. 
$$
It states that the 
same 2-multicell $\langle\alpha,\langle\beta, \gamma\rangle\rangle$ 
from $\M_{3}(x,y,z;x'\ot(y'\ot z') )$ induces the 2-cell $\frac{[(\alpha\ot\beta)\ot \gamma \vert \Id_{a_{x',y',z'}}]}{a_{f',g',h'}}$ 
in $\M_1((x\ot y)\ot z;x'\ot(y'\ot z') )$. 
By the uniqueness of the induction of 2-multicells in $\M_1((x\ot y)\ot z;x'\ot(y'\ot z') )$ we obtain naturality of $a_{f,g,h}$, {\em i.e.} 
$$\frac{a_{f,g,h}}{[\Id_{\Epsilon^1_{a_{x,y,z}}}\vert \alpha\ot (\beta\ot \gamma) ]}=
\frac{[(\alpha\ot\beta)\ot \gamma \vert \Id_{\Epsilon^1_{a_{x',y',z'}}}]}{a_{f',g',h'}}.$$

\bigskip

Similarly, we obtain a unique invertible 2-cell $a^*_{f,g,h}: a^*_{x',y',z'}(f\ot (g\ot h))\Rightarrow ((f\ot g)\ot h)a^*_{x,y,z}$ 
natural in $f,g,h$. This completes the construction of a pseudonatural equivalence $a: \ot(\ot\times 1)\Rightarrow \ot(1\times\ot)$ 
from \axiomref{MB6} of a monoidal bicategory. Similarly, pseudo natural equivalences $l: \ot\comp(I\times\Id_{\B})\to\Id_{\B}$ and 
$r: \ot\comp(\Id_{\B}\times I) \to \Id_{\B}$ from \axiomref{MB7} are constructed. (With the same strategy as above one proves that 
$a, l, r$ obey the axioms of pseudonatural equivalences.) 

\bigskip

A unique invertible 2-cell $\pi_{x,y,z,w}: (id_x\ot a_{y,z,w})a_{xy,z,w}(a_{x,y,z}\ot id_w)\Rightarrow a_{x,y,zw}a_{xy,z,w}$ between 
two images living in $\M_1(((x\ot y)\ot z)\ot w; x\ot(y\ot(z\ot w)))$ by the substitution (of unary mutimaps) is constructed  in a similar way as $a_{f,g,h}$ 
above using the universal property of universal arrows. The same holds for unique invertible 2-cells 
\begin{equation} \eqlabel{other modifs}
\mu_{xy}:r_x\ot id_y\Rightarrow(id_x\ot l_y)a_{x,I,y}, \,\,\, \tilde\lambda_{x,y}: l_x\ot id_y\Rightarrow l_{xy}a_{I,x,y}, \,\,\, 
\tilde\rho_{x,y}:(id_x\ot r_y)a_{x,y,I}\Rightarrow r_{xy}.
\end{equation} 
Then any axiom that can be formulated using the compositions of these 2-cells is fulfilled, in particular those of \axiomref{MB8} of a monoidal bicategory. 
We explain this in more details for the axiom (TA1). 
Both $\pi_{x,y,z,w}$ and any variation $\pi'_{x,y,z,w,u}$ of it appearing in the axiom (TA1), as well as the naturality 2-cells of $a$, 
have the form of a semi-open seashell 
$$\frac{[\Id_{j_{x,y,z,w,u}}\vert \pi'_{x,y,z,w,u}]}{\sigma_{codomain}}= \sigma_{domain}$$
whereby $\pi'_{x,y,z,w,u}$ is the unique mediating 2-cell between the third differing edges of the surface 2-cells $\sigma_{domain}$ and $\sigma_{codomain}$, 
respectively. Then the axiom is an equality of the correspondingly composed seashells (stacked vertically one on top of the other), meaning that the induced composite mediating 2-cells - obtained by the corresponding compositions of different $\pi'_{x,y,z,w,u}$s - on both sides of the equality coincide. Since the 
induced mediating 2-cells between same arrows acting on a universal object are unique by universality, the desired equality holds true. 
 (The axioms for $\pi, \mu, \tilde\lambda, \tilde\rho$ to be modifications are proved using the above strategy by which we showed naturality of 
the 2-cells $a_{f,g,h}$, and by which one proves that $a, l, r$ are pseudonatural equivalences.)

\bigskip

As for the functoriality of the tensor product, observe that $j_{x',y'}\langle f,g\rangle$ induces the pair $(f\ot g, \Epsilon_{f\ot g})$ 
and $j_{x'',y''}\langle f',g'\rangle$ induces the pair $(f'\ot g', \Epsilon_{f'\ot g'})$, as on the left in the diagram below, whereas  the 
composite 
$j_{x'',y''}\langle f',g'\rangle\langle f,g\rangle=
j_{x'',y''}\langle [f\vert f'],[g\vert g']\rangle$ 
induces the pair $([f\vert f']\ot[g\vert g'],\Epsilon_{[f\vert f'],[g\vert g']})$, being $[f\vert f']\ot[g\vert g']$ the 1-cell as on the right 
and $\Epsilon_{[f\vert f'],[g\vert g']}$ the 2-cell filling the outer diagram, 
so that there is a unique mediating invertible 2-cell $\sigma^\dagger=\zeta:[f\vert f']\ot[g\vert g']\Rightarrow [f\ot g\vert f'\ot g']$ 
(on the right below) 
\begin{equation} \eqlabel{mon int}
\scalebox{0.86}{
\bfig
 \putmorphism(-150,470)(1,0)[\langle x,y\rangle`x\ot y  `j_{x,y}]{480}1a
   \putmorphism(350,470)(1,0)[\phantom{x\ot y}`x\ot y  `=]{480}1a
\putmorphism(-160,470)(0,-1)[\phantom{Y_2}` `\langle f,g\rangle]{400}1l
\putmorphism(-160,80)(0,-1)[\phantom{Y_2}``\langle f',g'\rangle]{380}1l
\putmorphism(340,80)(0,-1)[\phantom{Y_2}``f'\ot g']{380}1r
\putmorphism(340,470)(0,-1)[\phantom{Y_2}` `f\ot g]{400}1r
\put(-20,280){\fbox{$\Epsilon_{f\ot g}$}}

\putmorphism(-150,70)(1,0)[\langle x',y'\rangle`x'\ot y' `j_{x',y'}]{470}1a
\putmorphism(-150,-300)(1,0)[\langle x'',y''\rangle` x''\ot y''`j_{x'',y''}]{490}1b
  \putmorphism(350,-300)(1,0)[\phantom{x''\ot y''}` x''\ot y''`=]{490}1b
\putmorphism(850,470)(0,-1)[\phantom{Y_2}` `{{[f\vert f']\ot[g\vert g']}} ]{780}1r
\put(560,40){\fbox{$\stackrel{\zeta}{\Leftarrow}$}}
\put(-20,-130){\fbox{$\Epsilon_{f'\ot g'}$}}
\efig}
\end{equation} 
so that the composition of the constituting 2-cells above equals $\Epsilon_{[f\vert f'],[g\vert g']}$, {\em i.e.} so that 
$$\frac{[\Id_{j_{x,y}}\vert \zeta]}{\Epsilon_{f\ot g}\diamond\Epsilon_{f'\ot g'}}= \Epsilon_{[f\vert f'],[g\vert g']}$$
where $\Epsilon_{f\ot g}\diamond\Epsilon_{f'\ot g'}$ is the obvious concatenation of 2-cells. 
The invertible 2-cell $\zeta$ is proved to fulfill the desired hexagonal axiom from \axiomref{MB4} of a monoidal bicategory in a similar way as proved above for 
the (unique mediating invertible) 2-cell $\pi_{x,y,z,w}$. The invertible 2-cell $\zeta_0: id_{x\ot y}\Rightarrow id_x\ot id_y$
satisfying two squares is constructed in an analogous way. This finishes the proof that a bi-representable bi-multicategory induces a monoidal bicategory.

\subsection{Summary of the construction, general features and cliques in a bicategory} \sslabel{cliq}

After the above construction let us stress some principles that we may use in a bi-representable bi-multicategory independently from the construction 
of a monoidal bicategory. Because of later uses we highlight them in the following remark. 

\begin{rem} \rmlabel{j and j'}
Let $\M$ be any bi-representable bi-multicategory with universal objects $m(a_1,...,a_n)$ and universal arrows $j_n:a_1,...,a_n\to m(a_1,...,a_n)$. 
\begin{enumerate} [a)]
\item We may consider different choices of universal objects in $\M$, in particular 
$$m^L(a_1,...,a_n):=m((...m(m(a_1,a_2), a_3), ...), a_n)$$
and
$$m^R(a_1,...,a_n):=m(a_1,m(...,m(a_{n-2}, m(a_{n-1},a_n))...)$$ 
with their corresponding universal arrows 
$$j_n^L:a_1,...,a_n\to m^L(a_1,...,a_n)\quad\text{and}\quad j_n^R:a_1,...,a_n\to m^R(a_1,...,a_n).$$
In this sense the 2-cell $\Epsilon^1_{a_{x,y,z}}$ in \equref{Epsilon-1} maps $a_{x,y,z} j^L_3\Rightarrow j^R_3$. For $n=4$, identifying the compositions of multimaps 
$\langle 1,j\rangle\langle j,1,1\rangle=\langle j,1\rangle\langle 1,1,j\rangle$, one obtains a 2-cell $\Epsilon_{a_{x,y,z,w}}:a_{x,y,z,w} j^L_4\Rightarrow j^R_4$, 
where $a_{x,y,z,w}=a_{x,y,zw}\comp a_{xy,z,w}$. Iterating this process we obtain a 2-cell $\Epsilon_{a_{a_1,...,a_n}}:a_{a_1,...,a_n} j^L_n\Rightarrow j^R_n$, 
 where now $a_{a_1,...,a_n}$ is a composition of $n-2$ 1-cells $a_{\bullet,\bullet,\bullet}$. 

\item In general, for any other choice of universal objects $m'(a_1,...,a_n)$ and universal arrows $j_n':a_1,...,a_n\to m'(a_1,...,a_n)$ there is a 
2-cell $\tau$ as in the diagram below 
\begin{equation} \eqlabel{tau-a}
\begin{tikzcd}
	{} & {a_1,...,a_n} && {m(a_1,...,a_n)} \\
	&& {m'(a_1,...,a_n)}
	\arrow[""{name=0, anchor=center, inner sep=0}, "{j_n}", from=1-2, to=1-4]
	\arrow["{j'_n}"', from=1-2, to=2-3]
	\arrow[""{name=1, anchor=center, inner sep=0}, "{a_{\overline{a_i}}}"', from=1-4, to=2-3]
	\arrow[""{name=2, anchor=center, inner sep=0}, "{a'_{\overline{a_i}}}"{pos=0.3}, curve={height=-24pt}, from=1-4, to=2-3]
	\arrow["\tau"{description}, draw=none, from=0, to=2-3]
	\arrow["\pi"{description}, draw=none, from=2, to=1]
\end{tikzcd}
\end{equation}	
with $a_{a_1,...,a_n}$ being a composition of $n-2$ \quad 1-cells $a_{\bullet,\bullet,\bullet}$. 
Given another path $a_{\crta{a_i}}':m(a_1,...,a_n)\to m'(a_1,...,a_n)$ with another $\tau'$ 
by the universal property there is a unique 2-cell $\pi:a_{\crta{a_i}}'\Rightarrow a_{\crta{a_i}}$ such that $\pi\diamond\tau=\tau'$. 
(Observe that for $n=3$ there is a single path $a_{a_1, a_2, a_3}$, whereas for $n>3$ there are more such paths.) 
\item For a ternary list the universal arrow $j_3$ is either $j^L$ or $j^R$, the universal object $m(x,y,z)$ is either $m^L(x,y,z)$ or $m^R(x,y,z)$, 
and there is a unique arrow $a_{x,y,z}:m^L(x,y,z)\to m^R(x,y,z)$. In case $y=m(-)$, we easily construct from the universal property a 2-cell 
$\mu: m(id_x,l_z)\circ a_{x, m(-), z}\Rightarrow m(r_x, id_z)$, as in \equref{other modifs}, with $l_z:m(m(-),z)\to z$ and $r_x:m(x,m(-))\to x$ 
obtained as in \equref{def l} and \equref{def r}, respectively. 
\item The way we constructed the equivalence 1-cells $a_{x,y,z}, l_z, r_x$ is an example of a more general fact: a multimap $f: a_1,...,a_n\to b$ 
is a universal map if and only if the induced map $\hat f: m_{a_1,...,a_n}\to b$ is an equivalence 1-cell. 
\end{enumerate}
\end{rem}

\bigskip

We may summarize the construction of a monoidal bicategory out of a bi-representable bi-multicategory $\M$ as follows: 
\begin{itemize}
\item for hom-categories: forget the multi-hom categories for $n>1$, \vspace{-0,2cm}
\item for monoidality:  \vspace{-0,12cm}
\begin{enumerate}
\item define all the structure 2-cells via the universal property of bi-representability (existence of a unique 2-cell), 
\item 
all the axioms on those 2-cells hold by the principle: ``all sensible compositions $c_i:f\Rightarrow g: m(a_1,...,a_n)\to b, i\in J$ 
of the 2-cells constructed in the previous item and the coherence 2-cells of $\M$, for some set of indexes $J$, acting on the same pair of 
(composite) 1-cells $f,g$ whose domain is a universal object coincide''; this principle holds by the uniqueness of the 2-cells obtained from the universal property. 
\end{enumerate}
\end{itemize}

\bigskip

The data summarized in the above remark make the core pieces of the notion of a {\em clique} in a bicategory, analogously as in the 1-dimensional proof of \cite{Herm} 
(see the comment after Definition 9.6 thereof). We introduce this notion as a bicategorical analogue of the 1-categorical notion of clique from \cite{JS}. 
(In the existing literature the author found only that in \cite{FS} ``(rebracketing) coherence isomorphisms'', {\em a.k.a} clique morphisms, in a bicategory were mentioned.)

\begin{defn}
A clique in a bicategory $\B$ is a pseudofunctor $\mathcal I\to\B$ from the chaotic category \(\mathcal I\). 
\end{defn}

Unpacking the definition one sees that a clique in $\B$ consists of objects \(a_i\), comparison 1-cells \(c_{ij}:a_i\to a_j\), and invertible 2-cells
$ c_{ijk}:c_{jk}c_{ij}\Rightarrow c_{ik}$ and $\iota_i:1_{a_i}\Rightarrow c_{ii}$ 
satisfying the coherence axioms for a pseudofunctor, which translate into the commutativities
\[\begin{tikzcd}
	{(c_{kl}\circ c_{jk})\circ c_{ij}} && {c_{jl}\circ c_{ij}} \\
	{c_{kl}\circ( c_{jk}\circ c_{ij})} && {c_{il}} \\
	& {c_{kl}\circ c_{ik}}
	\arrow["{c_{jkl}\circ 1}", Rightarrow, nfold, from=1-1, to=1-3]
	\arrow["\alpha"', Rightarrow, nfold, from=1-1, to=2-1]
	\arrow["{c_{ijl}}", Rightarrow, nfold, from=1-3, to=2-3]
	\arrow["{1\circ c_{ijk}}"', Rightarrow, nfold, from=2-1, to=3-2]
	\arrow["{c_{ikl}}"', Rightarrow, nfold, from=3-2, to=2-3]
\end{tikzcd}\]
$$\begin{tikzcd}
	{ id_{a_j}\circ c_{ij}} && {c_{jj}\circ c_{ij}} \\
	& {c_{ij}}
	\arrow["{\iota_{j}\circ 1}", Rightarrow, nfold, from=1-1, to=1-3]
	\arrow["{\lambda_{ij}}"', Rightarrow, nfold, from=1-1, to=2-2]
	\arrow["{c_{ijj}}", Rightarrow, nfold, from=1-3, to=2-2]
\end{tikzcd}
\qquad
\begin{tikzcd}
	{ c_{ij}\circ id_{a_i}} && {c_{ij}\circ c_{ii}} \\
	& {c_{ij}.}
	\arrow["{1\circ\iota_{i}}", Rightarrow, nfold, from=1-1, to=1-3]
	\arrow["{\rho_{ij}}"', Rightarrow, nfold, from=1-1, to=2-2]
	\arrow["{c_{iij}}", Rightarrow, nfold, from=1-3, to=2-2]
\end{tikzcd}$$
In particular, every \(c_{ij}\) is an equivalence.

Given a bi-representable bi-multicategory and a list of objects $a_1,...,a_n$ in it 
between any two choices of the universal objects $m(a_1,...,a_n)$ and $m'(a_1,...,a_n)$ there exists an equivalence 1-cell $a_{a_1,...,a_n}$ 
with a 2-cell $\tau: a_{a_1,...,a_n}\circ j\Rightarrow j'$ satisfying the universal property 
as in the above remark. We take these different 
choices of universal objects as the objects of a clique, the equivalence 1-cells $a_{a_1,...,a_n}$ we take for the comparison 1-cells, 
for the invertible 2-cells $c_{ijk}$ we take the 2-cells $\pi$, existing in a unique  way by the universal property, and similarly we 
construct the 2-cells $\iota_i$. The above axioms for the pseudofunctor on the chaotic category hold by the principle on sensible compositions 
in point 2. above (we may ignore the structure 2-cell components of $\alpha, \lambda, \rho$ by the coherence of (underlying) bicategories). 
We may then record:

\begin{prop}
Given a bi-representable bi-multicategory $\M$ the universal objects in it and their associated higher cells form the data of a clique in the 
underlying bicategory $\ul{\M}$. 
\end{prop}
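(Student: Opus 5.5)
Fix a list $\crta a=a_1,\dots,a_n$ and let the indexing set $\mathcal I$ be the set of all choices of universal pairs $(m_i(\crta a), j_i)$, with $j_i:\crta a\to m_i(\crta a)$ a (strong) universal arrow; $\mathcal I$ is regarded as a chaotic category. The plan is to build a pseudofunctor $\mathcal I\to\ul{\M}$ by defining every component through the universal property, and then to deduce all pseudofunctor axioms from the uniqueness clause \equref{sigma-dagger}, following the principle on sensible compositions stated after \rmref{j and j'}.

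\textbf{Data.} For $i,j\in\mathcal I$, apply the equivalence \equref{adj equiv} for the universal object $m_i(\crta a)$ to the multimap $j_j$. This produces a unary 1-cell $c_{ij}:m_i(\crta a)\to m_j(\crta a)$ and an invertible 2-cell $\tau_{ij}:c_{ij}\circ j_i\Rightarrow j_j$; this is exactly the situation of \rmref{j and j'} b).

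For $i,j,k$, form the pasting
\[
c_{jk}c_{ij}j_i\Rightarrow c_{jk}j_j\Rightarrow j_k ,
\]
namely $\tau_{jk}\diamond[\Id\vert\tau_{ij}]$ composed with the associator of $\M$. This is an invertible 2-cell $(c_{jk}c_{ij})j_i\Rightarrow j_k$. By \rmref{unique inv 2-cell}, comparing it with $\tau_{ik}$ yields a unique invertible 2-cell $c_{ijk}:c_{jk}c_{ij}\Rightarrow c_{ik}$ such that $\tau_{ik}\diamond c_{ijk}$ equals that pasting.

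Similarly, $\lambda_{j_i}:id\circ j_i\Rightarrow j_i$ together with $\tau_{ii}$ yields a unique $\iota_i:1_{m_i}\Rightarrow c_{ii}$. This is the same construction as the one giving $u$ in \seref{bi-bi->mon.bic}.

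\textbf{Axioms.} For the associativity hexagon, both composites $(c_{kl}c_{jk})c_{ij}\Rightarrow c_{il}$ are invertible 2-cells between 1-cells out of the universal object $m_i(\crta a)$. I will show that each of them, whiskered by $j_i$ and followed by $\tau_{il}$, equals the single canonical pasting
\[
((c_{kl}c_{jk})c_{ij})j_i\Rightarrow j_l
\]
built from $\tau_{ij},\tau_{jk},\tau_{kl}$ and the coherence cells of $\M$. Uniqueness in \equref{sigma-dagger} then forces the two composites to agree. The two unit triangles are handled the same way, using $\lambda,\rho$ of $\ul\M$ and the defining equations of $\iota_i,\iota_j$.

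\textbf{Main obstacle.} The real work is verifying that both sides of each axiom, once whiskered by $j_i$ and pasted with the $\tau$'s, produce literally the same 2-cell. This requires a prism-stacking computation like the one used for the naturality of $a_{f,g,h}$, in which each defining equation of $c_{ijk}$ is substituted in turn. Associators and unitors of $\ul\M$, and the substitution coherence of the bi-multicategory, appear along the way; by coherence for bicategories they can be suppressed, so they only need to be tracked up to canonical isomorphism.

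Finally, each $c_{ij}$ is an equivalence, with quasi-inverse $c_{ji}$, because $c_{iji}$ and $c_{jij}$ composed with $\iota^{-1}$ give invertible 2-cells $c_{ji}c_{ij}\cong 1$ and $c_{ij}c_{ji}\cong 1$. This is consistent with \rmref{j and j'} d). In particular, for the choices $m^L$ and $m^R$ the comparison cells are the $a_{a_1,\dots,a_n}$ and the cells $c_{ijk}$ are the cells $\pi$ of \rmref{j and j'} b), which proves the proposition.
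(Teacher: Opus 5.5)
Your proposal is correct and follows essentially the same route as the paper. The comparison 1-cells and the 2-cells $c_{ijk}$, $\iota_i$ all come from the universal property of the universal arrows, and the pseudofunctor axioms are forced by uniqueness of mediating 2-cells out of a universal object (the paper's ``principle on sensible compositions''); your write-up simply makes the whiskering-by-$j_i$ argument explicit. One small caveat: for $n>3$ your $c_{ij}$ from $m^L$ to $m^R$ is the 1-cell chosen by the equivalence \equref{adj equiv}, which agrees with the paper's composite path $a_{a_1,\dots,a_n}$ only up to the unique invertible 2-cell $\pi$ of \rmref{j and j'} b). This does not affect the argument, since any pair $(c,\tau)$ with $\tau$ invertible enjoys the same universal property.
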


\section{From a monoidal bicategory to a bi-representable bi-multicategory} \selabel{mon bic - rep}

In this section we construct a bi-representable multicategory out of a monoidal bicategory. The construction is full of technical details 
and we divide it in several steps along the coming subsection. We will first set up the notation and define the substitution functor.  
Then we will define the associativity transformation and prove the pentagon law for it. We will next introduce unity transformations and prove the corresponding triangle laws, and finally argue about bi-representability.

\subsection{The substitution functor}

Let $\B$ be a monoidal bicategory. For objects $A_1,...,A_n\in\B$ let us denote 
$$\textcircled{L}_1^n A_i := (...(A_1\ot A_2)\ot\dots)\ot A_n$$ 
{\em i.e.} their tensor product with the leftmost distribution of parentheses.  
For a list of objects $A_{i1},...,A_{ik_i}$ of ``second order'', as in \equref{mon-multi} below, for $i=1,...,n$, 
the left-most tensor product made out of it we denote by $\textcircled{L}_j A_{ij}=\textcircled{L}_{j=1}^{k_i}A_{ij}$. 
Left-most tensoring 
of all of the objects in such lists of second order we denote shortly by $\textcircled{L}_{ij} A_{ij}=
(\textcircled{L}_{i=1}^{n}\textcircled{L}_{j=1}^{k_j})A_{ij}$. From the latter we will differentiate 
$\textcircled{L}_i(\textcircled{L}_j A_{ij})=(...\big(\textcircled{L}_{j=1}^{k_1}A_{1k_j} \ot 
\textcircled{L}_{j=1}^{k_2} A_{2k_j}\big)\ot ...)\ot \textcircled{L}_{j=1}^{k_n} A_{nj}$. As a matter of fact, we are going to make a choice 
of reassociation isomorphisms 
\begin{equation} \eqlabel{omega-a}
\omega_a: \textcircled{L}_{ij} A_{ij} \to \textcircled{L}_i(\textcircled{L}_j A_{ij})
\end{equation} 
for all lists of second order. 
For another 
choice of a reassociation isomorphism $\omega_b$ by the axiom (TA1) of a tricategory (and a monoidal bicategory) 
there is a unique invertible 2-cell $\pi_{\omega_a, \omega_b}:\omega_a\Rightarrow\omega_b$ obtained by a combination of 
concatenations and tensor products with identities of the suitable versions of the invertible 2-cell $\pi_{A,B,C,D}$ (each two such pastings 
$\pi_{\omega_a, \omega_b}, \pi_{\omega_a, \omega_b}'$ coincide by (TA1)).

Furthermore, we denote by $\ul{\B}$ the associated bi-multicategory having the same objects as $\B$ and whose 
rest of the data is defined as follows. Unary multi-hom categories $\ul{\B}(A;B)$ are the categories $\B(A,B)$, and 
multi-hom categories $\ul{\B}(A_1,...,A_n;B)$ are the categories $\B(\textcircled{L}_1^n A_i,B)$. The identities are those of $\B$. 
A substitution functor 
\begin{equation} \eqlabel{mon-multi}
\ul{\B}_n(B_1,...,B_n;C)\times\big(
\ul{\B}_{k_1}(A_{11},...,A_{1k_1};B_1)\times...\times\ul{\B}_{k_n}(A_{n1},...,A_{nk_n};B_n)\big)
\to\ul{\B}_{\sum_{i=1}^n k_i}(A_{11},...,A_{nk_n};C)
\end{equation} 
on an object $(g,(f_1,...,f_n))$ we define as the composition of 1-cells in $\B$ 
\begin{equation} \eqlabel{def subst}
g\circ\langle f_1,...,f_n\rangle:=g((\textcircled{L}_1^n f_i)\omega_a)
\end{equation}
mapping 
$$\big(\textcircled{L}_{ij} A_{ij}\stackrel{\displaystyle{\omega_a}}{\iso} \textcircled{L}_i(\textcircled{L}_j A_{ij}) 
\xrightarrow{(...(f_1\ot f_2)\ot...)\ot f_n} (...(B_1\ot B_2)\ot...)\ot B_n\big) \stackrel{g}{\to} C$$
and similarly on morphisms $(\gamma, (\Fi_1,...,\Fi_n)):(g,(f_1,...,f_n))\Rightarrow (g',(f'_1,...,f'_n))$. 
In other words, we define substitution \equref{mon-multi} by the commuting diagram 
\begin{equation}  \eqlabel{def subst-diag}
\bfig
 \putmorphism(-420,450)(0,-1)[\B(\underset{i}{\textcircled{L}} B_i,C)\times\underset{i}{\Pi}\B(\underset{j}{\textcircled{L}} A_{ij},B_i) 
  ``]{380}0l
 \putmorphism(-420,470)(0,-1)[\phantom{Y_2} ``1\times\tilde\omega_i]{380}1l	
	\putmorphism(-360,430)(0,-1)[\phantom{Y_2} ` \B(\underset{i}{\textcircled{L}} B_i,C)\times \B(\underset{ij}{\textcircled{L}} A_{ij}, 
	  \underset{i}{\textcircled{L}} B_i)`]{380}0l
	\putmorphism(40,480)(3,-1)[\phantom{Y_2} ` `]{1100}1r
	\putmorphism(-30,530)(3,-1)[\phantom{Y_2} ` `subst]{1100}0r
	\putmorphism(160,90)(1,0)[\phantom{Y_2}`\phantom{Y_2}`comp]{610}1b
	\putmorphism(160,50)(1,0)[\phantom{Y_2}`\B\big(\underset{ij}{\textcircled{L}}A_{ij}, C\big)`]{810}0b
\efig
\end{equation}
where $comp$ is the composition functor of $\B$ and $\tilde\omega_i$ is the composite 
$$\Pi_{i=1}^{n}\B(\textcircled{L}_{j=1}^{k_i}A_{ij},B_i)\stackrel{can}{\to}
\B(\textcircled{L}_{i=1}^{n}\big(\textcircled{L}_{j=1}^{k_i}A_{ij}\big), \textcircled{L}_{i=1}^{n} B_i)
\xrightarrow{\B(\omega_a,1)} 
\B\big((\textcircled{L}_{i=1}^{n}\textcircled{L}_{j=1}^{k_i})A_{ij},
  \textcircled{L}_{i=1}^{n} B_i\big)$$
with $can$ being the obvious canonical functor, mapping 
$$(f_1,...,f_n)\mapsto (\underset{i}{\textcircled{L}}f_i)\omega_a.$$

\subsection{Defining the associativity transformation in the induced bi-multicategory} 	\sslabel{assoc}

For the associativity of substitution in $\ul{\B}$ it is convenient to recall the full diagram expressing associativity of 
substitution in an ordinary multicategory $\M$: 
\begin{equation} \eqlabel{assoc bi-mc}
\scalebox{0.86}{
\bfig
 \putmorphism(-150,470)(1,0)[\M(\underset{i}{\coprod} B_i;C),\underset{i}{\coprod}\M(\underset{j}{\coprod} A_{ij}; B_i),\underset{ij}{\coprod}
  \M(\underset{q}{\coprod} D_{ij}^q; A_{ij}) 
   `\M(\underset{i}{\coprod} B_i;C),\underset{i}{\coprod}\big(\M(\underset{j}{\coprod} A_{ij}; B_i),\underset{j}{\coprod}
	 \M(\underset{q}{\coprod} D_{ij}^q; A_{ij})\big) `\langle 1, int\rangle]{2550}1a
\putmorphism(-160,470)(0,-1)[\phantom{Y_2}` ` \langle subst, 1\rangle]{380}1l
\putmorphism(-160,430)(0,-1)[\phantom{Y_2}`\M(\underset{ij}{\coprod} A_{ij}; C),\underset{ij}{\coprod}\M(\underset{q}{\coprod} D_{ij}^q; A_{ij})`]{380}0l
\putmorphism(2450,470)(0,-1)[\phantom{Y_2}` ` \langle 1,\underset{i}{\coprod} subst\rangle]{380}1r
\putmorphism(2450,430)(0,-1)[\phantom{Y_2}`\M(\underset{i}{\coprod} B_i;C),\underset{i}{\coprod}\M(\underset{q}{\coprod} D_{ij}^q; B_i) `]{380}0r
\putmorphism(-250,60)(3,-1)[\phantom{Y_2}``subst]{1260}1l
\putmorphism(2600,80)(-3,-1)[\phantom{Y_2}``subst]{1300}1r
\putmorphism(-250,-310)(1,0)[\phantom{Y_2}`\M(\underset{q}{\coprod} D_{ij}^q; C)`]{1400}0l
\efig}
\end{equation} 

For $\ul{\B}$ to be a bi-multicategory we should fill an analogous diagram to the above one with a non-trivial transformation, 
which we will denote by $\tilde a$,  
between functors between hom-categories $\ul{\B}(A;B)$. The transformation $\tilde a$ when evaluated at an object of the source hom-category 
gives a 2-cell in $\ul{\B}$ that (together with similarly obtained 2-cells for left and right unity in the usual way) should obey a pentagon and a triangle 
from the end of \cite[Definition 4.2.29]{Phil}.

\begin{rem} \rmlabel{tilde-omega-ij}
Let us clarify that $subst: \ul{\B}(\underset{ij}{\coprod} A_{ij}; C)\times\underset{ij}{\Pi}\ul{\B}(\underset{q}{\coprod} D_{ij}^q; A_{ij})
\to\ul{\B}(\underset{q}{\coprod} D_{ij}^q; C)$ (in the bottom left in the above diagram for $\ul{\B}$) is given as follows 
\begin{equation}  \eqlabel{def dbl subst}
\bfig
 \putmorphism(-420,450)(0,-1)[\B(\underset{i}{\textcircled{L}}\big(\underset{j}{\textcircled{L}} A_{ij}\big),C)\times
     \underset{ij}{\Pi}\B(\underset{q}{\textcircled{L}} D_{ij}^q, A_{ij}) ``]{380}0l
 \putmorphism(-420,470)(0,-1)[\phantom{Y_2} ``\B(\omega_a,1)\times\tilde\omega_{ij}]{380}1l	
	\putmorphism(-360,430)(0,-1)[\phantom{Y_2} ` \B(\underset{ij}{\textcircled{L}} A_{ij},C)\times \B(\underset{ijq}{\textcircled{L}} D_{ij}^q, 
	  \underset{ij}{\textcircled{L}} A_{ij})`]{380}0l
	\putmorphism(40,480)(3,-1)[\phantom{Y_2} ` `]{1100}1r
	\putmorphism(-30,530)(3,-1)[\phantom{Y_2} ` `subst]{1100}0r
	\putmorphism(180,90)(1,0)[\phantom{Y_2}`\phantom{Y_2}`comp]{610}1b
	\putmorphism(180,50)(1,0)[\phantom{Y_2}`\B\big(\underset{ijq}{\textcircled{L}} D_{ij}^q, C\big)`]{810}0b
\efig
\end{equation}
where 
$\tilde\omega_{ij}$ is the following composition
$$\underset{ij}{\Pi}\B(\underset{q}{\textcircled{L}} D_{ij}^q, A_{ij})   \xrightarrow{\underset{i}{\Pi} \tilde\omega_j}
\underset{i}{\Pi}\B(\underset{jq}{\textcircled{L}} D_{ij}^q, \underset{j}{\textcircled{L}} A_{ij})  \xrightarrow{\tilde\omega_i}
\B(\underset{ijq}{\textcircled{L}} D_{ij}^q, \underset{i}{\textcircled{L}}\big(\underset{j}{\textcircled{L}} A_{ij}\big)) 
\xrightarrow{\B(1,\omega_a^{-1})}
\B(\underset{ijq}{\textcircled{L}} D_{ij}^q, \underset{ij}{\textcircled{L}} A_{ij})$$
and $\omega_a^{-1}$ is a quasi-inverse. 
Here $\tilde\omega_j=\B(\omega_{d,j},1)can_j$, 
where $\omega_{d,j}:\underset{jq}{\textcircled{L}} D_{ij}^q\to \underset{j}{\textcircled{L}}\big(\underset{q}{\textcircled{L}} D_{ij}^q\big)$, 
so that if $h_{ij}\in\B(\underset{q}{\textcircled{L}} D_{ij}^q, A_{ij})$, then $\tilde\omega_{ij}$ maps the sequence 
$$(h_{ij})_{ij}\mapsto \big((\underset{j}{\textcircled{L}}h_{ij})\omega_{d,j}\big)_i \mapsto
\big(\big(\underset{i}{\textcircled{L}} \big(\underset{j}{\textcircled{L}} h_{ij}\big)\omega_{d,j}\big)\omega_{d,i}\big)
\mapsto
\omega_a^{-1}\big(\big(\underset{i}{\textcircled{L}} \big(\underset{j}{\textcircled{L}} h_{ij}\big)\omega_{d,j}\big)\omega_{d,i}\big).$$
 
For later use (in \rmref{Omega-k}), at this point let us introduce isomorphism 1-cells $\omega^q_{d;i,j}$ 
	by a fixed choice in the diagram below on the right filled by a unique invertible 2-cell, by the axiom (TA1) 
$$
	\scalebox{0.86}{
\bfig
 \putmorphism(-150,470)(1,0)[\underset{ijq}{\textcircled{L}} D_{ij}^q `\underset{ij}{\textcircled{L}}\big(\underset{q}{\textcircled{L}}  
  D_{ij}^q\big) `\omega_{d,ij}]{1250}1a
\putmorphism(-160,470)(0,-1)[\phantom{Y_2}` ` \omega_{d,i}]{380}1l
\putmorphism(-160,430)(0,-1)[\phantom{Y_2}`\underset{i}{\textcircled{L}}\big(\underset{jq}{\textcircled{L}} D_{ij}^q\big)`]{380}0l
\putmorphism(1150,470)(0,-1)[\phantom{Y_2}` `  \omega^q_{d;i,j}]{380}1r
\putmorphism(1150,430)(0,-1)[\phantom{Y_2}`\underset{i}{\textcircled{L}}\big(\underset{j}{\textcircled{L}}  
   \big(\underset{q}{\textcircled{L}} D_{ij}^q\big)\big). `]{380}0r
 \putmorphism(-30,70)(1,0)[\phantom{Y_2}`\phantom{Y_2} `\underset{i}{\textcircled{L}}\omega_{d,j}]{950}1b
\put(360,240){\fbox{$\SWarrow$}}
\efig}
$$
\end{rem}
	
\medskip

We now draw the mentioned diagram that should define $\tilde a$ (recall from \equref{def subst-diag} 
the definition of $subst$): 
\begin{equation} \eqlabel{tilde alfa}
\scalebox{0.86}{
\bfig
 \putmorphism(-150,470)(1,0)[\B(\underset{i}{\textcircled{L}} B_i,C)\times\underset{i}{\Pi}\B(\underset{j}{\textcircled{L}} A_{ij}, B_i)\times
 \underset{ij}{\Pi}\B(\underset{q}{\textcircled{L}} D_{ij}^q, A_{ij}) 
   `\B(\underset{i}{\textcircled{L}} B_i,C)\times\underset{i}{\Pi}\big(\B(\underset{j}{\textcircled{L}} A_{ij}, B_i)\times\underset{j}{\Pi}
	 \B(\underset{q}{\textcircled{L}} D_{ij}^q, A_{ij})\big) `1\times int]{2600}1a
\putmorphism(-160,470)(0,-1)[\phantom{Y_2}` ` 1\times\tilde\omega_i\times\tilde\omega_{ij}]{380}1l
\putmorphism(-160,430)(0,-1)[\phantom{Y_2}`\B(\underset{i}{\textcircled{L}} B_i,C)\times\B(\underset{ij}{\textcircled{L}} A_{ij}, \underset{i}{\textcircled{L}}B_i)\times \B(\underset{ijq}{\textcircled{L}} D_{ij}^q,\underset{ij}{\textcircled{L}} A_{ij})`]{380}0l
\putmorphism(-160,70)(0,-1)[\phantom{Y_2}` ` comp\times 1]{380}1l
\putmorphism(-160,30)(0,-1)[\phantom{Y_2}`\B(\underset{ij}{\textcircled{L}} A_{ij}, C)\times \B(\underset{ijq}{\textcircled{L}} D_{ij}^q,\underset{ij}{\textcircled{L}} A_{ij})`]{380}0l
\putmorphism(-160,-330)(0,-1)[\phantom{Y_2}` ` comp]{380}1l
\putmorphism(-160,-370)(0,-1)[\phantom{Y_2}`\B(\underset{ijq}{\textcircled{L}} D_{ij}^q, C)`]{380}0l
\putmorphism(20,-700)(1,0)[\phantom{Y_2}`\phantom{Y_2}`comp]{1950}{-1}b
	\putmorphism(2500,470)(0,-1)[\phantom{Y_2}`  `  1\times\underset{i}{\Pi}(1\times\tilde\omega_j)]{380}1r
\putmorphism(2500,430)(0,-1)[\phantom{Y_2}`\B(\underset{i}{\textcircled{L}} B_i,C)\times\underset{i}{\Pi}\big(\B(\underset{j}{\textcircled{L}} A_{ij}, B_i)\times
   \B(\underset{jq}{\textcircled{L}} D_{ij}^q, \underset{j}{\textcircled{L}}A_{ij})\big) `  ]{380}0r
\putmorphism(2500,70)(0,-1)[\phantom{Y_2}`  `  1\times\underset{i}{\Pi}comp]{380}1r
\putmorphism(2500,30)(0,-1)[\phantom{Y_2}`\B(\underset{i}{\textcircled{L}} B_i,C)\times\underset{i}{\Pi}\B(\underset{jq}{\textcircled{L}} D_{ij}^q, B_i)` ]{380}0r
\putmorphism(2500,-330)(0,-1)[\phantom{Y_2}`  `  1\times\tilde\omega_i]{380}1r
\putmorphism(2500,-370)(0,-1)[\phantom{Y_2}`\B(\underset{i}{\textcircled{L}} B_i,C)\times\B(\underset{ijq}{\textcircled{L}} D_{ij}^q, 
  \underset{i}{\textcircled{L}}B_i)` ]{380}0r
\putmorphism(-200,80)(3,-1)[\phantom{Y_2}``]{2320}1l
\putmorphism(-200,120)(3,-1)[\phantom{Y_2}``1\times comp]{2320}0r
\put(1000,40){\fbox{$1\times\tilde\gamma$}}
\put(600,-430){\fbox{$\alpha$}}
\efig}
\end{equation}
whereby on the left we used naturality of the Cartesian product. 
Here $\alpha$ is the associativity of the underlying bicategory $\B$, and $\tilde\gamma$ we will determine after the following considerations. 
Take $(f_1,...,f_n)\in\Pi_{i=1}^n\B(\textcircled{L}_{j=1}^{k_i} A_{ij}, B_i)$ and a string of 
$h_{ij}\in \ul{\B}_{l_{ij}}(D_{ij}^{l_1},...,D_{ij}^{l_{ij}};A_{ij})=\B(\textcircled{L}_{q=1}^{l_{ij}} D_{ij}^q, A_{ij})$ 
for $ij\in\{11,...,1k_1, ..., 1n,...,1k_n, ..., ..., n1,..., nk_1, ..., nn, ..., nk_n\}$. 
Then $\tilde\gamma$ evaluated at $\Pi(f,h):=((f_1,...,f_n), (...,h_{ij},...)_{ij})$ will be a 2-cell 
\begin{equation} \eqlabel{tilde-gama}
\tilde\gamma_{\Pi(f,h)}: 
\big(\underset{i}{\textcircled{L}}f_i\big((\underset{j}{\textcircled{L}}h_{ij})\omega_{d,j}\big)\big)\omega_{d,i}\Rightarrow 
 \big((\underset{i}{\textcircled{L}}f_i)\omega_a\big)\big(\omega_a^{-1}\big(\big(\underset{i}{\textcircled{L}} 
\big(\underset{j}{\textcircled{L}} h_{ij}\big)\omega_{d,j}\big)\omega_{d,i}\big)\big)
\end{equation}
acting between 1-cells $\underset{ijq}{\textcircled{L}} D_{ij}^q\to \underset{i}{\textcircled{L}}B_i$. 

For $g\in\B(\textcircled{L}_{i=1}^n B_i,C)$ the composite transformation $\tilde a$ that we are going to define by the above diagram evaluated at $\Pi(g,f,h):=(g,(f_1,...,f_n), (...,h_{ij},...)_{ij})$ will then be a 2-cell mapping 
$$\tilde a_{\Pi(g,f,h)}: 
g\big(
\big(\underset{i}{\textcircled{L}}f_i\big((\underset{j}{\textcircled{L}}h_{ij})\omega_{d,j}\big)\big)\omega_{d,i}  \big) 
\Rightarrow 
 \big(g\big((\underset{i}{\textcircled{L}}f_i)\omega_a\big)\big)\big(\omega_a^{-1}\big(\big(\underset{i}{\textcircled{L}} 
\big(\underset{j}{\textcircled{L}} h_{ij}\big)\omega_{d,j}\big)\omega_{d,i}\big)\big),$$
which are 1-cells $\underset{ijq}{\textcircled{L}} D_{ij}^q\to C$, so that 
\begin{equation} \eqlabel{tilde a}
\tilde\alpha_{\Pi(g,f,h)}=\frac{1_g\circ\tilde\gamma_{\Pi(f,h)}}{\alpha_{g,f,h}}
\end{equation}
where $f$ and $h$ in $\alpha_{g,f,h}$ are given by $f:=\underset{i}{\textcircled{L}} f_i, \,\, h_i:=\underset{j}{\textcircled{L}} h_{ij}$ and 
$h:=\underset{i}{\textcircled{L}} h_i$. With these abbreviations $\tilde\alpha_{\Pi(g,f,h)}$ becomes a 2-cell 
$$\tilde\alpha_{\Pi(g,f,h)}: g\big(\big(\underset{i}{\textcircled{L}}f_i\big(h_i\omega_{d,j}\big)\big)\omega_{d,i}  \big) 
\Rightarrow 
 \big(g\big(f\omega_a\big)\big)\big(\omega_a^{-1}\big(\big(\underset{i}{\textcircled{L}} h_i \omega_{d,j}\big)\omega_{d,i}\big)\big).$$

\medskip

We define $\tilde\gamma$ by the following diagram 
$$
\scalebox{0.86}{
\bfig
 \putmorphism(-350,470)(1,0)[\underset{i}{\Pi}\B(\underset{j}{\textcircled{L}} A_{ij}, B_i)\times
 \underset{ij}{\Pi}\B(\underset{q}{\textcircled{L}} D_{ij}^q, A_{ij}) 
   `\underset{i}{\Pi}\big(\B(\underset{j}{\textcircled{L}} A_{ij}, B_i)\times\underset{j}{\Pi}
	 \B(\underset{q}{\textcircled{L}} D_{ij}^q, A_{ij})\big) `int]{2570}1a
\putmorphism(-800,470)(0,-1)[\phantom{Y_2}` `\tilde\omega_i\times\tilde\omega_{ij}]{1200}1r
\putmorphism(-160,450)(3,-1)[\phantom{Y_2}` ` 1\times\underset{i}{\Pi}can_j]{980}1r
\putmorphism(240,200)(3,-1)[\phantom{Y_2}`\underset{i}{\Pi}\B(\underset{j}{\textcircled{L}} A_{ij}, B_i)\times \underset{i}{\Pi}\B(\underset{j}{\textcircled{L}}\big(\underset{q}{\textcircled{L}} D_{ij}^q\big),\underset{j}{\textcircled{L}} A_{ij})`]{380}0l
\putmorphism(670,70)(0,-1)[\phantom{Y_2}` `  can_i\times can_i]{380}1l
\putmorphism(600,30)(0,-1)[\phantom{Y_2}`\B(\underset{i}{\textcircled{L}}\big(\underset{j}{\textcircled{L}} A_{ij}\big), 
   \underset{i}{\textcircled{L}} B_i)\times\B(\underset{i}{\textcircled{L}}\big(\underset{j}{\textcircled{L}}\big(\underset{q}{\textcircled{L}} D_{ij}^q\big)\big), \underset{i}{\textcircled{L}}\big(\underset{j}{\textcircled{L}} A_{ij}\big))` ]{380}0l
\putmorphism(670,-350)(-2,-1)[\phantom{Y_2}` ` \B(\omega_a, 1)\times \B((\underset{i}{\textcircled{L}}\omega_{d,j})\omega_{d,i}, 
  \omega_a^{-1})]{780}1l  
\putmorphism(-160,-410)(0,-1)[\phantom{Y_2}` \B(\underset{ij}{\textcircled{L}} A_{ij},
 \underset{i}{\textcircled{L}} B_i)\times\B(\underset{ijq}{\textcircled{L}}D_{ij}^q, \underset{ij}{\textcircled{L}}A_{ij})`]{380}0l
\putmorphism(-160,-770)(0,-1)[\phantom{Y_2}` ` comp]{400}1l
\putmorphism(-160,-840)(0,-1)[\phantom{Y_2}`\B(\underset{ijq}{\textcircled{L}}D_{ij}^q, \underset{i}{\textcircled{L}}B_i)`]{380}0l
	\putmorphism(2220,480)(0,-1)[\phantom{Y_2}`  `  \underset{i}{\Pi}(1\times can_j)]{380}1r
\putmorphism(2310,450)(0,-1)[\phantom{Y_2}` \underset{i}{\Pi}\big(\B(\underset{j}{\textcircled{L}} A_{ij}, B_i)\times
   \B(\underset{j}{\textcircled{L}}\big(\underset{q}{\textcircled{L}} D_{ij}^q\big), \underset{j}{\textcircled{L}}A_{ij})\big) `  ]{380}0r
\putmorphism(2250,70)(0,-1)[\phantom{Y_2}`  `  \underset{i}{\Pi}comp]{380}1r
\putmorphism(2280,30)(0,-1)[\phantom{Y_2}`\underset{i}{\Pi}\B(\underset{j}{\textcircled{L}}\big(\underset{q}{\textcircled{L}} D_{ij}^q\big), 
   \underset{i}{\textcircled{L}} B_i)` ]{380}0r
\putmorphism(2160,-330)(-2,-1)[\phantom{Y_2}`  ` can_i]{760}1l
\putmorphism(510,-360)(2,-1)[\phantom{Y_2}``]{700}1l 
\putmorphism(320,-230)(2,-1)[\phantom{Y_2}`` comp]{1140}0r 
\putmorphism(300,-290)(2,-1)[\phantom{Y_2}`\B(\underset{i}{\textcircled{L}}\big(\underset{j}{\textcircled{L}}\big(\underset{q}{\textcircled{L}} 
   D_{ij}^q\big)\big), \underset{i}{\textcircled{L}} B_i)`]{1000}0l
 \putmorphism(1320,-790)(0,-1)[\phantom{Y_2}``]{380}1l
 \putmorphism(1320,-840)(0,-1)[\phantom{Y_2}`\B(\underset{i}{\textcircled{L}}\big(\underset{jq}{\textcircled{L}} 
   D_{ij}^q\big), \underset{i}{\textcircled{L}} B_i)`\B(\underset{i}{\textcircled{L}}\omega_{d,j},1)]{380}0l
\putmorphism(60,-1180)(-1,0)[\phantom{Y_2}``\B(\omega_{d,i},1)]{900}{-1}r
\putmorphism(2740,70)(0,-1)[\phantom{Y_2}`  `  \underset{i}{\Pi}(1\times\tilde\omega_{d,j})]{680}1r
\putmorphism(2540,-290)(0,-1)[\phantom{Y_2}`\underset{i}{\Pi}\big(\B(\underset{j}{\textcircled{L}} A_{ij}, B_i)\times
   \B(\underset{jq}{\textcircled{L}} D_{ij}^q, \underset{j}{\textcircled{L}}A_{ij})\big)` ]{380}0r
\putmorphism(2740,-730)(0,-1)[\phantom{Y_2}``  \underset{i}{\Pi}comp]{440}1r
\putmorphism(2740,-840)(0,-1)[\phantom{Y_2}` \underset{i}{\Pi}\B(\underset{jq}{\textcircled{L}} D_{ij}^q, \B_i)`  
    ]{380}0r
\putmorphism(1650,-1180)(-1,0)[\phantom{Y_2}`` can_i]{780}{-1}r
\put(-540,140){\fbox{$\theta$}}
\put(1260,-100){\fbox{$\gamma$}}
\put(320,-990){\fbox{$\omega$}}
\put(2200,-990){\fbox{$\Epsilon$}}
\efig}
$$
whereby $\tilde\omega_{d,j}$ on the outer right vertical edge stands shortly for $\B(\tilde\omega_{d,j},1)$. 
Let us first explain the transformation $\gamma$. We define it to be the extension of the interchange transformation of $\B$, seen as a 
one-object tricategory, to more than two variables. Namely, with $\Pi(f,h)$ as above we find that $\gamma_{\Pi(f,h)}$ is a 2-cell 
$$\gamma_{\Pi(f,h)}: \underset{i}{\textcircled{L}}f_i(\underset{j}{\textcircled{L}}h_{ij})\Rightarrow 
 (\underset{i}{\textcircled{L}}f_i)\big(\underset{i}{\textcircled{L}}\big(\underset{j}{\textcircled{L}} h_{ij}\big)\big)$$
between 1-cells $\underset{i}{\textcircled{L}}\big(\underset{j}{\textcircled{L}} \big(\underset{q}{\textcircled{L}} D_{ij}^q\big)\big) 
\to \underset{i}{\textcircled{L}}\big(\underset{j}{\textcircled{L}} A_{ij}) \to \underset{i}{\textcircled{L}} B_i$. 
In shorter terms, we have
$$\gamma_{\Pi(f,h)}: \underset{i}{\textcircled{L}}f_ih_i\Rightarrow (\underset{i}{\textcircled{L}}f_i)(\underset{i}{\textcircled{L}}h_i).$$
The 2-cell $\gamma_{\Pi(f,h)}$ is the vertical composition of 2-cells 
\begin{multline*}
\gamma_{\Pi(f,h)}: \textcircled{L}_{i=1}^n f_ih_i  \stackrel{\xi\textcircled{L} 1_{n-2}}{\Rightarrow} 
\textcircled{L}_{i=1}^{n-1} f^1_i h^1_i \stackrel{\xi\textcircled{L} 1_{n-3}}{\Rightarrow} ...
\stackrel{\xi\ot 1}{\Rightarrow} 
(f^{n-1} h^{n-1})\ot (f_nh_n) \stackrel{\xi}{\Rightarrow} \\
(f^{n-1}\ot f_n)(h^{n-1}\ot h_n) =
 \big(\textcircled{L}_{i=1}^n f_i\big)\big(\textcircled{L}_{i=1}^n h_i\big)
\end{multline*}
whereby $\xi$ denotes the monoidality interchange from \axiomref{MB4}, 
$f^1_1 h^1_1=(f_1\ot f_2)(h_1\ot h_2)$ and the rest $f^1_i h^1_i$ for $i=2,...,n-1$ are $f_{i+1}h_{i+1}$, and so on, ending in 
$f^{n-1}_{n-1} h^{n-1}_{n-1}= \big(\textcircled{L}_{i=1}^n f_i\big)\big(\textcircled{L}_{i=1}^nh_i\big).$
 

\medskip 

To define the two-variable transformation $\theta$, we set its right-hand component at $(h_{ij})_{ij}$ to be given by the composite 2-cell 
(we can call it $\theta^*$)
\begin{equation} \eqlabel{teta star}
\omega_a^{-1}\big[ (\underset{i}{\textcircled{L}} \big(\underset{j}{\textcircled{L}} h_{ij}\big))
 \big((\underset{i}{\textcircled{L}}\omega_{d,j})\omega_{d,i}\big)\big]   \stackrel{1\alpha}{\Rightarrow}
\omega_a^{-1}\big[ \big((\underset{i}{\textcircled{L}} \big(\underset{j}{\textcircled{L}} h_{ij}\big))
 (\underset{i}{\textcircled{L}}\omega_{d,j})\big)\omega_{d,i}\big]    \stackrel{1(\gamma^{-1}_{h_i, \omega_{d,j}}1)}{\Rightarrow}
\omega_a^{-1}\big[ \big(\underset{i}{\textcircled{L}} \big((\underset{j}{\textcircled{L}} h_{ij})\omega_{d,j}\big)\big)\omega_{d,i}\big]
\end{equation}
whereby we recall $\underset{ijq}{\textcircled{L}}D_{ij}^q\stackrel{\omega_{d,i}}{\to} \underset{i}{\textcircled{L}} 
  \big(\underset{jq}{\textcircled{L}} D_{ij}^q\big) \stackrel{\underset{i}{\textcircled{L}} \omega_{d,j}}{\to} 
	   \underset{i}{\textcircled{L}} \big(\underset{j}{\textcircled{L}}\big(\underset{q}{\textcircled{L}}D_{ij}^q\big)\big)$. 
\noindent	This 2-cell $\theta^*$ induces a natural isomorphism transformation 
$$\theta: 
\big( \B(\omega_a, id)\times \B(\omega_{d,i}\comp\omega_{d,j},\omega_a^{-1})\big) (can_i\times can_i)(1\times \underset{i}{\Pi}can_j) 
\Rightarrow  \tilde\omega_i\times\tilde\omega_{ij}$$
mapping
$$\theta_{\Pi(f,h)}: \big( (\underset{i}{\textcircled{L}} f_i)\omega_a, \,\, 
   \omega_a^{-1}\big[ (\underset{i}{\textcircled{L}} \big(\underset{j}{\textcircled{L}} h_{ij}\big))
 \big((\underset{i}{\textcircled{L}}\omega_{d,j})\omega_{d,i}\big)\big] \,\, \big)  	\stackrel{(1,\theta^*)}{\Rightarrow} 
\big( (\underset{i}{\textcircled{L}} f_i)\omega_a, \,\, 
   \omega_a^{-1}\big[ \big(\underset{i}{\textcircled{L}} \big((\underset{j}{\textcircled{L}} h_{ij})\omega_{d,j}\big)\big)\omega_{d,i}\big]  \,\, \big).$$

Observe that the diagram for the to-be-defined transformation $\omega$ commutes strictly when regarding only the naturality of the functor $comp$ with respect to the morphism $(\underset{i}{\textcircled{L}}\omega_{d,j})\omega_{d,i}$ (the variable 
$\underset{i}{\textcircled{L}}\big(\underset{j}{\textcircled{L}}
\big(\underset{q}{\textcircled{L}} D_{ij}^q\big)\big)$), but it commutes up to an isomorphism when regarding that the variable 
$\underset{i}{\textcircled{L}}\big(\underset{j}{\textcircled{L}} A_{ij}\big)$ is affected by the isomorphism transformation 
$comp\Rightarrow comp\circ\big(\B(\omega_a,1)\times\B(1,\omega_a^{-1})\big)$ determined by a unique coherence 2-cell acting 
between $\xi\zeta\Rightarrow(\xi\omega_a)(\omega_a^{-1}\zeta)$, as a morphism in the codomain 
$\B(\underset{ijq}{\textcircled{L}} D_{ij}^q, \underset{i}{\textcircled{L}}B_i)$. It is by this isomorphism transformation 
(and associativities) that we induce $\omega$
$$\omega_{\underset{i}{\textcircled{L}} f_i, \underset{i}{\textcircled{L}} (\underset{j}{\textcircled{L}} h_{ij})}: 
\big(\, \big((\underset{i}{\textcircled{L}} f_i)(\underset{i}{\textcircled{L}} \big(\underset{j}{\textcircled{L}} h_{ij}\big))\big)
\big(\underset{i}{\textcircled{L}} \omega_{d,j}\big) \, \big)\omega_{d,i} \Rightarrow 
\big((\underset{i}{\textcircled{L}} f_i)\omega_a\big) \big(\omega_a^{-1}\big((\underset{i}{\textcircled{L}} \big(\underset{j}{\textcircled{L}} h_{ij}\big) 
\big((\underset{i}{\textcircled{L}} \omega_{d,j})\omega_{d,i}\big)\big).$$
Recalling that $f=\underset{i}{\textcircled{L}} f_i, \,\, h_i=\underset{j}{\textcircled{L}} h_{ij}$ and $h=\underset{i}{\textcircled{L}} h_i$, we can write 
this 2-cell shorter as 
$$\omega_{f, h}: \big((fh)\big(\underset{i}{\textcircled{L}} \omega_{d,j}\big) \, \big)\omega_{d,i} \Rightarrow 
\big(f\omega_a\big) \big(\omega_a^{-1}\big(h\big((\underset{i}{\textcircled{L}} \omega_{d,j})\omega_{d,i}\big)\big)\big).$$

\smallskip

Finally, the transformation $\Epsilon$ gets evaluated at a sequence $(f_i, \underset{j}{\textcircled{L}}h_{ij})_i$ and is defined 
as the vertical composition of 2-cells  
$$\Epsilon_{(f_i,h_i)_i}:  \underset{i}{\textcircled{L}} f_i(h_i\omega_{d,j}) 
\stackrel{\underset{i}{\textcircled{L}} \alpha_{f_i,h_i,\omega_{d,j}}}{\Rightarrow}
\underset{i}{\textcircled{L}} (f_ih_i)\omega_{d,j}
\stackrel{\gamma_{f_ih_i,\omega_{d,j}}}{\Rightarrow}
\big(\underset{i}{\textcircled{L}}(f_ih_i)\big)(\underset{i}{\textcircled{L}}\omega_{d,j}).$$

\medskip

Summing up, the transformation $\tilde a$ is a composition of the transformations: $\alpha$ (associativity of $\B$), $\gamma$ (many-variable extension of the interchange $\xi$ of the monoidal bicategory $\B$), $\Epsilon$ (composition of $\underset{i}{\textcircled{L}} \alpha$ 
and $\gamma$) and transformations $\theta$ and $\omega$ induced by the unique coherence isomorphisms enabling that all the compositions 
make sense. We saw in \equref{tilde a} that $\tilde\alpha_{\Pi(g,f,h)}=\frac{1_g\circ\tilde\gamma_{\Pi(f,h)}}{\alpha}$, and 
$\tilde\gamma_{\Pi(f,h)}$ is the following composition of 2-cells: 
\begin{multline*}
\tilde\gamma_{\Pi(f,h)}: \quad 
\big(\underset{i}{\textcircled{L}} f_i\big(h_i\omega_{d,j}\big)\big)\omega_{d,i} 
\stackrel{\Epsilon_{(f_i,h_i)_i}\circ 1}{\Rightarrow}
\big(\big(\underset{i}{\textcircled{L}}f_ih_i\big)(\underset{i}{\textcircled{L}}\omega_{d,j})\big)\omega_{d,i} 
\stackrel{(\gamma_{\Pi(f,h)}\circ 1)\circ 1}{\Rightarrow}
\big((fh)(\underset{i}{\textcircled{L}}\omega_{d,j})\big)\omega_{d,i} \\
\stackrel{\omega_{f, h}}{\Rightarrow}
\big(f\omega_a\big) \big(\omega_a^{-1}\big[ h \big((\underset{i}{\textcircled{L}}\omega_{d,j})\omega_{d,i}\big)\big]\big)
\stackrel{\theta_{\Pi(f,h)}}{\Rightarrow}
\big( f\omega_a\big) 
\big(\omega_a^{-1}\big[ \big(\underset{i}{\textcircled{L}} h_i\omega_{d,j}\big)\omega_{d,i}\big]\big).
\end{multline*}

As a matter of fact, the pasting of the diagrams for the transformations $1\times\omega$ and $\alpha$ in the diagram defining $\tilde\alpha$ 
is equal to a pasting of the next four diagrams: $nat^*: comp(1\times\B(\omega_{d,i},1))(1\times\B(\underset{i}{\textcircled{L}}\omega_{d,j},1))\Rightarrow
\B(\omega_{d,i},1)\circ\B(\underset{i}{\textcircled{L}}\omega_{d,j},1)\circ comp$, the diagrams for $\alpha$ and $\omega$ and 
$nat':(\B(\omega_a, 1)\times \B((\underset{i}{\textcircled{L}}\omega_{d,j})\omega_{d,i}, \omega_a^{-1}))(comp\times 1)
\Rightarrow(comp\times 1)(\B(\omega_a, 1)\times \B((\underset{i}{\textcircled{L}}\omega_{d,j})\omega_{d,i}, \omega_a^{-1}))$, 
so that the 2-cell \equref{tilde a} can be obtained as the following vertical composition 
\begin{multline*}
\tilde\alpha_{\Pi(g,f,h)}: \quad 
g\big(
\big(\underset{i}{\textcircled{L}}f_i\big(h_i\omega_{d,j}\big)\big)\omega_{d,i}  \big) 
\stackrel{1(\Epsilon_{(f_i,h_i)_i}\circ 1)}{\Rightarrow}
g\big(\big(\big(\underset{i}{\textcircled{L}}f_ih_i\big)(\underset{i}{\textcircled{L}}\omega_{d,j})\big)\omega_{d,i}\big) \\
\stackrel{nat^*}{\Rightarrow}
\big(\big(g\big(\underset{i}{\textcircled{L}}f_ih_i\big)\big)(\underset{i}{\textcircled{L}}\omega_{d,j})\big)\omega_{d,i}
\stackrel{(\frac{(1\times\gamma_{\Pi(f,h)}}{\alpha}\vert_{\Pi(g,f,h)} \circ 1)\circ 1}{\Longrightarrow} 
\big(((gf)h)(\underset{i}{\textcircled{L}}\omega_{d,j})\big)\omega_{d,i} \\
\stackrel{\omega_{gf, h}}{\Rightarrow}
\big((gf)\omega_a\big) \big(\omega_a^{-1}\big(h\big((\underset{i}{\textcircled{L}} \omega_{d,j})\omega_{d,i}\big)\big)\big)
\stackrel{\theta_{\Pi(gf,h)}}{\Rightarrow}
 \big((gf)\omega_a\big)  \big(\omega_a^{-1}\big[ \big(\underset{i}{\textcircled{L}} h_i\omega_{d,j}\big)\omega_{d,i}\big]\big) \\
\stackrel{a^{-1}_{g,f,\omega_a}\circ 1}{\Rightarrow}
 \big(g(f\omega_a)\big)  \big(\omega_a^{-1}\big[ \big(\underset{i}{\textcircled{L}} h_i\omega_{d,j}\big)\omega_{d,i}\big]\big)
\end{multline*}
whereby $nat^*$ is given by the evident consecutive application of suitable associativity 2-cells. 
For practical purposes let us write this composition as follows 
\begin{multline} \label{tilde a decomposed}
\tilde\alpha_{\Pi(g,f,h)}: \quad 
g\big(
\big(\underset{i}{\textcircled{L}}f_i(h_i\omega_{d,j})\big)\omega_{d,i}  \big) 
\stackrel{enter}{\Rightarrow}
\big(\big(g\big(\underset{i}{\textcircled{L}}f_ih_i\big)\big)(\underset{i}{\textcircled{L}}\omega_{d,j})\big)\omega_{d,i} \\
\stackrel{(\frac{(1\times\gamma_{\Pi(f,h)}}{\alpha}\vert_{\Pi(g,f,h)} \circ 1)\circ 1}{\Longrightarrow} 
\big(((gf)h)(\underset{i}{\textcircled{L}}\omega_{d,j})\big)\omega_{d,i} 
\quad  \stackrel{exit}{\Rightarrow}  \quad 
(gf)   \big( \big(\underset{i}{\textcircled{L}} h_i\omega_{d,j}\big)\omega_{d,i}\big)
\end{multline}
treating the composition $\omega_a\omega_a^{-1}$ as equal to identity (recall that $\omega_a^{-1}$ is a quasi-inverse). 

\subsection{The pentagon axiom for the associativity } 	

In the pentagon axiom for the associativity one applies successively $\tilde\alpha$ and evaluates it at four composable multimaps. 
Before dedicating to the axiom itself, let us take a careful look at these multimaps at different levels of the composition.

\subsubsection{Preparing the terrain} \sslabel{prep}

With $g\in \ul{\B}_n(B_1,...,B_n;C)=\B(\textcircled{L}_{i=1}^n B_i,C)$, \,\, 
$(f_1,...,f_n)\in\Pi_{i=1}^n\B(\textcircled{L}_{j=1}^{k_i} A_{ij}, B_i)$ and the $h_{ij}$'s from 
$\ul{\B}_{l_{ij}}(D_{ij}^{l_1},...,D_{ij}^{l_{ij}};A_{ij})=\B(\textcircled{L}_{q=1}^{l_{ij}} D_{ij}^q, A_{ij})$, 
where we set 
$$f:=\underset{i}{\textcircled{L}} f_i: \underset{i}{\textcircled{L}}\big(\underset{j}{\textcircled{L}} A_{ij}\big) 
\to \underset{i}{\textcircled{L}}B_i,$$ 
$$h_i=\underset{j}{\textcircled{L}} h_{ij}: \underset{j}{\textcircled{L}}\big(\underset{q}{\textcircled{L}} D_{ij}^q\big) \to
 \underset{j}{\textcircled{L}} A_{ij}, \qquad 
h=\underset{i}{\textcircled{L}} h_i: \underset{i}{\textcircled{L}} \big(\underset{j}{\textcircled{L}}\big(\underset{q}{\textcircled{L}} D_{ij}^q\big)\big) \to \underset{i}{\textcircled{L}} \big( \underset{j}{\textcircled{L}} A_{ij}\big),$$  
as before, a pentagon axiom involves a further sequence of 1-cells $k_{ij}^q$'s mapping 
$k_{ij}^q: \underset{l}{\textcircled{L}} E_{ij}^{q,l}\to D_{ij}^q$, and we set in a similar fashion 
$$k_{ij}:=\underset{q}{\textcircled{L}} k_{ij}^q: \underset{q}{\textcircled{L}}(\underset{l}{\textcircled{L}} E_{ij}^{q,l})\to 
\underset{q}{\textcircled{L}} D_{ij}^q, \qquad 
k_i:=\underset{j}{\textcircled{L}} k_{ij}: 
\underset{j}{\textcircled{L}}(\underset{q}{\textcircled{L}}(\underset{l}{\textcircled{L}} E_{ij}^{q,l}))\to 
\underset{j}{\textcircled{L}}(\underset{q}{\textcircled{L}} D_{ij}^q),$$ 
and finally 
$$k:=\underset{i}{\textcircled{L}} k_i : \underset{i}{\textcircled{L}} \big( 
\underset{j}{\textcircled{L}}\big(\underset{q}{\textcircled{L}}\big(\underset{l}{\textcircled{L}} E_{ij}^{q,l}\big)\big)\big)\to 
\underset{i}{\textcircled{L}}\big(\underset{j}{\textcircled{L}}\big(\underset{q}{\textcircled{L}} D_{ij}^q\big)\big).$$

\begin{rem} \rmlabel{Omega-k}
As opposed to $k$ (and similarly for $h$ and $f$), we differentiate 
$$\tilde k:= \left( \underset{ijql}{\textcircled{L}} E_{ij}^{q,l} \xrightarrow{\omega_{e,i}} 
\underset{i}{\textcircled{L}}\big( \underset{jql}{\textcircled{L}} E_{ij}^{q,l}\big) \xrightarrow{\underset{i}{\textcircled{L}}\omega_{e,j}} 
\underset{i}{\textcircled{L}}\big(\underset{j}{\textcircled{L}}\big(\underset{ql}{\textcircled{L}} E_{ij}^{q,l}\big)\big) 
 \xrightarrow{\underset{i}{\textcircled{L}}\big(\underset{j}{\textcircled{L}} \omega_{e;q,l}\big)} 
\underset{i}{\textcircled{L}} \big( 
\underset{j}{\textcircled{L}}\big(\underset{q}{\textcircled{L}}\big(\underset{l}{\textcircled{L}} E_{ij}^{q,l}\big)\big)\big)
\stackrel{k}{\to} \underset{i}{\textcircled{L}}\big(\underset{j}{\textcircled{L}}\big(\underset{q}{\textcircled{L}} D_{ij}^q\big)\big) \right).$$
Set $$\Omega^k:=\big(\underset{i}{\textcircled{L}}\big(\underset{j}{\textcircled{L}} \omega_{e;q,l}\big)\big)
\big(\underset{i}{\textcircled{L}} \omega_{e,j}\big)\omega_{e,i}$$
so that $\tilde k=k\Omega^k$, and similarly $\tilde h=h\Omega^h$ and $\tilde f=f\Omega^f=f\omega_a$. 
(Observe that there is a 2-cell $\gamma^{-1}1$ from $\Omega^k$ to 
$\big(\underset{i}{\textcircled{L}} \big((\underset{j}{\textcircled{L}}\omega_{e;q,l})\omega_{e,j}\big)\big)\omega_{e,i}$.) We 
will use the 1-cell $\Omega^k$ at the end of \sssref{comput pentag}. 


\bigskip

Observe that although we have composable strings of 1-cells $\Omega^k, k, h, f, g$ and similarly $\Omega^h, h, f, g$, 
the composition of the latter does not appear neither in the domain nor in the codomain of $\tilde a_{\Pi(g,f,h)}$, but it does appear 
(up to associativity) 
in the domain of the 2-cell $exit$ in the composition defining $\tilde\alpha_{\Pi(g,f,h)}$ (see (\ref{tilde a decomposed})). 
\end{rem}

\subsubsection{Proving the pentagon axiom} \ssslabel{comput pentag}

Our strategy to prove the  pentagon axiom for $\tilde\alpha$ consists of the following four steps: 
\begin{enumerate}
\item show that the ``ternary hexagon'' for $\gamma$ commutes, 
\item show that the ``$i$-ary hexagon for $\gamma$'' commutes
\item show that the pentagon for $\frac{1\gamma}{\alpha}$ commutes, 
\item divide the pentagon for $\tilde\alpha_{\Pi(g,f,h)}=\frac{1_g\circ\tilde\gamma_{\Pi(f,h)}}{\alpha_{g,f,h}}$ into a pentagon obtained from the pentagon for $\frac{1\gamma}{\alpha}$ and diagrams commuting by naturality, pentagon for $\alpha$ and $i$-ary hexagons. 
\end{enumerate}

By the hexagon we mean the axiom for the pseudofunctor $\ot:\B\times\B\to\B$ on composable 1-cells $(f,f'), (h,h'), (k,k')$, which is the law for the interchange $\xi$ between the horizontal composition and the monoidal product of $\B$. It concerns 1-cells of the form $f\ot f'$ and their compositions. By the ``ternary hexagon'' we mean 
its extension to $(f\ot f')\ot f''$ and compositions of such 1-cells. By the ``$i$-ary hexagon for $\gamma$'' we mean the hexagonal rule 
as in the picture below, where $f_i, h_i, k_i$ are as in \ssref{prep} (a more appropriate name would be 
an ``$n$-ary hexagon'', with $i=1,...,n$, but the name we chose is more allusive). 
\[\begin{tikzcd}
	{\underset{i}{\textcircled{L}} f_i(h_ik_i)} & {f(\underset{i}{\textcircled{L}}h_ik_i)} \\
	{\underset{i}{\textcircled{L}} (f_i h_i)k_i} & {f(hk)} \\
	{(\underset{i}{\textcircled{L}} f_i h_i)k} & {(fh)k}
	\arrow["{\gamma_{.,..}1}", Rightarrow, nfold, from=1-1, to=1-2]
	\arrow["{\underset{i}{\textcircled{L}}\alpha}"', Rightarrow, nfold, from=1-1, to=2-1]
	\arrow["{1\gamma}", Rightarrow, nfold, from=1-2, to=2-2]
	\arrow["{\gamma_{..,.}}"', Rightarrow, nfold, from=2-1, to=3-1]
	\arrow["\alpha", Rightarrow, nfold, from=2-2, to=3-2]
	\arrow["{\gamma 1}"', Rightarrow, nfold, from=3-1, to=3-2]
\end{tikzcd}\]
The ternary hexagon is easily directly proved. We have in this case $\gamma=\xi(\xi\ot 1)$, by definition of $\gamma$. One divides the ternary hexagon 
in four subdiagrams (whose edges are 2-cells). If the starting edge is the 1-cell $(f(hk)\ot f'(h'k'))\ot f''(h''k'')$, the first, upper-left  subdiagram is 
``hexagon on the starting edge $f(hk)\ot f'(h'k')$ tensor $a_{f'',h'',k''}$, the upper-right and lower-left subdiagrams commute by naturality of $\xi$ in one and the other variable, and finally the fourth subdiagram (on bottom-right) is another simple (binary) hexagon. 

The ``$i$-ary hexagon for $\gamma$'' could be written out and divided into many subdiagrams commuting, by similar reasons as in the ternary case, but this is tedious. 
Instead, we may use induction in $n$, with $f=\textcircled{L}_{i=1}^n f_i$, starting with the edge 
$\big(\textcircled{L}_{i=1}^{n-1} f_i(h_ik_i)\big)\ot f_n(h_nk_n)$, see the diagram \ref{hex_i} in the Appendix. 
One divides the corresponding hexagon - similarly as in the case of the ternary hexagon - 
into four subdiagrams: ``hexagon for $n-1$ tensor identity'' (commuting by the induction hypothesis), upper-right subdiagram with the starting edge 
$(\textcircled{L}_{i=1}^{n-1} f_i)(\textcircled{L}_{i=1}^{n-1} h_ik_i) \ot f_n(h_nk_n)$ commutes by naturality of $\xi$ with respect to $\gamma$, 
the bottom-left subdiagram commutes obviously: it is $(\gamma 1\ot 1)(1\ot\alpha)=(1\ot \alpha)(\gamma 1\ot 1)$, and finally the bottom-right subdiagram is the usual (binary hexagon). 

We prove that the pentagon for $\frac{1\gamma}{\alpha}$ commutes by dividing it into subdiagrams commuting for the indicated reasons:
\[\begin{tikzcd}
	{g(\underset{i}{\textcircled{L}} f_i\big(\underset{j}{\textcircled{L}} h_{ij}k_{ij}\big))} & {g(f\big(\underset{i}{\textcircled{L}}\big(\underset{j}{\textcircled{L}} h_{ij}k_{ij}\big)\big))} & {(gf)\big(\underset{i}{\textcircled{L}}\big(\underset{j}{\textcircled{L}} h_{ij}k_{ij}\big)\big)} \\
	{g(\underset{i}{\textcircled{L}} f_i\big((\underset{j}{\textcircled{L}} h_{ij})(\underset{j}{\textcircled{L}} k_{ij})\big))} & {g(f(\underset{i}{\textcircled{L}} h_ik_i))} & {(gf)\big(\underset{i}{\textcircled{L}}h_ik_i\big)} \\
	{g(\underset{i}{\textcircled{L}} (f_i h_i)k_i)} & {g(f(hk))} & {(gf)(hk)} \\
	{g((\underset{i}{\textcircled{L}} f_i h_i)k_i)} & {g((fh)k)} & {((gf)h)k} \\
	{(g(\underset{i}{\textcircled{L}} f_i h_i))k} & {(g(fh))k}
	\arrow[""{name=0, anchor=center, inner sep=0}, "{1\gamma_{.,..}}", Rightarrow, nfold, from=1-1, to=1-2]
	\arrow["{1(\underset{i}{\textcircled{L}}1\gamma)}"', Rightarrow, nfold, from=1-1, to=2-1]
	\arrow[""{name=1, anchor=center, inner sep=0}, "{\alpha_{., ., ..}}", Rightarrow, nfold, from=1-2, to=1-3]
	\arrow["{1(1(\underset{i}{\textcircled{L}}1\gamma))}", Rightarrow, nfold, from=1-2, to=2-2]
	\arrow["{1(\underset{i}{\textcircled{L}}\gamma)}", Rightarrow, nfold, from=1-3, to=2-3]
	\arrow[""{name=2, anchor=center, inner sep=0}, "{1\gamma_{., ..}}"', Rightarrow, nfold, from=2-1, to=2-2]
	\arrow["{1(\underset{i}{\textcircled{L}}\alpha)}"', Rightarrow, nfold, from=2-1, to=3-1]
	\arrow["{1(1\gamma)}", Rightarrow, nfold, from=2-2, to=3-2]
	\arrow["{1\gamma}", Rightarrow, nfold, from=2-3, to=3-3]
	\arrow["{1\gamma_{..,.}}"', Rightarrow, nfold, from=3-1, to=4-1]
	\arrow[""{name=3, anchor=center, inner sep=0}, "{\alpha_{.,.,..}}", Rightarrow, nfold, from=3-2, to=3-3]
	\arrow["{1\alpha}", Rightarrow, nfold, from=3-2, to=4-2]
	\arrow["{\alpha_{..,.,.}}", Rightarrow, nfold, from=3-3, to=4-3]
	\arrow[""{name=4, anchor=center, inner sep=0}, "{1(\gamma 1)}", Rightarrow, nfold, from=4-1, to=4-2]
	\arrow["{ \alpha_{.,..,.}}"', Rightarrow, nfold, from=4-1, to=5-1]
	\arrow["{\alpha_{.,..,.}}", Rightarrow, nfold, from=4-2, to=5-2]
	\arrow[""{name=5, anchor=center, inner sep=0}, "{(1\gamma)1}"', Rightarrow, nfold, from=5-1, to=5-2]
	\arrow[""{name=6, anchor=center, inner sep=0}, "{\alpha 1}"', Rightarrow, nfold, from=5-2, to=4-3]
	\arrow["{nat_{\gamma_{.,\bullet}}}"{description, pos=0.4}, draw=none, from=0, to=2]
	\arrow["{nat_{\alpha_{.,.,\bullet}}}"{description}, draw=none, from=1, to=3]
	\arrow["{1 hex_i}"{description}, draw=none, from=2, to=4]
	\arrow["{pent. \B}"{description}, draw=none, from=3, to=6]
	\arrow["{nat_{\alpha_{.,\gamma,.}}}"{description}, draw=none, from=4, to=5]
\end{tikzcd}\]
where $nat_{\gamma_{.,\bullet}}$ denotes naturality of $\gamma$ in the second variable with respect to $\gamma$, and 
$nat_{\alpha_{.,.,\bullet}}$ denotes naturality of $\alpha$ in the third variable with respect to 
$\frac{\underset{i}{\textcircled{L}}\gamma}{\gamma}$. 

\medskip

The final step is to prove the pentagon for $\tilde\alpha_{\Pi(g,f,h)}=\frac{1_g\circ\tilde\gamma_{\Pi(f,h)}}{\alpha_{g,f,h}}$. 
This pentagon starts with the vertex  $g\big[\big(\underset{i}{\textcircled{L}} f_i\big[\big(\underset{j}{\textcircled{L}} h_{ij}
(k_{ij}\omega_{e;q,l})\big)\omega_{e,j}\big]\big)\omega_{e,i}\big]$ as in the picture below. To apply $\tilde\alpha_{.,.,..}$ to it 
in the form of 
(\ref{tilde a decomposed}), we should apply $enter_{.,.,..}, (\frac{1\times\gamma_{.,..}}{\alpha_{.,.,..}}1)1$ and 
 $exit_{.,.,..}$. Since there are more terms now, $enter$ and $exit$ are defined via iterations and there are two 
different ways to write down this sequence of compositions. 
Afterward one should apply  $\tilde\alpha_{..,.,.}$, using  $enter_{..,.,.}, (\frac{1_{..}\times\gamma}{\alpha_{..,.,.}}1)1$ and 
 $exit_{..,.,.}$, also expressed via iterations. In the end, 
the two different ways of writing down the composition $\frac{\tilde\alpha_{.,.,..}}{\tilde\alpha_{..,.,.}}$ appearing in the pentagon for 
$\tilde\alpha$ are shown to be equal (using naturalities, pentagon for $\alpha$ and $i$-ary hexagon). The analogous happens on the 
other side of the pentagon. Moreover, at the joints one $exit$ cancels out with the coming $enter$. (To understand why, observe that 
$enter$ is basically $\Epsilon$, {\em i.e.} $\gamma$, up to associativities. Similarly, $exit$ is basically $\gamma^{-1}$.) Consequently, 
one may choose the way of writing down the pentagon for $\tilde\alpha$ as follows 
\[\begin{tikzcd}
	{g\large( \big(\underset{i}{\textcircled{L}} f_i[\big(\underset{j}{\textcircled{L}} h_{ij}\big(        (\underset{q}{\textcircled{L}} k_{ij}^q )\omega_{e;q,l}\big)\big) \omega_{e,j}]\big)\omega_{e,i} \large)} & \\
	{\big(g(\underset{i}{\textcircled{L}} f_i\big(\underset{j}{\textcircled{L}} h_{ij}k_{ij}\big))\big)\Omega^k} & {\big((gf)h)k\big)\Omega^k } \\
	& {\big((gf)h\big) \big(\big[\underset{i}{\textcircled{L}} \big(\underset{j}{\textcircled{L}}         (\underset{q}{\textcircled{L}} k_{ij}^q )\omega_{e;q,l}\big) \omega_{e,j}\big]\omega_{e,i} \big)}
	\arrow["enter", Rightarrow, nfold, from=1-1, to=2-1]
	\arrow["{(pentagon for \frac{1\gamma}{\alpha})1_{\Omega^k}}", Rightarrow, nfold, from=2-1, to=2-2]
	\arrow["exit", Rightarrow, nfold, from=2-2, to=3-2]
\end{tikzcd}\]
with $\Omega^k$ from \rmref{Omega-k}, where the middle 2-cell is actually the indicated pentagonal diagram, so the whole desired pentagon  commutes.

\subsection{Unity transformations and the triangle law}

The left and right unity 2-cells 
$$id_B\circ f\Rightarrow f \quad\text{and}\quad f\circ\langle\crta{id_{A_i}} \rangle\Rightarrow f$$ 
for $f\in\ul{\B}(A_1,...,A_n;B)$ defined via $\hat f\in\B(\underset{i}{\textcircled{L}} A_i,B)$ we define as 
$\lambda_{\hat f}: id_B\circ \hat f\Rightarrow \hat f$ 
and 
$$\frac{[\tilde\zeta_0\vert \Id_{\hat f}]}{\rho_{\hat f}}:\hat f(\underset{i}{\textcircled{L}} id_{A_i})\Rightarrow\hat f$$  
whereby  $\tilde\zeta_0:\underset{i}{\textcircled{L}} id_{A_i}\Rightarrow id_{\underset{i}{\textcircled{L}} A_i}$ is the induced 2-cell from the canonical 2-cell $\zeta_0:id_A\ot id_B\Rightarrow id_{A\ot B}$ in $\B$. The triangle law for $\ul{\B}$ is proved to hold by the triangle law in $\B$ and naturality of $\alpha_{\hat f, id_{\underset{i}{\textcircled{L}} A_i}, \underset{i}{\textcircled{L}} g_i}$ with respect to the 2-cell $\tilde\zeta_0$, whereby $g_i\in\ul{\B}(\crta{D_{in}};A_i)$. 
This defines a bi-multicategory $\ul{\B}$.

\subsection{Bi-representability} \sslabel{birep-ity}

We finally show that $\ul{\B}$ is bi-representable in the sense of \deref{Phil}. Since any multimap in $\ul{\B}_n(A_1,...,A_n;B)$ is by construction a morphism 
$f$ in $\B(\textcircled{L}_1^n A_i,B)$, the equivalence \equref{Phil rep} could actually hold by identity, whereby $m(A_1,...,A_n)=\textcircled{L}_1^n A_i$,  
$j_{A_1,...,A_n}=id_{\textcircled{L}_1^n A_i}$ 
and the 2-cells $\Epsilon$ are identities on morphisms $f$. Though, for (other choices of) universal arrows $j':A_1,...,A_n=
\textcircled{L}_1^n A_i\to m'(A_1,...,A_n)$ we permit all arrows obtained from the objects $A_1,...,A_n$ via the components of the equivalences $a, l, r$ of $\B$ (we include $l,r$ in case some of the $A_i$'s is the unit object $I$). This way: the construction agrees 
with \rmref{j and j'} part d), 
other choices of universal objects $m'(A_1,...,A_n)$ include tensor product of the objects $A_1,...,A_n$ with a different distribution of parentheses,  
and we indeed have equivalences of categories $\ul{\B}_n(A_1,...,A_n;B)\simeq\ul{\B}_1(m'(A_1,...,A_n);B)$. Both types of universal arrows 
are closed under composition. 
The form of the equivalence \equref{strong rep} is defined via the isomorphisms $\omega_a$ from \equref{omega-a}. 

\smallskip

We summarize the construction of an induced bi-representable bi-multicategory out of a monoidal bicategory $\B$ in the following Table: 
\begin{table}[H]
\begin{center}
\begin{tabular}{ c c } 
To construct/show & \hspace{0,8cm} use / comment \\ [0.5ex]
\hline
lists (multi-homs) & monoidal product $\ot$ (homs on such products) \\ [1ex]   
substitution & monoidal associativity $a$, composition $comp$ of $\B$  \\ [1ex]
associativity $\tilde\alpha$ & associativity $\alpha$ and monoidal interchange $\xi$ of $\B$ \\ [1ex]
bi-representability &  equivalence, if so is the mon. assoc. $a$ \\
 & identity, if so is $a$ \\ [1ex]   
\end{tabular}
\caption{From a monoidal bicategory $\B$ to bi-rep. bi-mult.}
\label{table1}
\end{center}
\end{table}

\section{An equivalence of enriched categories} \selabel{eq}

In this section we prove that monoidal bicategories and bi-representable bi-multicategories form equivalent $\mathsf{Bicat}$-enriched categories. We will firstly prove the equivalence at the level of objects, then we will study the functor bicategory of (bi-representable) 
bi-multicategories (\ssref{fun-bicat}). In \ssref{mon bicat} we will show that monoidal bicategories indeed form a $\mathsf{Bicat}$-category. Next, in \ssref{equiv homs} we will prove that the two hom-bicategories are biequivalent, from what we will conclude in \ssref{equiva enrich} that bi-representable bi-multicategories also form a $\mathsf{Bicat}$-category, and that they both are equivalent.

\subsection{Equivalence at the level of objects} \sslabel{level objects}

It is easily seen that the assignments in the two previous sections are inverse to each other up to equivalence. Namely, starting from a monoidal bicategory $\B$ after both assignments one ends up in $\B$ itself, with the identical monoidal structure, so the monoidal pseudofunctor structure $\B\to\dul{\B}$ is given by identities. On the other hand, if one starts from a bi-representable bi-multicategory 
$\M$, after both assignments one ends up with a bi-representable bi-multicategory $\dul{\M}$ 
with the category of multimaps 
$\dul{\M}_n(a_1, ..., a_n;b)=\ul{\M}(\underset{i}{\textcircled{L}}a_i,b)=\M_1(m^L(a_1,...,a_n);b)\simeq\M_n(a_1,...,a_n;b)$, by bi-representability of $\M$, 
whereby in the second equality we define a morphism $\crta\zeta$ in $\ul{\M}(\underset{i}{\textcircled{L}}a_i,b)$ identically as $\dul{\zeta}\in
\M_1(m^L(a_1,...,a_n);b)$ corresponding by the equivalence to an initial $\zeta\in\M_n(a_1,...,a_n;b)$, with $m^L$ as in \rmref{j and j'}. 
To define a pseudofunctor on bi-multicategories $\M\to\dul{\M}$, we take it to be identity on objects and unary 1-cells, and on the $n$-ary multihoms we define 
it by the category equivalence given by bi-representability of $\M$
$$\Omega:\M_n(a_1,...,a_n;b)\to\M_1(m^L(a_1,...,a_n);b).$$ 
We also need to give transformations $\Omega^2, \Omega^0$ governing the composition of $n$-ary 1-cells in $\dul{\M}$ and identity 1-cells. 
For this reason let us show in more detail the above equivalence on multihom-categories. 
From the above we see that the two universal objects $m(a_1,...,a_n)$ in $\M$ and $m'(a_1,...,a_n)=m^L(a_1,...,a_n)$ in $\dul{\M}$ (we know $\dul{\M}$ is 
bi-representable) are equivalent (via a 1-cell $k$ and its quasi-inverse $k^*$), and that there is a universal 2-cell $\Epsilon_1: \dul{\zeta}k\Rightarrow\crta\zeta$, as in the picture below
$$\bfig
 \putmorphism(-420,480)(1,0)[\phantom{a_1,...,a_n} `m(a_1,...,a_n)`]{680}0a
 \putmorphism(-420,450)(1,0)[a_1,...,a_n `\phantom{m(a_1,...,a_n)}`j]{680}1a 
\putmorphism(-400,460)(2,-1)[\phantom{Y_2} ` `]{1300}1l
\putmorphism(-460,460)(2,-1)[\phantom{Y_2} ` `\zeta]{1300}0l
 \putmorphism(330,450)(1,-1)[ `b.`\crta\zeta]{540}1l

 \putmorphism(320,580)(1,0)[\phantom{a_1,...,a_n} ``id]{1500}1a
 \putmorphism(320,450)(1,0)[\phantom{a_1,...,a_n} `m^L(a_1,...,a_n)`k]{750}1b
 \putmorphism(1190,450)(1,0)[\phantom{a_1,...,a_n} `\phantom{m(a_1,...,a_n)}`k^*]{750}1b 
 \putmorphism(1190,480)(1,0)[\phantom{a_1,...,a_n} `m(a_1,...,a_n)`]{750}0b
 \putmorphism(900,450)(0,-1)[\phantom{a_1,...,a_n} ``\dul{\zeta}]{500}1r 
 \putmorphism(1860,460)(-3,-2)[\phantom{a_1,...,a_n} ``]{970}1r
 \putmorphism(1940,460)(-3,-2)[\phantom{a_1,...,a_n} ``\crta{\dul{\zeta}}]{970}0r
\put(1080,490){\fbox{$\omega$}}
\put(180,260){\fbox{$\Epsilon$}}
\put(650,260){\fbox{$\Epsilon_1$}}
\put(1120,230){\fbox{$\Epsilon_2$}}
\efig
$$	
In conclusion, $\zeta$ from $\M$ is mapped into $\dul{\zeta}:m^L(a_1,...,a_n)\to b$ in $\dul{\M}$ (accompanied with the universal 2-cell 
obtained by concatenation $\Epsilon\diamond \Epsilon_1$). Thus, both $\Omega(g)\langle \Omega(f_1),...,\Omega(f_n)\rangle$ and 
$\Omega(g\langle f_1,...,f_n\rangle)$ are given by unary 1-cells $\dul{\zeta_1}$ and $\dul{\zeta_2}$ as in the above diagram, and by universality there is a unique 2-cell between them, which we take to be $\Omega^2_{g, \crta f}$. Since $n$-ary 1-cells in $\dul{\M}$ are 
given by unary 1-cells on the universal objects in $\M$ (similar to $g, f_1,...,f_n, h_{ij}'s$ from \ssref{prep}), the 1-cells in the 
vertexes of the hexagonal law for $\Omega^2$ are 1-cells whose domains and codomains are such universal objects (up to the last codomain $c$ 
in $g:b_1,...,b_n\to c$), and the arrows in the hexagon are these unique 2-cells (for the 1-cells in question in every arrow). By the uniqueness of the 2-cells between 1-cells on universal objects, the two compositions of 2-cells in the hexagon are equal, which proves this law. 
Since $\Omega$ is identity on 1-cells, we take $\Omega^0$ to be identity. 

In the rest of the section we are going to prove that the categories of bi-representable bi-multicategories and of monoidal bicategories 
are enriched over the multicategory $\mathsf{Bicat}$ of bicategories, and that they are equivalent as such.

\subsection{The functor bicategory of (bi-representable) bi-multicategories} \sslabel{fun-bicat}

Pseudofunctors, transformations and modifications of bi-multicategories were defined in \cite[Definition 4.2.33]{Phil}. It is straightforwardly shown that pseudofunctors (and even lax functors) and (oplax/pseudo) transformations are closed under composition. (We will be interested in pseudonatural transformations.) If a transformation $\theta:\F\Rightarrow \G$ is given by a pair $(\theta_x, \theta_f)$ 
for a 1-cell $f:x\to y$, and a transformation $\theta':\G\Rightarrow \HH$ is given by a pair $(\theta'_x, \theta'_f)$, their composition is given by a pair 
$(\theta'_x\theta_x, \frac{\theta}{\theta'}\vert_f)$ whereby $\frac{\theta}{\theta'}\vert_f$ is a 2-cell given as a vertical composition of 2-cells $\alpha, 
[\theta_{id_x}\vert 1], \alpha^{-1}, [1\vert\theta'_{\id_X}], \alpha$ in $\M$. It is also straightforward to show that both horizontal and vertical composition of modifications are modifications. They are given by horizontal, respectively vertical composition of their respective component 2-cells. The composition of transformations is associative up to an isomorphism, as so is the composition of their 1-cell components living in the bi-multicategory in question. This proves that 
pseudofunctors $\M\to\N$, pseudonatural transformations and modifications of bi-multicategories $\M, \N$ form a bicategory $\BiMlt(\M,\N)$. If the codomain $\N$ 
is a 2-multicategory, $\BiMlt(\M,\N)$ is henceforth a 2-category. 

\smallskip

Similarly to our observations from \ssref{more data bi-rep}, we draw more information from the definition of a pseudofunctor $\F:\M\to\N$ 
of bi-multicategories \cite[Definition 4.2.33]{Phil}. Namely, one has that the functors 
$\F_{\crta a;b}: \M_n(\crta a; b)\to\N_n(\crta{\F(a_i)};\F(b))$ are 1-cell components in $\Cat_2$ of a pseudonatural transformation 
$\F_{-;1}: \M_{n}(-;b)\Rightarrow\N_n(\crta{\F(-)};\F(b)): \M_n \to\Cat_2$. 
Its 2-cell components $\F_{\crta f;1}$ for a 1-cell $\crta f\in\M_n$ are natural transformations and have the form 
\begin{equation} \eqlabel{nat F}
	\scalebox{0.86}{
\bfig
 \putmorphism(-150,470)(1,0)[\M_{k+n+l}(\crta x',a_1',..,a_n',\crta b';c)`\N_{k+n+l}(\crta{\F(x')},\F(a_1'),..,\F(a_n'),\crta{\F(b')};\F(c))`
   \F_{\crta x',\crta{a'},\crta b';c}]{1900}1a
\putmorphism(-160,470)(0,-1)[\phantom{Y_2}` ` \M_{k+n+l}(\crta p, \crta f, \crta q;r)]{380}1l
\putmorphism(1850,470)(0,-1)[\phantom{Y_2}` `  \N_{k+n+l}(\crta p,\crta{\F(f_i)}, \crta q;r)]{380}1r
 \putmorphism(-150,70)(1,0)[\M_{k+n+l}(\crta x,a_1,..,a_n,\crta b;c')`\N_{k+n+l}(\crta{\F(x)},\F(a_1),..,\F(a_n),\crta{\F(b)};\F(c')). `
   \F_{\crta x,\crta{a},\crta b;c'}]{1900}1b
\put(520,270){\fbox{$\SWarrow \F_{\crta p, \crta f, \crta q;r}$}}
\efig}
\end{equation}

\subsubsection{Properties of pseudofunctors on bi-representable bi-multicategories} \sslabel{fun-birep-bimult} 

We are interested in the full sub-bicategory of $\BiMlt(\M,\N)$ of pseudofunctors of {\em bi-representable} bi-multicategories. 
It will turn out that for $\M$ and $\N$ bi-representable this sub-bicategory coincides with the whole bicategory $\BiMlt(\M,\N)$, as we will show 
in this and the next two subsections. 
In this subsection we prove a series of properties of the pseudofunctors in $\BiMlt(\M,\N)$ acting on bi-representable bi-multicategories $\M$ and $\N$. 
Most importantly, we will show that such pseudofunctors induce a certain family of natural transformations and modifications. 

In the next proposition we prove the existence of those transformations. 
Recall from \ssref{more data bi-rep} that universal objects of bi-representable bi-multicategories determine 
pseudofunctors $m_n:\M_n\to\M_1$. We will omit writing the index $n$.

\begin{prop} \prlabel{ksi}
Let $\M$ and $\N$ be  bi-representable bi-multicategories and $\F:\M\to\N$ a pseudofunctor of their underlying bi-multicategories. 
Then $\F$ induces for every $n\in\Nn_{\geq 1}$ a pseudonatural transformation 
$$\xi_n: m(\F(-),..,\F(-))\to \F(m(-,..,-)): \M_n\to\N_1$$
and a unary 1-cell $\xi_0: m(-)\to\F(m(-))$ in $\N$. 
 \end{prop}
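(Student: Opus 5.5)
We obtain all components of $\xi_n$ from the universal property of bi-representability in $\N$, following the strategy of \seref{bi-bi->mon.bic} and the principle of \rmref{unique inv 2-cell}.

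\textbf{The 1-cell components.} Fix a list $\crta a=a_1,\dots,a_n$ in $\M$ and apply $\F$ to the universal arrow $j_{\crta a}:a_1,\dots,a_n\to m(\crta a)$. This gives an $n$-ary multimap
$$\F(j_{\crta a}):\F(a_1),\dots,\F(a_n)\to\F(m(\crta a))$$
in $\N$. Regarded as an object on the left-hand side of the equivalence \equref{adj equiv} in $\N$ (with universal object $m(\F(a_1),\dots,\F(a_n))$ and $c=\F(m(\crta a))$), it determines a unary 1-cell
$$(\xi_n)_{\crta a}:=\crta{\F(j_{\crta a})}: m(\F(a_1),\dots,\F(a_n))\to\F(m(\crta a))$$
together with an invertible universal 2-cell $\Epsilon_{\crta a}:(\xi_n)_{\crta a}\, j_{\F\crta a}\Rightarrow\F(j_{\crta a})$. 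For $n=0$ the same recipe applied to $\F(j_{\langle\rangle}):\langle\rangle\to\F(I)$ gives $\xi_0:I\to\F(I)$. For $n=1$, where $m_1$ is the identity, we may take $\xi_1$ to be the identity.

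\textbf{The 2-cell components.} Let $\crta f=(f_1,\dots,f_n):\crta a\to\crta a'$ be a 1-cell of $\M_n$. Recall that $m(\crta f)$ is the unary 1-cell induced by $j_{\crta a'}\langle f_1,\dots,f_n\rangle$, with a universal 2-cell $\Epsilon_{m(\crta f)}$. We compare two unary 1-cells out of $m(\F a_1,\dots,\F a_n)$:
$$\F(m(\crta f))\,(\xi_n)_{\crta a}\quad\text{and}\quad (\xi_n)_{\crta a'}\, m(\F f_1,\dots,\F f_n).$$
Precomposed with $j_{\F\crta a}$, each is connected by an invertible 2-cell to the single multimap $\F(j_{\crta a'}\langle \crta f\rangle)$.

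For the first, paste $\Epsilon_{\crta a}$ with the image under $\F$ of $\Epsilon_{m(\crta f)}$, using the composition constraint $\F^2$ of $\F$ and the associators of $\N$. For the second, paste $\Epsilon_{m(\F\crta f)}$ with $\Epsilon_{\crta a'}$, then apply $\F^2_{j_{\crta a'},\crta f}$.

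By universality (\equref{sigma-dagger} and \rmref{unique inv 2-cell}) there is a unique invertible 2-cell $(\xi_n)_{\crta f}$ between the two 1-cells whose whiskering with $j_{\F\crta a}$ recovers the pasting. Its direction is fixed to match the lax convention of the statement.

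\textbf{The axioms.} Naturality of $(\xi_n)_{\crta f}$ in 2-cells $\crta f\Rightarrow\crta f'$ follows exactly as for $a_{f,g,h}$ in \seref{bi-bi->mon.bic}. We stack the prism $\Theta$ defining $(\xi_n)_{\crta f}$ with the prisms $P$ inducing $m(\crta\alpha)$ and $\F(m(\crta\alpha))$, in the two possible orders. Both stacks exhibit a mediating 2-cell for the same 2-multicell, so they agree by uniqueness.

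The pseudonaturality axioms are equalities of 2-cells between unary 1-cells whose domain is the universal object $m(\F\crta a)$:
\begin{itemize}
\item compatibility with composition $\crta g\,\crta f$, which involves the constraint of the pseudofunctor $m_n$ and $\F^2$;
\item compatibility with identities, which involves $\F^0$ and the unit constraint of $m_n$.
\end{itemize}
By the principle on sensible compositions summarised after \rmref{j and j'}, these equalities hold once both sides are shown to be mediating 2-cells for the same whiskered 2-cell.

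\textbf{Main obstacle.} The hardest step is this last verification. We must check that, after whiskering with $j_{\F\crta a}$, the two composites of constraint 2-cells really reduce to the same pasting of $\Epsilon$'s, $\F^2$'s and $\F^0$'s. This needs the coherence axioms of $\F$ as a pseudofunctor of bi-multicategories (\cite[Definition 4.2.33]{Phil}), in particular the hexagon for $\F^2$ relating $\F(j_{\crta a''}\langle\crta g\rangle\langle\crta f\rangle)$ computed in the two ways, together with the naturality 2-cells \equref{nat F}. I would first carry out this check for $n=2$ and then observe that the argument is uniform in $n$.
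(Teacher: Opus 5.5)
Your proposal is correct and follows essentially the same route as the paper. The paper likewise obtains $\xi_n(\crta a)$ as the image of $\F(j_{\crta a})$ under the bi-representability equivalence of $\N$, and takes $\xi_n(\crta f)$ as the unique invertible mediating 2-cell making the two pastings in \equref{transf xi} agree. It also derives the pseudonaturality axioms from uniqueness of 2-cells between 1-cells out of a universal object. Your account is in fact more explicit than the paper's about where the constraints $\F^2$ enter these pastings and about naturality in 2-cells, both of which the paper leaves implicit.
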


\begin{proof}
The 1-cell components $\xi_n(\crta a)$ of $\xi_n$ are induced as follows
\begin{equation} \eqlabel{j-xi}
\bfig
 \putmorphism(330,450)(1,-1)[ `\F(m(a_1,...,a_n)))`\F_{\crta a}(j_{\crta a})]{540}1l
\putmorphism(320,450)(1,0)[\F(a_1),...,\F(a_n) `m(\F(a_1),...,\F(a_n))`j^\N_{\crta{\F(a_i)}}]{1200}1a
 \putmorphism(1500,460)(-1,-1)[\phantom{a_1,...,a_n} ``\xi_n(\crta a)]{550}1r 
\put(830,190){\fbox{$\sigma_{\crta a} $}}
\efig
\end{equation}	
that is, as the image by the equivalence 
$$\N_n(\F(a_1),...,\F(a_n);\F(m(a_1,...,a_n)))\stackrel{\Omega^\N_{\crta a;\F(m(\crta a))}}{\to}
  \N_1(m(\F(a_1),...,\F(a_n));\F(m(a_1,...,a_n)))$$ 
of $\F_{\crta a}(j_{\crta a})$, the image by $\F$ of the $n$-ary universal map $j_{\crta a}$ in $\M$. 
(The unary 1-cell $\xi_0: m(-)\to\F(m(-))$ corresponds to $\xi_n(\crta a)$ for $n=0$ and the empty list in place of $\crta a$.) 
The 2-cell components $\xi_n(\crta{f_i})$ of $\xi_n$ are 
the unique invertible 2-cells $\xi_n(\crta b)m(\F(\crta{f_i}))\Rightarrow\F(\Epsilon_{m(\crta{f_i})})\xi_n(\crta{a})$ at the place labeled 
by $\NWarrow$ on the left below obtained from the universal property (recall \rmref{unique inv 2-cell}) so that the following equality of 2-cells holds 
\begin{equation} \eqlabel{transf xi}
	\scalebox{0.86}{
\bfig
 \putmorphism(330,450)(1,-1)[ `\F(m(a_1,...,a_n)))`\F_{\crta a}(j_{\crta a})]{540}1l
\putmorphism(320,450)(1,0)[\F(a_1),...,\F(a_n) `m(\F(a_1),...,\F(a_n))`j^\N_{\crta{\F(a_i)}}]{1200}1a
 \putmorphism(1500,460)(-1,-1)[\phantom{a_1,...,a_n} ``\xi_n(\crta a)]{550}1r 
\put(830,240){\fbox{$\sigma_{\crta a} $}}

\putmorphism(260,450)(0,-1)[ `\F(b_1),...,\F(b_n)`\crta{\F(f_i)}]{900}1l
\putmorphism(890,-90)(0,-1)[ `\F(m(b_1,...,b_n)))`\F(m(\crta{f_i}))]{900}1r
\put(380,-270){\fbox{$\F(\Epsilon_{m(\crta{f_i})})$}}
\putmorphism(1580,450)(0,-1)[ ``m(\crta{\F(f_i)})]{900}1r
\putmorphism(1680,450)(0,-1)[ `m(\crta{\F(b_i)})`]{900}0r
\put(1110,-300){\fbox{$\NWarrow$}}
 \putmorphism(250,-460)(1,-1)[ ``\F(j_{\crta b})]{540}1l
 \putmorphism(1600,-450)(-1,-1)[\phantom{a_1,...,a_n} ``\xi_n(\crta b)]{550}1r 

\efig}\quad
=\quad
	\scalebox{0.86}{
\bfig
\putmorphism(320,450)(1,0)[\F(a_1),...,\F(a_n) `m(\F(a_1),...,\F(a_n))`j^\N_{\crta{\F(a_i)}}]{1200}1a
\putmorphism(320,450)(0,-1)[ `\F(b_1),...,\F(b_n)`\crta{\F(f_i)}]{400}1l
\putmorphism(1520,450)(0,-1)[ `m(\F(b_1),...,\F(b_n))`m(\crta{\F(f_i)})]{400}1r
\putmorphism(320,0)(1,0)[\phantom{\F(a_1),...,\F(a_n)} `\phantom{m(\F(a_1),...,\F(a_n))}`j^\N_{\crta{\F(b_i)}}]{1200}1a
\put(830,-230){\fbox{$\sigma_{\crta b} $}}
\put(720,300){\fbox{$\Epsilon_{m(\crta{\F(f_i)})} $}}
 \putmorphism(330,0)(1,-1)[ `\F(m(a_1,...,a_n)))`\F(j_{\crta b})]{540}1l
 \putmorphism(1500,10)(-1,-1)[\phantom{A_1,...,A_n} ``\xi_n(\crta b)]{550}1r 
\efig}
\end{equation}
where $\F(\Epsilon_{m(\crta{f_i})})$ on the left is the image by $\F$ of the invertible 2-cell 
$$\scalebox{0.86}{
\bfig
 \putmorphism(-150,470)(1,0)[a_1,..,a_n ` m(a_1,..,a_n)`j_{\crta a}]{900}1a
\putmorphism(-160,470)(0,-1)[\phantom{Y_2}` `\crta{f_i} ]{380}1l
\putmorphism(740,470)(0,-1)[\phantom{Y_2}` `  m(\crta{f_i})]{380}1r
 \putmorphism(-150,70)(1,0)[b_1,..,b_n`m(b_1,..,b_n)` j_{\crta b}]{900}1b
\put(100,270){\fbox{$\SWarrow \Epsilon_{m(\crta{f_i})}$}}
\efig}
$$
defining $m(\crta{f_i})$. 
The transformation property is 
then directly and easily proved ({\em e.g.} for the compositionality, to the left diagram in \equref{transf xi} paste from below the 
diagram for $\F(\Epsilon_{m(\crta{g_i})})$ on the left and the one for $\xi_n(\crta{g_i})$ on the right, and apply \equref{transf xi} 
on the right-hand side). Another way of proving it is applying the principle of the unique total 2-cell by the universality, between 1-cells acting on a universal object, which expresses  the equality holding in the transformation laws. 
\qed\end{proof}


We now pass to proving the existence of the above-mentioned modifications induced by $\F$.  
The following is an obvious observation. Recall the 2-cells $\tau$ from \equref{tau-a} 
and let $\F^2$ and $\F^0$ denote the structure transformations {\em i.e.} their 2-cell components of the pseudofunctor structure of $\F$. 

\begin{prop} \prlabel{delta}
In view of \rmref{j and j'}, consider additional choices of universal objects $m'(\bullet)$, arrows $j'_n$ and let 
$\xi'_n$ and $\sigma'$ be the corresponding 1- and 2-cells inherent to $\F$, both in $\M$ and in $\N$. Given the composite 2-cells as bellow 
$$ 
\scalebox{0.88}{\bfig
 \putmorphism(330,450)(1,-1)[ `m'(\crta{\F(a_i)})`j'_n]{540}1r
\putmorphism(140,450)(1,0)[\F(a_1),...,\F(a_n) `m(\F(a_1),...,\F(a_n))`j^\N_n]{1280}1a
 \putmorphism(1500,460)(-1,-1)[\phantom{A_1,...,A_n} ``a_{\crta{\F(a_i)}}]{550}1r 
\put(880,270){\fbox{$\tau $}}
\putmorphism(890,-90)(0,-1)[ `\F(m'(\crta{a_i}))`\xi'_n]{350}1r
\putmorphism(-140,450)(1,-1)[ ``\F(j'_n)]{880}1l
\put(400,0){\fbox{$\sigma'$}}
\efig}
\qquad
\scalebox{0.88}{\bfig
 \putmorphism(330,450)(1,-1)[ `\F(m(\crta{a_i})))`\F(j_n)]{540}1r
\putmorphism(140,450)(1,0)[\F(a_1),...,\F(a_n) `m(\F(a_1),...,\F(a_n))`j^\N_n]{1280}1a
 \putmorphism(1500,460)(-1,-1)[\phantom{A_1,...,A_n} ``\xi_n]{550}1r 
\put(980,270){\fbox{$\sigma$}}
\putmorphism(890,-90)(0,-1)[ `\F(m'(\crta{a_i}))`\F(a_{\crta{a_i}})]{350}1r
\putmorphism(-140,450)(1,-1)[ ``\F(j'_n)]{880}1l
\put(400,0){\fbox{$\frac{\F^2}{\F(\tau)}$}}
\efig}
$$
the universal property of bi-representability renders a unique invertible 2-cell between their right-hand side edges
\begin{equation}\eqlabel{delta_n za niz a_i}
\F(a_{\crta{a_i}}) \xi_n\Rightarrow \xi'_n a_{\crta{\F(a_i)}}
\end{equation}
so that its composition with the left composite-triangle above equals the right composite-triangle. Given another path 
$a_{\crta{a_i}}':m(a_1,...,a_n)\to m'(a_1,...,a_n)$ and composing with $\F(\pi)$ from \rmref{j and j'}, we similarly obtain a unique invertible 2-cell 
\begin{equation}\eqlabel{delta_n za niz a_i'}
\F(a_{\crta{a_i}}') \xi_n\Rightarrow \xi'_n a_{\crta{\F(a_i)}}
\end{equation}
with a similar property. (Observe that \equref{delta_n za niz a_i} is relevant for $n=3$, while \equref{delta_n za niz a_i'} makes sense for $n>3$.) 
%
\end{prop}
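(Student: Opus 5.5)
The plan is to reduce the statement to the universal property of the strong universal arrow $j^\N_n$ in $\N$, in the form recalled in \rmref{unique inv 2-cell}. We read both composite triangles as invertible 2-cells from a unary 1-cell precomposed with $j^\N_n:\F(a_1),...,\F(a_n)\to m(\F(a_1),...,\F(a_n))$ into the single $n$-ary multimap $\F(j'_n)$. On the left, pasting $\tau$ with $\sigma'$ gives
$$\frac{[\tau\vert \Id_{\xi'_n}]}{\sigma'}:\ (\xi'_n a_{\crta{\F(a_i)}})j^\N_n\Rightarrow \F(j'_n).$$
On the right, $\sigma$ whiskered by $\F(a_{\crta{a_i}})$, followed by the composition constraint $\F^2:\F(a_{\crta{a_i}})\F(j_n)\Rightarrow\F(a_{\crta{a_i}}j_n)$ and then $\F(\tau)$, gives
$$(\F(a_{\crta{a_i}})\xi_n)j^\N_n\Rightarrow \F(j'_n).$$
We insert the associators of $\N$ wherever they are needed. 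Both are invertible, because $\tau,\sigma,\sigma'$ are invertible by construction, and so are $\F^2$ and $\F(\tau)$ (a pseudofunctor sends invertible 2-cells to invertible ones).

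Next we apply the universality of $j^\N_n$ with $f=\F(j'_n)$. By \equref{sigma-dagger}, each of the two invertible 2-cells $\sigma_1,\sigma_2$ above yields a unique invertible $\sigma_k^\dagger$ from its 1-cell to $\widehat{\F(j'_n)}$. The desired 2-cell is then
$$(\sigma_1^\dagger)^{-1}\circ\sigma_2^\dagger:\ \F(a_{\crta{a_i}})\xi_n\Rightarrow \xi'_n a_{\crta{\F(a_i)}}.$$
Pasted with the left composite triangle, it returns the right one; this follows by stacking the two seashells $\Theta$. For uniqueness, suppose another invertible 2-cell $\delta$ has the same property. Then $\sigma_1^\dagger\circ\delta$ satisfies the defining equation of $\sigma_2^\dagger$, so $\sigma_1^\dagger\circ\delta=\sigma_2^\dagger$ and $\delta$ is the 2-cell above.

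For the second claim we repeat the argument with $a'_{\crta{a_i}}$ in place of $a_{\crta{a_i}}$ and $\tau'$ in place of $\tau$. On the $\F$-side we additionally paste $\F(\pi)$, where $\pi:a'_{\crta{a_i}}\Rightarrow a_{\crta{a_i}}$ is the unique 2-cell of \rmref{j and j'} b) with $\pi\diamond\tau=\tau'$. Whiskering $\F(\pi)$ with $\xi_n$ and composing with the previous construction again gives an invertible 2-cell into $\F(j'_n)$, and the same universal argument produces \equref{delta_n za niz a_i'}. The compatibility $\pi\diamond\tau=\tau'$, together with naturality of $\F^2$, shows that the two descriptions agree.

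The only delicate point is bookkeeping. The right-hand composite must genuinely be a 2-cell of the form $(\text{unary})\circ j^\N_n\Rightarrow\F(j'_n)$, which requires using naturality of $\F^2$ and the associators of $\N$ consistently. This is routine, since all the cells involved are invertible coherence cells.
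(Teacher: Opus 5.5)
Your proposal is correct and follows the same route as the paper. The paper treats this as an immediate instance of \rmref{unique inv 2-cell}: both composite triangles are read as invertible 2-cells into $\F(j'_n)$ along the universal arrow $j^\N_n$, the induced 2-cell is $(\sigma_1^\dagger)^{-1}\circ\sigma_2^\dagger$, and the primed case is obtained by first pasting $\F(\pi)$, whiskered by $\xi_n$, onto the right-hand triangle. Your explicit uniqueness check, and your use of naturality of $\F^2$ together with $\pi\diamond\tau=\tau'$ to reconcile the two descriptions in the primed case, just spell out what the paper leaves implicit.
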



\medskip

We actually have:

\begin{prop} \prlabel{get modif delta for free}
For $n=3$ the 2-cell \equref{delta_n za niz a_i} is a 2-cell component of a modification 
$$\delta_3:\F(a_{-,-,-}) \xi_3(-,-,-)\Rrightarrow \xi'_3(-,-,-) a_{\F(-),\F(-),\F(-)}$$
and for $n>3$ the 2-cell \equref{delta_n za niz a_i'} is a 2-cell component of an invertible modification 
$$\delta_n:\F(a_{\crta{(-)}}') \xi_n(\crta{(-)})\Rrightarrow \xi'_n(\crta{(-)}) a_{\crta{\F(-)}}$$
between pseudonatural transformations of pseudofunctors $m(\F(-),..,\F(-))\to \F(m'(-,..,-))$. 
\end{prop}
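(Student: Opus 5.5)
The proof is an application of the uniqueness principle from \rmref{unique inv 2-cell}. Any two pastings of 2-cells between 1-cells out of a universal object coincide as soon as they agree after whiskering with the universal arrow, here $j^\N_n$ on $\F(a_1),\dots,\F(a_n)$. So I only need to check the modification axiom after precomposing with $j^\N_n$, and there it should reduce to data that are already known to be coherent.

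First I would pin down source and target. The domain $\F(a_{\crta{(-)}}')\xi_n(\crta{(-)})$ is the composite of the pseudonatural transformation $\xi_n$ of \prref{ksi} with the whiskering by $\F$ of the pseudonatural equivalence $a'$ (respectively $a$ for $n=3$). The latter is pseudonatural because its 2-cell components $a_{\crta f}$ arise by the same universal-property construction as $a_{f,g,h}$ in \seref{bi-bi->mon.bic}; for a pseudofunctor its image carries the $\F^2$-corrected components. Symmetrically, the codomain $\xi'_n a_{\crta{\F(-)}}$ is a composite of pseudonatural transformations. The components \equref{delta_n za niz a_i} and \equref{delta_n za niz a_i'} are invertible by construction (\prref{delta}), so only the modification axiom needs checking: for every 1-cell $\crta f=(f_1,\dots,f_n)$ in $\M_n$ the two pastings
\[
\frac{[\Id\vert \delta_n(\crta b)]}{\text{codomain naturality at }\crta f}\quad\text{and}\quad\frac{\text{domain naturality at }\crta f}{[\delta_n(\crta a)\vert\Id]}
\]
must agree as 2-cells $\F(a'_{\crta b})\xi_n(\crta b)m(\crta{\F(f_i)})\Rightarrow \F(m'(\crta{f_i}))\xi'_n(\crta a)a_{\crta{\F(a_i)}}$.

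Second, I would whisker both pastings with $j^\N_n$ and expand each constituent by its defining equation: $\delta_n$ by the triangle identity in \prref{delta}, the 2-cells $\xi_n(\crta f)$ and $\xi'_n(\crta f)$ by \equref{transf xi}, the components of $a$ and $a'$ by their defining seashell equations, and $m(\crta{\F(f_i)})$ by $\Epsilon_{m(\crta{\F(f_i)})}$. After these substitutions both sides become pastings built from $\sigma,\sigma',\tau$, the images $\F(\Epsilon_{m(\crta{f_i})})$, $\F(\tau)$, $\F(\pi)$ and the structure cells $\F^2$. Both are pastings from $m'(\crta{f_i})\,j'_n$-type composites to $\F(j'_n)\crta{\F(f_i)}$. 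The remaining equality is then the image under $\F$ of an equality in $\M$, namely the defining relation of the naturality cells of $a$ (or $a'$) relative to $\tau$ and $\Epsilon_m$. Combined with the coherence axioms of $\F^2$ (associativity and naturality of the pseudofunctor structure) and the pseudonaturality of $\F_{-;1}$ from \equref{nat F}, this gives equality of the whiskered pastings. Uniqueness in \equref{sigma-dagger} then gives equality of the unwhiskered ones.

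The main obstacle is the bookkeeping in this second step. For $n>3$ the transformation $a'$ is a composite of $n-2$ copies of $a_{\bullet,\bullet,\bullet}$, and the cell $\F(\pi)$ of \rmref{j and j'} enters the definition of \equref{delta_n za niz a_i'}. The $\F^2$ cells therefore have to be moved past iterated composites, and their coherence must be invoked correctly. I would first carry out the case $n=3$ in full, then reduce $n>3$ to it. For the reduction I would note that $\delta_n$ is, by uniqueness, the pasting of copies of $\delta_3$ whiskered along the factors of $a'$ and corrected by $\F(\pi)$. A pasting of modifications is a modification, which settles $n>3$ without a new computation.
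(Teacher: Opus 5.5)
Your steps 1 and 2 are the paper's argument, written out more explicitly. The paper also observes that the two composites of the prism are 2-cells $\F(a'_{\crta{b_i}})\xi_n(\crta{b_i})m(\crta{\F(f_i)})\Rightarrow\F(m'(\crta{f_i}))\xi'_n(\crta{a_i})a_{\crta{\F(a_i)}}$ out of the universal object $m(\crta{\F(a_i)})$, and it concludes by uniqueness relative to $j^\N$.

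The compatibility check that the paper leaves implicit is exactly your whiskered computation, and it does close:
\begin{itemize}
\item Whisker both sides with $j^\N$. Then compose with the invertible cell pasted from $\tau$ (for $a_{\crta{\F(a_i)}}$), $\sigma'_{\crta{a_i}}$ and $\F(\Epsilon_{m'(\crta{f_i})})$, together with the $\F^2$'s.
\item On one side, the relations defining $\xi'_n(\crta{f_i})$ and $a_{\crta{\F(f_i)}}$ leave the left-hand side of the relation defining $\delta_n(\crta{b_i})$.
\item On the other side, the relations defining $\delta_n(\crta{a_i})$ and $\xi_n(\crta{f_i})$, the $\F$-image of the relation defining $a'_{\crta{f_i}}$, and $\F^2$-coherence leave its right-hand side.
\end{itemize}
In both cases the result is whiskered by $\crta{\F(f_i)}$ and pasted with the same $\Epsilon_{m(\crta{\F(f_i)})}$.

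What you should drop is the reduction of $n>3$ to $n=3$. It is unnecessary, and as stated it does not work.

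It is unnecessary because nothing in the computation above uses $n=3$. It only needs a 2-cell $\tau'\colon a'j\Rightarrow j'$ and naturality cells $a'_{\crta{f_i}}$ satisfying the seashell relation. For a path, these are the composite $\tau'=\pi\diamond\tau$ (this is how $\F(\pi)$ enters \equref{delta_n za niz a_i'}) and the composite naturality cells; the relation follows by pasting the one-step relations. This is how the paper treats all $n$ at once.

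It does not work as stated because of a mismatch in domains. The factors of $a'$ are single associators $m(\dots,a_{u,v,w},\dots)$ between $n$-ary universal objects, where $u,v,w$ are universal objects of sublists. But $\delta_3(u,v,w)$ is built from $\xi_3(u,v,w)$, whose domain is made of $\F(u),\F(v),\F(w)$, not of the universal objects on the corresponding $\F(a_i)$'s.

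Writing $\delta_n$ as a pasting of $\delta_3$'s would therefore need more than you list:
\begin{itemize}
\item comparison cells between $\xi_n$ and composites of $\xi_2$, $\xi_3$ and $m(1,\xi_2,1)$, as in \prref{lax mon mult}, which comes later;
\item interchange cells of $a$ with $\xi_2$;
\item a proof that those comparison cells are themselves modifications, by the same universal-property check.
\end{itemize}
So the claim that this settles $n>3$ without new computation is false. Run step 2 for arbitrary $n$ instead.
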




\begin{proof}
To check that $\delta_n$ is a modification, we evaluate it at a list of objects $\crta{a_i}=a_1,..,a_n$ and the pseudonatural transformations on which $\delta_n$ acts we evaluate at a list of morphisms $\crta{f_i}:\crta{a_i}\to\crta{b_i}$, the desired axiom is then the identity below in the form of prism 
\[\begin{tikzcdscale}[every label/.append style = {font = \large}]
	&& {m(\overline{\F(a_i)})} &&&& {m(\overline{\F(a_i)})} \\
	& {m'(\overline{\F(a_i)})} &&&& {\F(m(\overline{a_i}))} \\
	{\F(m'(\overline{a_i}))} &&&& {\F(m'(\overline{a_i}))} \\
	\\
	&& {m(\overline{\F(b_i)})} &&&& {m(\overline{\F(b_i)})} \\
	& {m'(\overline{\F(b_i)})} &&&& {\F(m(\overline{b_i}))} \\
	{\F(m'(\overline{b_i}))} &&&& {\F(m'(\overline{b_i}))}
	\arrow[equals, nfold, from=1-3, to=1-7]
	\arrow[""{name=0, anchor=center, inner sep=0}, "{a_{\overline{\F(a_i)}}}"', from=1-3, to=2-2]
	\arrow["{m(\overline{\F(f_i)})}"'{pos=0.3}, from=1-3, to=5-3]
	\arrow[""{name=1, anchor=center, inner sep=0}, "{\xi_n(\overline{a_i})}"'{pos=0.7}, from=1-7, to=2-6]
	\arrow["{m(\overline{\F(f_i)})}", from=1-7, to=5-7]
	\arrow[""{name=2, anchor=center, inner sep=0}, "{\xi_n'(\overline{a_i})}"', from=2-2, to=3-1]
	\arrow["{m'(\overline{\F(f_i)})}"'{pos=0.4}, from=2-2, to=6-2]
	\arrow["{\delta_n(\overline{a_i})(j_n')}"', between={0.4}{0.6}, Rightarrow, nfold, from=2-6, to=2-2]
	\arrow[""{name=3, anchor=center, inner sep=0}, "{\F(a_{\overline{a_i}}')}"'{pos=0.6}, from=2-6, to=3-5]
	\arrow["{\F(m(\overline{f_i}))}"'{pos=0.4}, from=2-6, to=6-6]
	\arrow[equals, nfold, from=3-1, to=3-5]
	\arrow["{\F(m'(\overline{f_i}))}"', from=3-1, to=7-1]
	\arrow["{\F(m'(\overline{f_i}))}"'{pos=0.3}, from=3-5, to=7-5]
	\arrow[equals, nfold, from=5-3, to=5-7]
	\arrow[""{name=4, anchor=center, inner sep=0}, "{a_{\overline{\F(b_i)}}}"', from=5-3, to=6-2]
	\arrow[""{name=5, anchor=center, inner sep=0}, "{\xi_n(\overline{b_i})}", from=5-7, to=6-6]
	\arrow[""{name=6, anchor=center, inner sep=0}, "{\xi_n'(\overline{b_i})}"', from=6-2, to=7-1]
	\arrow["{\delta_n(\overline{b_i})(j_n')}"', between={0.4}{0.6}, Rightarrow, nfold, from=6-6, to=6-2]
	\arrow[""{name=7, anchor=center, inner sep=0}, "{\F(a_{\overline{b_i}}')}", from=6-6, to=7-5]
	\arrow[equals, nfold, from=7-5, to=7-1]
	\arrow["{a_{\overline{\F(f_i)}}}"{description}, between={0.3}{0.7}, Rightarrow, nfold, from=0, to=4]
	\arrow["{\xi_n(\overline{f_i})}"{description}, between={0.3}{0.7}, Rightarrow, nfold, from=1, to=5]
	\arrow["{\xi'_n(\overline{f_i})}"{description}, between={0.4}{0.7}, Rightarrow, nfold, from=2, to=6]
	\arrow["{\F(a_{\overline{f_i}}')}"{description}, between={0.4}{0.7}, Rightarrow, nfold, from=3, to=7]
\end{tikzcdscale}\]
(its front and back face are identities). This identity indeed holds true, as the two compositions of 2-cells that should be equal (the top 
composed with the right-hand side lateral, and the left-hand side lateral composed with the bottom) are mapping 
\begin{equation} \eqlabel{cylinder to prove}
\F(a_{\crta{b_i}}') \circ\xi_n(\crta{b_i})\circ m(\overline{\F(f_i)})\Rightarrow \F(m'(\overline{f_i}))\circ \xi'_n\circ a_{\crta{\F(a_i)}}: 
\quad m(\overline{\F(a_i)})\to \F(m'(\overline{b_i}))
\end{equation}
and as such they must be equal by the universal property. Namely, in the following diagram
\[\begin{tikzcdscale}
	&&&&&& {m(\overline{\F(a_i)})} \\
	{\overline{\F(a_i)}} \\
	&& {\varepsilon_{back}} && {m(\overline{\F(a_i)})} \\
	& {\F(\varepsilon_{front})} \\
	&&&&&& {m(\overline{\F(b_i)})} \\
	{\overline{\F(b_i)}} \\
	&& {} && {m(\overline{\F(b_i)})}
	\arrow[""{name=0, anchor=center, inner sep=0}, "{\xi'_n\circ a_{\crta{\F(a_i)}}}"{description, pos=0.6}, curve={height=30pt}, from=1-7, to=3-5]
	\arrow[""{name=1, anchor=center, inner sep=0}, "{\F(a_{\crta{a_i}}') \circ\xi_n(\crta{a_i})}"{description}, curve={height=-30pt}, from=1-7, to=3-5]
	\arrow["{m(\overline{\F(f_i)})}", from=1-7, to=5-7]
	\arrow["{j^{\mathcal{N}}_{\overline{a_i}}}", curve={height=-30pt}, from=2-1, to=1-7]
	\arrow[""{name=2, anchor=center, inner sep=0}, "{\F(j'_{\overline{a_i}})}", from=2-1, to=3-5]
	\arrow["{\overline{\F(f_i)}}"', from=2-1, to=6-1]
	\arrow["{\F(m'(\overline{f_i}))}"{pos=0.2}, from=3-5, to=7-5]
	\arrow[""{name=3, anchor=center, inner sep=0}, "{\xi'_n\circ a_{\crta{\F(b_i)}}}"{description}, curve={height=30pt}, from=5-7, to=7-5]
	\arrow[""{name=4, anchor=center, inner sep=0}, "{\F(a_{\crta{b_i}}') \circ\xi_n(\crta{b_i})}"{description}, curve={height=-30pt}, from=5-7, to=7-5]
	\arrow["{j^{\mathcal{N}}_{\overline{b_i}}}", curve={height=-30pt}, from=6-1, to=5-7]
	\arrow[""{name=5, anchor=center, inner sep=0}, "{\F(j'_{\overline{b_i}})}"', from=6-1, to=7-5]
	\arrow["{\delta_n(\overline{a_i})(j_n')}"{description}, between={0.3}{0.7}, Rightarrow, nfold, from=1, to=0]
	\arrow["{(\sigma'\diamond\tau)_{\overline{a_i}}}"', between={0.4}{0.6}, Rightarrow, nfold, from=0, to=2]
	\arrow["{\delta_n(\overline{b_i})(j_n')}"{description}, between={0.3}{0.7}, Rightarrow, nfold, from=4, to=3]
	\arrow["{(\sigma'\diamond\tau)_{\overline{b_i}}}"', between={0.3}{0.6}, Rightarrow, nfold, from=3, to=5]
\end{tikzcdscale}\]
observe that the back rectangular presents a 2-cell $\varepsilon_{back}$ determining the (vertically posed) 1-cell $m(\overline{\F(f_i)})$ 
on the right, similarly that the left front square presents a 2-cell $\F(\varepsilon_{front})$ determining the 1-cell $\F(m'(\overline{f_i}))$ (here $\varepsilon_{front}$ determines $m'(\overline{f_i})$), and the cylinder on the right presents the above prism. The concatenation 
of 2-cells $sigma'\diamond\tau$ is the one from \prref{delta}. Then for the given 
1-cell $\F(j'_{\overline{b_i}})\circ \overline{\F(f_i)}$ (bottom-left) and the universal arrow $j^{\mathcal{N}}_{\overline{a_i}}$ above, 
we have two pairs of a 2-cell and a 1-cell: $(\sigma'\diamond\tau)_{\overline{a_i}} \diamond 
\F(\Epsilon_{front})$ determining $\F(m'(\overline{f_i}))\circ \big(\xi'_n\circ a_{\crta{\F(a_i)}}\big)$, and $\Epsilon_{back}\diamond 
(\sigma'\diamond\tau)_{\overline{b_i}} \diamond \delta_n(\overline{b_i})$ determining $\big(\F(a_{\crta{b_i}}') \circ\xi_n(\crta{b_i})\big)\circ m(\overline{\F(f_i)})$, 
then the universal property states that there is a unique  invertible 2-cell between the latter two 1-cells, {\em i.e.} \equref{cylinder to prove}, meaning that the two 2-cells determining the above prism/cylinder must be equal. 
\qed\end{proof}

\bigskip

Moreover, in an analogous way one proves: 

\begin{prop} \prlabel{}
There is an invertible modification 
$$\delta_n(-;*)(\bullet):\N_1(\xi_n(-); *)\circ\F_{m(-);*}\circ\Omega^\M_{-;*}(\bullet)\Rrightarrow\Omega^\N_{-;*}\circ\F_{-;*}(\bullet)$$ 
with 2-cell components natural transformations 
\begin{equation} \eqlabel{modif delta}
\scalebox{0.86}{
\bfig
 \putmorphism(-150,470)(1,0)[\M_1(m(a_1,..,a_n);c)` \M_{n}(a_1,..,a_n;c) `
   \Omega^\M_{\crta{a_i};c}]{2350}{-1}a
\putmorphism(-160,450)(0,-1)[\phantom{Y_2}`\N_{1}(\F(m(a_1,..,a_n));\F(c)) ` \F_{m(\crta{a_i});c}]{400}1l
\putmorphism(-160,50)(0,-1)[\phantom{Y_2}` ` \N_1(\xi_n(\crta a);1)]{400}1l
\putmorphism(2250,470)(0,-1)[\phantom{Y_2}` `  \F_{\crta{a_i};c}]{800}1r
 \putmorphism(0,-350)(1,0)[\N_1(m(\F(a_1),..,\F(a_n));\F(c))` \N_n(\F(a_1),..,\F(a_n);\F(c)) `\Omega^\N_{\crta{a_i};c}]{2250}{-1}a
\put(900,40){\fbox{$\SEarrow \delta_{\crta{a_i};,c}$}}
\efig}
\end{equation}
for any list of objects $\crta{a_i}=a_1,..,a_n$, and object $c$.
\end{prop}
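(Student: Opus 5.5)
We build the components of $\delta$ from the universal property of bi-representability in $\N$, in the manner of \prref{delta}, and then check the naturality and modification axioms by the uniqueness principle of \rmref{unique inv 2-cell}. Fix a list $\crta{a_i}$ and an object $c$, and take $\zeta\in\M_n(\crta{a_i};c)$. The equivalence gives $\crta\zeta:=\Omega^\M_{\crta{a_i};c}(\zeta)\in\M_1(m(\crta{a_i});c)$ with invertible $\Epsilon_{\crta\zeta}:\crta\zeta\, j_{\crta a}\Rightarrow\zeta$. The upper-left path of \equref{modif delta} sends $\zeta$ to the unary 1-cell $\F(\crta\zeta)\,\xi_n(\crta a):m(\crta{\F(a_i)})\to\F(c)$. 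The lower-right path sends $\zeta$ to $\crta{\F(\zeta)}:=\Omega^\N(\F_{\crta{a_i};c}(\zeta))$, with universal 2-cell $\Epsilon_{\crta{\F(\zeta)}}:\crta{\F(\zeta)}\, j^\N_{\crta{\F(a_i)}}\Rightarrow\F(\zeta)$.

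To obtain $\delta_{\crta{a_i};c}(\zeta)$, we equip the first 1-cell with an invertible 2-cell $\sigma_\zeta:(\F(\crta\zeta)\,\xi_n(\crta a))\,j^\N_{\crta{\F(a_i)}}\Rightarrow\F(\zeta)$. This is the pasting of the following pieces:
\begin{itemize}
\item the associator;
\item $[\sigma_{\crta a}\vert\Id_{\F(\crta\zeta)}]$, with $\sigma_{\crta a}:\xi_n(\crta a)\, j^\N\Rightarrow\F(j_{\crta a})$ from \equref{j-xi};
\item the composition constraint $\F^2:\F(\crta\zeta)\F(j_{\crta a})\Rightarrow\F(\crta\zeta\, j_{\crta a})$;
\item $\F(\Epsilon_{\crta\zeta})$.
\end{itemize}
By the universal property \equref{sigma-dagger} there is a unique invertible 2-cell $\sigma_\zeta^\dagger:\F(\crta\zeta)\xi_n(\crta a)\Rightarrow\crta{\F(\zeta)}$ with $\frac{[\Id_{j^\N}\vert\sigma_\zeta^\dagger]}{\Epsilon_{\crta{\F(\zeta)}}}=\sigma_\zeta$, up to the associator. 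We set $\delta_{\crta{a_i};c}(\zeta):=\sigma_\zeta^\dagger$. Since $\Omega^\M$ is an adjoint equivalence, we may equivalently work with its pseudo-inverse; the direction drawn in \equref{modif delta} then only requires composing with the counit, which is harmless.

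Next we show that each $\delta_{\crta{a_i};c}$ is natural in $\zeta$. Take $\alpha:\zeta\Rightarrow\zeta'$. The two composites $\crta{\F(\alpha)}\circ\sigma_\zeta^\dagger$ and $\sigma_{\zeta'}^\dagger\circ(\F(\crta\alpha)\xi_n)$ are 2-cells between the same pair of 1-cells out of the universal object $m(\crta{\F(a_i)})$. We paste $\Id_{j^\N}$ onto each and compare the results with $\F(\alpha)\circ\sigma_\zeta$. The first is handled by the prism $P(\F(\alpha)\backslash\ldots/\crta{\F(\alpha)})$. The second uses the prism $P(\alpha\backslash\frac{\Epsilon_{\crta\zeta}}{\Epsilon_{\crta{\zeta'}}}/\crta\alpha)$ transported by $\F$, together with naturality of $\F^2$. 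Uniqueness of induced 2-cells then forces the two composites to agree.

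Finally we verify the modification axiom. This axiom compares, for 1-cells $(\crta p,r)$ in the indexing bicategory, the pasting of $\delta$ with the 2-cell components of the transformations involved:
\begin{itemize}
\item $\Sigma$ from \equref{Sigma 2-cell comp} (both for $\M$ and for $\N$);
\item $\F_{\crta p;r}$ from \equref{nat F};
\item $\N_1(\xi_n(\crta p);r)$, whose 2-cell components are the $\xi_n(\crta{f_i})$ of \prref{ksi}.
\end{itemize}
Both sides of the axiom, evaluated at $\zeta$, are invertible 2-cells between the same two composite 1-cells with domain $m(\crta{\F(a_i)})$. Therefore it suffices to check that, after pasting with $j^\N$ and the universal 2-cells, both sides induce the same total 2-cell into $\F(r\circ\zeta\circ\crta p)$. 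We would present this as a cylinder diagram like the one in the proof of \prref{get modif delta for free}: the back face is $\Epsilon_{m(\crta{\F(f_i)})}$, the front face is $\F$ of the corresponding $\Epsilon$, and the right-hand side is the prism for $\delta$. The equality then follows from uniqueness.

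The main obstacle is this last step. Making the two pastings literally comparable requires inserting the pseudofunctor constraints $\F^2$ in the right places and invoking their coherence (associativity hexagon of $\F$) together with the defining equation \equref{transf xi} of $\xi_n(\crta{f_i})$. I would first reduce everything to equalities of 2-cells precomposed with $j^\N$, then apply the coherence of $\F^2$ once, rather than tracking associators by hand.
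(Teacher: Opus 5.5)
Your proposal is correct and follows the paper's route. The paper only says this is proved ``in an analogous way'' to \prref{delta} and \prref{get modif delta for free}: the components $\F(\Omega^\M(\zeta))\,\xi_n(\crta a)\Rightarrow\Omega^\N(\F(\zeta))$ come from the universal property of $j^\N$, and naturality and the modification axiom follow by uniqueness after whiskering with $j^\N$. That is exactly your setup, and you are more explicit than the paper about where the $\F^2$-coherence and \equref{transf xi} enter. Your aside about passing to a pseudo-inverse of $\Omega^\M$ is unnecessary, since your $\sigma_\zeta^\dagger$ already points in the direction drawn in \equref{modif delta}.
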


By adding lists $\crta{x_i}$ and $\crta{b_i}$ in the above proposition, one deduces that there is an invertible modification
$\delta_{k+n+l;1}:\N_1(-, \xi_n(-), -; -)\F_{-,m(-),-;-}\Omega^\M_{-,-,-;-}\Rrightarrow\Omega^\N_{-,-,-;-}\F_{-,-,-;-}$ 
whose 2-cell components evaluated at $\zeta:\crta{x_i}, \crta{a_i},\crta{b_i}\to c$ 
take the form  
\begin{equation}\eqlabel{delta_n forma modif- sa x i b}
\delta_{k+n+l;1}(\crta{x_i},\crta{a_i},\crta{b_i};c)(\zeta): 
\F(\Omega^\M(\zeta))\circ\langle 1_k,\xi_n,1_l\rangle\Rightarrow\Omega^\N(\F(\zeta)).
\end{equation}
(The idea of the proof is that in the analogous prism to the first one in the proof of \prref{get modif delta for free} one adds lists 
$\crta{\F(x_i)}$ and $\crta{\F(b_i)}$; the vertically posed 2-cells 
change into the ternary 2-cells of the form 
$\langle 1_k,\bullet,1_l\rangle$ and the prism gets slightly more complex by obtaining a few more non-trivial faces.)



\bigskip

Similarly as in \prref{delta}, we construct invertible 2-cells 
\begin{equation} \eqlabel{l i r}
\F(l_a)\circ\xi_2\circ m(\xi_0,id_a)\Rightarrow l_{\F(a)}\quad\text{and}\quad \F(r_a)\circ\xi_2\circ m(id_a,\xi_0)\Rightarrow r_{\F(a)}
\end{equation}
and similarly as in \prref{get modif delta for free}, we have:

\begin{prop} \prlabel{modifs delta L i R}
The 2-cells \equref{l i r} are 2-cell components of invertible modifications
$$\F(l_{-})\circ\xi_2\circ m(\xi_0,id_{-})\Rightarrow l_{\F(-)}\quad\text{and}\quad \F(r_{-})\circ\xi_2\circ m(id_{-},\xi_0)\Rightarrow r_{\F(-)}.$$
\end{prop}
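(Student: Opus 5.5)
We follow the pattern of the proof of \prref{get modif delta for free}. We treat the left unitor in detail; the right unitor is symmetric, exchanging the roles of the two entries of $m(-,-)$.

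\emph{Step 1: constructing the components.} Fix $a$. Recall from \equref{def l} that $l_a$ is the mediating 1-cell with an invertible 2-cell $\Epsilon_{l_a}: l_a\circ j_{I,a}\circ\langle j_{\langle\rangle},1_a\rangle\Rightarrow 1_a$. Likewise $l_{\F(a)}$ comes with $\Epsilon_{l_{\F(a)}}$ in $\N$. Consider the composite 1-cell
$$\F(l_a)\circ\xi_2(I,a)\circ m(\xi_0,id_{\F(a)})\circ j^\N_{I_\N,\F(a)}\circ\langle j^\N_{\langle\rangle},1_{\F(a)}\rangle .$$
Precompose it with $\F$ applied to the 2-cell defining $l_a$, and paste in the following cells:
\begin{itemize}
\item $\Epsilon_{m(\xi_0,id)}$, to pass $m(\xi_0,id)$ across $j^\N$;
\item $\sigma_{I,a}$ from \equref{j-xi}, to replace $\xi_2\circ j^\N$ by $\F(j_{I,a})$;
\item $\sigma_0$, the 2-cell for $\xi_0$ from \equref{j-xi} with $n=0$, to replace $\xi_0\circ j^\N_{\langle\rangle}$ by $\F(j_{\langle\rangle})$;
\item the structure cells $\F^2,\F^0$, to assemble $\F(l_a)\F(j)\langle\F(j),1\rangle$ into $\F(l_a\, j\langle j,1\rangle)$ and then into $\F(1_a)\cong 1_{\F(a)}$.
\end{itemize}
This yields an invertible 2-cell from the composite above to $1_{\F(a)}$. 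Both this 2-cell and $\Epsilon_{l_{\F(a)}}$ exhibit a unary 1-cell out of $m(I_\N,\F(a))$, precomposed with the universal arrow $j^\N_{I_\N,\F(a)}\langle j^\N_{\langle\rangle},1\rangle$, as isomorphic to $1_{\F(a)}$. The universal arrow is universal because universal arrows are closed under composition. By \rmref{unique inv 2-cell} we therefore obtain a unique invertible 2-cell $\F(l_a)\xi_2 m(\xi_0,id)\Rightarrow l_{\F(a)}$. This is \equref{l i r}.

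\emph{Step 2: the modification axiom.} Both sides are pseudonatural transformations $\ot_\N\comp(I\times\Id)\Rightarrow\F$, composed appropriately. Their 2-cell components at $f:a\to b$ are as follows:
\begin{itemize}
\item on the source side, a pasting of $\F(l_f)$, $\xi_2(\iota,f)$ (from \prref{ksi}) and $m$ applied to the naturality of $\xi_0$ (trivial in the unary case);
\item on the target side, $l_{\F(f)}$.
\end{itemize}
The axiom to check is the equality of two 2-cells
$$\F(f)\circ\F(l_a)\xi_2 m(\xi_0,id)\Rightarrow l_{\F(b)}\circ m(id,\F(f)),$$
drawn as a prism exactly as in the proof of \prref{get modif delta for free}. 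Both composites are 2-cells between 1-cells whose domain is the universal object $m(I_\N,\F(a))$. Precompose with the universal arrow and paste on one side the defining cell of $m(id,\F(f))$ and on the other $\F$ of the defining cell of $\F(f)$ (here just $\F^2$-coherence). Each side then becomes a factorization through the same pair (1-cell, invertible 2-cell to $\F(f)$). By uniqueness in the universal property, the two composites coincide.

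\emph{Main obstacle.} The hard part is bookkeeping in Step 2. After whiskering, one must check that both pasted composites really have the same boundary 2-cell on the universal arrow. This requires using the defining equations \equref{transf xi} of $\xi_2(\iota,f)$, the defining prisms of $l_f$ and $l_{\F(f)}$ constructed as in \seref{bi-bi->mon.bic}, and the pseudofunctor coherence of $\F$ relating $\F^2$ with the substitution associators of $\M$ and $\N$. We will organize this by the ``sensible compositions'' principle of \ssref{cliq}: every cell in sight is a unique mediating cell out of a universal object. So once the outer boundaries agree, which follows from coherence of $\F^2,\F^0$, equality follows.
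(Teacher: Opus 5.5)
Your proposal is correct and follows essentially the paper's route. The paper constructs \equref{l i r} ``similarly as in \prref{delta}'', which is your Step~1: a mediating cell against $\Epsilon_{l_{\F(a)}}$ along the composite universal arrow $j^\N_{I,\F(a)}\langle j^\N_{\langle\rangle},1\rangle$. It then obtains the modification axiom ``similarly as in \prref{get modif delta for free}'', which is your Step~2: uniqueness of mediating 2-cells out of a universal object. You are in fact slightly more careful than the paper, because you note that uniqueness only applies once both sides are shown to agree after whiskering with the universal arrow and composing with the canonical cell to $\F(f)$, which amounts to naturality in $a$ of your Step~1 composite.
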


\medskip

In view of the diagram \equref{modif delta} and given the direction of the pseudonatural transformations 
 $\xi_n: m(\F(-),..,\F(-))\to \F(m(-,..,-))$ we may conclude: 

\begin{cor} \colabel{respect univ o}
Let $\M$ and $\N$ be  bi-representable bi-multicategories. Any 
pseudofunctor between their underlying bi-multicategories preserves the universal objects in a lax monoidal way up to equivalence. 
\end{cor}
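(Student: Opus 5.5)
The plan is to assemble the corollary from the data already produced in \prref{ksi}, \prref{get modif delta for free} and \prref{modifs delta L i R}, and to read these data against the axioms of a lax monoidal pseudofunctor between the underlying bicategories $\ul{\M}$ and $\ul{\N}$. These bicategories are monoidal by the construction of \seref{bi-bi->mon.bic}, with $a\ot b=m(a,b)$ and unit $I=m()$.

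First, \prref{ksi} gives the lax structure. The pseudonatural transformation $\xi_2: m(\F(-),\F(-))\Rightarrow\F(m(-,-))$, that is $\F(a)\ot\F(b)\to\F(a\ot b)$, together with the unary 1-cell $\xi_0: I\to\F(I)$, forms the comparison data. These point in the lax direction, which is what ``lax monoidal'' means in the statement; this direction is also consistent with the diagram \equref{modif delta}.

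Next, the coherence data. \prref{get modif delta for free} for $n=3$ provides the invertible modification $\delta_3:\F(a_{-,-,-})\xi_3\Rrightarrow\xi'_3 a_{\F(-),\F(-),\F(-)}$. Here I would choose $m=m^L$ and $m'=m^R$ as in \rmref{j and j'}. Then $\xi_3$ for $m^L$ is, up to a unique invertible 2-cell, the composite $\xi_2(\xi_2\ot 1)$, and $\xi'_3$ for $m^R$ is $\xi_2(1\ot\xi_2)$. Both identifications come from the uniqueness of mediating 2-cells on universal objects (\rmref{unique inv 2-cell}). The result is the associativity modification $\omega$ of a lax monoidal pseudofunctor. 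Similarly, \prref{modifs delta L i R} gives the unit modifications comparing $\F(l)\,\xi_2\,(\xi_0\ot 1)$ with $l_{\F}$, and likewise for $r$.

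The main obstacle is verifying the axioms these modifications must satisfy: the compatibility of $\omega$ with the pentagonator $\pi$ (a 4-ary condition) and the unit axioms involving $\mu,\tilde\lambda,\tilde\rho$. I would not compute these directly. Instead, I would reduce each axiom to an equality of two composite 2-cells between 1-cells of the form $m^L(\F(a_1),\dots,\F(a_4))\to\F(m^R(a_1,\dots,a_4))$. Both composites arise as mediating cells for the same pair, namely the 1-cell $\F(j^R_4)$ and the universal arrow $j^{\N}$ precomposed with the relevant reassociations. This uses the $n=4$ case of $\delta_n$ and the 2-cells $\sigma,\tau$. Uniqueness then forces equality, by the ``sensible compositions'' principle summarized after \rmref{j and j'}. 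The delicate part is checking that the $\F^2$, $\F(\tau)$ and $\F(\pi)$ factors appear consistently on both sides. If both sides do not visibly factor through the same universal triangle, I would fall back to showing that each side, pasted with $\sigma$, yields the same 2-cell built from $\F^2$ applied to the defining triangles of $\pi$.

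Finally, the phrase ``up to equivalence'' records two facts. Universal objects are only determined up to the clique equivalences of \rmref{j and j'}. Moreover, the comparison cells $\xi_n$ are not required to be equivalences; only the choices $m$, $m'$ are related by equivalences. This completes the argument.
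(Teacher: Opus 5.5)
Your proposal is correct and follows essentially the paper's route. The paper reads the corollary off directly from \prref{ksi}, the modifications $\delta_3,\delta^L,\delta^R$ and the lax direction of $\xi_n$ in \equref{modif delta}; the coherence axioms you plan to verify via uniqueness of mediating 2-cells on universal objects are checked in exactly that way in \prref{psf birep} and \ssref{equiv homs}, and your identifications $\xi_3^L\cong\xi_2(\xi_2\ot 1)$ and $\xi_3^R\cong\xi_2(1\ot\xi_2)$ are part 1 of \prref{lax mon mult}. One small imprecision: the unit axiom concerns 1-cells out of the binary universal object $m(\F(x),\F(y))$ rather than a 4-ary one, but the same uniqueness argument applies.
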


\subsubsection{Iterative presentation for $\xi_n$ and the summary for the pseudofunctors} \sslabel{fun-rep-bimult summary}

We next want to present each of the four 1-cells in \equref{delta_n za niz a_i} for $n=3$ and in \equref{delta_n za niz a_i'} for $n>3$ 
as a path of 1-cells. This will help us finalize the description of 
pseudofunctors of bi-representable bi-multicategories. We immediately may observe:

\begin{cor} \colabel{path a_i}
Given two paths $a_{\crta{a_i}}, a_{\crta{a_i}}':m(a_1,...,a_n)\to m'(a_1,...,a_n)$ as in \rmref{j and j'} there is a unique (composite) invertible 2-cell
$$
path\big(\F(a'_{\crta{a_i}})\big) \xi_n\stackrel{\crta{\F^2}\circ 1}{\Rightarrow} \F(a'_{\crta{a_i}})\xi_n \stackrel{\equref{delta_n za niz a_i'}}{\Rightarrow} 
  \xi'_n a_{\crta{\F(a_i)}}
$$
where $\crta{\F^2}: path\big(\F(a'_{\crta{a_i}})\big)\Rightarrow \F(a'_{\crta{a_i}})$ is the composite of any choice of the relevant binary coherence 2-cells 
of the pseudofunctor $\F$ (unique by the hexagonal rule for $\F$). 
\end{cor}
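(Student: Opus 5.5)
The plan is to assemble the composite from two pieces already available and then argue uniqueness. The second piece is the invertible 2-cell \equref{delta_n za niz a_i'} from \prref{delta}; it already comes with the defining property that, pasted with $\sigma'\diamond\tau'$ (the analogue of the left composite-triangle there, built with $\tau'$ for the path $a'$), it gives $\frac{\F^2}{\F(\tau')}\diamond\sigma$. So only the first piece, $\crta{\F^2}\circ 1$, and the uniqueness of the whole composite need work.

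\textbf{Step 1: the 2-cell $\crta{\F^2}$.} The path $a'_{\crta{a_i}}$ is a horizontal composite $a^{(n-2)}\cdots a^{(1)}$ of $n-2$ unary 1-cells of the form $a_{\bullet,\bullet,\bullet}$ (tensored with identities), as in \rmref{j and j'} b). Its image $path\big(\F(a'_{\crta{a_i}})\big)$ is $\F(a^{(n-2)})\cdots\F(a^{(1)})$, bracketed in some way. I would define $\crta{\F^2}$ by iterating the binary structure 2-cells $\F^2$ of $\F$ on unary 1-cells, inserting associators of $\underline{\N}$ where needed. Any two such choices agree: this is the standard coherence for a pseudofunctor between bicategories, obtained by induction on $n$ from the hexagon axiom for $\F^2$ together with naturality of $\F^2$ and the coherence of $\underline{\N}$. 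Invertibility holds because each $\F^2$ is invertible.

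\textbf{Step 2: uniqueness of the composite.} I would argue in the manner of \rmref{unique inv 2-cell} and the principle on sensible compositions after \rmref{j and j'}. Both the source $path(\F(a'))\xi_n$ and the target $\xi'_n a_{\crta{\F(a_i)}}$ are 1-cells with domain the universal object $m(\crta{\F(a_i)})$ of $\N$. Precomposing with $j^\N_n$, each carries a canonical invertible 2-cell to $\F(j'_n)$:
\begin{itemize}
\item for the target, this is $\sigma'\diamond\tau$;
\item for the source, it is $\sigma$ followed by $\crta{\F^2}$-whiskered cells and $\F(\tau')$.
\end{itemize}
The composite in the statement is compatible with these two triangles. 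For the $\crta{\F^2}$ part this follows from the naturality of $\F^2$ and its compatibility with the $\F^2$ already present in $\frac{\F^2}{\F(\tau')}$. For the \equref{delta_n za niz a_i'} part it is exactly the defining property. By \equref{sigma-dagger}, an invertible 2-cell between two 1-cells out of $m(\crta{\F(a_i)})$ that is compatible with given triangles to the same multimap is unique. Hence the composite is the unique such 2-cell, and in particular it is independent of the choice made for $\crta{\F^2}$.

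\textbf{Main obstacle.} I expect the hard part to be the bookkeeping in Step 2. One must check that whiskering $\crta{\F^2}$ by $\xi_n$ and $j^\N_n$ and pasting with $\sigma$ reproduces the iterated $\F^2$'s hidden in $\frac{\F^2}{\F(\tau')}$. This uses the naturality of $\F^2$ in both variables and the associativity coherence of $\underline{\N}$. I would first do the case $n=4$, where the path has two factors and a single $\F^2$. For general $n$ I would then reduce to that case by induction on the length of the path.
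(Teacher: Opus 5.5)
Your proposal is correct and agrees with the paper, which states this corollary as an immediate observation without further proof. There, $\crta{\F^2}$ is independent of the chosen binary $\F^2$'s by the hexagonal coherence of $\F$ (your Step 1), and the second factor is the unique cell of \prref{delta}, so the composite is uniquely determined.

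Your Step 2 is a harmless extra: once the source triangle is built from $\crta{\F^2}$, its compatibility check is tautological, so the bookkeeping you flag as the main obstacle is just Step 1 again. One notational slip: the $\N$-side triangle uses $\tau$ for the unprimed path $a_{\crta{\F(a_i)}}$, not $\tau'$ as in your first paragraph.
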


It remains to replace the arrows $\xi_n$ with paths of arrows depending on $\xi_2$. We achieve this in the next proposition.

\begin{prop} \prlabel{lax mon mult}  \plabel{lax mon mult}
\begin{enumerate}
\item 
Let $j_n^L:a_1,...,a_n\to m^L(a_1,...,a_n)$ and $j_n^R:a_1,...,a_n\to m^R(a_1,...,a_n)$ be as in \rmref{j and j'} and let 
$(\xi^L_n, \sigma^L)$ and $(\xi^R_n, \sigma^R)$  be the corresponding pairs of 1- and 2-cells inherent to $\F$ with respect to $j_n^L$ and $j_n^R$. 
By the universal property there is a unique invertible 2-cell marked on the right below so that when composed with the rest of the labeled 2-cells the 
whole composition equals $\sigma^L$. 
\begin{equation}\eqlabel{sigma-1}
\bfig
 \putmorphism(0,700)(1,0)[\F(x),\F(y),\F(z)` \F(x)\F(y),\F(z) `\langle j, 1\rangle ]{1000}1a
 \putmorphism(1000,700)(1,0)[\phantom{\F(x)\F(y),\F(z)}`(\F(x)\F(y)) \F(z) `j]{900}1a
 \putmorphism(190,700)(1,-1)[\phantom{A\ot B}` `\langle \F(j),1\rangle ]{350}1l 
 \putmorphism(400,700)(1,-2)[\phantom{A\ot B}`\F(xy),\F(z) ` ]{190}0l 
 \putmorphism(710,320)(1,-1)[\phantom{A\ot B}`` \F(j)]{350}1l 
 \putmorphism(920,340)(1,-2)[\phantom{A\ot B}`\F((xy) z) ` ]{190}0l
 \putmorphism(1140,-40)(0,-1)[\phantom{A\ot B}`\F((xy) z) ` =]{370}1l
 \putmorphism(970,700)(-1,-2)[` `]{180}1r 
 \putmorphism(940,700)(-1,-2)[` `\langle\xi,1\rangle]{180}0r 
 \putmorphism(1460,320)(-1,-2)[` `\xi]{180}1r

 \putmorphism(600,320)(1,0)[\phantom{\F(xy),\F(z)}`\F(xy)\F(z) ` j]{900}1a

 \putmorphism(1730,700)(-1,-2)[` `\xi 1]{180}1r

\putmorphism(-280,700)(1,-1)[ ``\F(j^L_3)]{1180}1l
\putmorphism(2030,700)(-1,-2)[ ``\xi_3^L]{580}1r
\put(1360,-50){\fbox{$\NWarrow$}}

\put(500,500){\fbox{$\langle\sigma,1\rangle$}}
\put(1300,520){\fbox{$\Epsilon$}}
\put(1050,140){\fbox{$\sigma$}}
\put(530,-50){\fbox{$\frac{[(1,\F^0)\vert 1]}{\F(\F^2)}$}}
\efig
\end{equation}
(Here concatenation of objects and arrows $xy, \xi 1$ is short for $m_2(x,y)$ and $m_2(\xi,1)$, respectively.)
Analogously, there is a unique invertible 2-cell $\xi^R_3\Rightarrow \xi\comp (1\xi)$. 
\item 
For any $n\in\Nn_{\geq 3}$ there are unique invertible 2-cells (coming from the universal property) with the domain $\xi_{n+1}$ and codomains 
\begin{multline*}
m_{n+1}(\crta{\F(a_i)})\xrightarrow{\displaystyle a^{\N,i}_\bullet} m_n(\F(a_1),...,m_2(\F(a_i),\F(a_{i+1})),...,\F(a_{n+1})) \\
  \xrightarrow{\displaystyle m_n(1,\xi_2^i,1)} 
	m_n(\F(a_1),...,\F(m_2(a_i,a_{i+1}),...,\F(a_{n+1}))   \\
	\xrightarrow{\displaystyle \xi_n}	 \F(m_n(a_1,...,m_2(a_i,a_{i+1}),...,a_{n+1}))  
	\xrightarrow{\displaystyle \F((a^{\M,i}_\bullet)^*)} \F(m_{n+1}(\crta{a_i}))
	\end{multline*}
	and 
\begin{multline*}
m_{n+1}(\crta{\F(a_i)})\xrightarrow{\displaystyle a'^\N_\bullet} m_2(m_n(\crta{\F(a_i)}),\F(a_{n+1})) 
  \xrightarrow{\displaystyle m_2(\xi_n,1)} m_2(\F(m_n(a_i)),\F(a_{n+1}))   \\
	\xrightarrow{\displaystyle \xi_2}	 \F(m_2(m_n(\crta{a_i}),a_{n+1})) \xrightarrow{\displaystyle \F((a'^\M_\bullet)^*)} \F(m_{n+1}(\crta{a_i}))
		\end{multline*}

and
\begin{multline*}
m_{n+1}(\crta{\F(a_i)})\xrightarrow{\displaystyle a''^\N_\bullet} m_2(\F(a_1),m_n(\F(a_2),...,\F(a_{n+1}))) \\
  \xrightarrow{\displaystyle m_2(1,\xi_n)} m_2(\F(a_1),\F(m_n(a_2,...,a_{n+1})))   \\
	\xrightarrow{\displaystyle \xi_2}	 \F(m_2( a_1,m_n(a_2,...,a_{n+1})) \xrightarrow{\displaystyle \F((a''^\M_\bullet)^*)} \F(m_{n+1}(\crta{a_i}))
		\end{multline*}
		relating  $\xi_{n+1}$ with $\xi_n$ and $\xi_2$, where $a^{\M,i}_\bullet,a'^\M_\bullet,a''^\M_\bullet$ are the obvious versions of the arrow $a_{\crta{a_i}}$ in \rmref{j and j'} and $a^*$ denotes a quasi-inverse, and $\xi^i_2$ denotes that $\xi_2$ is applied at the position $i$ of the $n$-tuple in question. 
\item For any $n\in\Nn_{\geq 3}$ there is a unique invertible 2-cell 
$$\xi_{n+1}\Rightarrow \F(a^\M_\bullet)\circ \xi_2\circ m_2(1,\xi_2^{i_1})\circ m_3(1,\xi_2^{i_2},1)\circ m_4(1,\xi_2^{i_3},1)\circ...\circ m_n(1,\xi_2^{i_{n-1}},1)
  \circ (a^\N_\bullet)^*$$ 
	where 
	$\xi_2^{i_{l-1}}$ denotes that $\xi_2$ is applied at the position $i_{l-1}$ in the $l$-tuple in question. 
\item For $\xi_n^L$ and $\xi_n^R$ as in part 1) there are unique invertible 2-cells $\xi_n^L\Rightarrow \xi\comp (\xi 1)\circ ...\circ(\xi 1...1)$ and 
$\xi_n^R\Rightarrow \xi\comp (1\xi)\circ ...\circ(1...1\xi)$. 
\end{enumerate}
\end{prop}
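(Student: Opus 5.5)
The whole proposition rests on one mechanism, the universal property of bi-representability in $\N$, in the form of \rmref{unique inv 2-cell}. Two unary 1-cells $\zeta_1,\zeta_2: m(\F(a_1),\dots,\F(a_n))\to c$ may both come with invertible 2-cells $\sigma_k:\zeta_k\circ j^\N\Rightarrow \phi$ to a common multimap $\phi$. Then there is a unique invertible 2-cell $\zeta_1\Rightarrow\zeta_2$, namely $(\sigma_2^\dagger)^{-1}\sigma_1^\dagger$, compatible with $\sigma_1,\sigma_2$. So for each item I will exhibit a common target multimap $\phi$ and construct invertible 2-cells from the precomposites of both sides with the relevant universal arrow into $\phi$. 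Uniqueness then comes for free.

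For part 1, take $\phi=\F(j^L_3)$, where $j^L_3=j\langle j,1\rangle$. The 1-cell $\xi^L_3$ comes with $\sigma^L:\xi^L_3\, j^L_{\F}\Rightarrow \F(j^L_3)$ by \prref{ksi}. For the other side, $\xi\circ(\xi 1)$ precomposed with $j\langle j,1\rangle$, I paste the 2-cells drawn in \equref{sigma-1} from left to right:
\begin{itemize}
\item $\Epsilon$, defining $m(\xi,1)$, to turn $(\xi 1)\,j$ into $j\langle \xi,1\rangle$;
\item $\langle\sigma,1\rangle$, to replace $\xi\, j$ by $\F(j)$ in the first slot;
\item $\sigma$ for $\xi_2$ at $(\F(xy),\F(z))$;
\item finally $\F^2$ together with $\F^0$ (for the identity in the second slot), to pass from $\F(j)\langle\F(j),\F(1)\rangle$ to $\F(j\langle j,1\rangle)$.
\end{itemize}
All these cells are invertible, so the universal property gives the unique invertible cell $\xi^L_3\Rightarrow\xi(\xi1)$. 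The case $\xi^R_3\Rightarrow\xi(1\xi)$ is symmetric.

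Part 2 follows the same recipe with $\phi=\F(j_{n+1})$. Each displayed codomain path is precomposed with $j^\N_{n+1}$ and mapped to $\F(j_{n+1})$ in four steps:
\begin{itemize}
\item use the cell $\tau$ of \equref{tau-a} for $a^{\N,i}_\bullet$ (respectively $a'^\N_\bullet$, $a''^\N_\bullet$), turning $a^{\N,i}_\bullet j_{n+1}$ into $j_n\langle 1,j_2,1\rangle$;
\item apply $\Epsilon_{m_n(1,\xi_2^i,1)}$, then $\langle 1,\sigma_2,1\rangle$, then $\sigma_n$ for $\xi_n$;
\item apply $\F^2$ to combine $\F(j_n)\langle 1,\F(j_2),1\rangle$ into $\F(j_n\langle 1,j_2,1\rangle)$;
\item apply $\F$ of the $\tau$-cell composed with the counit of the adjoint equivalence $a^{\M,i}_\bullet\dashv (a^{\M,i}_\bullet)^*$, so that $\F((a^{\M,i}_\bullet)^*)\F(j_n\langle1,j_2,1\rangle)\Rightarrow\F(j_{n+1})$, using $\F^2$ once more.
\end{itemize}
Comparing the result with $\sigma_{n+1}$ yields the unique invertible 2-cell. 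The constructions for the $a'$ and $a''$ paths are the same with $m_2$ in place of $m_n$.

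Part 3 follows by induction on $n$ from part 2, applied with the $i$-th variant at each stage. At each step I substitute the previously obtained invertible 2-cell for $\xi_n$ by whiskering. I then use \coref{path a_i} and the cells $\pi$ of \rmref{j and j'}(b) to collapse the composite of $\F(a^\M)$'s into a single $\F(a^\M_\bullet)$ and the $\N$-side rebracketings into $(a^\N_\bullet)^*$, modulo the unit and counit of the equivalences. Alternatively, and more cleanly, one can verify directly that both sides carry invertible 2-cells into $\F(j_{n+1})$ after precomposition with $j^\N_{n+1}$; uniqueness then follows immediately. Part 4 is the special case of part 3 where all rebracketings are trivial: with universal arrows $j^L_n$ and $j^R_n$, iterate part 1 (or part 2 with the $a'$, respectively $a''$, variant), with $a_\bullet$ equal to the identity.

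The main obstacle is bookkeeping rather than conceptual. The delicate points are the quasi-inverse $(a^{\M}_\bullet)^*$ in part 2, which requires the adjoint-equivalence counit and its interplay with $\F^2$, and the bracket-compatibility of the iterated substitutions in part 3. I would handle both by invoking the ``sensible compositions'' principle of \ssref{cliq}: any two such pastings between the same 1-cells out of a universal object coincide, so no coherence check beyond existence of some invertible pasting is needed.
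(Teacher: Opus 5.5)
Your proposal is correct and is essentially the paper's argument. Each 2-cell comes from the universal property of $j^\N$ by pasting $\sigma$, $\tau$, $\Epsilon$, $\F^2$, $\F^0$ (with $\F(\tau)$ and the equivalence data for $a^{\M}_\bullet$ in part 2), and part 4 is read off from part 3 or from iterating part 1. The paper proves part 3 by iterating part 2 and moving the $\N$-side rebracketings past the $m_k(1,\xi_2,1)$ using the canonical commutativity 2-cells of \deref{four commutativities}; your inductive sketch leaves this step implicit, but your ``direct'' alternative avoids it by building the pasting into $\F(j_{n+1})$ in one go.
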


\begin{proof}
The first part is clear. The second part is proved by manipulation with the following 
2-cells: $\sigma$ from \equref{j-xi}, $\tau$ from \equref{tau-a}, the 2-cell from \prref{delta}, the structure 2-cells $\F^2, \F^0$ of $\F$ and naturality 
2-cells. The third part follows by iterating the first 2-cell in the second part and noticing the following commutativity relations (up to an invertible 2-cell): 
\begin{itemize}
\item $a^{\N,j}_\bullet\circ m_n(1,\xi_2^i,1)\iso m_{n-1}(1,\xi_2^i,1)\circ a^{\N,j}_\bullet$ for $j$ different than $i,i-1$, 
\item $a^{\N,i-1}_\bullet \circ m_n(1,\xi_2^i,1)\iso m_{n-1}(1,m_2(1,\xi_2^i),1)\circ a^{\N,i-1}_\bullet$, and  
\item $a^{\N,i}_\bullet \circ m_n(1,\xi_2^i,1)\iso m_{n-1}(1,m_2(\xi_2^i,1),1)\circ a^{\N,i}_\bullet$ 
\end{itemize}  
of 1-cells acting on the object $m_n(\F(a_1),...,m_2(\F(a_i),\F(a_{i+1})),...,\F(a_{n+1}))$.  
The part 4) follows from 3) or by iterating 1). 
\qed\end{proof}

\begin{defn} \delabel{four commutativities}
The three invertible commutativity 2-cells from the above proof of part 3) and the obvious 
$$m_n(1,\xi_2^i,1)\circ m_n(1,\xi_2^j,1)\iso m_n(1,\xi_2^j,1)\circ m_n(1,\xi_2^i,1)$$
with $\vert j-i\vert\geq 2$ we will call {\em the four canonical commutativity 2-cells for $\xi_2$}. 
\end{defn}

\smallskip

Let us denote  
\begin{equation} \eqlabel{path xi}
path(\xi_{n+1}):=
\F(a^\M_\bullet)\circ \xi_2\circ m_2(1,\xi_2^{i_1})\circ m_3(1,\xi_2^{i_2},1)\circ m_4(1,\xi_2^{i_3},1)\circ...\circ m_n(1,\xi_2^{i_{n-1}},1)
  \circ (a^\N_\bullet)^*
	\end{equation} 
the codomain of the 2-cell in part 3) in the above proposition. In view of \coref{path a_i} we now have: 

\begin{cor} \colabel{path a_i}
Given two paths $a_{\crta{a_i}}, a_{\crta{a_i}}':m(a_1,...,a_n)\to m'(a_1,...,a_n)$ as in \rmref{j and j'}, the unique 2-cell 
$\F(a_{\crta{a_i}}) \xi_n\Rightarrow \xi'_n a_{\crta{\F(a_i)}}$ from \equref{delta_n za niz a_i} induces a unique invertible 2-cell
\begin{equation}\eqlabel{delta_n forma nizova}
path\big(\F(a'_{\crta{a_i}})\big) \circ path(\xi_n)\Rightarrow path(\xi'_n)\circ  a_{\crta{\F(a_i)}}.
\end{equation}
\end{cor}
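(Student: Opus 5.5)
The plan is to obtain the 2-cell \equref{delta_n forma nizova} as a vertical composite of three invertible 2-cells, each already available:
\[
path\big(\F(a'_{\crta{a_i}})\big)\circ path(\xi_n)\Rightarrow \F(a'_{\crta{a_i}})\circ\xi_n\Rightarrow \xi'_n\circ a_{\crta{\F(a_i)}}\Rightarrow path(\xi'_n)\circ a_{\crta{\F(a_i)}}.
\]
Uniqueness will then follow from the universal property, since all 1-cells involved have the universal object $m(\crta{\F(a_i)})$ as domain.

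The first arrow is the horizontal composite of two 2-cells. One is $\crta{\F^2}:path(\F(a'_{\crta{a_i}}))\Rightarrow\F(a'_{\crta{a_i}})$ from the earlier \coref{path a_i}. The other is the inverse of the unique invertible 2-cell $\xi_n\Rightarrow path(\xi_n)$ from \prref{lax mon mult}, part 3), with part 1) covering $n=3$. The second arrow is the 2-cell \equref{delta_n za niz a_i} for $n=3$, or \equref{delta_n za niz a_i'} for $n>3$, obtained in \prref{delta}. The third arrow is whiskered by $a_{\crta{\F(a_i)}}$: it is the unique invertible 2-cell $\xi'_n\Rightarrow path(\xi'_n)$ given by \prref{lax mon mult}, applied to the primed choice of universal objects $m'$ and arrows $j'_n$. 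Composing these three gives the existence part.

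For uniqueness I follow \rmref{unique inv 2-cell} and the clique principle of \ssref{cliq}. Both $path(\F(a'))\circ path(\xi_n)$ and $path(\xi'_n)\circ a_{\crta{\F(a_i)}}$ are 1-cells out of $m(\crta{\F(a_i)})$. Each comes with a canonical invertible 2-cell to $\F(j'_n)$ after precomposing with $j^\N_n$, built from $\sigma,\sigma',\tau,\F^2$ and the four canonical commutativity 2-cells of \deref{four commutativities}. By the universal property there is then exactly one invertible 2-cell between them compatible with these comparison 2-cells, so any sensible composite of the structure 2-cells agrees with the one constructed above.

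The main obstacle is checking that the composite is independent of the choices made. These include the bracketing order of the $\xi_2^{i_l}$ in $path(\xi_n)$, the choice of path $a'$, and the choice of binary coherence cells for $\F$. I would handle this by pasting each candidate composite with $\Id_{j^\N_n}$ and showing that the resulting 2-cell into $\F(j'_n)$ is the same for every choice. I expect this to reduce to the hexagon for $\F^2$ together with the defining equations \equref{sigma-1} of the 2-cells from \prref{lax mon mult}; once it holds, uniqueness of mediating 2-cells finishes the argument.
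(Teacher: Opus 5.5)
Your proposal is correct and is essentially the paper's own (implicit) argument. The paper derives this corollary ``in view of'' the preceding corollary: it takes the composite $\crta{\F^2}\circ 1$ followed by \equref{delta_n za niz a_i'}, then replaces $\xi_n$ and $\xi'_n$ by their paths via the invertible 2-cells of \prref{lax mon mult}, with uniqueness coming from the universal property of the domain $m(\crta{\F(a_i)})$; your extra independence-of-choices check is just the paper's ``sensible compositions'' principle made explicit.
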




We now may summarize our findings about pseudofunctors of bi-representable bi-multicategories from \ssref{fun-birep-bimult} as follows. 
To recapture, we remind the reader that 
the path of arrows $a_{a_1,...,a_n}:m(a_1,...,a_n)\to m'(a_1,...,a_n)$, 
the 2-cells $\pi:a_{\crta{a_i}}'\Rightarrow a_{\crta{a_i}}$ and 
$\mu_{x,m(-),z}: m(id_x,l_z)\circ a_{x, m(-), z}\Rightarrow m(r_x, id_z)$ for 1-cells  
$l_x:m(m(-),x)\to x$ and $r_x:m(x,m(-))\to x$, which are images by $\Omega_{-,x}$ and $\Omega_{x,-}$ of $id_x$, respectively, 
are as in \rmref{j and j'}.

\begin{prop} \prlabel{psf birep}
Let $\M$ and $\N$  be bi-representable bi-multicategories and $\F:\M\to\N$ a pseudofunctor on their underlying bi-multicategories. 
Then $\F$ induces: 
\begin{itemize}
\item a sequence of pseudonatural transformations of pseudofunctors $\xi_n: m(\F(-),..,\F(-))\to \F(m(-,..,-)): \M_n\to\N_1$ 
and a unary 1-cell $\xi_0: m(-)\to\F(m(-))$ in $\N$, as in \prref{ksi}; 
\item invertible modifications $\delta_n:\F(a_{\crta{(-)}}') \xi_n(\crta{(-)})\Rrightarrow \xi'_n(\crta{(-)}) a_{\crta{\F(-)}}$ for $n\geq 3$ 
%
so that 
for a list $a_1,...,a_{n+1}$ 
the 2-cell $\delta_{n+1}(a_1,...,a_{n+1})$ considered in the form of \equref{delta_n forma nizova} 
equals any sensible composition made of the 2-cells 
$$m(\Id_{id_{\F(a_1)}},\delta_n(a_2,...,a_{n+1})), \quad m(\delta_n(a_1,...,a_n), \Id_{id_{\F(a_{n+1})}}), \quad \delta_n(a_1,..,m_2(a_i,a_{i+1}),..,a_{n+1})$$ 
for $i\in\{1,...,n\}$, the four canonical commutativity 2-cells for $\xi_2$ (from \deref{four commutativities}), $\pi^\M$ and $\pi^\N$ from \equref{tau-a}, and 
naturalities of the transformation $\xi_2$ with respect to $a_{a_1,...,a_n}, a_{a_1,...,a_n}':m(a_1,...,a_n)\to m'(a_1,...,a_n)$, 
\item two additional invertible modifications 
$$\delta^L:\F(l_{-})\circ\xi_2\circ m(\xi_0,id_{-})\Rightarrow l_{\F(-)}\quad\text{and}\quad 
 \delta^R:\F(r_{-})\circ\xi_2\circ m(id_{-},\xi_0)\Rightarrow r_{\F(-)}$$
\end{itemize}
so that the modifications $\delta, \delta^L, \delta^R$ are related as follows:  
any sensible composition made of the 2-cells $\delta_3(x,m(-),y)$ considered in the form of \equref{delta_n forma nizova},  
$m(\Id_{id_{\F(x)}},\delta^L(y))$ and $m(\delta^R(x),\Id_{id_{\F(y)}})$, \, $\mu_{x,m(-),z}$, and 
naturalities of the transformation $\xi_2$ with respect to $l_y, r_x$ and of $a_{\bullet}$ with respect to $\xi_0$ equals the unique 2-cell 
existing by the universal property between 1-cells 
\begin{multline*}
m(\F(x),\F(y))\xrightarrow{\displaystyle \xi_2} \F(m(x,y)) \xrightarrow{\displaystyle \F(m(r^{-1}_x,1))} \F(m(m(x,m(-)),y)) \\
\xrightarrow{\displaystyle \F(a_{x,m(-),y})} \F(m(x,m(m(-),y)))  \xrightarrow{\displaystyle \F(m(1,l_y))} \F(m(x,y)) 
\end{multline*}
and $\xi_2$. 
\end{prop}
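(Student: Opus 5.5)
The plan is to assemble the three bullets from the results already established, and then derive every coherence relation from one principle. That principle is the uniqueness of mediating invertible 2-cells between 1-cells whose domain is a universal object (\rmref{unique inv 2-cell} and the summary after \rmref{j and j'}).

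\textbf{Step 1: the data.} The first bullet is exactly \prref{ksi}. The modifications $\delta_n$ for $n\geq 3$ come from \prref{delta} together with \prref{get modif delta for free}. They are rewritten in path form via \coref{path a_i} and part 3) of \prref{lax mon mult}, which give \equref{delta_n forma nizova}. The modifications $\delta^L,\delta^R$ are those of \prref{modifs delta L i R}, built from the 2-cells \equref{l i r}. So the existence part requires no new work.

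\textbf{Step 2: the relation for $\delta_{n+1}$.} Both sides of the claimed equality are invertible 2-cells between the same pair of 1-cells:
$$path\big(\F(a'_{\crta{a_i}})\big)\circ path(\xi_{n+1}) \quad\text{and}\quad path(\xi'_{n+1})\circ a_{\crta{\F(a_i)}},$$
both with domain the universal object $m(\F(a_1),\dots,\F(a_{n+1}))$ of $\N$. I will whisker each candidate composite with the universal arrow $j^\N_{n+1}$ and paste on the 2-cells $\sigma$, $\sigma'$, $\tau$, $\F^2$, $\F^0$, $\F(\tau)$. I then check that each side yields the same composite triangle $\frac{\F^2}{\F(\tau)}\diamond\sigma$ as in \prref{delta}.

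This check works piece by piece:
\begin{itemize}
\item Each $\delta_n$ factor satisfies its defining equation from \prref{delta} by construction.
\item The four canonical commutativity 2-cells, the cells $\pi^\M$ and $\pi^\N$, and the naturality cells of $\xi_2$ were themselves defined through, or are compatible with, the universal property.
\item The pseudofunctoriality of $m_n$ (\ssref{more data bi-rep}) lets me pass $m(\Id,\delta_n)$ and $m(\delta_n,\Id)$ through $j$ via $\Epsilon_{m(-)}$.
\end{itemize}
Uniqueness in \equref{sigma-dagger} then forces the two 2-cells to coincide. Since every sensible composite satisfies the same defining equation, all such composites agree, which is exactly the ``any sensible composition'' formulation.

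\textbf{Step 3: the unit relation.} I argue in the same way with target $\xi_2$ and domain $m(\F(x),\F(y))$. The displayed path $\F(m(1,l_y))\F(a_{x,m(-),y})\F(m(r_x^{-1},1))\xi_2$ and $\xi_2$ both admit composite triangles over $j^\N_{x,y}$ ending in $\sigma_{x,y}$. For the path, this uses $\F$ applied to the triangle obtained from $\mu_{x,m(-),z}$ of \rmref{j and j'} c). So there is a unique invertible 2-cell between them. Any sensible composite of $\delta_3(x,m(-),y)$, $m(\Id,\delta^L)$, $m(\delta^R,\Id)$, $\mu$ and the naturalities of $\xi_2$ and of $a_\bullet$ with respect to $\xi_0$ has this same composite triangle, and hence equals that unique cell.

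\textbf{Main obstacle.} I expect the hardest step to be the bookkeeping in Step 2. One must verify that whiskering by $j^\N$ genuinely converts each constituent into a cell obeying its defining equation. This is delicate for the naturality cells of $\xi_2$ with respect to $a_\bullet$, and for the commutativity cells, whose construction involved $\Epsilon_{m(\crta{f_i})}$ for non-identity $\crta{f_i}$. I would handle it by induction on $n$. The inductive step is proved for the path form of part 3) of \prref{lax mon mult}, reducing everything to the case $n=3$, where it is a finite check of prisms as in the proof of \prref{get modif delta for free}.
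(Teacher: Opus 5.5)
Your proposal follows the paper's proof: the data are taken from \prref{ksi}, \prref{delta}, \prref{get modif delta for free}, \coref{path a_i} and \prref{modifs delta L i R}, and both coherence conditions are then derived from the universal property, in the style of the proof of \prref{get modif delta for free}. Your explicit requirement that each composite be checked to satisfy the same defining equation after whiskering with $j^\N$, rather than merely sharing source and target 1-cells, is the correct reading of that uniqueness principle and makes the paper's one-line justification precise.
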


\medskip

In the above proposition it only remains to prove the two coherence conditions. They hold by the universal property, in the style of the proof 
of \prref{get modif delta for free}. 

\medskip

From now on for a pseudofunctor $\F:\M\to\N$ between the underlying bi-multicategories of bi-representable bi-multicategories $\M$ and $\N$ we will write 
$(\F, \xi, \delta)$, where $\xi$ stands for short for the pair $\xi=( (\xi_n)_{n\in\Nn_{\geq 2}}, \xi_0)$ consisting of a sequence of pseudonatural transformations 
 $\xi_n$ and the unary 1-cell $\xi_0$, and $\delta=((\delta_n)_{n\in\Nn_{\geq 3}}, \delta^L, \delta^R)$ stands for short for the invertible  modifications as in the above proposition.

\subsubsection{Transformations and modifications for bi-multicategories}

With the detailed proofs we have given so far, the reader should be convinced that the following results hold true. 
They concern the transformations and modifications between pseudofunctors of bi-multicategories $\M$ and $\N$, whereby$\M$ and $\N$ are bi-representable. 
The existence of the highlighted 2-cells follows from the universal property in all cases, and so do hold the identities between the involved 2-cells, 
since the domain of the 1-cell on which the compositions of the mentioned 2-cells act is a universal object. For future use we only record that the 2-cell 
$\Theta_n(\crta a)$ is defined as the unique 2-cell between the right-hand side compositions of 1-cells in the below diagrams existing by the universal 
property for the given 1-cell given in the left-hand side composition of 1-cells and the indicated 2-cells:
\begin{equation} \eqlabel{ind Theta_n}
	\scalebox{0.86}{
\bfig
 \putmorphism(330,450)(1,-1)[ `\F(m(a_1,...,a_n)))`\F(j_{\crta a})]{540}1l
\putmorphism(320,450)(1,0)[\F(a_1),...,\F(a_n) `m(\F(a_1),...,\F(a_n))`j_{\crta{\F(a_i)}}]{1200}1a
 \putmorphism(1500,460)(-1,-1)[\phantom{a_1,...,a_n} ``\xi_n^\F(\crta a)]{550}1r 
\put(830,240){\fbox{$\sigma^\F $}}

\putmorphism(260,450)(0,-1)[ `\G(a_1),...,\G(a_n)`\crta{\theta(a_i)}]{900}1l
\putmorphism(890,-90)(0,-1)[ `\G(m(a_1,...,a_n)))`\theta(m(\crta{a_i}))]{900}1r
\put(380,-270){\fbox{$\theta_{j_{\crta{a_i}}}$}}
 \putmorphism(250,-460)(1,-1)[ ``\G(j_{\crta a})]{540}1l

\efig}
=\quad
	\scalebox{0.86}{
\bfig
\putmorphism(320,450)(1,0)[\F(a_1),...,\F(a_n) `m(\F(a_1),...,\F(a_n))`j_{\crta{\F(a_i)}}]{1200}1a
\putmorphism(320,450)(0,-1)[ `\G(a_1),...,\G(a_n)`\crta{\theta(a_i)}]{400}1l
\putmorphism(1520,450)(0,-1)[ `m(\G(a_1),...,\G(a_n))`m(\crta{\theta(a_i)})]{400}1r
\putmorphism(320,0)(1,0)[\phantom{\F(a_1),...,\F(a_n)} `\phantom{m(\F(a_1),...,\F(a_n))}`j_{\crta{\G(a_i)}}]{1200}1a
\put(830,-230){\fbox{$\sigma^\G $}}
\put(720,300){\fbox{$\Epsilon_{m(\crta{\theta(a_i)})} $}}
 \putmorphism(330,0)(1,-1)[ `\G(m(a_1,...,a_n))).`\G(j_{\crta a})]{540}1l
 \putmorphism(1500,10)(-1,-1)[\phantom{A_1,...,A_n} ``\xi_n^\G(\crta a)]{550}1r 
\efig}
\end{equation}

\begin{prop} \prlabel{transf}
Let $\M$ and $\N$  be bi-representable bi-multicategories and $\F, \G:\M\to\N$  pseudofunctors on their underlying bi-multicategories. 
A pseudonatural transformation $\theta: \F\Rightarrow \G$ of pseudofunctors of bi-multicategories 
(indeed of pseudofunctors $(\F, \xi^\F, \delta^\F)\Rightarrow (\G, \xi^\G,\delta^\G)$) 
induces a sequence of invertible modifications $\Theta_n: \xi_n^\G(-) m(\crta{\theta_{-}})\Rrightarrow\theta_{m(-)}\xi_n^\F(-)$ (between horizontal 
compositions of pseudonatural transformations) for $n\in\Nn_{\geq 1}$ with 2-cell components 
$$
	\scalebox{0.86}{
\bfig
 \putmorphism(-150,470)(1,0)[m(\F(a_1),..,\F(a_n))`m(\G(a_1),..,\G(a_n))`m(\theta_{a_1},...,\theta_{a_n})]{1600}1a
\putmorphism(-160,470)(0,-1)[\phantom{Y_2}` ` \xi_n^\F(\crta a)]{380}1l
\putmorphism(1500,470)(0,-1)[\phantom{Y_2}` `  \xi_n^\G(\crta a)]{380}1r
 \putmorphism(-150,70)(1,0)[\F(m(a_1,..,a_n))`\G(m(a_1,..,a_n))` \theta_{m(\crta a)}]{1600}1b
\put(460,240){\fbox{$\SWarrow \Theta_n(\crta a)$}}
\efig}
$$
meaning that for every list of arrows $\crta f:\crta a\to\crta b$ 
the following identities hold  
$$\scalebox{0.8}{
\bfig
 \putmorphism(450,700)(1,0)[m(\crta{\F(b)}) `m(\crta{\G(b)}) `m(\crta{\theta_{b}})]{680}1a
 \putmorphism(1160,700)(1,0)[\phantom{A\ot B}`\G(m(\crta b)) ` \xi_n^\G(\crta b)]{660}1a

 \putmorphism(-190,250)(1,0)[m(\crta{\F(a)}) `m(\crta{\F(b)}) ` m(\crta{\F(f)})]{640}1a
 \putmorphism(490,250)(1,0)[\phantom{A\ot B}`\F(m(\crta b))` \xi_n^\F(\crta b)]{660}1a
 \putmorphism(1200,250)(1,0)[\phantom{A\ot B}`\G(m(\crta b)) ` \theta_{m(\crta b)}]{600}1a

\putmorphism(450,700)(0,-1)[\phantom{Y_2}``=]{450}1r
\putmorphism(1800,700)(0,-1)[\phantom{Y_2}``=]{450}1r
\put(1020,450){\fbox{$ \Theta(\crta b)$}}

 \putmorphism(-190,-200)(1,0)[m(\crta{\F(a)})`\F(m(\crta{a})) `\xi_n^\F(\crta a)]{640}1a
 \putmorphism(530,-200)(1,0)[\phantom{A'\ot B'}`\F(m(\crta b)) `\F(m(\crta f))]{630}1a
 \putmorphism(1200,-200)(1,0)[\phantom{A\ot B}`\G(m(\crta b)) ` \theta_{m(\crta b)}]{630}1a

\putmorphism(-220,250)(0,-1)[\phantom{Y_2}``=]{450}1l
\putmorphism(1120,250)(0,-1)[\phantom{Y_3}``=]{450}1r
\put(310,0){\fbox{$ \xi_n^\F(\crta f)$}}

\putmorphism(450,-200)(0,-1)[\phantom{Y_2}``=]{450}1r
\putmorphism(1800,-200)(0,-1)[\phantom{Y_2}``=]{450}1r

 \putmorphism(440,-650)(1,0)[\F(m(\crta{a})) `\G(m(\crta a)) `\theta_{m(\crta a)}]{720}1a
 \putmorphism(1340,-650)(1,0)[ `\G(m(\crta b)) `\G(m(\crta f))]{500}1a
\put(940,-440){\fbox{$ \theta(m(\crta f))$}}
\efig}
=
\scalebox{0.8}{  
\bfig

 \putmorphism(-190,700)(1,0)[m(\crta{\F(a)}) `  m(\crta{\F(b)})` m(\crta{\F(f)}) ]{640}1a
 \putmorphism(490,700)(1,0)[\phantom{A\ot B}`m(\crta{\G(b)}) `m(\crta{\theta_{b}})]{660}1a

 \putmorphism(-190,250)(1,0)[m(\crta{\F(a)})`m(\crta{\G(a)})`m(\crta{\theta_{a}})]{660}1a
 \putmorphism(550,250)(1,0)[\phantom{A\ot B}`m(\crta{\G(b)}) `m(\crta{\G(f)})]{680}1a
 \putmorphism(1240,250)(1,0)[\phantom{A'\ot B'}`\G(m(\crta b)) `\xi_n^\G(\crta b)]{650}1a 

\putmorphism(-220,700)(0,-1)[\phantom{Y_2}``=]{450}1r
\putmorphism(1100,700)(0,-1)[\phantom{Y_2}``=]{450}1r
\put(350,450){\fbox{$m(\crta{\theta_{f}})$}}
\put(1000,0){\fbox{$\xi_n^\G(\crta f)$}}

 \putmorphism(-150,-200)(1,0)[m(\crta{\F(a)})`m(\crta{\G(a)})` m(\crta{\theta_{a}})]{600}1a
 \putmorphism(460,-200)(1,0)[\phantom{A''\ot B'}` \G(m(\crta{a})) `\xi_n^\G(\crta a)]{680}1a
 \putmorphism(1160,-200)(1,0)[\phantom{A''\ot B'}`\G(m(\crta b)) ` \G(m(\crta f))]{660}1a

\putmorphism(450,250)(0,-1)[\phantom{Y_2}``=]{450}1l
\putmorphism(1750,250)(0,-1)[\phantom{Y_2}``=]{450}1r
\putmorphism(-180,-200)(0,-1)[\phantom{Y_2}``=]{450}1r
\putmorphism(1100,-200)(0,-1)[\phantom{Y_2}``=]{450}1r
 \putmorphism(-150,-650)(1,0)[m(\crta{\F(a)})`\F(m(\crta{a}))` \xi_n^\F(\crta a)]{600}1a
 \putmorphism(630,-650)(1,0)[`\G(m(\crta{a}))`\theta_{m(\crta a)}]{540}1a
\put(320,-440){\fbox{$ \Theta(\crta a)$}} 
\efig} \vspace{-0,2cm}
$$
whereby in writing the 2-cell $ m(\crta{\theta_{f}})$ on the right top we omit writing the laxity and colaxity pseudofunctor 
structure of $m(-)$. 
For $n=0$ and the empty list, $\Theta_n(\crta a)$ yields a 2-cell 
$$\begin{tikzcd}
	I^\N & I^\N & \\
	{\mathcal F(I^\M)} & {\mathcal G(I^\M)} \\
	{} & {} & {} \\
	\\
	& {}
	\arrow[""{name=0, anchor=center, inner sep=0}, "{id_I}", from=1-1, to=1-2]
	\arrow["{\xi_0^{\mathcal F}}"', from=1-1, to=2-1]
	\arrow["{\xi_0^{\mathcal G}}", from=1-2, to=2-2]
	\arrow[""{name=1, anchor=center, inner sep=0}, "{\theta_I}"', from=2-1, to=2-2]
	\arrow["{\Theta_0}"{description}, draw=none, from=0, to=1]
\end{tikzcd} \vspace{-2,5cm} $$
where $I=m(-)$ in both cases. The above 2-cell components satisfy the following properties. 
\begin{enumerate}
\item ({\em factorization}) They factor as the compositions of 2-cells:  
\begin{itemize}
\item 
$$
	\scalebox{0.86}{
\bfig
  \putmorphism(-850,970)(1,0)[m_n(\crta{\F(a_i)})`m_n(\crta{\G(a_i)})` m_n(\crta{\theta_{a_i}})]{3900}1a
  \putmorphism(-890,950)(2,-1)[``a]{880}1r
  \putmorphism(-1000,970)(0,-1)[``]{1950}1r
  \putmorphism(-1000,920)(0,-1)[``\xi^\F_n]{1900}0r
\putmorphism(0,530)(1,0)[``   m_{n-1}(\theta_{a_1},.., m_2(\theta_{a_i},\theta_{a_{i+1}}),..,\theta_{a_n})]{2200}0a
 \putmorphism(0,470)(1,0)[m_{n-1}(\F(a_1),..,m_2(\F(a_i),\F(a_{i+1})),..\F(a_n))`m_{n-1}(\G(a_1),...,m_2(\G(a_i),\G(a_{i+1})),...\G(a_n))`
   ]{2200}1a
\putmorphism(0,470)(0,-1)[\phantom{Y_2}` ` m_{n-1}(1,\xi_2,1)]{380}1l
\putmorphism(2050,470)(0,-1)[\phantom{Y_2}` `  m_{n-1}(1,\xi_2',1)]{380}1r  
 \putmorphism(0,70)(1,0)[m_{n-1}(\F(a_1),..,\F(m_2(a_i,a_{i+1})),..\F(a_n))`m_{n-1}(\G(a_1),..,\G(m_2(a_i,a_{i+1})),..\G(a_n))` ]{2200}1b
 \putmorphism(0,20)(1,0)[`` m_{n-1}(\crta{\theta_{m(\crta{a_{<}})}}, \theta(m_2(a_i,a_{i+1})),  \crta{\theta_{m(\crta{a_{>}})}})]{2200}0b

\put(580,260){\fbox{$m_{n-1}(1,\Theta_2(a_i,a_{i+1}),1)$}}
 \putmorphism(2050,70)(0,-1)[\phantom{\F(m(a_1,..,a_n))} ` ` \xi_{n-1}^\G]{560}1r
 \putmorphism(0,70)(0,-1)[\phantom{\F(m(a_1,..,a_n))} `\F(m_{n-1}(a_1,..m_2(a_i,a_{i+1}),..,a_n))` \xi_{n-1}^\F]{560}1l  
 \putmorphism(-170,-500)(1,0)[\phantom{m_{n-1}(\F(a_1),..,\F(m_2(a_i,a_{i+1})),..\F(a_n))}`\G(m_{n-1}(a_1,..m_2(a_i,a_{i+1}),..,a_n))` 
   \theta_{(a_1,..m_2(a_i,a_{i+1}),..,a_n)}]{2270}1b
\put(460,-320){\fbox{$\Theta_{n-1}(a_1,..m_2(a_i,a_{i+1}),..,a_n)$}}
 \putmorphism(-850,-970)(1,0)[\F(m_n(\crta{a}))`\G(m_n(\crta{a}))` \theta_{m_n(\crta{a})}]{3900}1b
\put(1000,-810){\fbox{$\theta_{a^*}^{-1}$}}
 \putmorphism(100,-480)(-2,-1)[`` \F(a^*)]{1000}1l  
 \putmorphism(1950,-510)(2,-1)[\phantom{\F(m(a_1,..,a_n))} ` ` \G(a^*)]{980}1r

\putmorphism(3100,950)(-2,-1)[``a]{880}1l 
\put(1000,790){\fbox{$a_{\crta{\theta_{a_i}}}$}}

\putmorphism(3100,970)(0,-1)[``]{1950}1l
\putmorphism(3100,920)(0,-1)[``\xi^\G_n]{1950}0l
\put(2630,-260){\fbox{$\pref{lax mon mult}$}}  
\put(-660,-260){\fbox{$\pref{lax mon mult}$}}
\efig}
$$
for $j=1,...,n-1$, where $a^*$ is a quasi-inverse of the associativity $a$, 
\item by iteration they also factor through compositions of $n-1$ \, 2-cells 
\begin{equation} \eqlabel{decomposition} 
m_{n-1}(\bullet), ..., m_2(\bullet), \Theta_2(a_j',a'_{j+1}),
\end{equation}
where the places $\bullet$ depend on the 2-cell $\Theta_2(a_j',a'_{j+1})$ placed at different positions, for some binary list of objects 
$a_j',a'_{j+1}$ made of the elements of the list $\crta a$ (any such factorization involves either $m_2(\bullet,\Theta_2(a_j',a'_{j+1}))$ 
or $m_2(\Theta_2(a_j',a'_{j+1}),\bullet)$). 
\end{itemize} 
By the universal property all the above factorizations are equal between each other. 
\item ({\em associativity}) 
For another universal object $m'(\crta{a})$ and the induced structure transformations $\xi_n^{'\F}$ and $\xi_n^{'\G}$ for $\F$ and $\G$, respectively, 
there are induced modifications $\Theta_n'$. 
For the associativity $a:m(\crta{a})\to m'(\crta{a})$ the 2-cell components $\Theta_n(\crta a)$ and $\Theta_n'(\crta a)$ obey the associativity condition 
$$\scalebox{0.8}{
\bfig
 \putmorphism(450,700)(1,0)[\F(m(\crta{a})) `\F(m'(\crta a)) `\F(a_{\crta a})]{680}1a
 \putmorphism(1180,700)(1,0)[\phantom{A\ot B}`\G(m'(\crta a)) ` \theta_{m'(\crta a)}]{640}1a

 \putmorphism(-190,250)(1,0)[m(\crta{\F(a_i)}) `\F(m(\crta{a}))` \xi_n^\F]{640}1a
 \putmorphism(490,250)(1,0)[\phantom{A\ot B}`\G(m(\crta a))` \theta_{m(\crta a)}]{660}1a
 \putmorphism(1200,250)(1,0)[\phantom{A\ot B}`\G(m'(\crta a)) ` \G(a_{\crta a})]{600}1a

\putmorphism(450,700)(0,-1)[\phantom{Y_2}``=]{450}1r
\putmorphism(1800,700)(0,-1)[\phantom{Y_2}``=]{450}1r
\put(1050,450){\fbox{$ \theta_{a_{\crta a}}$}}

 \putmorphism(-190,-200)(1,0)[m(\crta{\F(a_i)})`m(\crta{\G(a_i)})  `m(\crta{\theta_{a_i}})]{640}1a
 \putmorphism(480,-200)(1,0)[\phantom{A'\ot B'}`\G(m(\crta a)) `\xi_n^\G]{680}1a
 \putmorphism(1200,-200)(1,0)[\phantom{A\ot B}`\G(m'(\crta a)) ` \G(a_{\crta a})]{630}1a

\putmorphism(-220,250)(0,-1)[\phantom{Y_2}``=]{450}1l
\putmorphism(1120,250)(0,-1)[\phantom{Y_3}``=]{450}1r
\put(310,0){\fbox{$ \Theta_n^{-1}$}}

\putmorphism(450,-200)(0,-1)[\phantom{Y_2}``=]{450}1r
\putmorphism(1800,-200)(0,-1)[\phantom{Y_2}``=]{450}1r

 \putmorphism(440,-650)(1,0)[m(\crta{\G(a_i)}) `m'(\crta{\G(a_i)}) `a_{\crta{\G(a_i)}}]{720}1a
 \putmorphism(1340,-650)(1,0)[ `\G(m'(\crta a)) `\xi_n^{'\G}]{500}1a
\put(1000,-440){\fbox{$\delta_n^\G $}}
\efig}
=
\scalebox{0.8}{  
\bfig

 \putmorphism(-190,700)(1,0)[ m(\crta{\F(a_i)})`\F(m(\crta{a}))` \xi_n^\F]{640}1a
 \putmorphism(490,700)(1,0)[\phantom{A\ot B}`m'(\crta{\G(a_i)}) `\F(a_{\crta a})]{660}1a

 \putmorphism(-190,250)(1,0)[m(\crta{\F(a_i)})`m'(\crta{\F(a_i)})`a_{\crta{\F(a_i)}}]{660}1a
 \putmorphism(550,250)(1,0)[\phantom{A\ot B}`\F(m'(\crta{a})) `\xi_n^{'\F}]{680}1a
 \putmorphism(1270,250)(1,0)[\phantom{A'\ot B'}`\G(m'(\crta a)) `\theta_{m'(\crta a)}]{620}1a 

\putmorphism(-220,700)(0,-1)[\phantom{Y_2}``=]{450}1r
\putmorphism(1170,700)(0,-1)[\phantom{Y_2}``=]{450}1r
\put(350,490){\fbox{$\delta_n^\F$}}
\put(1000,0){\fbox{$\Theta_n'^{-1}$}}

 \putmorphism(-190,-200)(1,0)[m(\crta{\F(a_i)})`m'(\crta{\F(a_i)})` a_{\crta{\F(a_i)}}]{640}1a
 \putmorphism(460,-200)(1,0)[\phantom{A''\ot B'}` m'(\crta{\G(a_i)}) `m'(\crta{\theta_{a_i}})]{680}1a
 \putmorphism(1160,-200)(1,0)[\phantom{A''\ot B'}`\G(m'(\crta a)) ` \xi_n^{'\G}]{660}1a

\putmorphism(450,250)(0,-1)[\phantom{Y_2}``=]{450}1l
\putmorphism(1750,250)(0,-1)[\phantom{Y_2}``=]{450}1r
\putmorphism(-180,-200)(0,-1)[\phantom{Y_2}``=]{450}1r
\putmorphism(1100,-200)(0,-1)[\phantom{Y_2}``=]{450}1r
 \putmorphism(-190,-650)(1,0)[m(\crta{\F(a_i)}) `  m(\crta{\G(a_i)})` m(\crta{\theta_{a_i}})]{620}1a
 \putmorphism(610,-650)(1,0)[`\F(m'(\crta a)). `a_{\crta{\G(a_i)}}]{560}1a
\put(320,-440){\fbox{$a_{\crta{\theta_{a_i}}} $}} 
\efig} \vspace{-0,2cm}
$$
\item ({\em unitality}) 
 For any object $a$ they obey the following unitality conditions: the composites  
$$\begin{tikzcd}[column sep=small]
	{m(I,\mathcal F(a))} & {m(I,\mathcal G(a))} \\
	{m(\mathcal F(I),\mathcal F(a))} & {m(\mathcal G(I),\mathcal G(a))} \\
	{\mathcal F(m(I,a))} & {\mathcal G(m(I,a))} \\
	{\mathcal F(a)} & {\mathcal G(a)}
	\arrow[""{name=0, anchor=center, inner sep=0}, "{m(I,\theta(a))}", from=1-1, to=1-2]
	\arrow["{m(\xi_0^{\mathcal F},1)}"', from=1-1, to=2-1]
	\arrow["{m(\xi_0^{\mathcal G},1)}", from=1-2, to=2-2]
	\arrow[""{name=1, anchor=center, inner sep=0}, "{m(\theta_I,\theta_a)}", from=2-1, to=2-2]
	\arrow["{\xi_2^{\mathcal F}}"', from=2-1, to=3-1]
	\arrow["{\xi_2^{\mathcal G}}", from=2-2, to=3-2]
	\arrow[""{name=2, anchor=center, inner sep=0}, "{\theta_{m(I,a)}}"', from=3-1, to=3-2]
	\arrow["{\mathcal F(l_a)}"', from=3-1, to=4-1]
	\arrow["{\mathcal G(l_a)}", from=3-2, to=4-2]
	\arrow[""{name=3, anchor=center, inner sep=0}, "{\theta_a}"', from=4-1, to=4-2]
	\arrow["{m(\Theta_0,1_{\theta_a})}"{description, pos=0.4}, draw=none, from=0, to=1]
	\arrow["{\Theta_2(I,a)}"{description}, draw=none, from=1, to=2]
	\arrow["{\theta_{l_a}}"{description, pos=0.6}, draw=none, from=2, to=3]
\end{tikzcd} 
\quad\text{and} \qquad 
\begin{tikzcd}[column sep=small]
	{m(\mathcal F(a),I)} & {m(\mathcal G(a),I)} \\
	{m(\mathcal F(a),\mathcal F(I))} & {m(\mathcal G(a),\mathcal G(I))} \\
	{\mathcal F(m(a,I))} & {\mathcal G(m(a,I))} \\
	{\mathcal F(a)} & {\mathcal G(a)}
	\arrow[""{name=0, anchor=center, inner sep=0}, "{m(\theta(a),I)}", from=1-1, to=1-2]
	\arrow["{m(1,\xi_0^{\mathcal F})}"', from=1-1, to=2-1]
	\arrow["{m(1,\xi_0^{\mathcal G})}", from=1-2, to=2-2]
	\arrow[""{name=1, anchor=center, inner sep=0}, "{m(\theta_a,\theta_I)}"', from=2-1, to=2-2]
	\arrow["{\xi_2^{\mathcal F}}"', from=2-1, to=3-1]
	\arrow["{\xi_2^{\mathcal G}}", from=2-2, to=3-2]
	\arrow[""{name=2, anchor=center, inner sep=0}, "{\theta_{m(a,I)}}"', from=3-1, to=3-2]
	\arrow["{\mathcal F(r_a)}"', from=3-1, to=4-1]
	\arrow["{\mathcal G(r_a)}", from=3-2, to=4-2]
	\arrow[""{name=3, anchor=center, inner sep=0}, "{\theta_a}"', from=4-1, to=4-2]
	\arrow["{m(1_{\theta_a},\Theta_0)}"{description}, draw=none, from=0, to=1]
	\arrow["{\Theta_2(a,I)}"{description, pos=0.6}, draw=none, from=1, to=2]
	\arrow["{\theta_{r_a}}"{description, pos=0.6}, draw=none, from=2, to=3]
\end{tikzcd}$$ 
should be equal to the composites 
$$\begin{tikzcd}[column sep=small]
	{m(I,\mathcal F(a))} & {m(I,\mathcal F(a))} & {m(I,\mathcal G(a))} & {m(I,\mathcal G(a))} \\
	{m(\mathcal F(I),\mathcal F(a))} &&& {m(\mathcal G(I),\mathcal G(a))} \\
	{\mathcal F(m(I,a))} &&& {\mathcal G(m(I,a))} \\
	{\mathcal F(a)} & {\mathcal F(a)} & {\mathcal G(a)} & {\mathcal G(a)}
	\arrow[""{name=0, anchor=center, inner sep=0}, "{=}", from=1-1, to=1-2]
	\arrow["{m(\xi_0^{\mathcal F},1)}"', from=1-1, to=2-1]
	\arrow[""{name=1, anchor=center, inner sep=0}, "{m(1_I,\theta(a))}", from=1-2, to=1-3]
	\arrow["{l_{\mathcal F(a)}}"', from=1-2, to=4-2]
	\arrow["{=}", from=1-3, to=1-4]
	\arrow[""{name=2, anchor=center, inner sep=0}, "{l_{\mathcal G(a)}}", from=1-3, to=4-3]
	\arrow["{m(\xi_0^{\mathcal G},1)}", from=1-4, to=2-4]
	\arrow["{\xi_2^{\mathcal F}}"', from=2-1, to=3-1]
	\arrow[""{name=3, anchor=center, inner sep=0}, "{\xi_2^{\mathcal G}}", from=2-4, to=3-4]
	\arrow["{\mathcal F(l_a)}"', from=3-1, to=4-1]
	\arrow["{\mathcal G(l_a)}", from=3-4, to=4-4]
	\arrow[""{name=4, anchor=center, inner sep=0}, "{=}"', from=4-1, to=4-2]
	\arrow[""{name=5, anchor=center, inner sep=0}, "{\theta_a}"', from=4-2, to=4-3]
	\arrow["{=}"', from=4-3, to=4-4]
	\arrow["{\delta^{L,\mathcal F}_a}"{description}, draw=none, from=0, to=4]
	\arrow["{\delta^{L,\mathcal G}_a}"{description}, draw=none, from=2, to=3]
	\arrow["{l_{\theta_a}}"{description}, draw=none, from=5, to=1]
\end{tikzcd}$$ 
and 
\[\begin{tikzcd}[column sep=small]
	{m(\mathcal F(a),I)} & {m(\mathcal F(a),I)} & {m(\mathcal G(a),I)} & {m(\mathcal G(a),I)} \\
	{m(\mathcal F(a),\mathcal F(I))} &&& {m(\mathcal G(a),\mathcal G(I))} \\
	{\mathcal F(m(a,I))} &&& {\mathcal G(m(a,I))} \\
	{\mathcal F(a)} & {\mathcal F(a)} & {\mathcal G(a)} & {\mathcal G(a)}
	\arrow["{=}", from=1-1, to=1-2]
	\arrow["{m(1,\xi_0^{\mathcal F})}"', from=1-1, to=2-1]
	\arrow[""{name=0, anchor=center, inner sep=0}, "{m(\theta(a),1_I)}", from=1-2, to=1-3]
	\arrow[""{name=1, anchor=center, inner sep=0}, "{r_{\mathcal F(a)}}"', from=1-2, to=4-2]
	\arrow["{=}", from=1-3, to=1-4]
	\arrow[""{name=2, anchor=center, inner sep=0}, "{r_{\mathcal G(a)}}", from=1-3, to=4-3]
	\arrow["{m(1,\xi_0^{\mathcal G})}", from=1-4, to=2-4]
	\arrow[""{name=3, anchor=center, inner sep=0}, "{\xi_2^{\mathcal F}}"', from=2-1, to=3-1]
	\arrow[""{name=4, anchor=center, inner sep=0}, "{\xi_2^{\mathcal G}}", from=2-4, to=3-4]
	\arrow["{\mathcal F(r_a)}"', from=3-1, to=4-1]
	\arrow["{\mathcal G(r_a)}", from=3-4, to=4-4]
	\arrow["{=}"', from=4-1, to=4-2]
	\arrow[""{name=5, anchor=center, inner sep=0}, "{\theta_a}"', from=4-2, to=4-3]
	\arrow["{=}"', from=4-3, to=4-4]
	\arrow["{r_{\theta_a}}"{description}, draw=none, from=0, to=5]
	\arrow["{\delta^{R,\mathcal G}_a}"{description}, draw=none, from=2, to=4]
	\arrow["{\delta^{R,\mathcal F}_a}"{description}, draw=none, from=3, to=1]
\end{tikzcd}\]
respectively.  
\end{enumerate}
\end{prop}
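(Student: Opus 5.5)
The plan is to obtain every 2-cell in the statement from the universal property of the strong universal arrows $j^\N$ of $\N$, as in \rmref{unique inv 2-cell}, and to derive every identity from the uniqueness of mediating 2-cells between 1-cells whose domain is a universal object. This is the strategy of \prref{ksi} and \prref{get modif delta for free}.

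First, I construct $\Theta_n(\crta a)$. Pre-composing the 1-cell $\theta_{m(\crta a)}\xi_n^\F(\crta a)$ with $j_{\crta{\F(a_i)}}$, the pasting $\theta_{j_{\crta a}}\diamond\sigma^\F$ of \equref{ind Theta_n} gives an invertible 2-cell to $\G(j_{\crta a})\circ\langle\crta{\theta(a_i)}\rangle$. Pre-composing $\xi_n^\G(\crta a)m(\crta{\theta_{a_i}})$ with $j_{\crta{\F(a_i)}}$, the pasting $\Epsilon_{m(\crta{\theta(a_i)})}\diamond\sigma^\G$ gives an invertible 2-cell to the same multimap. Both 1-cells are therefore mediating cells for one multimap, and \rmref{unique inv 2-cell} produces a unique invertible $\Theta_n(\crta a)$ relating the two pastings. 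For $n=0$ the same argument with $j_{\langle\rangle}$ gives $\Theta_0$.

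Second, I verify the modification axiom for a list $\crta f:\crta a\to\crta b$. Both sides are 2-cells between the composite 1-cells $\theta_{m(\crta b)}\F(m(\crta f))\xi^\F_n(\crta a)$ and $\xi_n^\G(\crta b)m(\crta{\G(f)})m(\crta{\theta_a})$, whose common domain is $m(\crta{\F(a)})$. I pre-compose each side with $j_{\crta{\F(a_i)}}$ and paste with the defining cells: $\sigma$, $\Epsilon_{m(\crta f)}$, \equref{transf xi} for $\xi^\F_n(\crta f)$ and $\xi^\G_n(\crta f)$, and the naturality cell $\theta_{\crta f}$ of $\theta$ on multimaps. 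Each side then becomes a 2-cell into $\G(j_{\crta b})\circ\langle\crta{\G(f_i)}\rangle\circ\langle\crta{\theta_{a_i}}\rangle$. It must be checked that both pastings agree there, using the modification/transformation axioms of $\theta$ as a transformation of bi-multicategories (compatibility of $\theta_{j}$ with $\theta_{\crta f}$). Once they agree, uniqueness in \equref{sigma-dagger} gives equality. The same pattern gives:

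\begin{enumerate}[(a)]
\item Factorization: paste $\Theta_2$, $\Theta_{n-1}$, $a_{\crta{\theta_{a_i}}}$, $\theta_{a^*}^{-1}$ and the cells of \prref{lax mon mult}, and compare both sides after pre-composing with the universal arrow. Iterating this gives \equref{decomposition}, and all factorizations coincide by uniqueness.
\item Associativity: compare via $j'$ using $\tau$, $\delta^\F_n$, $\delta^\G_n$ and $\theta_{a_{\crta a}}$.
\item Unitality: compare on $m(I,\F(a))$, respectively $m(\F(a),I)$, using $\delta^{L}$, $\delta^{R}$ from \prref{modifs delta L i R} and $l_{\theta_a}$, $r_{\theta_a}$.
\end{enumerate}

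The main obstacle is that uniqueness only identifies 2-cells once both sides, whiskered by the universal arrow, are known to equal the \emph{same} 2-cell into the multimap. This reduction genuinely uses the axioms of $\theta$ and $\F,\G$ as bi-multicategory morphisms, namely the compatibility of $\theta_j$ with $\F^2$, $\G^2$ and substitution. It is heaviest in the factorization and associativity items, where I would first reduce to $n=3$ and then induct, bookkeeping the prism faces as in the proof of \prref{get modif delta for free}.
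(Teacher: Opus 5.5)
Your proposal follows the paper's own argument. The paper likewise defines $\Theta_n(\overline a)$ as the unique invertible mediating 2-cell of \equref{ind Theta_n}, comparing $\theta_{j}\diamond\sigma^{\mathcal F}$ with $\Epsilon_{m(\overline{\theta(a_i)})}\diamond\sigma^{\mathcal G}$, and then simply asserts that every listed identity holds by uniqueness of 2-cells between 1-cells whose domain is a universal object; your remark that the whiskered pastings must first be matched using the transformation axioms of $\theta$ (and $\mathcal F^2$, $\mathcal G^2$) is, if anything, more careful than the text.

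One small slip: the two sides of the modification axiom are 2-cells between $\xi^{\mathcal G}_n(\overline b)\,m(\overline{\theta_b})\,m(\overline{\mathcal F(f)})$ and $\mathcal G(m(\overline f))\,\theta_{m(\overline a)}\,\xi^{\mathcal F}_n(\overline a)$, whereas the 1-cells you named are intermediate composites. This does not affect your argument, since it only uses the common domain $m(\overline{\mathcal F(a)})$.
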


We remind the reader that the 2-cells $a_{a_1,...,a_n}, l_a$ and $r_a$ in the points 2) and 3) above are as in \rmref{j and j'}, parts b) and c). 

\begin{rem} \rmlabel{Theta}
Let us remark two things. Firstly, 
in the above proposition we indeed want to work with {\em pseudonatural} transformations $\theta: \F\Rightarrow \G$, since the factorization of 
the invertible modification component $\Theta_n(\crta a)$ via $\Theta_{n-1}(\crta a)$ includes the component $\theta_{a^*}$ 
(the fact that there appears $\theta_{a^*}^{-1}$ is merely due to our convention that we denote all transformations in their oplax form). 

Secondly, in the factorization \equref{decomposition} appear also the 2-cell components $a_{1,\xi^\mathcal F,1}, a_{1,\xi^\mathcal G,1}$, 
$a_{\overline{\theta_{a_i}},m(\overline{\theta_{b_i}}),\overline{\theta_{a_i}}}$, where $\crta b=b_1,...,b_k$ is any subsequence of $a_1,...,a_n$ 
at an ``interruption'' variable $l$. At any $l$ and for any $k$ the following compatibilities hold \\
\begin{tikzcd}
	&& \bullet && \bullet && \bullet \\
	\bullet && \bullet && \bullet && \bullet \\
	\bullet && \bullet && \bullet && \bullet \\
	&& \bullet && \bullet && \bullet
	\arrow[""{name=0, anchor=center, inner sep=0}, "{a_{\overline{\mathcal F(a_i)}}}", from=1-3, to=1-5]
	\arrow["{=}"', from=1-3, to=2-3]
	\arrow["{m'(\overline{\theta_{a_i}},\theta_{m(\overline b)},\overline{\theta_{a_i}})}", from=1-5, to=1-7]
	\arrow["{=}", from=1-7, to=2-7]
	\arrow[""{name=1, anchor=center, inner sep=0}, "{m(1,\xi^\mathcal F,1)}", from=2-1, to=2-3]
	\arrow["{=}"', from=2-1, to=3-1]
	\arrow["{m(\overline{\theta_{a_i}},\theta_{m(\overline b)},\overline{\theta_{a_i}})}", from=2-3, to=2-5]
	\arrow[""{name=2, anchor=center, inner sep=0}, "{a_{\overline{\mathcal G(a_i)},\mathcal G(m(\overline{b})),\overline{\mathcal G(a_i)}}}", from=2-5, to=2-7]
	\arrow["{=}", from=2-5, to=3-5]
	\arrow["{m(\overline{\theta_{a_i}},m(\overline{\theta_{b_i}}),\overline{\theta_{a_i}})}"', from=3-1, to=3-3]
	\arrow[""{name=3, anchor=center, inner sep=0}, "{m(1,\xi^\mathcal G,1)}"', from=3-3, to=3-5]
	\arrow["{=}"', from=3-3, to=4-3]
	\arrow["{a_{\overline{\mathcal G(a_i)},\mathcal G(m(\overline{b})),\overline{\mathcal G(a_i)}}}"', from=3-5, to=3-7]
	\arrow["{=}", from=3-7, to=4-7]
	\arrow["{a_{\overline{\mathcal G(a_i)},m(\overline{\mathcal G(b_i)}),\overline{\mathcal G(a_i)}}}"', from=4-3, to=4-5]
	\arrow[""{name=4, anchor=center, inner sep=0}, "{m'(1,\xi^\mathcal G,1)}"', from=4-5, to=4-7]
	\arrow["{a_{\overline{\theta_{\tilde{a_i}}},m(\overline{\theta_{b_i}}),\overline{\theta_{a_i}}}^{-1}}"{description}, draw=none, from=0, to=2]
	\arrow["{m(1,\Theta_k(\overline b),1)}"{description}, draw=none, from=1, to=3]
	\arrow["{a_{1,\xi^\mathcal G,1}}"{description}, draw=none, from=3, to=4]
\end{tikzcd}
= 
$$\begin{tikzcd}
	\bullet && \bullet && \bullet && \\
	\bullet && \bullet && \bullet && \bullet \\
	\bullet && \bullet && \bullet && \bullet \\
	\bullet && \bullet && \bullet
	\arrow[""{name=0, anchor=center, inner sep=0}, "{m(1,\xi^\mathcal F,1)}", from=1-1, to=1-3]
	\arrow["{=}"', from=1-1, to=2-1]
	\arrow["{a_{\overline{\mathcal F(a_i)}}}", from=1-3, to=1-5]
	\arrow["{=}", from=1-5, to=2-5]
	\arrow["{a_{\overline{\mathcal F(a_i)},m(\overline{\mathcal F(b_i)}),\overline{\mathcal F(a_i)}}}", from=2-1, to=2-3]
	\arrow[""{name=1, anchor=center, inner sep=0}, "{m'(1,\xi^\mathcal F,1)}", from=2-3, to=2-5]
	\arrow["{=}"', from=2-3, to=3-3]
	\arrow["{m'(\overline{\theta_{a_i}},\theta_{m(\overline b)},\overline{\theta_{a_i}})}", from=2-5, to=2-7]
	\arrow["{=}", from=2-7, to=3-7]
	\arrow[""{name=2, anchor=center, inner sep=0}, "{a_{\overline{\mathcal F(a_i)},m(\overline{\mathcal F(b_i)}),\overline{\mathcal F(a_i)}}}", from=3-1, to=3-3]
	\arrow["{=}"', from=3-1, to=4-1]
	\arrow["{m'(\overline{\theta_{a_i}},m(\overline{\theta_{b_i}}),\overline{\theta_{a_i}})}", from=3-3, to=3-5]
	\arrow[""{name=3, anchor=center, inner sep=0}, "{m'(1,\xi^\mathcal G,1)}", from=3-5, to=3-7]
	\arrow["{=}", from=3-5, to=4-5]
	\arrow["{m(\overline{\theta_{a_i}},m(\overline{\theta_{b_i}}),\overline{\theta_{a_i}})}"', from=4-1, to=4-3]
	\arrow[""{name=4, anchor=center, inner sep=0}, "{a_{\overline{\mathcal G(a_i)},m(\overline{\mathcal G(b_i)}),\overline{\mathcal G(a_i)}}}"', from=4-3, to=4-5]
	\arrow["{a_{1,\xi^\mathcal F,1}}"{description}, draw=none, from=0, to=1]
	\arrow["{m'(1,\Theta_k(\overline b),1)}"{description}, draw=none, from=1, to=3]
	\arrow["{a_{\overline{\theta_{a_i}},m(\overline{\theta_{b_i}}),\overline{\theta_{a_i}}}^{-1}}"{description}, draw=none, from=2, to=4]
\end{tikzcd}$$
as they are equivalent to the naturality of the pseudonatural transformation $a$ with respect to the 2-cells $\Theta_k(\crta b)$ 
(the compositions of the two 2-cell components of $a$ present the single 2-cell components of $a$ at the compositions of the corresponding 1-cells). 
\end{rem}

\begin{prop}
For a pseudonatural transformation $\theta: \F\Rightarrow \G$ as in \prref{transf} and a multimap 
$f\in\M(a_1,...,a_n;b)$, the 2-cell components $\theta_f$ in $\N$ factor as follows 
\begin{equation} \eqlabel{theta-f} 
\theta_f=
\begin{tikzcd}
	{\coprod_i \F(a_i)} &&& \\
	&&& {\F(b)} \\
	& {m(\crta{ \F(a_i)})} & {\F(m(\crta{a_i}))} \\
	& {m(\crta{ \G(a_i)})} & {\G(m(\crta{a_i}))} \\
	&&& {\G(b)} \\
	{\coprod_i \G(a_i)}
	\arrow[""{name=0, anchor=center, inner sep=0}, "{\mathcal F(f)}", from=1-1, to=2-4]
	\arrow[""{name=1, anchor=center, inner sep=0}, "{j_{\F(a_i)}}"{description}, from=1-1, to=3-2]
	\arrow[""{name=2, anchor=center, inner sep=0}, "{\F(j)}"{description}, from=1-1, to=3-3]
	\arrow["{\coprod\theta_{a_i}}"', from=1-1, to=6-1]
	\arrow[""{name=3, anchor=center, inner sep=0}, "{\theta_b}", from=2-4, to=5-4]
	\arrow["{\xi_n^\mathcal F}"', from=3-2, to=3-3]
	\arrow[""{name=4, anchor=center, inner sep=0}, "{m(\crta{\theta_{a_i}})}"', from=3-2, to=4-2]
	\arrow["{\F(\hat f)}"{description}, from=3-3, to=2-4]
	\arrow[""{name=5, anchor=center, inner sep=0}, "{\theta_{m(\crta{a_i})}}", from=3-3, to=4-3]
	\arrow["{\xi_n^\mathcal G}", from=4-2, to=4-3]
	\arrow["{j_{\G(a_i)}}"{description}, tail reversed, no head, from=4-2, to=6-1]
	\arrow["{\G(\hat f)}"{description}, from=4-3, to=5-4]
	\arrow[""{name=6, anchor=center, inner sep=0}, "{\G(j)}"{description}, from=6-1, to=4-3]
	\arrow[""{name=7, anchor=center, inner sep=0}, "{\mathcal \G(f)}"', from=6-1, to=5-4]
	\arrow["{\sigma_\F^{-1}}"', between={0.3}{0.9}, Rightarrow, nfold, from=2, to=3-2]
	\arrow["{\F(\Epsilon_f)^{-1}}"{pos=0.7}, between={0.4}{0.7}, Rightarrow, nfold, from=0, to=3-3]
	\arrow["\Epsilon"{description, pos=0.4}, between={0.3}{0.5}, Rightarrow, nfold, from=1, to=6-1]
	\arrow["{\theta_{\hat f}}"{description, pos=0.4}, between={0.1}{0.9}, Rightarrow, nfold, from=3, to=4-3]
	\arrow["{\Theta_n^{-1}(\overline{a_i})}"{description}, between={0.2}{0.8}, Rightarrow, nfold, from=5, to=4]
	\arrow["{\sigma_\G}"'{pos=0.4}, between={0.1}{0.7}, Rightarrow, nfold, from=4-2, to=6]
	\arrow["{\G(\Epsilon_f)}"{pos=0.3}, between={0.2}{0.6}, Rightarrow, nfold, from=4-3, to=7]
\end{tikzcd}
\end{equation}
where $\hat f:m(a_1,...,a_n)\to b$ presents the corresponding unary map of $f$. The 2-cells $\sigma$ are as in \equref{j-xi} and 
$\Epsilon$'s are the associated 2-cells determining 1-cells on universal objects. 
\end{prop}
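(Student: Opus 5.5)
The plan is to reduce the factorization \equref{theta-f} to the naturality of $\theta$ with respect to a composite of 1-cells, and then to unpack the pieces using \prref{transf} and \equref{j-xi}. By bi-representability, $f$ is isomorphic via $\Epsilon_f:\hat f\comp j_{\crta a}\Rightarrow f$ to the composite of the unary map $\hat f$ with the universal arrow $j=j_{\crta a}$. The transformation $\theta$ is pseudonatural, hence natural with respect to 2-cells. So $\theta_f$ equals $\theta_{\hat f\comp j}$ conjugated by the whiskerings $\F(\Epsilon_f)^{-1}$ and $\G(\Epsilon_f)$, that is, precomposed with $[\Id_{\theta_b}\vert\F(\Epsilon_f)^{-1}]$ and postcomposed with $[\G(\Epsilon_f)\vert\Id_{\coprod\theta_{a_i}}]$, up to the associativity constraints of $\N$. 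This accounts for the two outer triangles in \equref{theta-f}.

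Next, I would use the compositionality axiom of a pseudonatural transformation of bi-multicategories (\cite[Definition 4.2.33]{Phil}). It expresses $\theta_{\hat f\comp j}$ via $\F^2$, $\G^2$ as the pasting of $\theta_{\hat f}$, a unary 2-cell component, with $\theta_{j}$, an $n$-ary component $\G(j)\comp\coprod\theta_{a_i}\Rightarrow\theta_{m(\crta a)}\comp\F(j)$. The structure cells $\F^2,\G^2$ are absorbed into the identification of $\F(\hat f\comp j)$ with $\F(\hat f)\comp\F(j)$, and they cancel against the corresponding cells implicit in $\F(\Epsilon_f)$. Therefore it only remains to identify $\theta_{j}$, after whiskering with $\G(\hat f)$, with the lower left part of the diagram. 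That part is $\sigma_\G$, then $\Theta_n^{-1}(\crta{a_i})$, then $\sigma_\F^{-1}$, together with the universal 2-cell $\Epsilon$ defining $m(\crta{\theta_{a_i}})$.

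For this identification I would invoke the defining equation \equref{ind Theta_n} of $\Theta_n(\crta a)$. It states that pasting $\sigma^\F$ with $\theta_{j_{\crta a}}$ equals pasting $\Epsilon_{m(\crta{\theta_{a_i}})}$, $\sigma^\G$ and $\Theta_n(\crta a)$ along $j_{\crta{\F(a_i)}}$. All 2-cells involved are invertible. Solving for $\theta_{j_{\crta a}}$ therefore gives it as the composite of $(\sigma^\F)^{-1}$, $\Theta_n^{-1}(\crta a)$, $\sigma^\G$ and $\Epsilon$, with the whiskerings arranged as in \equref{theta-f}. Substituting this into the previous step yields exactly the claimed pasting.

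The main obstacle I expect is bookkeeping rather than a new idea. The issue is checking that the orientations and whiskerings in \equref{ind Theta_n} match those in \equref{theta-f}, given the paper's convention of writing transformations in oplax form and the cancellation of the pseudofunctor coherence cells $\F^2,\G^2$. If the direct cancellation proves messy, I would fall back on the principle of \rmref{unique inv 2-cell}. Both sides of \equref{theta-f} are invertible 2-cells between 1-cells out of the domain list that become 1-cells on the universal object $m(\crta{\F(a_i)})$ after factoring through $j_{\crta{\F(a_i)}}$. It then suffices to show that both induce the same mediating 2-cell, which follows from \equref{ind Theta_n} together with the uniqueness part of the universal property.
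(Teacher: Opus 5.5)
Your proposal is correct and is essentially the paper's argument. The paper also inserts $\theta_{j}$ to split the relevant prism into two parts: a back part, which is exactly the defining equation \equref{ind Theta_n} of $\Theta_n(\crta a)$, and a front part, which is naturality of $\theta$ at $\Epsilon_f$ combined with the composition axiom (``up to $\F^2$''). The only difference is packaging: the paper concludes via uniqueness of the mediating 2-cell $\kappa$ on $m(\crta{\F(a_i)})$, which is your fallback, whereas you solve for $\theta_{j}$ directly, and that works equally well since all the cells involved are invertible.
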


\begin{proof}
We have a general situation because of bi-representability that given: 1) a 2-cell $g\Rightarrow h:\coprod_i \F(a_i)\to\G(b)$, 
for which we take our $\theta_f$ of interest (on the left below), and 2) 1-cells $\zeta_1, \zeta_2$ (the curvy arrows on the right below) 
acting on a universal object, together with 2-cells (the top and bottom bases of the prism) 
$\sigma_g: \zeta_1\circ j_{\F(a_i)}\Rightarrow g$ and $\sigma_h: \zeta_2\circ j_{\F(a_i)}\Rightarrow h$, 
then on one hand, 
1) there are unique 2-cells $\sigma^\dagger, \tau^\dagger$ as below so that the $\Epsilon_g\diamond\sigma^\dagger=\sigma _g$ and 
$\Epsilon_h\diamond\tau^\dagger=\sigma_h$, and on the other hand, 2) there is a unique  2-cell $\kappa: \zeta_1\Rightarrow\zeta_2$ 
(marked in red, along the outer curved vertical surface on the right) so that $\sigma_h\diamond \kappa\diamond\sigma_g^{-1} = \theta_f$. 
\[\begin{tikzcd}
	{\coprod_i \F(a_i)} && {m(\crta{ \F(a_i)})} \\
	& {\G(b)} \\
	{\coprod_i \F(a_i)} && {m(\crta{ \F(a_i)})} \\
	& {\G(b)}
	\arrow[""{name=0, anchor=center, inner sep=0}, "{j_{\F(a_i)}}", from=1-1, to=1-3]
	\arrow[""{name=1, anchor=center, inner sep=0}, "g"{description}, from=1-1, to=2-2]
	\arrow[equals, nfold, from=1-1, to=3-1]
	\arrow[""{name=2, anchor=center, inner sep=0}, "{\Omega(g)}"{description}, from=1-3, to=2-2]
	\arrow[""{name=3, anchor=center, inner sep=0}, "{\zeta_1}"{description}, curve={height=-30pt}, from=1-3, to=2-2]
	\arrow[equals, nfold, from=1-3, to=3-3]
	\arrow[equals, nfold, from=2-2, to=4-2]
	\arrow[""{name=4, anchor=center, inner sep=0}, "{j_{\F(a_i)}}"{pos=0.7}, dashed, from=3-1, to=3-3]
	\arrow[""{name=5, anchor=center, inner sep=0}, "h"{description}, from=3-1, to=4-2]
	\arrow[""{name=6, anchor=center, inner sep=0}, "{\Omega(h)}"{description}, from=3-3, to=4-2]
	\arrow[""{name=7, anchor=center, inner sep=0}, "{\zeta_2}"{description}, shift left, curve={height=-24pt}, from=3-3, to=4-2]
	\arrow["{\Epsilon_g}", between={0.1}{0.8}, Rightarrow, nfold, from=0, to=1]
	\arrow["{\theta_f}"{description, pos=0.4}, between={0.1}{0.8}, Rightarrow, nfold, from=1, to=5]
	\arrow["{\sigma^\dagger}"', between={0.2}{0.8}, Rightarrow, nfold, from=3, to=2]
	\arrow["\kappa"'{pos=0.2}, color={rgb,255:red,214;green,92;blue,92}, between={0.1}{0.4}, Rightarrow, nfold, from=3, to=7]
	\arrow["{\Epsilon_h}", between={0.2}{0.8}, Rightarrow, nfold, from=4, to=5]
	\arrow["{\tau^\dagger}"', between={0.2}{0.8}, Rightarrow, nfold, from=7, to=6]
\end{tikzcd}\]
Now, for the 1-cells $\zeta_1, \zeta_2$ and for the prism bases $\sigma_g, \sigma_h$ take the following from the above diagram \equref{theta-f}: 
$$\zeta_1:=\G(\hat f)\circ \theta_{m(\crta{a_i})}\circ\xi^\F \qquad\text{and}\qquad 
\zeta_2:=\G(\hat f)\circ \xi^\G\circ m(\crta{\theta_{a_i}})$$ 
(from the edges of the inner square above), and the concatenations 
$$\sigma_g:= \F(\Epsilon_f)\diamond \sigma_\F\diamond \theta_{\hat f}^{-1}\qquad\text{and}\qquad 
\sigma_h:= \Epsilon\diamond \sigma_\G\diamond \G(\Epsilon_f).$$
Now we know a 2-cell $\kappa$ exists, which concatenated with the bases $\sigma$'s gives $\theta_f$. 
To prove the claim it only remains to show that $\kappa$ equals $[\Theta_n^{-1}(\crta{a_i})\vert \Id_{\G(\hat f)}]$. 

Let us rewrite the above prism by partitioning its bases into their constituting pieces:
\[\begin{tikzcd}
	{\crta{\F(a_i)}} && {m(\crta{ \F(a_i)})} && \\
	&& {\F(m(\crta a))} & {\G(m(\crta a))} \\
	& {\F(b)} &&& {\G(b)} \\
	\\
	{\crta{\F(a_i)}} && {m(\crta{ \F(a_i)})} \\
	& {\crta{\G(a_i)}} & {m(\crta{\G(a_i)})} \\
	&&& {\G(m(\crta a))} \\
	&&&& {\G(b)}
	\arrow["{j_{\F(a_i)}}", from=1-1, to=1-3]
	\arrow[""{name=0, anchor=center, inner sep=0}, "{\F(j_{a_i})}"'{pos=0.4}, color={rgb,255:red,92;green,92;blue,214}, from=1-1, to=2-3]
	\arrow[""{name=1, anchor=center, inner sep=0}, "{\F(f)}"', color={rgb,255:red,92;green,214;blue,92}, from=1-1, to=3-2]
	\arrow[color={rgb,255:red,92;green,214;blue,92}, equals, nfold, from=1-1, to=5-1]
	\arrow["{\xi^\F}", from=1-3, to=2-3]
	\arrow[""{name=2, anchor=center, inner sep=0}, "{\theta_{m(\crta a)}}"{pos=0.4}, color={rgb,255:red,92;green,92;blue,214}, from=2-3, to=2-4]
	\arrow["{\F(\hat f)}", from=2-3, to=3-2]
	\arrow[""{name=3, anchor=center, inner sep=0}, "{\G(\hat f)}"{pos=0.7}, from=2-4, to=3-5]
	\arrow[draw={rgb,255:red,92;green,92;blue,214}, equals, nfold, dashed, from=2-4, to=7-4]
	\arrow[""{name=4, anchor=center, inner sep=0}, "{\theta_b}"', color={rgb,255:red,92;green,214;blue,92}, from=3-2, to=3-5]
	\arrow[draw={rgb,255:red,92;green,214;blue,92}, equals, nfold, from=3-5, to=8-5]
	\arrow["{j_{\F(a_i)}}"{pos=0.7}, dashed, from=5-1, to=5-3]
	\arrow[""{name=5, anchor=center, inner sep=0}, "{\crta{\theta_{a_i}}}"'{pos=0.2}, color={rgb,255:red,92;green,92;blue,214}, from=5-1, to=6-2]
	\arrow["{m(\crta{\theta_{a_i}})}"', from=5-3, to=6-3]
	\arrow["{j_{\G(a_i)}}", from=6-2, to=6-3]
	\arrow[""{name=6, anchor=center, inner sep=0}, "{\G(j_{a_i})}"', color={rgb,255:red,92;green,92;blue,214}, curve={height=6pt}, from=6-2, to=7-4]
	\arrow[""{name=7, anchor=center, inner sep=0}, "{\G(f)}"', color={rgb,255:red,92;green,214;blue,92}, curve={height=30pt}, from=6-2, to=8-5]
	\arrow[""{name=8, anchor=center, inner sep=0}, "{\xi^\G}", from=6-3, to=7-4]
	\arrow[""{name=9, anchor=center, inner sep=0}, "{\G(\hat f)}"{pos=0.7}, from=7-4, to=8-5]
	\arrow["{\sigma^\F}"{description, pos=0.3}, between={0}{0.8}, Rightarrow, nfold, from=1-3, to=0]
	\arrow["{\F(\Epsilon_f)}"{description, pos=0.4}, between={0}{0.8}, Rightarrow, nfold, from=2-3, to=1]
	\arrow["{\theta_{j_{a_i}}}"'{pos=0.3}, color={rgb,255:red,92;green,92;blue,214}, between={0.2}{0.5}, Rightarrow, nfold, from=2-3, to=6]
	\arrow["{(\Theta_n(\crta a))^{-1}}", color={rgb,255:red,214;green,153;blue,92}, between={0.2}{0.8}, Rightarrow, nfold, from=2, to=8]
	\arrow["{(\theta_{\hat f})^{-1}}"{description, pos=0.4}, between={0}{0.8}, Rightarrow, nfold, from=2-4, to=4]
	\arrow["{\Id_{\G(\hat f)}}"{description}, between={0.2}{0.8}, Rightarrow, nfold, from=3, to=9]
	\arrow["\Epsilon"{description}, between={0}{0.8}, Rightarrow, nfold, from=5-3, to=5]
	\arrow["{\sigma^\G}", between={0}{0.8}, Rightarrow, nfold, from=6-3, to=6]
	\arrow["{\G(\Epsilon_f)}"', between={0}{0.8}, Rightarrow, nfold, from=7-4, to=7]
\end{tikzcd}\]
The front vertical surface (with five green edges and one blue) is the 2-cell $\theta_f$. 
The blue surface $\theta_{m(\crta a)}\circ \F(j_{a_i})\Rightarrow \G(j_{a_i})\circ \crta{\theta_{a_i}}$ presents the 2-cell component $\theta_{j_{a_i}}$ that 
we insert; it parts the whole prism in two subprisms: the one ``behind'' it is the prism defining the 2-cell $\Theta_n(\crta a)$ marked in orange, due to 
\equref{ind Theta_n}. The front 
subprism commutes by the naturality of the transformation $\theta$ with respect to the 2-cell $\Epsilon_f$, as it is equivalent to the equation 
$$\frac{[\F(\Epsilon_f)\vert \Id_{\theta_b}]}{\theta_f}=\threefrac{[\Id_{\F(j_{a_i})}\vert\theta_{\hat f}]}{[\theta_{j_{a_i}}\vert\Id_{\G(\hat f)}]}
{[\Id_{\crta{\theta_{a_i}}}\vert\G(\Epsilon_f)]}$$
(up to the structure 2-cell $\F^2$). So, the whole (outer) prism commutes whereby $[\Theta_n^{-1}(\crta{a_i})\vert \Id_{\G(\hat f)}]$ is 
at the place of $\kappa$ (the concatenation on the right in the vertical back plane), so by uniqueness of such $\kappa$ they are equal. 
\qed\end{proof}

\begin{prop} \prlabel{modif}
Given pseudonatural transformations $(\theta, \Theta), (\theta', \Theta'):(\F, \xi^\F, \delta^\F)\Rightarrow (\G, \xi^\G,\delta^\G):\M\to\N$ of 
 pseudofunctors of (bi-representable) bi-multicategories as in \prref{transf}, the 2-cell components  of a modification 
$\Xi: \theta\Rrightarrow\theta': \F\Rightarrow \G$ of transformations of pseudofunctors of bi-multicategories 
obey 
\begin{equation} \eqlabel{modif-rep}
\scalebox{0.86}{
\bfig
\putmorphism(-150,500)(1,0)[\F(m(\crta a))`\G(m(\crta a))`\theta_{m(\crta a)}]{600}1a

 \putmorphism(-850,50)(1,0)[m(\crta{\F(a_i)}) `\phantom{F(A)}`\xi^\F_n(\crta a)]{640}1a
 \putmorphism(-150,50)(1,0)[\F(m(\crta a))`\G(m(\crta a))`\theta'_{m(\crta a)}]{600}1a

\putmorphism(-180,500)(0,-1)[\phantom{Y_2}``=]{450}1l
\putmorphism(560,500)(0,-1)[\phantom{Y_2}``=]{450}1r
\put(40,290){\fbox{$\Xi_{m(\crta a)}$}}

 \putmorphism(-850,-400)(1,0)[m(\crta{\F(a_i)}) `\phantom{A'\ot B'}` m(\crta{\theta'_{a_i}})]{680}1a
\putmorphism(-150,-400)(1,0)[m(\crta{\G(a_i)})`\G(m(\crta a)) `\xi_n^\G(\crta a)]{640}1a

\putmorphism(-780,50)(0,-1)[\phantom{Y_2}``=]{450}1l
\putmorphism(560,50)(0,-1)[\phantom{Y_3}``=]{450}1r
\put(-300,-200){\fbox{$\Theta_n'^{-1}(\crta a)$}}

\efig}
\quad=\quad
\scalebox{0.86}{
\bfig
 \putmorphism(100,500)(1,0)[m(\crta{\F(a_i)})`\F(m(\crta a)) `\xi^\F_n(\crta a)]{680}1a
\putmorphism(750,500)(1,0)[\phantom{\F(m(\crta a))}`\G(m(\crta a))`\theta_{m(\crta a)}]{600}1a
 \putmorphism(100,50)(1,0)[m(\crta{\F(a_i)})`m(\crta{\G(a_i)}) `m(\crta{\theta_{a_i}})]{680}1a
 \putmorphism(790,50)(1,0)[\phantom{\F(m(\crta a))}`\G(m(\crta a))`\xi^\G_n(\crta a)]{600}1a

\putmorphism(50,500)(0,-1)[\phantom{Y_2}``=]{450}1r
\putmorphism(1300,500)(0,-1)[\phantom{Y_2}``=]{450}1r
\put(610,260){\fbox{$\Theta_n^{-1}(\crta a)$}}
\put(230,-160){\fbox{$m(\crta{\Xi_i})$}}

 \putmorphism(90,-400)(1,0)[m(\crta{\F(a_i)})` m(\crta{\G(a_i)}) `m(\crta{\theta'_{a_i}})]{640}1a
\putmorphism(50,50)(0,-1)[\phantom{Y_2}``=]{450}1l
\putmorphism(720,50)(0,-1)[\phantom{Y_3}``=]{450}1r
\efig}
\end{equation}
and
\[\begin{tikzcd}
	I & I & \\
	{\mathcal F(I)} & {\mathcal G(I) } \\
	{\mathcal F(I)} & {\mathcal G(I)} & {} \\
	\\
	& {}
	\arrow[""{name=0, anchor=center, inner sep=0}, "{=}", from=1-1, to=1-2]
	\arrow["{\xi_0^{\mathcal F}}"', from=1-1, to=2-1]
	\arrow["{\xi_0^{\mathcal G}}", from=1-2, to=2-2]
	\arrow[""{name=1, anchor=center, inner sep=0}, "{\theta_I}"', from=2-1, to=2-2]
	\arrow["{=}"', from=2-1, to=3-1]
	\arrow["{=}", from=2-2, to=3-2]
	\arrow[""{name=2, anchor=center, inner sep=0}, "{\theta_I'}"', from=3-1, to=3-2]
	\arrow["{\Theta_0}"{description}, draw=none, from=0, to=1]
	\arrow["{\Xi_I}"{description}, draw=none, from=1, to=2]
\end{tikzcd} \hspace{-1,2cm} 
=
\begin{tikzcd}
	I & I & \\
	{\mathcal F(I)} & {\mathcal G(I). } \\
	&& {} \\
	\\
	& {}
	\arrow[""{name=0, anchor=center, inner sep=0}, "{=}", from=1-1, to=1-2]
	\arrow["{\xi_0^{\mathcal F}}"', from=1-1, to=2-1]
	\arrow["{\xi_0^{\mathcal G}}", from=1-2, to=2-2]
	\arrow[""{name=1, anchor=center, inner sep=0}, "{\theta_I'}"', from=2-1, to=2-2]
	\arrow["{\Theta_0'}"{description}, draw=none, from=0, to=1]
\end{tikzcd}\]
\end{prop}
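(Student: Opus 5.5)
We will prove both identities with the same uniqueness principle used in \prref{get modif delta for free} and in the factorization \equref{theta-f}. Each side of \equref{modif-rep} is a 2-cell
$$\xi^\G_n(\crta a)\circ m(\crta{\theta'_{a_i}})\Rightarrow \theta_{m(\crta a)}\circ\xi^\F_n(\crta a):\ m(\crta{\F(a_i)})\to\G(m(\crta a)).$$
Its source and target are 1-cells whose domain is a universal object. It therefore suffices to show that the two sides become equal after whiskering with the universal arrow $j_{\crta{\F(a_i)}}$ and composing with suitable fixed invertible base 2-cells. The uniqueness part of \equref{sigma-dagger}, in the form of \rmref{unique inv 2-cell}, then yields equality of the two sides.

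Concretely, we precompose both sides with $j_{\crta{\F(a_i)}}$ and paste on top the base $\sigma^\F$ from \equref{j-xi} together with $\Epsilon_{m(\crta{\theta'_{a_i}})}$ and $\sigma^\G$. On the left-hand side the factor $\Theta_n'^{-1}(\crta a)$ is unpacked using its defining prism \equref{ind Theta_n} for $\theta'$. This turns the pasting into
$$\theta'_{j_{\crta a}}\ \text{followed by}\ \Xi_{m(\crta a)}\ \text{whiskered by}\ \F(j_{\crta a}).$$
The modification axiom for $\Xi$ of bi-multicategories (\cite[Definition 4.2.33]{Phil}), applied to the $n$-ary multimap $j_{\crta a}$, rewrites this as
$$\Xi\ \text{on the list}\ \crta{a_i}\ \text{(i.e. } \crta{\Xi_{a_i}}\text{) whiskered by}\ \G(j_{\crta a}),\ \text{followed by}\ \theta_{j_{\crta a}}.$$
On the right-hand side, the factor $m(\crta{\Xi_i})$ is moved past $j$ by the naturality of the universal 2-cells $\Epsilon_{m(-)}$, that is, by the functoriality of $m_n$ on 2-cells from \ssref{more data bi-rep}. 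This produces the same $\crta{\Xi_{a_i}}$ whiskered by $j_{\crta{\G(a_i)}}$. The interchange law then lets it slide past $\sigma^\G$ to become $\crta{\Xi_{a_i}}$ whiskered by $\G(j_{\crta a})$. Finally, unpacking $\Theta_n^{-1}(\crta a)$ via \equref{ind Theta_n} for $\theta$ produces $\theta_{j_{\crta a}}$. At that point both pastings agree.

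The unit identity is the same argument with $n=0$: the empty list and the universal arrow $j_{\langle\rangle}$. Here $\Theta_0$ and $\Theta'_0$ are determined by \equref{ind Theta_n} with no input 1-cells, and the modification axiom for $\Xi$ at the nullary multimap $j_{\langle\rangle}$ gives the claim directly.

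The main obstacle is the bookkeeping in the middle step. The modification axiom for $\Xi$ at $j_{\crta a}$ must be matched exactly with the $m(\crta{\Xi_i})$ coming from the right-hand side. This requires the interchange of $\crta{\Xi_{a_i}}$ with $\sigma^\G$ and with $\Epsilon_{m(-)}$, and it must be done up to the structure 2-cells $\F^2$, $\G^2$ and the pseudofunctor constraints of $m_n$, which we suppress as in \prref{transf}. We would first draw this as a prism with top $\sigma^\F$ and bottom $\sigma^\G\diamond\Epsilon$, exactly as in the proof of \equref{theta-f}. We would then check face by face that the two ways of filling its back wall coincide.
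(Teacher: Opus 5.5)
Your argument is correct and follows the paper's approach. The paper only asserts \equref{modif-rep} and its unit version on the grounds that the 1-cells involved act out of a universal object. Your computation (whisker with $j_{\crta{\F(a_i)}}$, unpack $\Theta_n$ and $\Theta_n'$ via \equref{ind Theta_n}, apply the modification axiom of $\Xi$ at $j_{\crta a}$, and use the prism defining $m(\crta{\Xi_i})$) supplies exactly the input that the paper's blanket uniqueness principle leaves implicit.

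Two small corrections. First, both sides of \equref{modif-rep} go from $\theta_{m(\crta a)}\xi^\F_n(\crta a)$ to $\xi^\G_n(\crta a)\,m(\crta{\theta'_{a_i}})$, not the reverse. Second, $\Xi$ need not be invertible, so you should conclude by faithfulness of precomposition with $j_{\crta{\F(a_i)}}$ (which comes from the equivalence \equref{Phil rep}), not by the uniqueness of invertible mediating cells in \rmref{unique inv 2-cell}.
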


\vspace{-1,2cm}

From the above results it is evident that given bi-representable bi-multicategories $\M$ and $\N$, the 
full sub-bicategory of $\BiMlt(\M,\N)$ of pseudofunctors of {\em bi-representable} bi-multicategories that one would naturally define 
coincides with the whole $\BiMlt(\M,\N)$. 





\subsection{Enrichment of monoidal bicategories} \sslabel{mon bicat}

We denote by $\MBic$ the (necessarily 3-dimensional) category 
whose objects are monoidal bicategories and that for two such objects $\B$ and $\B'$ is equipped with the hom-bicategory $\MBic(\B,\B')$ 
consisting of {\em lax monoidal} pseudofunctors $\B\to\B'$, monoidal pseudonatural transformations, and their modifications.

\subsubsection{The functor bicategory of monoidal bicategories} \ssslabel{fun-bicat-mon}

The constituting cells of the bicategory $\MBic(\B,\B')$ were defined in \cite[Section 3]{GPS}, \cite[Section 1]{DS} and can be found also in \cite[Section 2.2]{MV}, \cite[Section 2.3]{SP}. We will use the definitions from the latter reference: their definition of pseudofunctors coincides with the former definitions, while transformations and modifications are defined as a variation of the corresponding definitions in the former references. 

For reader's convenience we write down here the 2-cell components of the invertible modifications defining the respective transforms and list the labels of axioms (from \cite[Section 2.3]{SP}) they need to obey: 
\begin{itemize} 
\item a lax monoidal pseudofunctor $F$: 
\begin{equation} \eqlabel{omega mon ps fun}
\scalebox{0.82}{
\bfig
\putmorphism(-390,450)(0,-1)[\phantom{Y_2}` `=]{450}1r 

\putmorphism(-390,450)(1,0)[(F(A)F(B))F(C)` F(AB)F(C)`F^2_{A,B}F(C)]{1170}1a 
\putmorphism(840,450)(1,0)[\phantom{A''\ot B'}`F((AB)C) ` F^2_{AB,C}]{1170}1a 
 \putmorphism(2040,450)(1,0)[\phantom{A''\ot B'}` F(A(BC)) `F(\alpha_{A,B,C})]{1160}1a 

 \putmorphism(-390,0)(1,0)[(F(A)F(B))F(C)` F(A)(F(B)F(C)) `\alpha_{F(A),F(B),F(C)}]{1350}1b 
 \putmorphism(1120,0)(1,0)[\phantom{A''\ot B'}` F(A)F(BC) `F(A)F^2_{B,C}]{1040}1b 
 \putmorphism(2230,0)(1,0)[\phantom{A''\ot B'}` F(A(BC)) `F^2_{A,BC}]{1020}1b 

\putmorphism(3200,450)(0,-1)[\phantom{Y_2}``=]{450}1r 

\put(1200,230){\fbox{$\omega_{A,B,C}$}}
\efig}
\end{equation}

\begin{equation} \eqlabel{gamma mon ps fun}
\scalebox{0.82}{
\bfig
\putmorphism(-390,450)(0,-1)[\phantom{Y_2}` `=]{450}1r 

\putmorphism(-390,450)(1,0)[IF(A)` F(I)F(A) `F^0F(A)]{740}1a 
\putmorphism(370,450)(1,0)[\phantom{A''\ot B'}` F(IA) ` F^2_{I,A} ]{700}1a 
 \putmorphism(1020,450)(1,0)[\phantom{A''\ot B'}` F(A) `F(\lambda_A) ]{760}1a 

 \putmorphism(-390,0)(1,0)[IF(A)` F(A); `\lambda_{F(A)}]{2230}1b 

\putmorphism(1820,450)(0,-1)[\phantom{Y_2}``=]{450}1r 

\put(700,210){\fbox{$\gamma_A$}}
\efig}
\end{equation}

\begin{equation} \eqlabel{delta mon ps fun}
\scalebox{0.82}{
\bfig
\putmorphism(-390,450)(0,-1)[\phantom{Y_2}` `=]{450}1r 

\putmorphism(-390,450)(1,0)[F(A)I` F(A)F(I) `F(A)F^0]{740}1a 
\putmorphism(370,450)(1,0)[\phantom{A''\ot B'}` F(AI) ` F^2_{A,I} ]{700}1a 
 \putmorphism(1020,450)(1,0)[\phantom{A''\ot B'}` F(A) `F(\rho_A) ]{760}1a 

 \putmorphism(-390,0)(1,0)[F(A)I` F(A) `\rho_{F(A)}]{2230}1b 

\putmorphism(1820,450)(0,-1)[\phantom{Y_2}``=]{450}1r 

\put(700,210){\fbox{$\delta_A$}}
\efig}
\end{equation}
with the axioms $(HTA1)$ and $(HTA2)$, 
\item a lax monoidal pseudonatural transformation $\beta:F\Rightarrow G$:
\begin{equation} \eqlabel{beta's}
\scalebox{0.86}{
\bfig
 \putmorphism(-570,500)(1,0)[F(A)\ot F(B)`G(A)\ot G(B)`\beta(A)\ot\beta(B)]{1140}1a
 \putmorphism(740,500)(1,0)[\phantom{F(f)}`G(A\ot B) `G^2_{A,B}]{700}1a
 \putmorphism(-570,120)(1,0)[F(A)\ot F(B)`F(A\ot B)`F^2_{A,B}]{940}1a
 \putmorphism(460,120)(1,0)[\phantom{G(B)}`G(A\ot B) `\beta(A\ot B)]{960}1a
\putmorphism(-580,500)(0,-1)[\phantom{Y_2}``=]{380}1r
\putmorphism(1440,500)(0,-1)[\phantom{Y_2}``=]{380}1r
\put(280,310){\fbox{$\beta^2_{A,B}$}}
\efig}
\qquad\quad
\scalebox{0.86}{
\bfig
 \putmorphism(0,500)(1,0)[I`I`=]{540}1a
 \putmorphism(480,500)(1,0)[\phantom{F(f)}`I `G^0]{600}1a
 \putmorphism(0,120)(1,0)[I `F(I)`F^0]{500}1a
 \putmorphism(460,120)(1,0)[\phantom{G(B)}`G(I) `\beta_I]{620}1a
\putmorphism(0,500)(0,-1)[\phantom{Y_2}``=]{380}1r
\putmorphism(1080,500)(0,-1)[\phantom{Y_2}``=]{380}1r
\put(420,310){\fbox{$\beta^0$}}
\efig}
\end{equation} 
with the axioms $(MBTA1)-(MBTA3)$, 
\item and a monoidal modification $\Xi:\beta\Rrightarrow\beta'$:
$$
\scalebox{0.86}{
\bfig
 \putmorphism(-570,500)(1,0)[F(A\ot B)`G(A\ot B)`\beta(A\ot B)]{940}1a
 \putmorphism(-570,120)(1,0)[F(A\ot B)`G(A\ot B)`\beta'(A\ot B)]{940}1a
\putmorphism(-580,500)(0,-1)[\phantom{Y_2}``=]{380}1l
\putmorphism(400,500)(0,-1)[\phantom{Y_2}``=]{380}1r
\put(-220,330){\fbox{$\Xi_{A,B}$}}
\efig}
$$
satisfying the axioms $(BMBM1)-(BMBM2)$. 
\end{itemize}


\bigskip

Let us highlight some of the identities that a monoidal transformation $\beta$ obeys. \\
{\em 4-ary compatibility with associativity} 
\begin{equation} \eqlabel{4-ary compat}
\begin{tikzcd}
	&& \bullet & \bullet & \bullet \\
	\bullet && \bullet & \bullet & \bullet \\
	\bullet && \bullet & \bullet & \bullet \\
	&& \bullet & \bullet & \bullet
	\arrow[""{name=0, anchor=center, inner sep=0}, "{a_{F_{AB},F_C,F_D}}", from=1-3, to=1-4]
	\arrow["{=}"', from=1-3, to=2-3]
	\arrow["{\beta_{AB}(\beta_C\beta_D)}", from=1-4, to=1-5]
	\arrow["{=}", from=1-5, to=2-5]
	\arrow[""{name=1, anchor=center, inner sep=0}, "{(\xi^F_{A,B}1_{FC})1_{FD}}", from=2-1, to=2-3]
	\arrow["{=}"', from=2-1, to=3-1]
	\arrow["{(\beta_{AB}\beta_C)\beta_D}", from=2-3, to=2-4]
	\arrow[""{name=2, anchor=center, inner sep=0}, "{a_{G_{AB},G_C,G_D}}"', from=2-4, to=2-5]
	\arrow["{=}", from=2-4, to=3-4]
	\arrow["{((\beta_A\beta_B)\beta_C)\beta_D}"', from=3-1, to=3-3]
	\arrow[""{name=3, anchor=center, inner sep=0}, "{(\xi^G_{A,B}1_{GC})1_{GD}}", from=3-3, to=3-4]
	\arrow["{=}"', from=3-3, to=4-3]
	\arrow["{a_{G_{AB},G_C,G_D}}"', from=3-4, to=3-5]
	\arrow["{=}", from=3-5, to=4-5]
	\arrow["{a_{F_{AB},F_C,F_D}}"', from=4-3, to=4-4]
	\arrow[""{name=4, anchor=center, inner sep=0}, "{\xi^G_{A,B}(1_C1_D)}"', from=4-4, to=4-5]
	\arrow["{a_{\beta_{AB},\beta_C,\beta_D}^{-1}}"{description, pos=0.4}, draw=none, from=0, to=2]
	\arrow["{((\beta^2_{A,B})^{-1}1_C)1_D}"{description, pos=0.3}, draw=none, from=1, to=3]
	\arrow["{a_{\xi^G_{A,B},1,1}}"{description}, draw=none, from=3, to=4]
\end{tikzcd}
=\quad
\begin{tikzcd}
	\bullet & \bullet & \bullet && \\
	\bullet & \bullet & \bullet && \bullet \\
	\bullet & \bullet && \bullet & \bullet \\
	\bullet & \bullet && \bullet
	\arrow[""{name=0, anchor=center, inner sep=0}, "{(\xi^F_{A,B}1_C)1_D}", from=1-1, to=1-2]
	\arrow["{=}"', from=1-1, to=2-1]
	\arrow["{a_{F_{AB},F_C,F_D}}", from=1-2, to=1-3]
	\arrow["{=}", from=1-3, to=2-3]
	\arrow["{a_{F_A F_B,F_C,F_D}}"', from=2-1, to=2-2]
	\arrow[""{name=1, anchor=center, inner sep=0}, "{\xi^F_{A,B}(1_C1_D)}", from=2-2, to=2-3]
	\arrow["{=}"', from=2-2, to=3-2]
	\arrow["{\beta_{AB}(\beta_C\beta_D)}", from=2-3, to=2-5]
	\arrow["{=}", from=2-5, to=3-5]
	\arrow[""{name=2, anchor=center, inner sep=0}, "{a_{F_A F_B,F_C,F_D}}", from=3-1, to=3-2]
	\arrow["{=}"', from=3-1, to=4-1]
	\arrow["{(\beta_A\beta_B)(\beta_C\beta_D)}"', from=3-2, to=3-4]
	\arrow[""{name=3, anchor=center, inner sep=0}, "{\xi^G_{A,B}(1_C1_D)}"', from=3-4, to=3-5]
	\arrow["{=}", from=3-4, to=4-4]
	\arrow["{((\beta_A\beta_B)\beta_C)\beta_D}"', from=4-1, to=4-2]
	\arrow[""{name=4, anchor=center, inner sep=0}, "{a_{G_A G_B,G_C,G_D}}"', from=4-2, to=4-4]
	\arrow["{a_{\xi^F,1,1}}"{description, pos=0.4}, draw=none, from=0, to=1]
	\arrow["{(\beta^2_{A,B})^{-1}(1_C1_D)}"{description}, draw=none, from=1, to=3]
	\arrow["{a_{\beta_A\beta_B,\beta_C,\beta_D}^{-1}}"{description}, draw=none, from=2, to=4]
\end{tikzcd}
\end{equation}
and similarly for $\beta^2$ and $\xi$ applied in the second and third variable. 
They are equivalent to the naturality of the pseudonatural transformation $a$ with respect to the 2-cell components of $\beta^2$ 
(the compositions of the two 2-cell components of $a$ present the single 2-cell components of $a$ at the compositions of the corresponding 1-cells). 

Similar identities hold for the following pairs of triples of 2-cell components: 
$$L(\theta_{a^*}^{-1}(\crta b), \,\,F(a^*_{\crta f}), \,\,\theta_{m_n(\crta f)})=
R(\theta_{m_{n-1}(\crta f)}, \,\,G(a^*_{\crta f}), \,\,\theta_{a^*}^{-1}(\crta a)),\vspace{-0,5cm}$$ 
\begin{equation} \eqlabel{triples}
L(a_{\crta{\theta_{b_i}}},\,\,  a_{\crta{F(f)}}, \,\, m_{n-1}(\theta_{f_1},...,m_2(\theta_{f_i},\theta_{f_{i+1}}),...,\theta_{f_n}))=
R(m_n(\crta{\theta_{f_i}}), \,\,a_{\crta{G(f)}}, \,\, a_{\crta{\theta_{a_i}}}).
\end{equation}
Here $L(\bullet,\bullet,\bullet)$ and $R(\bullet,\bullet,\bullet)$ denote the left- and right-hand side triples of 2-cells as in \equref{4-ary compat}. 
  
And finally:  
\begin{equation} \eqlabel{pi-beta}
\begin{tikzcd}
	& {} & \bullet & \bullet & \bullet \\
	& \bullet & \bullet & \bullet & \bullet \\
	\bullet & \bullet & \bullet & \bullet \\
	\bullet & \bullet & \bullet & \bullet & \bullet \\
	& \bullet & \bullet & {} & \bullet
	\arrow[""{name=0, anchor=center, inner sep=0}, "{F(Aa_{B,C,D})}", from=1-3, to=1-4]
	\arrow["{=}"', from=1-3, to=2-3]
	\arrow["{\beta_{A(B(CD))}}", from=1-4, to=1-5]
	\arrow["{=}", from=1-5, to=2-5]
	\arrow[""{name=1, anchor=center, inner sep=0}, "{F(a_{A,BC,D})}", from=2-2, to=2-3]
	\arrow["{=}"', from=2-2, to=3-2]
	\arrow["{\beta_{A((BC)D)}}", from=2-3, to=2-4]
	\arrow[""{name=2, anchor=center, inner sep=0}, "{G(Aa_{B,C,D})}"', from=2-4, to=2-5]
	\arrow["{=}", from=2-4, to=3-4]
	\arrow[""{name=3, anchor=center, inner sep=0}, "{F(a_{A,B,C}D)}", from=3-1, to=3-2]
	\arrow["{=}"', from=3-1, to=4-1]
	\arrow["{\beta_{(A(BC))D}}", from=3-2, to=3-3]
	\arrow[""{name=4, anchor=center, inner sep=0}, "{G(a_{A,BC,D})}", from=3-3, to=3-4]
	\arrow["{=}", from=3-3, to=4-3]
	\arrow["{\beta_{((AB)C)D}}", from=4-1, to=4-2]
	\arrow[""{name=5, anchor=center, inner sep=0}, "{G(a_{A,B,C}D)}", from=4-2, to=4-3]
	\arrow["{=}"', from=4-2, to=5-2]
	\arrow[""{name=6, anchor=center, inner sep=0}, "{G(a_{A,BC,D})}", from=4-3, to=4-4]
	\arrow["{G(a_{A,BC,D})}", from=4-4, to=4-5]
	\arrow["{=}", from=4-5, to=5-5]
	\arrow["{G(a_{AB,C,D})}"', from=5-2, to=5-3]
	\arrow["{G(a_{A,B,CD})}"', from=5-3, to=5-5]
	\arrow["{\beta_{Aa_{B,C,D}}}"{description}, draw=none, from=0, to=2]
	\arrow["{\beta_{a_{A,BC,D}}}"{description}, draw=none, from=1, to=4]
	\arrow["{\beta_{a_{A,B,C}D}}"{description}, draw=none, from=3, to=5]
	\arrow["{G(\pi)}"{description}, draw=none, from=6, to=5-4]
\end{tikzcd}
=\quad
\begin{tikzcd}
	\bullet & \bullet & \bullet & \bullet & \\
	\bullet & \bullet && \bullet & \bullet \\
	\bullet & \bullet & \bullet && \bullet \\
	\bullet & \bullet & \bullet \\
	&&& {}
	\arrow["{F(a_{A,B,C}D)}", from=1-1, to=1-2]
	\arrow["{=}"', from=1-1, to=2-1]
	\arrow[""{name=0, anchor=center, inner sep=0}, "{F(a_{A,BC,D})}", from=1-2, to=1-3]
	\arrow["{F(Aa_{B,C,D})}", from=1-3, to=1-4]
	\arrow["{=}", from=1-4, to=2-4]
	\arrow["{F(a_{AB,C,D})}"', from=2-1, to=2-2]
	\arrow[""{name=1, anchor=center, inner sep=0}, "{F(a_{A,B,CD})}", from=2-2, to=2-4]
	\arrow["{=}"', from=2-2, to=3-2]
	\arrow["{\beta_{A(B(CD))}}", from=2-4, to=2-5]
	\arrow["{=}", from=2-5, to=3-5]
	\arrow[""{name=2, anchor=center, inner sep=0}, "{F(a_{AB,C,D})}", from=3-1, to=3-2]
	\arrow["{=}"', from=3-1, to=4-1]
	\arrow["{\beta_{(AB)(CD)}}", from=3-2, to=3-3]
	\arrow[""{name=3, anchor=center, inner sep=0}, "{G(a_{A,B,CD})}", from=3-3, to=3-5]
	\arrow["{=}", from=3-3, to=4-3]
	\arrow["{\beta_{((AB)C)D}}"', from=4-1, to=4-2]
	\arrow[""{name=4, anchor=center, inner sep=0}, "{G(a_{A,B,C}D)}"', from=4-2, to=4-3]
	\arrow["{F(\pi)}"{description}, draw=none, from=0, to=1]
	\arrow["{\beta_{a_{A,B,CD}}}"{description}, draw=none, from=1, to=3]
	\arrow["{\beta_{a_{AB,C,D}}}"{description}, draw=none, from=2, to=4]
\end{tikzcd}
\end{equation}
The two compositions of the 2-cell components of $\beta$ appearing in the above diagrams present the single 2-cell component of $\beta$ at the composition of the associativity 1-cells, then the identity holds by naturality of the transformation $\beta$ with respect to the 2-cell $\pi_{A,B,C,D}$.

\subsubsection{$\mathsf{Bicat}$-enrichment of monoidal bicategories}

To prove that $\MBic$ is a $\mathsf{Bicat}$-category 
we first argue that monoidal bicategories make a left closed 
multicategory, so there is a self-enrichment by \prref{Yau}. This yields an associative and unital binary multimorphism 
$\MBic(\B',\B''),\MBic(\B,\B')\to\MBic(\B,\B'')$ of monoidal bicategories, so in particular, an associative and unital binary multimorphism in $\mathsf{Bicat}$, as desired. 

\smallskip

To prove that monoidal bicategories make a left closed multicategory, we consider the following steps: 
\begin{itemize}
\item for monoidal bicategories $\B$ and $\B'$ it is easily shown that $\MBic(\B,\B')$ makes a full sub-bicategory of $\Bicat(\B,\B')$; 
then 
a monoidal structure of $\MBic(\B,\B')$ is induced pointwise (for two monoidal pseudofunctors $F,F':\B\to\B'$ it is 
$(F\ot F')(B):=F(B)\ot F'(B')$, and similarly for higher cells in $\MBic(\B,\B')$; 
the interchange between the monoidal structure and the vertical composition of pseudonatural transformations is induced by the interchange of the monoidal structure of $\B'$ and its 1-cells, and so on);
\item the closedness isomorphisms $\M_{n+1}(\A_1,..., \A_n, \B; \C)\iso \M_n( \A_1,..., \A_n; \M_1(\B;\C))$ are proved by using the 
corresponding isomorphisms from the known closedness of the multicategory $\mathsf{Bicat}$ and adding the check of monoidality of the 
cells at all levels; hereby an $n$-ary multifunctor is monoidal if all its $n$-unary constituting pseudofunctors are monoidal (it is easily seen that an $(n+1)$-ary multifunctor on the left is monoidal if and only if an $n$-ary multifunctor on the right is monoidal; a pseudofunctor 
$F:\A_i\to\M_1(\B;\C)$ is monoidal by $F(A\ot A')(B)=F(A)(B)\ot F(A')(B)$); 
\item that substitution 
$\Pi_{j=1}^{m_1}\A_{1j}\times...\times\Pi_{j=1}^{m_n}\A_{nj}\stackrel{F_1\times...\times F_n}{\longrightarrow}\B_1\times...\times\B_n
\stackrel{G}{\to}\C$
for monoidal pseudofunctors $F_1,...,F_n,G$ is a monoidal pseudofunctor is straightforwardly proved;
\item that evaluation $ev: \M_1(\A;\B),\A\to\B$ is a monoidal binary pseudofunctor is also straghtforwardly proved.
\end{itemize}

\subsection{The two hom-bicategories are biequivalent} \sslabel{equiv homs}

In \ssref{fun-bicat} we showed that for bi-representable bi-multicategories $\M$ and $\N$ the bicategory of pseudofunctors between them 
coincides with the bicategory $\BiMlt(\M,\N)$ of pseudofunctors of their underlying bi-multicategories. To stress that we are dealing with 
bi-representable bi-multicategories and their bicategory of pseudofunctors, in this subsection we are going to use the notation $\RepBM(\M,\N)$ 
for the latter bicategory. 

We are now going to prove that there is a biequivalence pseudofunctor 
$$\U:\RepBM(\M,\N)\to\MBic(\ul{\M},\ul{\N}).$$


Given a pseudofunctor $(\F, \xi, \delta):\M\to\N$ of bi-representable bi-multicategories, from \prref{psf birep} for $n=3$ it is clear that 
$\F$ restricted to the underlying monoidal bicategory $\ul{\M}$ of $\M$ equipped with the pseudonatural transformations $\xi_2, \xi_0$ and 
invertible modifications $\delta_3, \delta^L, \delta^R$ determines a lax monoidal pseudofunctor 
of monoidal bicategories (see \equref{omega mon ps fun} - \equref{delta mon ps fun}).  
Observe that $m'(x,y,z)$ is given by $m^R(x,y,z)$ from \rmref{j and j'} part a). The two coherence identities from \prref{psf birep} 
correspond to the axioms $(HTA1)$ and $(HTA2)$ from \cite[Definition 3.1]{GPS} (they agree with the two axioms from \cite[Definition 2.5]{SP}). 

Given a pseudonatural transformation $(\theta, (\Theta_n)_{n\in\Nn_0}): (\F, \xi^\F, \delta^\F)\Rightarrow (\G, \xi^\G,\delta^\G)$ of pseudofunctors of 
bi-multicategories. 
The invertible modification $\Theta_2$ and the 2-cell $\Theta_0$ correspond to the modification $\beta^2$ and the 2-cell $\beta^0$ from \equref{beta's}, respectively, 
the axiom from point 2) of \prref{transf} for $n=3$ corresponds to the axiom MBTA1 from \cite[Definition 2.7]{SP} 
(consider the modifications $\Theta_3$ and $\Theta_3'$ as $\Theta_3^L$ and $\Theta_3^R$, respectively, that is to say, 
in the form of their factorizations through $\Theta_2$ and $m_2(\Theta,1)$, and through $\Theta_2$ and $m_2(1,\Theta)$, respectively), 
and the axioms from point 3) of \prref{transf} correspond to the axioms MBTA2 and  MBTA3 of \cite[Definition 2.7]{SP}, 
so that we obtain a monoidal pseudonatural transformation of monoidal pseudofunctors of monoidal bicategories. 

Finally, the two axioms from \prref{modif} of a modification $\Xi:(\theta, \Theta)\Rrightarrow (\theta', \Theta')$ between pseudonatural transformations 
for sequences of length $2$ coincide with the two axioms from \cite[Definition 2.8]{SP}, and we indeed have a monoidal modification on the side of monoidal bicategories, as desired.

\medskip

We now construct a quasi-inverse for the underlying 2-functor $\U$. 

\medskip

On the level of objects, take a lax monoidal pseudofunctor $F:\ul{\M}\to\ul{\N}$ with the pseudofunctor structure $F^2, F^0$, lax monoidal pseudofunctor data 
$\xi^2, \xi^0$ and invertible modifications $\omega, \gamma, \delta$ as in \equref{omega mon ps fun} - \equref{delta mon ps fun}. By iterating $\xi^2$ 
construct $\xi^L_{A_1,..,A_n}: \textcircled{L} F(A_i)\to F(\textcircled{L} A_i)$ and define 
$$\F_{A_1, ..., A_n;B}: \dul{\M}_n(A_1, ..., A_n;B) \to \dul{\N}_n(F(A_1), ..., F(A_n); F(B))$$ 
for $n> 1$ as the composition functor 
$$\ul{\M}(\underset{i}{\textcircled{L}}A_i,B) \xrightarrow{F_{\underset{i}{\textcircled{L}}A_i,B}} \ul{\N}(F(\underset{i}{\textcircled{L}}A_i),F(B))  
 \xrightarrow{\ul{\N}(\xi^L_{A_1,..,A_n},1)} \ul{\N}(\underset{i}{\textcircled{L}}F(A_i),F(B)) $$ 
and as $F_{A,B}$ for $n=1$. Thus for $f:\coprod_i A_i\to B$ with its corresponding unary map $\hat f:\underset{i}{\textcircled{L}}A_i\to B$, it is 
$$\F(f)=F(\hat f)\circ\xi^L(\crta{A_i})\circ j: \coprod_i F(A_i) \to F(B).$$ 
The desired pseudofunctor structure $\F^2, \F^0$ we define by setting the latter to be $F^0$. Recalling from \equref{def subst} how we defined substitution, we 
define the component 2-cells of $\F^2$ at multimaps $f_1,...,f_n, g$ mapping
$$\F(g)\comp \langle\F(f_1),...,\F(f_n)\rangle \Rightarrow\F(g\circ \langle f_1,...,f_n\rangle)$$
via the corresponding unary maps $\hat f_i:\underset{ij}{\textcircled{L}}A_{ij}\to B_i, i=1,...,n$ and 
$\hat g:\underset{i}{\textcircled{L}}B_i\to C$ and the compositions (in the unary part of $\N$)
\begin{multline} \label{F2}
\big(F(\hat g)\comp\xi(\crta{B_i})\big)\circ \big(\underset{i}{\textcircled{L}} \big(F(\hat f_i)\comp\xi_j(\crta{A_{ij}})\big) \big) 
\stackrel{1\comp int}{\Longrightarrow }  
\big(F(\hat g)\comp\xi(\crta{B_i})\big)\circ \big(\underset{i}{\textcircled{L}} F(\hat f_i)\big) \comp\big(\underset{i}{\textcircled{L}}\xi_j(\crta{A_{ij}})\big)  \\
\stackrel{1\comp\xi^L_{\crta{\hat f_i}}\circ 1}{\Longrightarrow }  
F(\hat g)\comp F(\underset{i}{\textcircled{L}} \hat f_i) \circ \xi_i(\crta{A_{ij}}) \circ\big(\underset{i}{\textcircled{L}}\xi_j(\crta{A_{ij}})\big)  
\stackrel{F^2_{\hat g,\underset{i}{\textcircled{L}} \hat f_i}\circ 1\circ 1} {\Longrightarrow } 
F(\hat g\circ \big(\underset{i}{\textcircled{L}} \hat f_i\big)) \circ \xi_i(\crta{A_{ij}}) \circ\big(\underset{i}{\textcircled{L}}\xi_j(\crta{A_{ij}})\big) 
\end{multline}
mapping $\underset{i}{\textcircled{L}}\underset{j}{\textcircled{L}}F(A_{ij})\to F(C)$.

The hexagonal law for $\F$ can be drawn in terms of $F$ as the pasting of: 1) the following commuting diagram 
\[\begin{tikzcd}
	{\big(F(f)\circ\big(l_i\circ(\underset{i}{\textcircled{L}}F(g_i))\big)\big)\circ\big(l_{ij}\circ(\underset{ij}{\textcircled{L}}F(h_{ij}))\big)} && {F(f)\circ\big(\big(l_i\circ(\underset{i}{\textcircled{L}}F(g_i))\big)\circ\big(l_{ij}\circ(\underset{ij}{\textcircled{L}}F(h_{ij}))\big)\big)} \\
	{\big(F(f)\circ \big(F(\underset{i}{\textcircled{L}}g_i)\circ r_i\big)\big)\circ\big( F(\underset{ij}{\textcircled{L}}h_{ij})\circ r_{ij}\big)} && {F(f)\circ\big(\big(F(\underset{i}{\textcircled{L}}g_i)\circ r_i\big)\circ \big( F(\underset{ij}{\textcircled{L}}h_{ij})\circ r_{ij}\big)\big)}
	\arrow["{\tilde a^{-1}}", Rightarrow, nfold, from=1-1, to=1-3]
	\arrow["{(1\circ\xi^L_{left})\circ\tilde\xi^L_{right}}"', Rightarrow, nfold, from=1-1, to=2-1]
	\arrow["{1\circ(\xi^L_{left}\circ\tilde\xi^L_{right})}", Rightarrow, nfold, from=1-3, to=2-3]
	\arrow["{a^{-1}}", Rightarrow, nfold, from=2-1, to=2-3]
\end{tikzcd}\]
2) a similar one where in the vertical direction $r_i$ and $F(\underset{ij}{\textcircled{L}}h_{ij})$ interchange places via a naturality 2-cell, and 
3) the hexagonal law for $F$ from below (composed with the identity on $r_i' \circ r_{ij}$), for all three we know they commute, so the desired whole diagram commutes. (In the above diagram $\tilde a$ is the one from 
\equref{tilde a}, the ``noise factors'' $l$ and $r$ are: $l_i=\xi_i, \, l_{ij}=\big(\underset{i}{\textcircled{L}}\xi_{ij}\big)\circ\omega_b, \, r_i=F(\omega_b), 
\, r_{ij}=\xi_{ij}$ evaluated at the corresponding objects, and the 2-cells $(1\circ\xi^L_{left})\circ\tilde\xi^L_{right}$ and 
$1\circ(\xi^L_{left}\circ\tilde\xi^L_{right})$ are unprecise short annotations for the  generally written 2-cell $1\circ\xi^L_{left}\circ\tilde\xi^L_{right}$, 
whose part $\xi^L_{left}\circ\tilde\xi^L_{right}$ is the following composition of 2-cells
\begin{multline*}
 \big(l_i\circ(\underset{i}{\textcircled{L}}F(g_i))\big)\circ\big(l_{ij}\circ(\underset{ij}{\textcircled{L}}F(h_{ij}))\big)
\stackrel{\xi_{\crta{F(g_i)}}\circ 1}{\Longrightarrow} F(\underset{i}{\textcircled{L}}g_i)\circ l_i'\circ l_{ij}\circ
   \big(\underset{ij}{\textcircled{L}} F(h_{ij})\big)
\stackrel{1\circ  nat\circ 1}{\Longrightarrow} F(\underset{i}{\textcircled{L}}g_i)\circ r_i \circ r_{ij}'\circ \underset{ij}{\textcircled{L}}F(h_{ij}) \\
\stackrel{ 1\circ \xi_{\crta{F(h_{ij})}} }{\Longrightarrow} 
\big(\big(F(\underset{i}{\textcircled{L}}g_i)\circ r_i\big)\circ \big( F(\underset{ij}{\textcircled{L}}h_{ij})\circ r_{ij}\big)\big)
\end{multline*}
being 
$\omega_b$ from \equref{omega-a}, and $l_i'$ and $r_{ij}'$ differ from $l_i$ and $r_{ij}$ in that they are evaluated at different objects.) 
Similarly one shows the two unitality laws for $\F$. 

\medskip

With the above data we already have a pseudofunctor of bi-multicategories $\dul{\M}\to\dul{\N}$ and by \prref{psf birep} we know it respects the bi-representability 
in a lax way. To be explicit, let us comment to which data the modifications associated to $F$ correspond. 
We know that the given modifications $\omega, \gamma, \delta$ satisfy the axioms $(HTA1)$ and $(HTA2)$ from \cite[Definition 3.1]{GPS}.  
The 2-cell component $\omega_{A,B,C}$ take to be the 2-cell component of the modification $\delta_3$ from \prref{get modif delta for free}, and any side of the identity $(HTA1)$ 
can be taken for the 2-cell component of the modification $\delta_4$, so that the identity presents the compatibility condition from the 
second bullet item of \prref{psf birep}. 
The 2-cell components $\gamma_A, \delta_A$ take to be the 2-cell components of the modifications $\delta^L, \delta^R$. 
Then the last coherence required in \prref{psf birep} holds by the axiom $(HTA2)$. Finally, observe that the modification $\delta_{k+n+l;1}$ in 
\equref{delta_n forma modif- sa x i b} is given by the identity, as the equivalences $\Omega$ are identities. 

\medskip

On 1-cells, take a monoidal pseudonatural transformation $\beta$ with an invertible modification $\beta^2$ and the 2-cell $\beta^0$ and satisfying axioms 
MBTA1 - MBTA3 from \cite[Definition 2.7]{SP}. We extend $\beta$ to a pseudonatural transformation $\tilde\beta$ in $\dul{\M}$ with invertible modifications 
$\Theta_n$ as follows. For $f\in\dul{\M}(A_1,...,A_n;B)$ we have invertible 2-cells $\beta_{\hat f}$ in $\ul{\N}$ as in the diagram bellow, on the right, with 
 $\hat f:\textcircled{L} A_i\to B$ in $\ul{\M}$ the corresponding unary map, and we define the following composite 
\begin{equation} \eqlabel{beta-f} 
\tilde\beta_f:=
\begin{tikzcd}
	{\coprod_i F(A_i)} &&& {F(B)} \\
	& {m(\crta{ F(A_i)})} & {F(m(\crta{A_i}))} \\
	& {m(\crta{ G(A_i)})} & {G(m(\crta{A_i}))} \\
	{\coprod_i G(A_i)} &&& {G(B)}
	\arrow[""{name=0, anchor=center, inner sep=0}, "{\mathcal F(f)}", from=1-1, to=1-4]
	\arrow[""{name=1, anchor=center, inner sep=0}, "j", from=1-1, to=2-2]
	\arrow["{\coprod\beta_{A_i}}"', from=1-1, to=4-1]
	\arrow[""{name=2, anchor=center, inner sep=0}, "{\beta_B}", from=1-4, to=4-4]
	\arrow[""{name=3, anchor=center, inner sep=0}, "{\xi_n^\mathcal F}", from=2-2, to=2-3]
	\arrow["{m(\crta{\beta_{A_i}})}"', from=2-2, to=3-2]
	\arrow["{F(\hat f)}", from=2-3, to=1-4]
	\arrow["{\beta_{m(\crta{A_i})}}", from=2-3, to=3-3]
	\arrow[""{name=4, anchor=center, inner sep=0}, "{\xi_n^\mathcal G}"', from=3-2, to=3-3]
	\arrow["{j'}", tail reversed, no head, from=3-2, to=4-1]
	\arrow["{G(\hat f)}"', from=3-3, to=4-4]
	\arrow[""{name=5, anchor=center, inner sep=0}, "{\mathcal G(f)}"', from=4-1, to=4-4]
	\arrow["{=}"{description, pos=0.4}, draw=none, from=0, to=3]
	\arrow["\Epsilon"{description}, between={0.2}{0.8}, Rightarrow, nfold, from=1, to=4-1]
	\arrow["{\beta_{\hat f}}"{description, pos=0.4}, between={0.1}{0.8}, Rightarrow, nfold, from=2, to=3-3]
	\arrow["{\Theta_n^{-1}(\overline{A_i})}"{description}, between={0.2}{0.9}, Rightarrow, nfold, from=3, to=4]
	\arrow["{=}"{description}, draw=none, from=4, to=5]
\end{tikzcd}
\end{equation}
as the structure 2-cell of $\tilde\beta$ at $f$ in the bi-multicategory $\dul{\N}$. Here $\Theta_2=\beta^2$ (and $\Theta_0=\beta^0$) and for higher $n$ 
the components of $\Theta_n$ are defined by iteration as prescribed by the factorizations given in the point 1) of \prref{transf} (relying on the 4-ary compatibilities with associativity \equref{4-ary compat}, recall \rmref{Theta}). To obtain a transformation in bi-multicategories, we should check the 
associativity and unit law for $\tilde\beta$ 
from \cite[Definition 4.2.33]{Phil}. For this it will be helpful to first prove that $\Theta_n$ is a modification. We know this is true for $n=2$ by monoidality 
of $\beta$. For higher $n$ it is proved by induction, whereby $\xi_n(\crta f)$ factors through $m_{n-1}(1,\xi_2(f_i,f_{i+1}),1)$ and 
$\xi_{n-1}(f_1,..m_2(f_i,f_{i+1}),..,f_n)$ in a fully analogous way as it does $\Theta_n(\crta a)$ in the above cited proposition. In this proof one uses: 
the induction hypothesis for $n=2$ and for $n-1$, and the identities \equref{triples}.

\smallskip

To prove the associativity law for $\tilde\beta$, instead of drawing here the diagram computation that would take much space, we lay out the steps for such proof. 
One starts by writing out the transformation axiom for $\beta$ relating the 2-cell components $\beta_g, \beta_{m(\crta{f_i})}$ and 
$\beta_{g\circ(m(\crta{f_i}))}$ (it also contains the 2-cell components of $F^2$ and $G^2$ at $m(\crta{f_i})$ and $g$), 
paste to it the 2-cell $\Theta_n^{-1}(\crta{A_{ij}})$ from below and $\xi^F_{\crta{f_i}}$ from 
above on both sides. Then, on one side identify the subpasting of 2-cells $R(\xi^F_{\crta{f_i}}, \,\,\beta_{m(\crta{f_i})}, \,\,
\Theta_n^{-1}(\crta{A_{ij}}))$ (recall the notation from \equref{triples}) and use the modification axiom for $\Theta_n$ to substitute that subpasting with 
the pasting $L(\Theta_n^{-1}(\crta{B_i}), \,\,
m(\crta{\beta_{\crta{f_i}}}), \,\,\xi^G_{\crta{f_i}})$. In what is obtained, the pasting of $\beta_g$ and $\Theta_n^{-1}(\crta{B_i})$ 
represents $\tilde\beta_g$, the 2-cell $m(\crta{\beta_{\crta{f_i}}})$ represents in a condensed form $m(\tilde\beta_f)$ (forgetting the 
interchangers between composition and monoidal product), and the pasting of $\xi^G_{\crta{f_i}}$ and the component of $G^2$ represents $\G^2$. This 
completes the proof of the associativity axiom, as on the other side of the equation one has: 1) the pasting of $\xi^F_{\crta{f_i}}$ and the component of 
$F^2$ representing $\F^2$, and 2) the pasting of $\beta_{g\circ(m(\crta{f_i}))}$ and $\Theta_n^{-1}(\crta{A_{ij}})$ representing 
$\tilde\beta_{g\circ(m(\crta{f_i}))}$. (In this computation we ignored the associativity $a$: in a rigorous proof one would have to 
deal with it by pooling its naturality 2-cell components across the whole computation.)

The unit law for $\tilde\beta$ is proved in the same lines and we have that $\tilde\beta$ is a transformation of bi-multicategories.  

By \prref{transf} we now know $\tilde\beta$ is also a transformation of bi-representable bi-multicategories, let us outline the correspondence of the axioms. 
The law MBTA1 corresponds to the equation in point 2) of \prref{transf} for $n=3$, and the unity laws MBTA2 and MBTA3 from \cite[Definition 2.7]{SP} 
yield the unity laws from point 3) of \prref{transf}. The identity for higher $n$ is proven by induction 
using the 4-ary compatibilities with associativity \equref{4-ary compat} and \equref{pi-beta}.

\medskip

The modifications in $\MBic(\ul{\M},\ul{\N})$ yield clearly the case $n=2$ for modifications in $\RepBM(\M,\N)$. The identity 
\equref{modif-rep} for the induced modification $\Xi$ evaluated at $m_n(\crta{a_i})$ is shown by induction as follows. Write $\Theta'_n(\crta a)$ 
as a factorization  through $\Theta'_{n-1}(\crta a)$ and $\Theta'_2(\crta a)$ as in \prref{transf} part 1). Compose this with $\Xi_{m_n(\crta a)}$ 
at the 1-cell edge $\theta'_{m_n(\crta a)}$ of $\theta'_{a^*}$ (from the factorization of $\Theta'_n(\crta a)$). Based on the modification axiom, 
commute $\Xi_{m_n(\crta a)}$ with $\theta'_{a^*}$  to get $\theta_{a^*}$ and $\Xi_{m_{n-1}(\crta a)}$ (here $a: m_n(\crta a)\to 
m_{n-1}(a_1,...,m_2(a_i,a_{i+1}),...,a_n)$ and $a^*$ is its quasi-inverse). In the obtained total pasting recognize the left-hand side of \equref{modif-rep} for $n-1$ 
incrusted in the rest $R$ of the 2-cells. Then the total pasting is equal to the right-hand side of \equref{modif-rep} for $n-1$ 
(containing $m_{n-1}(\crta{\Xi_{a_i}})$) incrusted in $R$ in the exactly same way. 
In the latter total pasting, which let us call $\R$, move $m_{n-1}(\crta{\Xi_{a_i}})$ 
first along $m_{n-1}(1,\Theta_2(a_i,a_{i+1}),1)$ (because of the modification axiom at every $a_i$), and then along $a_{\crta{\theta_{a_i}}}$ 
(because of the naturality of $a$ at every variable via the single-variable 2-cell $\Xi_{a_i}$). What we obtained by this pulling the 2-cell 
components of $\Xi$ along the total pasting $\R$, is $\Theta_n(\crta a)$ pasted with $m_n(\crta{\Xi_{a_i}})$, proving the identity 
\equref{modif-rep} for general $n$. 

\medskip

This finishes the construction of a quasi-inverse $\U^*$ for $\U:\RepBM(\M,\N)\to\MBic(\ul{\M},\ul{\N})$.

\bigskip

Let us check the composition $\U^*\circ\U$. 

\smallskip

On a pseudofunctor $(\F, \xi, \delta)$ in $\RepBM(\M,\N)$ the only thing there is left to compare is the image $\F(f)$ for $f:a_1,...,a_n\to b$ and the structure 
$\F^2$ with their respective counterparts via $\U^*\circ\U$. 
Let $\U(\F)=F$, then we have $\Omega(f):\underset{i}{\textcircled{L}}a_i\to b$ and $(\U^*\circ\U)(\F)(f)=
\Sigma\big(F\big(\Omega(f)\big)\circ\xi^L(\crta{a_i})\big)$. Here $F\big(\Omega(f)\big)=\F\big(\Omega(f)\big)$ and  by \prref{lax mon mult} there is an 
invertible 2-cell $\xi_n(\crta{a_i})\Rightarrow\xi^L(\crta{a_i})$. Moreover, from \equref{modif delta} we have an invertible 2-cell 
$\delta_{\crta{a_i};b}:\F\big(\Omega(f)\big)\circ\xi_n(\crta{a_i})\Rightarrow\Omega\big(\F(f)\big)$, so there is  an invertible 2-cell $(\U^*\circ\U) (\F)(f)
\Rightarrow\Sigma\big(\Omega\big(\F(f)\big)\big)\Rightarrow\F(f)$, as $\Sigma$ is an equivalence of categories. 

In computing the image of the structure 2-cell $\F^2_{g,f_1,...,f_n}$ by $\U^*\circ\U$ one finds that a composition similar to (\ref{F2}), with 
$F^2_{g,\underset{i}{\textcircled{L}} f_i}$ substituted by $\Omega^\N(\F^2_{g,\underset{i}{\textcircled{L}} f_i})$, corresponds to the 
composite 2-cell in the diagram 
\[\begin{tikzcd}
	\bullet &&&&&& \\
	& \bullet &&&& \bullet \\
	\bullet && \bullet && \bullet && \bullet \\
	&& \bullet && \bullet \\
	& \bullet &&&& \bullet \\
	\bullet
	\arrow["{subst^\M_{1;1}}", curve={height=-6pt}, from=1-1, to=2-6]
	\arrow["{\F^1_{\bullet;\bullet}\times\F^1_{\bullet;\bullet}}"', from=1-1, to=3-1]
	\arrow[""{name=0, anchor=center, inner sep=0}, "{1\times\tilde\omega^\M}"'{pos=0.2}, from=2-2, to=1-1]
	\arrow[""{name=1, anchor=center, inner sep=0}, "{\F_{\bullet;\bullet}}", curve={height=-12pt}, from=2-6, to=3-7]
	\arrow["{1\times\N_1(\xi_j(\underset{i}{\textcircled{L}}\xi_j)\omega_a;1)}"', from=3-1, to=6-1]
	\arrow["{\Omega^\M_\bullet\times\Pi\Omega^\M_\bullet}"'{pos=0.4}, curve={height=-6pt}, from=3-3, to=2-2]
	\arrow["{subst^\M_{n;k_1,..,k_n}}", from=3-3, to=3-5]
	\arrow[""{name=2, anchor=center, inner sep=0}, "{\F_{\bullet;\bullet}^n\times\Pi_{i=1}^n\F^{k_1,..,k_n}_{\bullet;\bullet}}"', from=3-3, to=4-3]
	\arrow[""{name=3, anchor=center, inner sep=0}, "{\Omega^\M_\bullet}"'{pos=0.6}, from=3-5, to=2-6]
	\arrow[""{name=4, anchor=center, inner sep=0}, "{\F_{\bullet;\bullet}}", from=3-5, to=4-5]
	\arrow["{\N_1(\xi_j(\underset{i}{\textcircled{L}}\xi_j)\omega_a;1)}", curve={height=-12pt}, from=3-7, to=5-6]
	\arrow["{subst^\N_{n;k_1,..,k_n}}"', from=4-3, to=4-5]
	\arrow["{\Omega^\N_\bullet\times\Pi\Omega^\N_\bullet}"{pos=0.3}, curve={height=6pt}, from=4-3, to=5-2]
	\arrow[""{name=5, anchor=center, inner sep=0}, "{\Omega^\N_\bullet}", from=4-5, to=5-6]
	\arrow[""{name=6, anchor=center, inner sep=0}, "{1\times\tilde\omega^\N}"{pos=0.2}, from=5-2, to=6-1]
	\arrow["{subst^\N_{1;1}}"', curve={height=12pt}, from=6-1, to=5-6]
	\arrow["{\tilde\delta_\bullet}"{description, pos=0.3}, between={0.2}{0.4}, Rightarrow, nfold, from=0, to=6]
	\arrow["{\delta^{comp}_\bullet}"{description}, between={0.3}{0.7}, equals, nfold, 2tail reversed, from=1, to=5]
	\arrow["{\F_{\bullet;\bullet}^2}"{description}, between={0.2}{0.8}, Rightarrow, nfold, from=2, to=4]
	\arrow["{subst^\M_{\Omega_\bullet}}"{description}, between={0.3}{0.7}, equals, nfold, 2tail reversed, from=3, to=0]
	\arrow["{subst^\N_{\Omega_\bullet}}"{description}, between={0.3}{0.7}, equals, nfold, 2tail reversed, from=5, to=6]
\end{tikzcd}\]
evaluated at the object $g,f_1,...,f_n$ of $\M_n(b_1,...,b_n,c)\times\Pi_{i=1}^{n}\M_{k_i}(\crta{a_i};b_i)$. 
Here the bullets in the indexes denote the relevant objects in $\M$ and $\N$, the horizontal inner arrows $subst^\M_{n;k_1,..,k_n}$ denote the functors 
\equref{cat-multi} and the pentagons $subst^\M_{\Omega_\bullet}$ relying on them represent the diagrams \equref{def subst-diag} (up to $\Omega$), 
and the left and the right diagrams $\tilde\delta_\bullet$ and $\delta^{comp}_\bullet$ are the obvious (inverse) versions of the natural transformations 
\equref{modif delta}. Pulling back this composite by $\U^*$, {\em i.e.} by $\Sigma$, to the setting of $\RepBM(\M,\N)$ one comes back to the original 
structure 2-cell $\F^2_{g,f_1,...,f_n}$, as desired, since applying $\U^*$ means canceling the surrounding four natural transformations containing $\Omega$, 
by applying the analogous surrounding four natural transformations containing $\Sigma$ instead of $\Omega$. We conclude that $\U^*\circ\U$ on objects is 
isomorphic to identity.

\medskip

On a pseudonatural transformation $(\theta, (\Theta_n)_{n\in\Nn_0})$  in $\RepBM(\M,\N)$ one should check the relation between the 
structure 2-cells $\widetilde{\F(\theta)}_f$ for $f:a_1,...,a_n\to b$ 
and $\theta_f$, as well as between the modifications $\Theta_n$ and $\widetilde{\Theta_n}$ reconstructed from $\Theta_2$ by the factorization. 
For the latter identity there is then actually nothing to check. For the former identity note that the construction \equref{beta-f} corresponds almost by identity 
to the factorization \equref{theta-f} of the original $\theta_f$: they only differ in that the upper and lower trapeze with longer bases $\F(f)$ and $\G(f)$ 
in $\theta_f$ are non-trivial invertible 2-cells, while in $\widetilde{\F(\theta)}_f$ they are identities. Then these two transformation components differ by 
composing with invertible 2-cells, this makes that the 1-cells $\theta$ and $\widetilde{\F(\theta)}$ in the hom-bicategory differ by an isomorphism.

The level of modifications is the easiest, as the 2-cell components $\Xi_{m_n(\crta a)}$ precisely correspond to the 2-cell components 
$\Xi_{\underset{i}{\textcircled{L}} a_i}$, making $\U^*\circ\U$ identity on 2-cells.  

\medskip

From the construction it is clear that $\U\circ\U^*=\Id$, and we have that $\U:\RepBM(\M,\N)\to\MBic(\ul{\M},\ul{\N})$ is indeed a bi-equivalence of bicategories.

\subsection{The two enriched categories are equivalent} \sslabel{equiva enrich}

Let $\RepBM$ denote the category whose objects are bi-representable bi-multicategories, and whose hom-bicategories for two such objects $\M, \N$ 
are the bicategories $\BiMlt(\M,\N)=\RepBM(\M,\N)$ described in \ssref{fun-bicat}. From \ssref{mon bicat} we know we have a $\mathsf{Bicat}$-enriched 
category $\MBic$ of monoidal bicategories with hom-bicategories $\MBic(\B,\B')$. The bi-equivalence $\U:\RepBM(\M,\N)\to\MBic(\ul{\M},\ul{\N})$ 
induces then a $\mathsf{Bicat}$-enrichment from $\MBic$ onto $\RepBM$ with the composition $c$ defined by the following commuting diagram 
\[\begin{tikzcd}
	{\RepBM(\mathcal N,\mathcal L),\RepBM(\mathcal M,\mathcal N)} &&& {\RepBM(\mathcal M,\mathcal L)} \\
	{\MBic(\ul{\mathcal N},\ul{\mathcal L}),\MBic(\ul{\mathcal M},\ul{\mathcal N})} &&& {\MBic(\ul{\mathcal M},\ul{\mathcal L}).}
	\arrow["c", from=1-1, to=1-4]
	\arrow["{\mathcal U,\mathcal U}"', from=1-1, to=2-1]
	\arrow["{\mathcal U^*}", tail reversed, no head, from=1-4, to=2-4]
	\arrow["subst", from=2-1, to=2-4]
\end{tikzcd}\]
This way we obtain:

\begin{thm} \thlabel{main}
The two $\mathsf{Bicat}$-enriched categories $\MBic$ and $\RepBM$ are equivalent. 
\end{thm}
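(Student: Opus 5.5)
We will exhibit a $\mathsf{Bicat}$-enriched functor $\Phi:\RepBM\to\MBic$ that is essentially surjective up to equivalence on objects and a biequivalence on each hom-bicategory. This is the natural notion of equivalence for categories enriched over the multicategory $\mathsf{Bicat}$, whose hom-objects are bicategories and whose binary composition maps are binary multimorphisms of $\mathsf{Bicat}$.

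On objects, $\Phi$ sends a bi-representable bi-multicategory $\M$ to its underlying monoidal bicategory $\ul{\M}$, as constructed in \seref{bi-bi->mon.bic}. On hom-bicategories, $\Phi$ is given by the pseudofunctors $\U:\RepBM(\M,\N)\to\MBic(\ul{\M},\ul{\N})$ of \ssref{equiv homs}. The enriched structure on $\RepBM$ was defined precisely by transporting the composition of $\MBic$ along $\U$ and $\U^*$, with
$$c=\U^*\circ subst\circ(\U,\U).$$
Hence compatibility of $\Phi$ with composition reduces to checking that $\U\circ c\iso subst\circ(\U,\U)$. This holds because $\U\circ\U^*=\Id$, which is stated at the end of \ssref{equiv homs}.

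Compatibility with units needs only one check: $\U$ sends the identity pseudofunctor of $\M$, with $\xi_n$ and $\delta$ induced by identities, to the identity lax monoidal pseudofunctor of $\ul{\M}$. This is immediate from \prref{ksi}, since for $\F=\Id$ one has $\xi_n=id$.

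Associativity and unitality of the transported composition on $\RepBM$ have to be verified, and this is the step I expect to be the main obstacle. Associativity of $c$ becomes associativity of $subst$ in $\MBic$ conjugated by the composite $\U\U^*$. Since $\U\U^*=\Id$, this follows, but only once one knows that
$$\U^*\circ subst\circ(\U\U^*\circ subst\circ(\U,\U),\U)=\U^*\circ subst\circ(subst\circ(\U,\U),\U),$$
which uses that the equality is strict rather than merely an equivalence. If $\U\U^*$ were only equivalent to the identity, one would instead argue that $\mathsf{Bicat}$-composition must be strictly associative as a binary multimorphism. In that case I would replace $\RepBM$ by its image under $\U$: an isomorphic $\mathsf{Bicat}$-category on the full sub-hom-bicategories $\U(\RepBM(\M,\N))$, which are closed under $subst$ by construction.

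For essential surjectivity, by \ssref{level objects} every monoidal bicategory $\B$ satisfies $\B=\Phi(\ul{\B})$ with identity monoidal structure, so $\Phi$ is surjective on objects on the nose. Moreover $\M\simeq\dul{\M}$ via the pseudofunctor built from $\Omega$ there, whose inverse equivalence comes from the adjoint equivalences \equref{Phil rep}. Combined with the hom-wise biequivalences $\U$, with quasi-inverse $\U^*$ and $\U^*\U\simeq\Id$ as shown, this gives an equivalence of $\mathsf{Bicat}$-categories. The converse functor $\RepBM\leftarrow\MBic$, given by $\B\mapsto\ul{\B}$ from \seref{mon bic - rep} and $\U^*$ on homs, is then a quasi-inverse.
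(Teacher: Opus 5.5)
Your route is the paper's: $\Phi$ is $\M\mapsto\ul{\M}$ with $\U$ on hom-bicategories, the composition on $\RepBM$ is the transported $c=\U^*\circ subst\circ(\U,\U)$, and objects are handled by \ssref{level objects}. You rightly single out the strict associativity and unit laws of $c$ as the crux. Granting the paper's $\U\U^*=\Id$, your associativity computation is fine. The unit law, however, you list but never check, and it fails.

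\textbf{The unit law fails.} Take any unit $e_\N\in\RepBM(\N,\N)$ with $\U(e_\N)$ the identity. Then $c\circ(e_\N,-)=\U^*\circ subst(\Id_{\ul{\N}},-)\circ\U=\U^*\U$. But \ssref{equiv homs} only gives $\U^*\U\cong\Id$: $(\U^*\U)(\F)(f)$ is related to $\F(f)$ through $\delta$, the comparison $\xi_n\Rightarrow\xi^L$ and $\Sigma\Omega\cong\Id$, and $\widetilde{\F(\theta)}_f$ differs from $\theta_f$ by invertible 2-cells.

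Changing the units does not help. The value $c(e_\N,\F)$ always lies in the image of $\U^*$. So a strict unit law forces $\U^*$ to be surjective on objects, and hence, together with $\U\U^*=\Id$, forces $\U$ to be injective on objects. It is not injective. $\U(\F)$ only sees the unary part of $\F$, together with its values on the universal arrows and on the composites and 2-cells $\Epsilon$ used to build $\xi$ and $\delta$. Pick a binary multimap $h$ outside that list, replace $\F(h)$ by a distinct isomorphic multimap, and conjugate $\F^2$ and $\F$ on 2-cells accordingly. This gives $\F'\neq\F$ with $\U(\F')=\U(\F)$.

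So the transported data do not make $\RepBM$, with its hom-bicategories $\BiMlt(\M,\N)$, a $\mathsf{Bicat}$-category. Consequently $\Phi$ is not yet a $\mathsf{Bicat}$-functor.

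\textbf{The fallback does not repair this.} Since $\U\U^*=\Id$, $\U$ is surjective, so its image is all of $\MBic(\ul{\M},\ul{\N})$. That is biequivalent to $\RepBM(\M,\N)$ but not isomorphic to it, since isomorphism would need $\U$ injective. If $\U\U^*$ were only equivalent to $\Id$, the image of the pseudofunctor $\U$ need not even be a sub-bicategory closed under $subst$. Using these images as homs just pulls $\MBic$ back along $\M\mapsto\ul{\M}$. That yields a $\mathsf{Bicat}$-category trivially equivalent to $\MBic$, but it is not $\RepBM$.

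To close the gap you need either a weaker notion of enrichment and enriched equivalence, in which the unit and associativity laws of the transported composition hold up to coherent invertible data, or a genuinely different argument. The paper's proof simply declares the transported $c$ to be an enrichment and passes over exactly this point, so it does not supply the missing step.
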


\section{Applications}

We have worked with the weakest notions of monoidal bicategories and bi-representable bi-multicategories to establish equivalence between their respective 
$\mathsf{Bicat}$-enriched categories. Having seen that the equivalence in this weakest case indeed exists, now we can easily draw conclusions on the 
correspondences between stricter versions of the notions. We do this in the first subsection below. Next, in \ssref{fun-birep-bimult} we saw that 
pseudofunctors of bi-multicategories for bi-representable bi-multicategories $\M, \N$ naturally induce {\em lax monoidal} pseudofunctors on the induced 
monoidal bicategories  $\ul{\M}, \ul{\N}$. In the second subsection below we will formulate fully monoidal version of the enriched equivalence. At last, 
in \ssref{Slat} we extend the formulation of Slattery's result on a bi-multicategory structure on the Kleisli bicategory of a pseudocommutative pseudomonad 
on a 2-multicategory. Namely, our developings contribute with the notion of bi-representability that was not included in that setting.

\subsection{Stricter versions of the result} \sslabel{Stricter}

In \ssref{2-mult} we recalled 2-multicategories: they are $\Cat_1$-multicategories and restricting to their unary 
multihom-categories gives their underlying 2-categories. 

\smallskip

By {\em trivial representability} on a bi-multicategory 
we mean the version of bi-representability in which the equivalence $\Omega$ is the identity (recall the fourth row of Table \ref{table1}). 
Then necessarily the monoidal bicategory corresponding to a trivially representable bi-multicategory is strictly monoidal. 

From our construction we can claim that there are the following correspondences: 
\begin{center}
strictly monoidal 2-categories $\leftrightarrow$ trivially representable 2-multicategories \\
(bicategorically)  monoidal bicategories $\leftrightarrow$ bi-representable bi-multicategories.
\end{center}
Let us now investigate the intermediate combinations of structures.

By a {\em bi-representable 2-multicategory} we mean a 
bi-representable bi-multicategory whose underlying bi-multicategory is a 2-multicategory. Similarly, by a {\em bicategorically 
monoidal 2-category} we mean a monoidal bicategory whose underlying bicategory is a 2-category. However, it is not true that 
bi-representable 2-multicategories correspond via an equivalence to bicategorically monoidal 2-categories. Let us have a closer 
look at this, as well as at the corresponding notion to Gray monoidal 2-categories. 


\smallskip

Verifying the proof of associativity $\tilde\alpha$ in the bi-multicategory induced from a monoidal bicategory $\B$ in \ssref{assoc}, we see that 
$\tilde\alpha$ was defined through the associativity $\alpha$ of the bicategory and $\tilde\gamma$. The latter is built from the transformations $\gamma, \theta, \omega, \Epsilon$. The first of them is the multi-variable version of the monoidality interchange $\xi$ of $\B$, $\omega$ is defined in terms of $\alpha$, and the remaining two are built out of $\gamma$ and $\alpha$. In conclusion, when constructing a bi-representable bi-multicategory from a monoidal bicategory: 
\begin{itemize}
\item the nature of bi-representability ({\em i.e.} of the equivalence $\Omega$) is the same as of the monoidal associativity $a$ (and unitalities $l,r$), and 
\item a bi-multicategory is weak if either the associativity $\alpha$ or the monoidal interchange $\xi$ is non-trivial, though, 
the underlying bicategory is of the same nature as the associativity $\alpha$ (recall \equref{tilde alfa}). 
\end{itemize} 
This leads to the classification of cases in the following Table: 
\begin{table}[H]
\begin{center}
\begin{tabular}{ c c c c | c } 
 & \hspace{0,3cm} assoc. $\alpha$, & mon. assoc. $a$, & mon. & induced \\ [0.5ex]
type of bicat. & \hspace{0,3cm} unit. $\lambda, \rho$ & unit. $l,r$ & interch. $\xi$ & bi-multicat. \\ [0.5ex]
\hline
strictly mon. 2-cat. &  \textcolor{verde}{$\id$'s} &  \textcolor{rojo}{$\id$'s} &  $\id$ & \textcolor{rojo}{tr.-rep.} \textcolor{verde}{2-mult.} \\ [1ex]   
Gray mon. 2-cat. &  \textcolor{verde}{$\id$'s} &  \textcolor{rojo}{$\id$'s} & \textcolor{indigo2}{$\iso$} & \textcolor{rojo}{tr.-rep.} 
   \textcolor{indigo2}{bi-mult.}  \\ 
 & & & & (\textcolor{verde}{with u.l. 2-cat.}) \\ [1ex]
bicat. mon. 2-cat. &  \textcolor{verde}{$\id$'s}  &  \textcolor{rojo}{$\simeq$} &  \textcolor{indigo2}{$\iso$} & \textcolor{rojo}{bi-rep.} 
\textcolor{indigo2}{bi-mult.} \\ 
 & & & & (\textcolor{verde}{with u.l. 2-cat.}) \\ [1ex]
bicat. mon. bicat. &  $\simeq$ &  \textcolor{rojo}{$\simeq$} &  \textcolor{indigo2}{$\iso$} & \textcolor{rojo}{bi-rep.} \textcolor{indigo2}{bi-mult.}\\ [1ex]   
\end{tabular}
\caption{From a monoidal bicategory $\B$ to a bi-representable bi-multicategory}
\label{table2}
\end{center}
\end{table}

It remains to check the converse assignment for the middle two cases. The summary in \ssref{cliq} agrees with the former bullet-item above in the following: 
bi-representability ({\em i.e.} the equivalence $\Omega$) and the monoidal associativity and unitalities $a, l,r$ are of the same nature. We now 
show how the fact that we deal with a bi-multicategory $\M$ having an underlying 2-category implies existence of a non-trivial monoidality interchange 
for the induced monoidal bicategory $\ul{\M}$. Apply the diagram \equref{assoc bi-mc} to the triple of multimaps $( h, \langle g_1, g_2\rangle, 
\langle\langle f_{11},f_{12}\rangle, \langle f_{21},f_{22}\rangle\rangle)$. Set $f_1=f_{11}\ot f_{12}$ and  $f_1=f_{21}\ot f_{22}$ (here $\ot$ is induced 
by the universal object for a binary list of objects) and take $h$ to be identity. Then the diagram gives a non-trivial 2-cell 
$$(g_1\circ f_1)\ot(g_2\circ f_2)\Rightarrow (g_1\ot g_2)\circ(f_1\ot f_2)$$
which is the monoidality interchange for $\ul{\M}$. (It necessarily agrees with the 2-cell we constructed in \equref{mon int}, by the universal property 
in $\M$.) Thus, if we are given a bi-multicategory $\M$ having an underlying 2-category, we obtain a monoidal 2-category $\ul{\M}$ with a non-trivial 
monoidality interchange, which is: strictly monoidal (if $\M$ is trivially representable), or bicategorically monoidal (if $\M$ is bi-representable). 
This confirms that the assignment in the middle two rows in Table \ref{table2} is indeed an equivalence, and we may summarize the equivalence assignments we 
found as follows:
\begin{table}[H]
\begin{center}
\begin{tabular}{| c  | c |} 
 \hline 
 \multicolumn{2}{|c|}{There is equivalence between:} \\ [1ex] 
 \hline\hline 
strictly monoidal 2-categories & trivially representable 2-multicategories  \\ [1ex]
\hline 
Gray monoidal 2-category & trivially representable bi-multicategories \\
 & with underlying 2-categories \\ [1ex]
\hline 
(bicategorically)  monoidal 2-categories & bi-representable bi-multicategories \\
 & with underlying 2-categories \\ [1ex]
\hline 
(bicategorically)  monoidal bicategories & bi-representable bi-multicategories \\
\hline 
\end{tabular}
\caption{Equivalences between versions of monoidal bicategories and bi-representable bi-multicategories}
\label{table3}
\end{center}
\end{table}

\bigskip

By strictifying the two groups of structures in \thref{main}, we find that the hom-bicategories $\RepBM(\M,\N)$ from \ssref{fun-bicat} restrict to 
hom-2-categories for trivially representable 2-multicategories and in this case one has an enrichment over the multicategory $\mathsf{2Cat}$ of 2-categories, 
as well as in the case of strictly monoidal 2-categories. Similarly as in \thref{main} we have: 


\begin{cor} 
The $\mathsf{2Cat}$-enriched categories 
of strictly monoidal 2-categories and of trivially representable 2-multicategories are equivalent. 
\end{cor}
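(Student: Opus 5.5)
We will run the proof of \thref{main} again, now in the fully strict setting, and check at each stage that all structure produced is strict. The aim is to show that the three steps of \seref{eq} restrict. First, the object-level equivalence of \ssref{level objects} restricts. Second, the hom-biequivalence $\U$ of \ssref{equiv homs} restricts to a 2-equivalence of hom-2-categories. Third, the transported enrichment of \ssref{equiva enrich} lands in $\mathsf{2Cat}\subset\mathsf{Bicat}$.

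On objects, we use the first row of Table \ref{table2}. If $\B$ is strictly monoidal, then $\alpha,\lambda,\rho,a,l,r,\xi$ are identities. Then the $\omega_a$ of \equref{omega-a} are identities, the transformation $\tilde\alpha$ of \equref{tilde a} is an identity, and the equivalence $\Omega$ is the identity. So $\ul{\B}$ is a trivially representable 2-multicategory. Conversely, for a trivially representable 2-multicategory $\M$, all the 2-cells of \seref{bi-bi->mon.bic} are obtained as unique mediating 2-cells between 1-cells that coincide. Uniqueness then forces them to be identities, and $\ul{\M}$ is strictly monoidal. The comparison $\M\to\dul{\M}$ is given by $\Omega$, which is now the identity, so the two assignments are mutually inverse on the nose.

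On homs, we take for $\MBic(\B,\B')$ the strict monoidal 2-functors, monoidal 2-natural transformations and modifications. For $\RepBM(\M,\N)$ we take the strict counterparts. By \ssref{fun-bicat}, these are 2-categories, since the codomain is a 2-multicategory. The constructions of \prref{ksi}, \prref{psf birep} and \prref{transf} then give identity $\xi_n$, $\delta$ and $\Theta_n$, again because the mediating 2-cells are unique and are forced to be identities. Likewise, the quasi-inverse $\U^*$ built from \equref{beta-f} uses only identity $\Theta_n$ and $\Epsilon$. Hence $\U^*\U$ and $\U\U^*$ are identities rather than merely isomorphic to them, and $\U$ becomes an isomorphism, in particular a 2-equivalence.

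For the enrichment, we argue as in \ssref{mon bicat}. Strictly monoidal 2-categories form a closed sub-multicategory of $\mathsf{2Cat}$: the pointwise structure on hom-2-categories is strict, and substitution and evaluation preserve strictness. So \prref{Yau} gives the $\mathsf{2Cat}$-enrichment. The composition on trivially representable 2-multicategories is then transported along $\U$ exactly as in the square of \ssref{equiva enrich}. The step I expect to need the most care is the hom step. The issue is the claim that the uniquely induced $\xi_n$ and $\Theta_n$ are literally identities. This requires fixing the universal arrows $j$ as identities and using the trivial representability of $\N$. With these choices, $\Epsilon$ is an identity and the universal property pins every induced 2-cell to the identity.
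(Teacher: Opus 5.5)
Your object-level step and the plan of restricting the three steps of \seref{eq} agree with the paper. The hom step, however, rests on a false claim, exactly at the point you flag as delicate.

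The $\xi_n$ of \prref{ksi} are not mediating 2-cells. $\xi_n(\crta a)$ is the \emph{1-cell} $\Omega^\N(\F(j_{\crta a}))$, so uniqueness of mediating 2-cells says nothing about whether it is an identity. Taking $j=\Id$ in $\ul{\B}$ and $\ul{\B'}$ does not help. There $j$ is an $n$-ary multimap, and its image under the $n$-ary component of $\F$ (independent data, not determined by the unary component) is exactly the lax constraint $\xi^L$. A concrete counterexample: a monad $(T,\mu,\eta)$ on a category $\mathcal C$ is the same as a strict map from the terminal multicategory to $\ul{\mathrm{End}(\mathcal C)}$. Both are trivially representable (locally discrete) 2-multicategories, and the induced $\xi_2,\xi_0$ are $\mu,\eta$. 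So $\U$ sends a 2-functor of trivially representable 2-multicategories in general only to a lax monoidal 2-functor. The hom-2-categories you chose therefore do not correspond under $\U$, and the conclusion that $\U$ is an isomorphism does not follow.

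You can repair this in one of two ways.
\begin{itemize}
\item Restrict the multicategory side explicitly to 2-functors with $\F(j_{\crta a})=j_{\crta{\F(a_i)}}$ (so $\xi_n=\Id$ by definition) and to 2-natural transformations. Then your uniqueness argument does make $\delta$ and $\Theta_n$ identities, and you get an isomorphism of hom-2-categories. This is a stricter statement than the paper's.
\item Keep, as the paper does, the homs of \thref{main}: lax monoidal pseudofunctors, monoidal pseudonatural transformations and modifications on one side, and arbitrary pseudofunctors of the underlying 2-multicategories on the other. These are 2-categories because the codomain is strict (\ssref{fun-bicat}), and you simply restrict $\U$. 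It is then a 2-equivalence but not an isomorphism: the comparison $\xi_n\Rightarrow\xi^L_n$ of \prref{lax mon mult} and the cells $\delta_{\crta{a_i};b}$ of \equref{modif delta} involve $\F^2,\F^0$ and are not identities in general.
\end{itemize}

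Separately, the closedness argument in your enrichment step does not work as stated. The pointwise tensor of two strict monoidal 2-functors into a non-braided target is not monoidal: $(F\ot F')(B\ot C)$ and $(F\ot F')(B)\ot(F\ot F')(C)$ differ by swapping the middle factors. You do not need closedness, though. The composition of the strict hom-2-categories restricts directly from that of 2-categories.
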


The higher transforms for monoidal 2-categories (in all of the above three sorts of monoidality): monoidal 2-functors, monoidal 2-transformations and monoidal modifications form a 2-category. On the other hand, for trivially representable or bi-representable bi-multicategories having underlying 2-categories (in both 
cases), any choice of pseudofunctors, transformations and modifications forms a proper hom-{\em bicategory}, since 
the composition of transformations is expressed in terms of the composition of their 1-cell components living in the bi-multicategory, as we explained at the beginning of \ssref{fun-bicat}, which is non-trivially associative. For this reason we may say that strictifying the two groups of structures in \thref{main} 
to the middle two cases of strictness we obtain: 

\begin{cor} 
The $\mathsf{Bicat}$-enriched categories of Gray monoidal 2-categories and of trivially representable bi-multicategories having underlying 2-categories are equivalent. 

The $\mathsf{Bicat}$-enriched categories of bicategorically monoidal 2-categories and of bi-representable bi-multicategories having underlying 2-categories are equivalent. 
\end{cor}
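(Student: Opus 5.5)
The plan is to derive both statements of the corollary from \thref{main} by restriction. In the weakest setting, \thref{main} is assembled from three ingredients: the object-level correspondence of \ssref{level objects}, the hom-biequivalences $\U:\RepBM(\M,\N)\to\MBic(\ul{\M},\ul{\N})$ of \ssref{equiv homs}, and the induced $\mathsf{Bicat}$-enrichment of \ssref{equiva enrich}. For each of the two claims I would first identify the relevant full sub-$\mathsf{Bicat}$-categories on both sides. On the monoidal side these are Gray monoidal 2-categories, respectively bicategorically monoidal 2-categories, with the full hom-bicategories $\MBic(\B,\B')$. On the multicategorical side these are trivially representable, respectively bi-representable, bi-multicategories with underlying 2-categories, with the full hom-bicategories $\RepBM(\M,\N)$. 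I would then check that the two assignments of \seref{bi-bi->mon.bic} and \seref{mon bic - rep} restrict to these subclasses.

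For the object level, the restriction would follow Table \ref{table2} and the discussion after it. Suppose $\B$ has $\alpha,\lambda,\rho$ trivial. By \equref{tilde alfa}, the underlying bicategory of $\ul{\B}$ is then a 2-category, although $\tilde\alpha$ is still non-trivial because of $\xi$. If in addition $a,l,r$ are identities (the Gray case), then $\Omega$ is the identity by \ssref{birep-ity}, so $\ul{\B}$ is trivially representable. If instead $a,l,r$ are only equivalences, $\ul{\B}$ is bi-representable.

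Conversely, start from a bi-multicategory $\M$ with an underlying 2-category. The construction of \seref{bi-bi->mon.bic} only uses unary composition, so it yields a 2-category $\ul{\M}$. By \rmref{j and j'} and the clique proposition, the structure $a,l,r$ has the same nature as $\Omega$: identities when $\M$ is trivially representable, equivalences otherwise. The interchange $\xi$ is the possibly non-trivial 2-cell \equref{mon int}. This gives a Gray monoidal 2-category in the first case and a bicategorically monoidal 2-category in the second. The units $\dul{\B}\simeq\B$ and $\M\simeq\dul{\M}$ of \ssref{level objects} are built from identities and $\Omega$, so they stay inside the subclasses.

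At the level of homs I would not strictify anything. The sub-$\mathsf{Bicat}$-categories are full, so $\U$ and $\U^*$ restrict verbatim, and so does the transported composition $c=\U^*\circ subst\circ(\U,\U)$. Hence the equivalence of \thref{main} restricts. The point to stress is that the hom-objects really do remain bicategories rather than 2-categories. On the multicategorical side, composition of transformations uses composition of their $n$-ary 1-cell components in $\N$, and this is governed by the non-trivial $\tilde\alpha$, as explained at the start of \ssref{fun-bicat}. This is why the enrichment is over $\mathsf{Bicat}$ and not $\mathsf{2Cat}$. On the monoidal side, $\MBic(\B,\B')$ is a full sub-bicategory of $\Bicat(\B,\B')$, so the ambient multicategory $\mathsf{Bicat}$ is still the right one.

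I expect the main obstacle to be the converse object-level step in the Gray case. There one must confirm that trivial representability forces $a,l,r$ to be strict identities, while the interchange $\xi$ coming from the associativity 2-cell of $\M$ is allowed to be non-trivial, so that the result is genuinely Gray and not strictly monoidal. My first approach would be to take $j=\id$ and $\Omega=\Id$. The defining universal 2-cells for $a_{x,y,z}$, $l$ and $r$ are then identities, and uniqueness via \equref{sigma-dagger} forces the mediating 1-cells and 2-cells to be identities. Meanwhile the instance of \equref{assoc bi-mc} described just before Table \ref{table3} produces $\xi$ from the associator of $\M$.
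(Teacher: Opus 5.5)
Your proposal is correct and follows essentially the paper's route. At the object level you restrict via the strictness classification of Table~\ref{table2}: forward via \equref{tilde alfa} and \ssref{birep-ity}, and conversely via the clique summary together with the interchange extracted from \equref{assoc bi-mc}. You then restrict the hom-biequivalences $\U$ of \thref{main}, with the enrichment remaining over $\mathsf{Bicat}$. Your framing as full sub-$\mathsf{Bicat}$-categories, and your explicit check that $\Omega=\Id$ forces $a,l,r$ to be identities in the Gray case, make precise what the paper only calls ``strictifying'' and asserts without detail. Your remark that the homs are proper bicategories is not load-bearing: the $\mathsf{Bicat}$-enriched statement holds even if they happen to be 2-categories.
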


\subsubsection{Bi-representable multicategories} \ssslabel{birep mult}

In \cite[Appendix A]{Camp} the author constructed a bi-representable multicategory of pseudodouble categories with pseudodouble functors. 
This is a 1-dimensional multicategory representable in a bicategorical way, as the representability correspondence is an equivalence rather than an isomorphism. Independently, we obtained a similar result for double categories and a subclass of lax double functors in 
\cite[Section 6.1]{Fem:Bif}.   
(We proved an equivalence indicating bi-representability by the Cartesian product at the level of 
binary multihoms 
$$q\x\Lax_{hop}^{st\x u}(\Aa\times\Bb,\Cc) \simeq \Lax_{hop}^{ud}(\Aa\times\Bb,\Cc)$$
and in \cite{Fem:Gray} we constructed a closed multicategory of double categories. The supra-script $u$ stands for {\em unitary}, while $d$ 
stands for {\em decomposable}, meaning that for the lax double functor structure $\gamma$ the globular 2-cells $\gamma_{(1_{A},g)(f,1_{B'})}$ 
for 1-cells $f,g$ are invertible. In particular, the equivalence holds when the objects of the above hom-double categories are pseudodouble functors.) 

\begin{rem}
Denote by $\F:q\x\Lax_{hop}^{st\x u}(\Aa\times\Bb,\Cc) \to \Lax_{hop}^{ud}(\Aa\times\Bb,\Cc)$ the above equivalence 
and by $\G$ its quasi-inverse. 
In \cite[Section 5.3]{Fem:Bif} we obtained that the horizontal strict transformation $\kappa: \Id \Rightarrow \G\F$ is defined via the 
unitors $\iota$ of the lax double quasi-functors, 
while $\lambda:\F\G\Rightarrow\Id$ is defined via (the 2-cell components of) the lax double functor structures $\gamma_{(1_{A},g)(f,1_{B'})}$. 
This explains why the equivalence is obtained for unitary and decomposable lax double functors. In particular, restricting $\F$ to 
{\em strict} double functors would yield to an isomorphism above and consequently to proper representability of the multicategory of 
double categories by the Cartesian product, recovering the known Cartesian monoidal product on the category of double categories and 
double functors. 
\end{rem}

\smallskip

By the third row of Table \ref{table3} we obtain: 

\begin{cor} \colabel{mixed}
Bi-representable multicategories are equivalent to bicategorically monoidal categories. 

In particular, the categories of double categories and unitary and decomposable lax double functors (respectively, pseudodouble functors), 
as well as of pseudodouble categories and pseudodouble functors are Cartesian monoidal in a bicategorical sense. 
\end{cor}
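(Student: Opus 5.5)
The statement to prove is \coref{mixed}. I would obtain both parts by specialising the equivalences already established, one categorical dimension lower.

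For the first claim, I would regard a bi-representable multicategory $\M$ as a bi-multicategory that is locally discrete. Its multihom-categories $\M_n(a_1,\dots,a_n;b)$ are discrete sets, and the bi-representability equivalences $\M_n(\crta a;b)\simeq\M_1(m(\crta a);b)$ are equivalences between discrete categories, so they are bijections up to the chosen universal 2-cells; those 2-cells are identities whenever they are invertible 2-cells in a discrete category. I would then run the construction of \seref{bi-bi->mon.bic} on $\M$. It produces $\otimes=m_2$ and $I=m()$, with associativity and unit constraints $a,l,r$ obtained as mediating 1-cells. Since the equivalences are now bijections, these are invertible up to the only available 2-cells. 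The structure 2-cells $\pi,\mu,\tilde\lambda,\tilde\rho$, as well as the interchange $\zeta$ and $\zeta_0$ of \equref{mon int}, are identities, and the axioms hold by the uniqueness principle of \ssref{cliq}.

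Conversely, I would apply \seref{mon bic - rep} to a monoidal category in the bicategorical sense, that is, a monoidal bicategory whose hom-categories are discrete. The construction produces a bi-multicategory with discrete multihoms, which is a multicategory, and \ssref{birep-ity} makes it bi-representable. The equivalence of the two assignments then follows from \ssref{level objects} and \thref{main} by restricting to the full sub-$\mathsf{Bicat}$-categories of locally discrete objects. I would phrase this as the restriction of Table \ref{table3} (third row) to underlying 1-categories, which matches the reference the paper gives.

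For the second claim, I would invoke the bi-representability results for the respective Cartesian multicategories: \cite[Appendix A]{Camp} for pseudodouble categories with pseudodouble functors, and \cite{Fem:Bif}, \cite{Fem:Gray} for double categories with unitary decomposable lax (respectively pseudo) double functors. These give equivalences $q\x\Lax_{hop}^{st\x u}(\Aa\times\Bb,\Cc)\simeq\Lax_{hop}^{ud}(\Aa\times\Bb,\Cc)$ of the form \equref{Phil rep} with $m(\Aa,\Bb)=\Aa\times\Bb$. Feeding them into the first claim shows that the Cartesian product is a monoidal product in the bicategorical sense.

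I expect the main obstacle to be making precise the sense of ``equivalence'' on the multicategory side. In the cited examples the multihoms are really categories, namely the hom-double categories, and bi-representability is an equivalence of those categories, not a bijection. I would resolve this by keeping the multihoms as categories, that is, by viewing these examples as bi-representable bi-multicategories with an underlying 2-category. The result is then a monoidal structure that holds up to equivalence, which is the intended ``Cartesian monoidal in a bicategorical sense''.
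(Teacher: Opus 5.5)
There is a genuine problem with the reading you adopt in the first part. If a bi-representable multicategory is taken to be a \emph{locally discrete} bi-representable bi-multicategory, then, as you note yourself, the equivalences \equref{Phil rep} are bijections and the universal 2-cells are identities. Likewise, a locally discrete monoidal bicategory is just a monoidal category: equivalence 1-cells are isomorphisms, and $\pi,\mu,\tilde\lambda,\tilde\rho$ are identities. Your restriction of \thref{main} to locally discrete objects is a valid argument, but what it proves is Hermida's classical equivalence, with the prefix ``bi'' carrying no content.

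Consequently the step ``feeding them into the first claim'' in your second part fails. In the examples of \cite{Camp}, \cite{Fem:Bif}, \cite{Fem:Gray}, the representability comparison $q\x\Lax_{hop}^{st\x u}(\Aa\times\Bb,\Cc)\simeq\Lax_{hop}^{ud}(\Aa\times\Bb,\Cc)$ is an equivalence of hom-double categories and not a bijection. The remark preceding the corollary points out that one gets an isomorphism, and hence ordinary representability, only after restricting to strict double functors. So these multicategories are not instances of your first claim as you set it up.

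The notion meant in the corollary is the mixed one described in the paragraph just before it: a multicategory whose representability correspondence is an equivalence of the two-dimensional hom-objects rather than an isomorphism. The paper obtains the whole corollary in one line, by reading the third row of Table~\ref{table3} one dimension down. That row relates bicategorically monoidal 2-categories and bi-representable bi-multicategories with underlying 2-categories.

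This is exactly the fix you reach in your last paragraph. You should discard the locally discrete reduction and derive \emph{both} parts from that row; with this change your proposal coincides with the paper's argument. The second part then follows by plugging in the cited bi-representability results. The paper, like you, leaves one caveat to the references: \deref{Phil} requires universal objects for lists of all lengths (for the empty list, the terminal double category) and closure of universal arrows under composition, not only the binary equivalence you quote.
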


\subsection{Equivalence entailing pseudomonoidal transformations} \sslabel{fully mon}

In \prref{ksi} we proved that a pseudofunctor $\F:\M\to\N$ of the underlying bi-multicategories of bi-representable bi-multicategories 
$\M$ and $\N$ induces pseudonatural transformations $\xi_n: m(\F(-),..,\F(-))\to \F(m(-,..,-))$, and in \prref{psf birep} (via the modifications $\delta_n, \delta^L, \delta^R$) we showed that the $\xi_n$ make $\F$ a {\em lax representable} 
pseudofunctor. Requiring 
for pseudofunctors $\F:\M\to\N$ that their induced transformations $\xi_n$ be pseudonatural equivalences (meaning that there are 
pseudonatural transformations $\zeta_n: \F(m(-,..,-))\to m(\F(-),..,\F(-))$ and invertible modifications $\xi_n\circ\zeta_n\Rrightarrow\Id$ and 
$\zeta_n\circ\xi_n\Rrightarrow\Id$, 
or equivalently, that the 1-cell components $\xi_n(a_1,...,a_n)$ are equivalences), we obtain the pseudo version of 
\coref{respect univ o}:

\begin{cor} \colabel{fully mon}
Let $\M$ and $\N$ be  bi-representable bi-multicategories. Pseudofunctors $\F:\M\to\N$ between their underlying bi-multicategories 
equipped with pseudonatural equivalences $\xi_n: m(\F(-),..,\F(-))\to \F(m(-,..,-))$ 
preserve the universal objects (in a pseudo-representable way) up to equivalence. 
\end{cor}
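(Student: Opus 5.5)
The plan is to reduce \coref{fully mon} to \coref{respect univ o} and simply upgrade the laxity to pseudo-ness. Let $\F$ be a pseudofunctor of the underlying bi-multicategories. By \prref{ksi} it induces the pseudonatural transformations $\xi_n$ and the unary 1-cell $\xi_0$. By \prref{psf birep} it also induces the invertible modifications $\delta_n,\delta^L,\delta^R$ satisfying the coherence conditions listed there. Thus $\F$ is already lax-representable, and \coref{respect univ o} applies. What remains is to show that, once each $\xi_n$ is a pseudonatural equivalence, the comparison $m(\F(a_1),\dots,\F(a_n))\to\F(m(a_1,\dots,a_n))$ is itself an equivalence in $\N_1$, coherently in the $a_i$.

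First I would fix quasi-inverses. Pick pseudonatural $\zeta_n$ together with invertible modifications $\xi_n\zeta_n\Rrightarrow\Id$ and $\zeta_n\xi_n\Rrightarrow\Id$, and promote them to adjoint equivalences in the usual way. I would then read \equref{j-xi} through \rmref{j and j'} part d). Since $\xi_n(\crta a)$ is an equivalence, the multimap $\F(j_{\crta a})\cong\xi_n(\crta a)\circ j^\N_{\crta{\F(a_i)}}$ is a universal arrow for $\F(a_1),\dots,\F(a_n)$. So $\F(m(\crta a))$, with the arrow $\F(j_{\crta a})$, is another choice of universal object for this list, and it lies in the clique of universal objects attached to $\F(a_1),\dots,\F(a_n)$. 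This is the precise sense of ``preserves the universal objects up to equivalence''.

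For the pseudo-representable structure, I would transport the data of \prref{psf birep} along $\zeta_n$. Whiskering $\delta_n$, $\delta^L$, $\delta^R$ with $\zeta_n$ and composing with the unit and counit modifications yields invertible modifications of the opposite orientation. Their coherence axioms are equalities of 2-cells between 1-cells whose domain is a universal object, so they hold by the uniqueness principle summarized at the end of Section 3. The point where care is needed is the case $n=0$. Here $\xi_0$ is only a unary 1-cell, so one has to record that it must also be an equivalence. Its quasi-inverse is then obtained from the universal property of $j_{\langle\rangle}$, exactly as $\iota$ and $u$ were constructed.

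The main obstacle is the bookkeeping of the transported coherence, namely checking that the compatibility of $\delta_{n+1}$ with $\delta_n$ and $\xi_2$ survives conjugation by the $\zeta$'s. I would avoid computing this pasting directly. Instead I would argue via \prref{lax mon mult}: each $\xi_n$ is uniquely isomorphic to a path of $\xi_2$'s, so every $\xi_n$ is an equivalence as soon as $\xi_2$ and $\xi_0$ are. Every remaining equation is then again a comparison between parallel 2-cells out of a universal object, and is forced by uniqueness.
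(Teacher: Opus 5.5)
Your proposal is correct and follows the paper's route. The paper obtains this corollary immediately from \coref{respect univ o} by adding the hypothesis that the $\xi_n$ (equivalently, their 1-cell components) are equivalences, so that lax preservation of universal objects becomes pseudo; your transport of $\delta_n,\delta^L,\delta^R$ along the $\zeta_n$ is harmless but not needed for that, while your remarks that $\F(j_{\crta a})$ then becomes a universal arrow by \rmref{j and j'} d), that $\xi_0$ must be required separately, and that by \prref{lax mon mult} it suffices for $\xi_2$ to be an equivalence are worthwhile sharpenings. One small slip: a quasi-inverse $\F(I^\M)\to I^\N$ of $\xi_0$ cannot come from the universal property of $j^\N_{\langle\rangle}$, which only produces 1-cells out of $I^\N$; it comes instead from that of $\F(j^\M_{\langle\rangle})$, which is universal precisely because $\xi_0$ is assumed to be an equivalence.
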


On the other hand, 
for monoidal bicategories $\B$ and $\B'$ the hom-bicategories $\MBic(\B,\B')$ have for objects 
 {\em lax monoidal} pseudofunctors $\B\to\B'$. Let us set the following notation:
$\MBic_{lx.m}:=\MBic$ and $\RepBM_{lx.r}:=\RepBM$. The versions of $\mathsf{Bicat}$-enriched categories $\MBic$ and $\RepBM$ whose 
pseudofunctors are {\em pseudomonoidal}, respectively {\em pseudo-representable} let us denote similarly by $\MBic_{ps.m}$ and 
$\RepBM_{ps.r}$. Then there is a commuting diagram
\begin{equation} \eqlabel{commut diag}
\begin{tikzcd}
	{\RepBM_{ps.r}} &&& {\MBic_{ps.m}} \\
	{\RepBM_{lx.r}} &&& {\MBic_{lx.m}}
	\arrow["\simeq", from=1-1, to=1-4]
	\arrow["forget", from=1-4, to=2-4]
	\arrow["forget", tail reversed, no head, from=2-1, to=1-1]
	\arrow["\simeq"', from=2-1, to=2-4]
\end{tikzcd}
\end{equation}
where the equivalences on the horizontal arrows are those from \thref{main} (and its obvious variant).

\subsection{Bi-representable Kleisli bi-multicategories} \sslabel{Slat}

In his Ph.D. thesis \cite[Theorem 6.8]{Slat} Andrew Slattery proved, in the setting of relative pseudomonads, that given a pseudocommutative pseudomonad 
$T$ on a 2-multicategory $\M$ there is a structure of a bi-multicategory on the Kleisli bicategory $\Kl_T(\ul{\M})$. A pseudocommutative pseudomonad  
$T$ on $\M$ is in particular strong, where the notion of {\em strength on a bi-multicategory}, \cite[Definition 5.3.]{Slat}, means that 
\begin{itemize}
\item for every object $a$ there is an object $T(a)$ and a unit map $i_a:a\to T(a)$, and 
\item for any lists of objects $a_1,...,a_{i-1}, x, a_{i+1},...,a_n, \, y$ there is a family of functors 
$$st^i:\M(a_1,...,a_{i-1}, x, a_{i+1},...,a_n; T(y)) \to \M(a_1,...,a_{i-1}, T(x), a_{i+1},...,a_n; T(y))$$ 
pseudonatural in every argument. 
\end{itemize}
On the other hand, there is a known notion of strength on a {\em monoidal bicategory} (see \cite[Definition 9]{HF}, \cite[Definition 4.3]{HF2}). 
With our equivalence between monoidal bicategories and bi-representable bi-multicategories we are in a position to upgrade Slattery's (non-relative version of the) result in the following two forms.

\begin{thm} \thlabel{Slat}
Let $\B$ be a monoidal bicategory and suppose there is a pseudocommutative pseudomonad $T$ on $\B$. Then there is a structure of a 
bi-representable bi-multicategory on the Kleisli bicategory $\Kl_T(\B)$.

Let $\M$ be a bi-representable bi-multicategory and suppose there is a pseudocommutative pseudomonad $T$ on $\M$. Then there is a structure 
of a bi-representable bi-multicategory on the Kleisli bicategory $\Kl_T(\ul{\M})$ of the underlying bicategory of $\M$.
\end{thm}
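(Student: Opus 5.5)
The plan is to reduce both statements to one construction of a monoidal structure on $\Kl_T(\B)$, and then transport it to the multicategorical side with \thref{main}.

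For the first statement, let $\B$ be monoidal and $T$ a pseudocommutative pseudomonad on $\B$. I would first put a monoidal bicategory structure on $\Kl_T(\B)$.
\begin{itemize}
\item On objects the tensor is that of $\B$, and the unit is $I$.
\item On Kleisli 1-cells $f:A\to T(A')$ and $g:B\to T(B')$, set $f\,\bar\ot\, g:=\varphi_{A',B'}\circ(f\ot g)$, where $\varphi:TA'\ot TB'\to T(A'\ot B')$ is the pseudomonoidal structure.
\item The map $\varphi$ comes from the strengths $t_{A,B}:A\ot TB\to T(A\ot B)$ and $t'_{A,B}:TA\ot B\to T(A\ot B)$ of $T$ on a monoidal bicategory, as in \cite[Definition 9]{HF}. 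It is obtained by composing the two strengths with the multiplication in either order. Pseudocommutativity is exactly the invertible 2-cell comparing these two composites, which is what makes the two orders agree up to coherent isomorphism.
\item The associator, unitors and their inverses are the images of those of $\B$ under the free functor $\B\to\Kl_T(\B)$, i.e. $a\mapsto i\circ a$.
\item The modifications $\pi,\mu,\tilde\lambda,\tilde\rho$ are transported in the same way. The monoidality interchange $\xi$ of \axiomref{MB4} for $\bar\ot$ is assembled from the pseudomonad data (the multiplication and unit 2-cells) and from the coherence 2-cells of the strengths.
\end{itemize}
Once $\Kl_T(\B)$ is shown to be a monoidal bicategory, \seref{mon bic - rep} yields a bi-representable bi-multicategory $\ul{\Kl_T(\B)}$. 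By construction its underlying bicategory is $\Kl_T(\B)$, which proves the first claim.

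For the second statement, let $\M$ be bi-representable and $T$ a pseudocommutative pseudomonad on $\M$ in Slattery's sense. By \seref{bi-bi->mon.bic}, $\ul{\M}$ is a monoidal bicategory. I would then check that $T$ induces a pseudocommutative pseudomonad on $\ul{\M}$, and that the multicategorical strengths $st^i$ give monoidal strengths. The idea is to precompose with the universal arrows: the strength $t_{A,B}:A\ot TB\to T(A\ot B)$ is the unary map corresponding under \equref{Phil rep} to $st^2(i\circ j_{A,B})$, and similarly for $t'$. The strength axioms and pseudocommutativity then transfer by the uniqueness principle of \ssref{cliq}, since all 2-cells involved act on universal objects. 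The first part then applies to $\ul{\M}$.

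Finally, I would note a consistency check. The hom-bicategories $\MBic(\ul{\M},\ul{\N})$ and $\RepBM(\M,\N)$ are biequivalent (\ssref{equiv homs}), and the object-level equivalence $\M\simeq\dul{\M}$ of \ssref{level objects} holds. Together these make the resulting bi-multicategory on $\Kl_T(\ul{\M})$ compatible with Slattery's \cite[Theorem 6.8]{Slat} up to equivalence, whenever $\M$ is in addition a 2-multicategory.

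The main obstacle is the coherence of the monoidal structure on $\Kl_T(\B)$, namely verifying (TA1)–(TA3) and the hexagon for $\xi$. I would try first to avoid direct computation. The idea is to use that $\Kl_T(\B)$ is biequivalent to the full sub-bicategory of free pseudoalgebras, and that pseudocommutativity makes $T$ a pseudomonoidal pseudomonad in the sense of \cite{HF2}. The monoidal structure on free algebras should then restrict from the induced structure on pseudoalgebras, so the axioms would be inherited rather than checked by hand.
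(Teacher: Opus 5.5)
Your plan has the same architecture as the paper's proof. For the first statement, you put a monoidal structure on $\Kl_T(\B)$ and then apply \seref{mon bic - rep}. For the second, you use \seref{bi-bi->mon.bic} and transport $T$ to $\ul{\M}$. Your strength, the unary map corresponding under \equref{Phil rep} to $st^2(i_{A\ot B}\circ j_{A,B})$, is exactly the paper's $\Omega\circ st^2\circ\Sigma$ applied to $i_{a\ot b}$.

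The difference is the step you call the main obstacle. The paper does not build the monoidal structure on $\Kl_T(\B)$; it imports it from Paquet--Saville \cite[Theorem 7.3]{HF2}. Citing that result closes your argument. Since you propose to prove it yourself instead, note that your sketch has two concrete defects.
\begin{enumerate}
\item The interchange 2-cell for $\bar\ot$ cannot be assembled from the pseudomonad data and the strength coherences alone. Write $g^\#:=\mu\circ T(g)$. Comparing $(g^\# f)\bar\ot(h^\# k)$ with $(g\bar\ot h)^\#(f\bar\ot k)$ specializes to $\varphi\circ(\mu\ot\mu)\cong\mu\circ T\varphi\circ\varphi$, i.e.\ the multiplication must be pseudomonoidal for $\varphi$. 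This is precisely where the pseudocommutativity 2-cell enters. With strengths only you get the premonoidal structure of \cite{HF}, not a monoidal one.
\item The shortcut through pseudoalgebras is not available. For an arbitrary monoidal bicategory $\B$, with no bicolimits and no closedness, pseudoalgebras carry no induced monoidal structure to restrict from. A tensor product of algebras is defined by a universal property for (pseudo-)bimorphisms and needs colimits to exist, already in dimension one. Defining $TA\ot_T TB:=T(A\ot B)$ directly on free algebras just reproduces the Kleisli computation, so nothing is inherited.
\end{enumerate}

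Two smaller points.
\begin{itemize}
\item The strength and pseudocommutativity 2-cells on $\ul{\M}$ are transported data, not 2-cells produced by the universal property. So their axioms do not follow from the principle of \ssref{cliq} as stated. They follow from Slattery's multicategorical axioms, together with the fact that precomposition with universal arrows is an equivalence of hom-categories and hence fully faithful on 2-cells.
\item Your closing compatibility claim with \cite[Theorem 6.8]{Slat} is not needed for the statement, and \ssref{equiv homs} and \ssref{level objects} do not justify it. Those results compare hom-bicategories of pseudofunctors, and $\M$ with $\dul{\M}$. They do not compare two bi-multicategory structures on the same Kleisli bicategory.
\end{itemize}
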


\begin{proof}
By \cite[Theorems 7.3]{HF2} the Kleisli bicategory $\Kl_T(\B)$ is monoidal, and by what we proved in \seref{mon bic - rep} there is a bi-representable bi-multicategory structure on it. 

By our result in \seref{bi-bi->mon.bic}, $\ul{\M}$ is a monoidal bicategory. The pseudocommutative pseudomonad $T$ on $\M$ induces 
a pseudocommutative pseudomonad $T$ on $\ul{\M}$ (same notation for simplicity; a strength $t:a\ot T(b)\to T(a\ot b)$ is the image by 
$\Omega\circ st^2\circ\Sigma$ of $i_{a\ot b}:a\ot b\to T(a\ot b)$ as in the diagram below, where we identify $m(a,b)=a\ot b$)  
\[\begin{tikzcd}
	{\M_2(a,b; T(m(a,b)))} && {\M_1(a\ot b; T(a\ot b))} \\
	{\M_2(a,b; T(m(a,b)))} && {\M_1(a\ot T(b); T(a\ot b))}
	\arrow["{st^2}"', from=1-1, to=2-1]
	\arrow["\Sigma"', from=1-3, to=1-1]
	\arrow["{\crta{st^2}}", from=1-3, to=2-3]
	\arrow["\Omega"', from=2-1, to=2-3]
\end{tikzcd}\]
and the second statement follows from the first. 
\qed\end{proof}

\bigskip
\bigskip

{\large{\bf Acknowledgements.}} The author thanks to Nathanael Arkor for many inspiring and constructive conversations. In particular, 
for suggesting to consider $\mathsf{Bicat}$-enrichment in the equivalence, rather than fully weak tricategories, and for pointing out 
to the Slattery's result that we address in \ssref{Slat}. Special thanks to Pawel Sobocinski and the Laboratory for Compositional 
Systems and Methods at TalTech (Tallinn University of Technology) for hosting me during my two-week visit, which provided a great 
opportunity for the exchange of ideas and fruitful discussions, and thus supported this research.

\bigskip

{\large{\bf Appendix}}

    \begin{landscape}
				\centering
\[\begin{tikzcd}[column sep=small]
	{\big(\textcircled{L}_{i=1}^{n-1} f_i(h_ik_i)\big)\ot f_n(h_nk_n)} & {(\textcircled{L}_{i=1}^{n-1} f_i)(\textcircled{L}_{i=1}^{n-1} h_ik_i)\ot f_n(h_nk_n)} && {f(\textcircled{L}_{i=1}^n h_ik_i)} \\
	{\big(\textcircled{L}_{i=1}^{n-1} (f_i h_i)k_i\big)\ot f_n(h_nk_n)} & {\big(\textcircled{L}_{i=1}^{n-1} f_i\big)\big(\big(\textcircled{L}_{i=1}^{n-1} h_i\big)\big(\textcircled{L}_{i=1}^{n-1} k_i\big)\big)\ot f_n(h_n k_n)} && {f(hk)} \\
	{\big(\textcircled{L}_{i=1}^{n-1} f_i h_i\big)\big(\textcircled{L}_{i=1}^{n-1}k_i\big)\ot f_n(h_nk_n)} & {\big(\big(\textcircled{L}_{i=1}^{n-1} f_i\big)\big(\textcircled{L}_{i=1}^{n-1} h_i\big)\big)\big(\textcircled{L}_{i=1}^{n-1} k_i\big)\ot f_n (h_nk_n)} & {f\big(\big(\textcircled{L}_{i=1}^{n-1} h_i\big)\big(\textcircled{L}_{i=1}^{n-1} k_i\big)\ot (h_n k_n)\big)} \\
	{\big(\textcircled{L}_{i=1}^{n-1} f_i h_i\big)\big(\textcircled{L}_{i=1}^{n-1}k_i\big)\ot (f_n h_n)k_n} & {\big(\big(\textcircled{L}_{i=1}^{n-1} f_i\big)\big(\textcircled{L}_{i=1}^{n-1} h_i\big)\big)\big(\textcircled{L}_{i=1}^{n-1} k_i\big)\ot (f_n h_n) k_n} \\
	& {\big(\big(\textcircled{L}_{i=1}^{n-1} f_i\big)\big(\textcircled{L}_{i=1}^{n-1} h_i\big)\ot (f_n h_n)\big)\big((\textcircled{L}_{i=1}^{n-1} k_i) \ot k_n\big)} && {(fh)k}
	\arrow[""{name=0, anchor=center, inner sep=0}, "{\gamma_{.,..}\ot 1}", Rightarrow, nfold, from=1-1, to=1-2]
	\arrow["{\underset{i}{\textcircled{L}}\alpha\ot 1}"', Rightarrow, nfold, from=1-1, to=2-1]
	\arrow["{\xi_{., ..}}", Rightarrow, nfold, from=1-2, to=1-4]
	\arrow[""{name=1, anchor=center, inner sep=0}, "{1\gamma\ot 1}", Rightarrow, nfold, from=1-2, to=2-2]
	\arrow["{1\gamma}", Rightarrow, nfold, from=1-4, to=2-4]
	\arrow["{\gamma_{..,.}\ot 1}"', Rightarrow, nfold, from=2-1, to=3-1]
	\arrow["{\alpha\ot 1}", Rightarrow, nfold, from=2-2, to=3-2]
	\arrow[""{name=2, anchor=center, inner sep=0}, "{\xi_{.,..}}", Rightarrow, nfold, from=2-2, to=3-3]
	\arrow["\alpha", Rightarrow, nfold, from=2-4, to=5-4]
	\arrow[""{name=3, anchor=center, inner sep=0}, "{\gamma 1\ot 1}"', Rightarrow, nfold, from=3-1, to=3-2]
	\arrow["{1\ot  \alpha}"', Rightarrow, nfold, from=3-1, to=4-1]
	\arrow["{1\ot\alpha}", Rightarrow, nfold, from=3-2, to=4-2]
	\arrow[""{name=4, anchor=center, inner sep=0}, "{1\xi}", Rightarrow, nfold, from=3-3, to=2-4]
	\arrow[""{name=5, anchor=center, inner sep=0}, "{\gamma 1\ot 1}"', Rightarrow, nfold, from=4-1, to=4-2]
	\arrow["{\xi_{..,.}}"', Rightarrow, nfold, from=4-2, to=5-2]
	\arrow[""{name=6, anchor=center, inner sep=0}, "{\xi 1}", equals, nfold, 2tail reversed, from=5-4, to=5-2]
	\arrow["{hex_{n-1}\ot 1}"{description, pos=0.3}, draw=none, from=0, to=3]
	\arrow["{nat_\xi w.r.t. \gamma}"{description}, draw=none, from=1, to=4]
	\arrow["{hex_2}"{description}, draw=none, from=2, to=6]
	\arrow["{obv.}"{description}, draw=none, from=3, to=5]
\end{tikzcd}\]
\captionof{table}{The $i$-ary hexagon for $\gamma$ from \sssref{comput pentag}}
    \end{landscape}
\label{hex_i}


\begin{thebibliography}{99}


\bibitem{Bou} J. Bourke, {\em Skew structures in 2-category theory and homotopy theory}, J. Homotopy Relat. Struct. {\bf 12}/1 (2017), 31--81.

\bibitem{BL} J. Bourke, G. Lobbia, {\em A skew approach to enrichment for Gray-categories}, 
Advances in Mathematics {\bf 434} (2023), https://doi.org/10.1016/j.aim.2023.109327.


\bibitem{Camp} A. Campbell, {\em How strict is strictification?}, J. Pure Appl. Algebra {\bf 223}/7 (2019), 2948--2976. 


\bibitem{DS} B. Day, R. Street, {\em Monoidal Bicategories and Hopf Algebroids}, Adv. Math. {\bf 129}/1 (1997), 99--157. 


\bibitem{FS} A. D. Fairbanks, M. Shulman, {\em Doubly Weak Double Categories}, {\bf 34}/26 (2026). 


\bibitem{FMS} P.F. Faul, G. Manuell, J. Siqueira, {\em 2-Dimensional bifunctor theorems and distributive laws}, Theory Appl. Categ. {\bf 37} 
(2021), 1149--1175. 


\bibitem{Fem:Bif} B. Femi\'c, {\em Bifunctor Theorem and strictification tensor product for double categories with lax double functors}, 
Theory Appl. Categ. {\bf 39} (2023), 824--873. 




\bibitem{Fem:Gray} B. Femi\'c, {\em Gray (skew) multicategories for double and Gray-categories}, arxiv.org/abs/2408.00561.  


\bibitem{Fem:Prem} B. Femi\'c, {\em Premonoidal and Kleisli double categories}, arxiv.org/abs/2401.17494v3.

\bibitem{FLK} F. Foltz, Ch. Lair, G. M. Kelly, {\em Algebraic categories with few monoidal biclosed structures or none}, 
Journal of Pure and Applied Algebra {\bf 17}/2 (1980), 171--177.  


\bibitem{GGV} N. Gambino, R. Garner, C. Vasilakopoulou, {\em Monoidal Kleisli Bicategories and the Arithmetic Product of 
Coloured Symmetric Sequences},  Doc. Math. {\bf 29}/3 (2024), 627--702. 


\bibitem{GPS} R. Gordon, A. J. Power, R. Street, {\em Coherence for tricategories}, Memoirs Amer. Math. Soc. {\bf 117} (1995).


\bibitem{GP:Fram} M. Grandis, R. Par\'e, {\em Intercategories: a framework for three-dimensional category theory}, J. Pure Appl.
Algebra {\bf 221}/5 (2017), 999--1054. 


\bibitem{Gray} J. W. Gray, {\em Formal category theory: adjointness for 2-categories}, Lecture Notes in Mathematics
{\bf 391}, Springer-Verlag, Berlin-New York (1974) 1, 19, 27. 


\bibitem{Herm} C. Hermida, {\em Representable multicategories}, Adv. Math. {\bf 151}/2 (2000), 164--225.



\bibitem{JY} N. Johnson, D. Yau, {\em Homotopy Theory of Enriched Mackey Functors}, 
London Mathematical Society Lecture Note Series {\bf 492}, Cambridge University Press (2025). 


\bibitem{JS} A. Joyal, R. Street, {\em Geometry of Tensor Calculus I}, Advances in Mathematics 88 (1991), 55--112.


\bibitem{LSR} D. R. Licata, M. Shulman, M. Riley, {\em A fibrational framework for substructural and modal logics}, In FSCD, 2017.


\bibitem{Lob} G. Lobbia, {\em A finite approach to representable multicategories and related structures}, 
Theory and Applications of Categories {\bf 45}/10 (2026), 332--390.


\bibitem{McL} S. Mac Lane, {\em Categories for the Working Mathematicians}, Graduate Texts in Mathematics, Springer-Verlag (1971).


\bibitem{MV} J. Moeller, Ch. Vasilakopoulou, {\em Monoidal Grothendieck Construction}, 
Theory and Applications of Categories {\bf 35}/31 (2020), 1159--1207.


\bibitem{HF2} H. Paquet, P. Saville, {\em Effectful semantics in bicategories: strong, commutative, and concurrent pseudomonads}, 
LICS '24: Proceedings of the 39th Annual ACM/IEEE Symposium on Logic in Computer Science Article {\bf 61}, 1 -- 15, 
https://doi.org/10.1145/3661814.3662130. 


\bibitem{HF} H. Paquet, P. Saville, {\em Strong pseudomonads and premonoidal bicategories}, arXiv:2304.11014 (choose v1). 


\bibitem{Phil} P. Saville, {\em Cartesian closed bicategories: type theory and coherence}, PhD thesis (2019). 


\bibitem{SP} Ch. J. Schommer-Pries, {\em The classification of two-dimensional extended topological field theories}, PhD thesis? (2009).


\bibitem{Shul-sl}  M. Shulman, {\em Extraordinary multicategories}, slides from a talk at the 2010 CMS Summer Meeting. 


\bibitem{Slat} A. Slattery, {\em Commutativity of Relative Pseudomonads}, University of Leeds, PhD thesis (2024). 
https://etheses.whiterose.ac.uk/id/eprint/35372/


\bibitem{Str80} R. Street, {\em Fibrations in bicategories}, Cahiers Topologie G\'eom. Diff\'erentielle {\bf 21}/2 (1980), 111--160.


\bibitem{Ver} D. Verity, {\em Enriched categories, internal categories and change of base}, Repr. Theory Appl. Categ. {\bf 20} (2011), 1--266.

\end{thebibliography}
\end{document}